\definecolor{hypercolor}{HTML}{003399}
\newtheorem*{thmfixedtime}{Fixed-time Theorem}
\newtheorem*{thmmain}{Main Theorem}
\newtheorem*{thmfixedtimeup}{Fixed-time Theorem, upper bound}
\newtheorem*{thmfixedtimelow}{Fixed-time Theorem, lower bound}
\newtheorem{lem}{Lemma}[section]
\newtheorem{prop}[lem]{Proposition}
\newenvironment{customlem}[1]
  {\innercustomlem}%
  {\endinnercustomlem}%
\newenvironment{customprop}[1]
  {\innercustomprop}%
  {\endinnercustomprop}%
\theoremstyle{definition}
\newtheorem{ex}[lem]{Example}
\newtheorem{defn}[lem]{Definition}
\newtheorem{assu}[lem]{Assumption}
\theoremstyle{remark}
\newtheorem{con}[lem]{Convention}
\newtheorem{rmk}[lem]{Remark}
\numberwithin{equation}{section}
\newcommand{\tdef}[1]{\textbf{\boldmath{#1}}}
\acrodef{TASEP}{Totally Asymmetric Simple Exclusion Process}
\acrodef{LDP}{Large Deviation Principle}
\acrodef{LD}{Large Deviation}
\acrodef{KPZ}{Kardar--Parisi--Zhang}
\acrodef{lsc}{lower-semicontinuous} 
\newcommand{\tL}{\mathrm{L}}				
\newcommand{\tR}{\mathrm{R}}				
\newcommand{\trw}{\mathrm{rw}}
\newcommand{\Geo}{\mathrm{Geo}}				
\newcommand{\vect}{\mathbf{t}}
\newcommand{\e}{\varepsilon}
\newcommand{\calC}{\mathcal{C}}
\newcommand{\calO}{\mathcal{O}}
\newcommand{\calK}{\mathcal{K}}
\newcommand{\calM}{\mathcal{M}}
\newcommand{\C}{\mathbb{C}}
\newcommand{\R}{\mathbb{R}}
\newcommand{\Z}{\mathbb{Z}}
\newcommand{\ldq}{\mathsf{LdQ}}				
\newcommand{\Lip}{\mathsf{1\hspace{-1.2pt}\text{-}\hspace{-.1pt}Lip}} 
\newcommand{\HLsp}{\mathsf{HLsp}}			
\newcommand{\HLspk}{\mathsf{HLsp}_\star}	
\newcommand{\HLspkk}{\mathscr{F}}			
\newcommand{\Elem}{\mathsf{Elem}}			
\newcommand{\Csp}{\mathcal{C}}				
\newcommand{\Dsp}{\mathcal{D}}				
\newcommand{\lsp}{\ell}						
\newcommand{\Mat}{\mathsf{Mat}} 			
\newcommand{\dist}{\mathrm{dist}}			
\newcommand{\ppin}{P_{\mathrm{pin}}}
\newcommand{\punder}{P_\mathrm{und}}		
\newcommand{\D}{D}							
\newcommand{\DD}{\Delta}					
\newcommand{\DDprnd}{\DD^\mathrm{pr,nd}}	
\newcommand{\RAte}{\mathcal{I}_\star}		
\newcommand{\rate}{I}						
\newcommand{\Rate}{\mathcal{I}}				
\newcommand{\rateJV}{\mathcal{I}_{\mathrm{JV}}}
\newcommand{\rateber}{I_{\mathrm{Ber}}}		
\newcommand{\rateberc}{J_{\mathrm{Ber}}}	
\newcommand{\rategeo}{I_{\mathrm{geo}}}		
\newcommand{\raterw}{I_{\trw}}				
\newcommand{\raterwRel}[2]{\raterw(#1\,/\!\!/\raisebox{-1pt}{$#2$})}	
\newcommand{\raterwREl}[2]{\raterw\big(#1\,/\!\!/\raisebox{-2pt}{$#2$}\big)}	
\newcommand{\slope}{\eta}
\newcommand{\parab}[1][]{p_{#1}}			
\newcommand{\hd}{\mathsf{D}}				
\newcommand{\hh}{\mathsf{h}}				
\newcommand{\uu}{\mathsf{u}}				
\newcommand{\hic}{\mathfrak{h}_\mathrm{ic}}	
\renewcommand{\P}{\mathbb{P}}				
\newcommand{\HL}{\mathrm{HL}^{\mathrm{fw}}}
\newcommand{\HLf}{\mathrm{HL}^{\mathrm{fw}}}
\newcommand{\HLb}{\mathrm{HL}^{\mathrm{bk}}}
\DeclareMathOperator{\hyp}{hyp}				
\DeclareMathOperator{\CG}{CG}				
\DeclareMathOperator{\tr}{tr}				
\newcommand{\sgn}{\mathrm{sgn}}				
\newcommand{\parity}{\mathrm{prty}}			
\newcommand{\parityc}{\mathrm{prty}_\star}	
\newcommand{\ind}{\mathbf{1}}				
\newcommand{\img}{\mathbf{i}}				
\renewcommand{\d}{\mathrm{d}}				
\renewcommand{\Re}{\mathrm{Re}}				
\newcommand{\set}[1]{ {\{#1\}} }
\newcommand{\kdelta}{\delta}				
\newcommand{\Sumie}{\sum\nolimits_\mathrm{ie}} 
\newcommand{\sumie}{\sum_\mathrm{ie}} 		
\newcommand{\Mid}{\mathrm{mid}}
\newcommand{\norm}[1]{\Vert #1\Vert}
\newcommand{\Id}{\mathrm{I}} 				
\newcommand{\conj}{\theta} 					
\newcommand{\rwop}{Q}						
\newcommand{\Slop}{S^{\triangleleft}}		
\newcommand{\Srop}{S^{\triangleright}}		
\newcommand{\ica}{\widehat{a}}
\newcommand{\icd}{\widehat{d}}
\newcommand{\ics}{\widehat{s}}
\newcommand{\icx}{\widehat{x}}
\newcommand{\icm}{\widehat{m}}
\newcommand{\icmu}{\widehat{\mu}}
\newcommand{\veca}{\mathbf{a}}
\newcommand{\vecb}{\mathbf{b}}
\newcommand{\vecd}{\mathbf{d}}
\newcommand{\vecs}{\mathbf{s}}
\newcommand{\vecx}{\mathbf{x}}
\newcommand{\vecy}{\mathbf{y}}
\newcommand{\icveca}{\widehat{\mathbf{a}}}
\newcommand{\icvecx}{\widehat{\mathbf{x}}}
\newcommand{\vecrho}{\boldsymbol{\rho}}
\newcommand{\ick}{\hat{k}} 
\newcommand{\Ic}{\hat}	
\NewDocumentCommand{\opuMQR}{O{} O{} m}{ {}_{#1}[#3]_{#2} } 
\NewDocumentCommand{\opu}{O{} O{} m}{ {}_{#1}|\![#3]\!|_{#2} }
\NewDocumentCommand{\Opu}{O{} O{} m}{ {\vphantom{\big|}}_{#1}\big|\hspace{-3pt}\big[#3\big]\hspace{-3pt}\big|_{#2} }
\NewDocumentCommand{\opuc}{O{} O{} m}{ {}_{#1}|\![#3_{\star}]\!|_{#2} }
\NewDocumentCommand{\opuci}{O{} O{} m}{ {}_{#1}|\![#3_{\star\,\mathrm{isle}}]\!|_{#2} }
\newcommand{\alphabet}{\mathsf{Alphabet}}	
\newcommand{\alphabetkl}[1][kk']{\alphabet(#1)}
\newcommand{\wordset}{\mathsf{Words}}		
\newcommand{\wordsett}{\wordset_{\oslash}}	
\newcommand{\wordsetkl}[1][kk']{\wordset(#1)}
\newcommand{\wordsettkl}[1][kk']{\wordset_{\oslash}(#1)}
\newcommand{\word}{w}
\newcommand{\wordF}{\mathrm{wrd}}			
\newcommand{\wordsub}{v}					
\newcommand{\wordfull}{\word^{\mathrm{full}}}
\newcommand{\wordfullkl}[1][kk']{\word^{\mathrm{full},#1}}
\newcommand{\letterfirst}{j_\mathrm{*}}		
\newcommand{\letterlast}{j^\mathrm{*}}		
\NewDocumentCommand{\wordkl}{O{\word} O{k} O{k'}}{ {}_{#2} {#1} {}_{#3} }		
\newcommand{\deep}{\mathsf{Deep}}
\newcommand{\up}{{\scriptstyle\nwarrow}}
\newcommand{\down}{{\scriptstyle\searrow}}
\newcommand{\ups}{{\scriptscriptstyle\nwarrow}}
\newcommand{\downs}{{\scriptscriptstyle\searrow}}
\newcommand{\UD}{\sigma}
\newcommand{\vecUD}{{\boldsymbol{\sigma}}}
\newcommand{\UDc}{\sigma_\star}
\newcommand{\vecUDc}{{\boldsymbol{\sigma}}_\star}
\newcommand{\UDi}{\sigma_\mathrm{inh}}
\newcommand{\vecUDi}{{\boldsymbol{\sigma}}_\mathrm{inh}}
\newcommand{\tree}{\mathsf{Tree}}			
\newcommand{\treekl}[1][kk']{\tree(#1)}
\DeclareMathOperator{\activated}{\mathsf{Act}}
\newcommand{\children}{\mathsf{Children}}	
\newcommand{\increment}[1]{{}_{#1}\!\subset}
\newcommand{\Increment}[1]{{}_{\{#1\}}\!\subset}
\newcommand{\tower}{\mathsf{Tower}}			
\newcommand{\IHC}[1][kk']{\mathrm{IHC}(#1)}
\newcommand{\isleset}{\mathsf{Isles}}
\newcommand{\islesetkl}[1][kk']{\mathsf{Isles}(#1)}
\newcommand{\llo}{<}						
\newcommand{\lli}{\ll}						
\newcommand{\cupi}{\,\rotatebox{90}{\scriptsize$(\!($}\,}
\newcommand{\gterm}{\pmonomial}				
\newcommand{\coef}{\alpha}					
\DeclareMathOperator{\fdet}{fdet}			
\newcommand{\fdot}{{\raisebox{-0.5ex}{\scalebox{2.2}{$\cdot$}}}}
\newcommand{\bkk}[1]{[\![#1]\!]}			
\newcommand{\angg}[1]{\langle\hspace{-2.5pt}\langle#1\rangle\hspace{-2.5pt}\rangle}	
\newcommand{\symb}{\mathsf{Symb}} 			
\newcommand{\Monomials}{\angg{X_1,\ldots,X_\ell}}
\newcommand{\MonomialXY}{\angg{X,Y}}
\newcommand{\cyclic}{\mathrm{cyc}}
\newcommand{\monomial}{\mathfrak{p}}		
\newcommand{\pmonomial}{\mathfrak{q}}		
\newcommand{\fcoef}{\til{\coef}}			
\newcommand{\LAmonomial}{\mathfrak{r}}		
\newcommand{\LAMonomials}{\mathcal{R}}		
\newcommand{\indup}{\ind^{\!\scriptscriptstyle\nwarrow}}
\newcommand{\inddown}{\ind^{\!\scriptscriptstyle\searrow}}
\newcommand{\indsign}[1]{\ind^{\!#1}}
\newcommand{\fncontourr}{\psi}				
\newcommand{\fncontour}{\phi}				
\newcommand{\fnSll}{\fncontourr^\triangleleft}
\newcommand{\fnSrr}{\fncontourr^\triangleright}
\newcommand{\fnrww}{\fncontourr^\trw}
\newcommand{\fnrw}{\fncontour^\trw}			
\newcommand{\fnSl}{\fncontour^\triangleleft}
\newcommand{\fnSr}{\fncontour^\triangleright}
\newcommand{\expfn}{\Phi}	
\newcommand{\expSl}{\expfn^{\triangleleft}}	
\newcommand{\expSr}{\expfn^{\triangleright}}			
\newcommand{\exprw}{\expfn^{\trw}}
\newcommand{\expSls}[1][]{\expfn^{\triangleleft}_{\star#1}}	
\newcommand{\expSrs}[1][]{\expfn^{\triangleright}_{\star#1}}	
\newcommand{\exprws}[1][]{\expfn^{\trw}_{\star#1}}	
\newcommand{\radius}{r}
\newcommand{\zls}[1][]{z^{\triangleleft}_{\star#1}}
\newcommand{\zrs}[1][]{z^{\triangleright}_{\star#1}}
\newcommand{\zrws}[1][]{z^{\trw}_{\star#1}}
\NewDocumentCommand{\expfnword}{O{\wordkl}}{\Psi_\star(#1)}
\newcommand{\Circ}{\mathcal{C}}
\newcommand{\rwfsymb}{\mathcal{F}_\star}
\NewDocumentCommand{\rwf}{O{\wordkl}}{\rwfsymb(#1)}
\newcommand{\Rwf}{\mathcal{G}_\star}		
\newcommand{\xl}{x^{\triangleleft}}			
\newcommand{\xr}{x^{\triangleright}}		
\renewcommand{\bar}{\overline}
\newcommand{\til}{\widetilde}
\newcommand{\shock}{\xi}
\newcommand*{\Cdot}{{\raisebox{-0.5ex}{\scalebox{1.8}{$\cdot$}}}} 
\title{Hydrodynamic large deviations of TASEP}
\author{Jeremy Quastel and Li-Cheng Tsai}
\address[Jeremy Quastel]{Department of Mathematics, University of Toronto}
\address[Li-Cheng Tsai]{\hspace{1.5pt} Departments of Mathematics, University of Utah}
\subjclass[2010]{%
}
\keywords{}
\begin{document}
\begin{abstract}
We consider the large deviations from the hydrodynamic limit of the Totally Asymmetric Simple Exclusion Process (TASEP).
This problem was studied in \cite{jensen00,varadhan04} and was shown to be related to entropy production in the inviscid Burgers equation. 
Here we prove the full large deviation principle.
Our method relies on the explicit formula of \cite{matetski16} for the transition probabilities of the TASEP.
\end{abstract}

\maketitle
\tableofcontents

\section{Introduction}\label{s.intro0}

In the study of hydrodynamic limits of stochastic interacting particle system, those with hyperbolic scaling \cite{rezakhanlou91,seppalainen96,seppalainen96a,fritz04,fritz04a} are distinguished from those with parabolic scaling \cite{guo88, kipnis13, vardhan13}. The latter hold for symmetric and weakly asymmetric particle systems and enjoy the regularity provided by a lasting viscosity; the former hold for strongly asymmetric systems and endure the vanishing of viscosity.

The distinction between the two types of scaling widens at the level of large deviations. The large deviations of symmetric and weakly asymmetric systems under parabolic scaling are of Freidlin--Wentzell type \cite{donsker89,kipnis89,jona93,kipnis13}, ubiquitous in stochastic systems. In their study of the \ac{TASEP} --- a simple and quintessential strongly asymmetric system --- Jensen \cite{jensen00} and Varadhan \cite{varadhan04} showed that under hyperbolic scaling the deviations take place within weak, but not necessarily entropic, solutions of the inviscid Burgers equation. They proposed a rate function given by the positive part of a Kruzhkov entropy production. This description of large deviations differs from the standard Freidlin--Wentzell ones, and is tied to the study of entropies \cite{lax73,dafermos16} in hyperbolic conservation laws. The works \cite{jensen00, varadhan04} obtained the full upper bound, but the lower bound only for specially constructed weak solutions.  This was extended to a larger class of weak solutions in~\cite{vilensky08}; see the paragraph after \hyperref[t.main]{Main Theorem}  in Section~\ref{s.results.spacetime} for a description.  The result was extended to related stochastic PDEs in \cite{mariani10}.

One way to understand the hyperbolic large deviations is to take the vanishing-viscosity limit at the level of rate functions. One can consider the Freidlin--Wentzell-type rate function in \cite{kipnis89} and attempt to show that it converges under hyperbolic scaling to the Jensen--Varadhan rate function. Doing so provides a transparent explanation of the origin of the hyperbolic large deviations, though it does not provide a proof of the large deviation principle. This program was carried out in \cite{bellettini10} for a general class of models, where the full upper bound and the lower bound for special weak solutions were obtained. (The upper/lower bound here corresponds to the lower/upper bound in the $ \Gamma $-convergence language.) The upper bound was extended to higher dimensions in \cite{barre18}. 
The picture of vanishing-viscosity for large deviations also enters the study of non-equilibrium systems in physics, through the macroscopic fluctuation theory \cite{bertini15}, especially in systems with open boundaries \cite{derrida01,derrida03,bodineau06,bahadoran10}.

A major challenge in the study of these large deviations is to approximate weak solutions of nonlinear hyperbolic PDEs with a bounded Kruzhkov entropy production. The problem of the regularity and structure of such solutions has attracted considerable interest in geometric measure theory \cite{delellis03a,delellis03,golse13,lamy18}; the solutions lie in a Besov space and have a rectifiable jump set of dimension one, outside of which they are of vanishing mean oscillation. For such irregular solutions, it is unclear how to construct a nearly optimal perturbation for proving the large deviation lower bound. This issue has obstructed access to the full large deviation principle in existing works.

There has been widespread interest in mathematics and physics in large deviations of strongly asymmetric systems, yet a full understanding remains open. In particular, proving the full large deviation principle has remained one of the key open problems in hydrodynamic limits \cite{varadhan09}. In this paper, we prove the full large deviation principle for the \ac{TASEP}. The rate function is the liminf of the Jensen--Varadhan rate function on a dense set of specially constructed weak solutions, which we term elementary solutions. Our result implies that any weak solution can be optimally approximated by nice, specially constructed weak solutions.

Our proof relies on the integrable structure of the \ac{TASEP}, specifically the exact formula from \cite{matetski16}. In a series of breakthroughs around 2000, exact one-point distributions were computed for \ac{TASEP} with a special initial condition and related models \cite{baik99,borodin00,johansson00} based on the study of Young diagrams, random matrices, and representation theory \cite{logan77,kim96,deuschel99,johansson98,borodin00a}. The results were extended to multi-point distributions and to a few more special initial conditions; see \cite[Section~1]{corwin12} for a review. Characterizing the transition probabilities of the \ac{TASEP} requires starting it from a general deterministic initial condition. Based on a biorthogonal structure obtained in \cite{sasamoto05,borodin07}, the work \cite{matetski16} obtained a determinantal formula for the transition probabilities.

The major challenge in our work is to extract the multi-point large deviation rate function from the determinant of \cite{matetski16}. The asymptotics of Fredholm determinants $ \det(1+A) $ are tractable in the regime where $ A $ is vanishing. For the question at hand, one-point deviations belong to the tractable regime, but multi-point deviations generally belong to the ill-behaved regime where $ A $ diverges; see Section~\ref{s.results.det}. The ill-behaved regime has bedeviled random matrix theory and requires very sophisticated analysis such as the Riemann--Hilbert methods \cite{deift93,deift95,baik01,baik08,deift08}. To the best of our knowledge, existing methods would not apply to what is required by our work (arbitrarily many points and general initial conditions).

Here we tackle the problem by working directly with a Plemelj(-like) expansion, namely expanding the determinant into a series of products of traces. At first glance this approach seems infeasible in the ill-behaved regime. However, we are able to identify and implement many exact cancellations through algebraic manipulations of the determinant. We develop a systematic approach to perform these manipulations to the point where the desired terms become dominant.

A byproduct of our result is a modified Onsager--Machlup principle. Originally proposed for reversible systems \cite{onsager53}, the principle has been generalized to and studied in some irreversible systems \cite{eyink90,gabrielli96,gabrielli97,eyink96,bertini01,bertini02}. The fixed-time rate function that we extract from the determinant is given by a certain random-walk entropy, which can be viewed as the Einstein entropy in statistical mechanics terms. We show that the random-walk entropy can be realized by the Jensen--Varadhan rate function on a weak solution. For a special initial condition, the weak solution is obtained by running Burgers equation backward, but for more general initial conditions the solution needs to be modified according to a max structure of the \ac{TASEP}; see Section~\ref{s.results.matching}. For a general class of scalar conservation laws, \cite{bellettini2010a} showed the matching of the quasipotential and Einstein entropy.

We conclude this introduction by discussing some related works. The work \cite{landim96} first showed that the type of large deviations considered here should have speed $ N $ and take place in weak solutions. The one-point, upper-tail large deviation principles for the \ac{TASEP} and a related model were first obtained in \cite{seppalainen98,seppalainen98a}, and recently for the ASEP in \cite{das21}. Fluctuations in strongly asymmetric systems under hyperbolic scaling were characterized in \cite{rezakhanlou95,rezakhanlou02,seppalainen02}. In the physics literature, large deviations of the \ac{TASEP} have been studied by using the Bethe ansatz \cite{derrida98,derrida99}. The works \cite{chleboun17,chleboun18} studied related models by specially constructed weak solutions. The work \cite{olla19} studied another type of large deviations, related to random tiling, of the \ac{TASEP}.
Recently, there has been much interest in the large deviations of the \ac{KPZ} equation \cite{kardar86} in mathematics and physics. Based on exact formulas and integrable structures \cite{amir11,calabrese10,dotsenko10,sasamoto10,borodin16,quastel19},
the works \cite{ledoussal16a,ledoussal16,krajenbrink17,sasorov17,corwin18,krajenbrink18,krajenbrink18a,krajenbrink18b,tsai18,cafasso19,das19,kim19,krajenbrink19,corwin20,corwin20a,ledoussal20,ghosal20,lin20} obtained results on the one-point tails; using the optimal fluctuation theory, the works \cite{kolokolov07,kolokolov09,meerson16,meerson17,meerson18,kamenev16,hartmann19,lin20a,krajenbrink21} studied the weak-noise large deviations. 
For some related models, the one-point large deviation principles have been established in \cite{georgiou13,janjigian15,basu17,emrah17,janjigian19}.

\subsection*{Outline}

In Section~\ref{s.results}, we give an overview of the proof and state the results.
Besides \hyperref[t.main]{Main Theorem}, a key intermediate result \hyperref[t.rw]{Fixed-time Theorem} is stated.
After Section~\ref{s.results}, the paper is divided into two parts, which can be read \emph{independently}.

\begin{description}[leftmargin=10pt]
\item[Determinantal analysis] This part consists of Sections~\ref{s.overview}--\ref{s.pftrw}.
In Section~\ref{s.overview}, we recall the determinantal formula and related operators from \cite{matetski16}, work out some examples, and introduce two algebraic manipulations that will be carried out later.
In Section~\ref{s.geo} we prepare some notation and highlight the geometric meanings of the notation.
In Section~\ref{s.updown} we carry out one manipulation, the up-down iteration, and establish properties of this iteration that will be relevant later.
Based on the output of the up-down iteration, in Section~\ref{s.isle}, we perform another manipulation, the isle factorization. At the end of this section, we expand the determinant into products of traces.
In Section~\ref{s.asymptotics}, we utilize a contour integral expression and steepest descent to obtain the rates of the traces.
Then, via geometric arguments, we identify the smallest rates among all relevant terms. 
Finally, in Section~\ref{s.pftrw}, based on the results in the preceding sections, we invoke approximation arguments to prove \hyperref[t.rw]{Fixed-time Theorem}.

\item[Hydrodynamic large deviations]
This part consists of Section~\ref{s.matching}. Here, we establish the matching of the random-walk rate function to the Jensen--Varadhan rate function and collect previous results to conclude \hyperref[t.main]{Main Theorem}.

\end{description}

\subsection*{Acknowledgements}
JQ was supported by the Natural Sciences and Engineering Research Council of Canada.
LCT thanks Yao-Yuan Mao for many useful discussions about the presentation of the paper and thanks Kuan-Wen Lai for the discussion about Lemma~\ref{l.det}.
LCT was partially supported by the NSF through DMS-1953407 and DMS-2243112 and by the Alfred P.\ Sloan Foundation through the Sloan Research Fellowship FG-2022-19308.

\section{Overview and results}
\label{s.results}

\subsection{TASEP, Burgers, and Kruzhkov entropy}
\label{s.results.tasepburgers}
The \ac{TASEP} can be thought of as a microscopic particle model for Burgers equation. Particles perform totally asymmetric random walks on $ \Z $ in continuous time, attempting jumps one step to the right using independent unit-rate Poisson clocks. If the target site is unoccupied, the jump is realized; otherwise, the jump is suppressed.
At fixed time $ t $, the configuration of such indistinguishable particles can be described by $ \eta(t) := (\eta(t,x))_{x\in\Z}\in\{-1,+1\}^\Z $, where $ \eta(t,x):=1 $ if a particle occupies site $ x $ and $ \eta(t,x):=-1 $ otherwise.
Note that our $ \{-1,+1\} $-valued convention differs from the standard $ \{0,1\} $-valued one. This $ \eta(t,x) $ solves
\begin{align}
	\label{e.discrete.burgers}
	\d \eta(t,x) 
	= 
	\big( \tfrac12 \Delta \eta(t,x) + \tfrac12 \nabla (\eta(t,x-1)\eta(t,x)) \big) \, \d t + \nabla \d M(t,x),
\end{align}
where $ \d $ acts on the $ t $ variable, $ \Delta $ and $ \nabla $ are the lattice Laplacian and gradient, acting on the $ x $ variable by $ \Delta f(x) := f(x+1) - 2f(x) + f(x-1) $ and $ \nabla f(x) := f(x+1)-f(x) $, and $ M(t,x) $ is a martingale in $ t $.
The $ \eta $-dependent terms in \eqref{e.discrete.burgers} together can be viewed as a microscopic Burgers equation, with the martingale being a noisy driving force.

The connection of the \ac{TASEP} and Burgers equation manifests itself through the hydrodynamic limit. Consider the empirical density field of the particle configuration under hyperbolic scaling: $ \uu_N(t) : = \frac{1}{N}  \sum_{x\in \Z} \eta(Nt,x) \delta_{x/N}(\Cdot) $, where $ \delta $ denotes the Dirac delta. Initiate $ \uu_N $ so that at $ t=0 $ it converges, in a suitable topology, to a fixed $ u_\text{ic} $. The process then $ \uu_N $ converges in probability in a suitable topology to the entropy solution of the inviscid Burgers equation
\begin{align}
	\tag{Burgers}
	\label{e.burgers}
	\partial_t u  = \tfrac12 \partial_x \big(u^2\big),
\end{align}
with the initial condition $ u_\text{ic} $ \cite{rezakhanlou91, seppalainen96}. Note that any relevant solution here must satisfy $ u(t,x) \in [-1,1] $ because $ \eta(t,x)\in\{-1,+1\} $. A non-rigorous way to understand the hydrodynamic limit
is to perform hyperbolic scaling in \eqref{e.discrete.burgers}. Doing so reveals that the viscosity (Laplacian) term and the noise (martingale) term gain prefactors of $ N^{-1} $, and non-rigorously dropping the two terms leads to \eqref{e.burgers}.

Even though the viscosity and noise terms formally vanish in \eqref{e.burgers}, they leave marks on the limiting behaviors of $ \uu_N $. The convergence of $ \uu_N $ to the entropy solution is a remnant of the vanishing viscosity. As we will see in the next paragraph, the vanishing noise term affects the entropy production in \eqref{e.burgers}. To describe it, view a given strictly convex function $ \rate:[-1,1]\to\R $ as a Kruzhkov entropy, and define $ J(u):=\int_0^u \d v\, v \rate'(v) $. The functions $ \rate,J $ form an entropy-entropy flux pair: For a continuous and piecewise smooth solution $ u $ of \eqref{e.burgers} we have $ \partial_t \rate(u(t,x)) - \partial_x J(u(t,x)) = 0 $. The last equality may fail when shocks appear, but is always non-positive for entropy solutions. In fact, entropy solutions can be characterized by the non-positivity condition $ \partial_t \rate(u(t,x)) - \partial_x J(u(t,x)) \leq 0 $ for just one (any one) entropy-entropy flux pair \cite{panov94,delellis04}. Since the \ac{TASEP} has product Bernoulli invariant measures \cite{liggett05}, the following function is the natural choice of a Kruzhkov entropy:
\begin{align}
	\label{e.rateBer}
	\rateber(u) := \tfrac{1+u}{2}\log(1+u) + \tfrac{1-u}{2}\log(1-u),
	\qquad
	u\in[-1,1].
\end{align}

We now describe the large deviations of the \ac{TASEP} studied in \cite{jensen00,varadhan04}. The study of large deviations seeks to characterize the exponentially small probability around given deviations (rare realizations).
More precisely, fixing a topological space $ \mathfrak{X} $, a \ac{lsc} $ \Rate:\mathfrak{X}\to[0,\infty] $, and $ \mathfrak{X} $-valued processes $ \{X_N\} $,
we say
\begin{defn}
The process $ X_N $ satisfies the \tdef{\ac{LDP}} with rate function $ \Rate $ and speed $ N $ if, for closed $ \calC \subset \mathfrak{X} $ and open $ \calO \subset \mathfrak{X} $,
\begin{align*}
	\limsup_{N\to\infty} \frac1{N}\log \P[X_N\in\calC] \leq -\inf_{\calC} \Rate
	\ \ \text{(Upper Bound)},
	\qquad 
	\liminf_{N\to\infty} \frac1{N}\log \P[X_N\in\calO] \geq -\inf_{\calO} \Rate
	\ \ \text{(Lower Bound)}.
\end{align*}
\end{defn}
\noindent{}%
Hereafter, fix $ T $ and observe the \ac{TASEP} within the macroscopic time horizon $ [0,T] $.
It was shown in \cite{jensen00,varadhan04} that off the weak solutions of \eqref{e.burgers} the probability decays superexponentially fast.
Namely, the deviations essentially take place in the set of weak solutions.
By analyzing the microscopic entropy production, the works proposed the rate function 
\begin{align}
	\label{e.rateJV}
	\rateJV(u) := \int_{[0,T]\times\R} \d t \d x \, \big( \partial_t \rateber(u) - \partial_x \rateberc(u) \big)_+\,,
	\qquad
	u=u(t,x) \text{ a weak solution of } \eqref{e.burgers}.
\end{align}
More explicitly, $ \rateberc(u) := \int_0^u \d v \, v \rateber'(v) = -\frac{1-u^2}{4} \log \frac{1+u}{1-u} + \tfrac{u}{2} $, and the positive part $f_+(x)=\max\{f(x),0\}$ of the entropy production is interpreted in the weak sense as
$
	\sup_{\phi} \{  \int_{[0,T]\times\R} \d t \d x \, ( -\partial_t\phi \cdot \rateber(u) + \partial_x\phi \cdot \rateberc(u) ) \},
$
where the supremum runs over $ \phi \in \Csp^\infty([0,T]\times\R) $ such that $ 0 \leq \phi \leq 1 $, $ \phi(0,\Cdot) \equiv 0 $, and $ \phi(T,\Cdot) \equiv 0 $.
Following the discussion in the previous paragraph, we see that in the realm of large deviations, the vanishing noise term steers the entropy production --- it introduces a positive part to $ \partial_t \rateber(u) - \partial_x \rateberc(u) $. In \cite{jensen00,varadhan04} the upper bound was proved with the rate function $ \rateJV $, but the corresponding lower bound was only obtained around special weak solutions; see also \cite{vilensky08}.  

\subsection{The height function and Hopf--Lax space}
\label{s.results.heightfn}
In this paper we work at the level of the height function. Information about the empirical density field can be straightforwardly recovered by taking a spatial derivative of the height function. At $ x=0 $, $ \hh(t,0) := -2( $number of particles that have crossed from site $ -1 $ to $ 0 $ up to time $ t $), and the value of the height at other $ x $ is obtained by integrating $ \eta(t,x) $, namely $ \hh(t,x+1)-\hh(t,x) =: \eta(t,x) $. At any $ t $, the height function $ \hh(t)=\hh(t,\Cdot) $ is piecewise linear with slopes  $ +1 $ or $ -1 $. The dynamics of the \ac{TASEP} translates into that of $ \hh(t) $ as follows: A local maximum (a $ \wedge $) turns into a local minimum (a $ \vee $) at unit rate, independently of other local maxima. In particular, $ \hh=\hh(\Cdot) $ is itself a Markov process. Under hyperbolic scaling we consider $ \hh_N(t,x) := \frac{1}{N}\hh(Nt,Nx) $. 

Let us set up the space and topology for the height function. Let $ \Lip $ denote the set of $ 1 $-Lipschitz functions $ \R \to \R $, namely $ |f(x)-f(x')| \leq |x-x'| $, and equip this space with the uniform over compact metric $ \dist(f_1,f_2) := \sum_{n=1}^\infty 2^{-n} \sup_{x\in[-n,n]} | f_1(x) - f_2(x) | $. The fixed-time heights $ \hh(t) $ and $ \hh_N(t) $ live in $ \Lip $. As a process, $ \hh_N\in\Dsp([0,T],\Lip) $, the space of right-continuous-with-left-limit $ h:[0,T]\to\Lip $.
Equip this space with the uniform metric $ \dist_{[0,T]}(h_1,h_2) := \sup\{ \dist(h_1(t),h_2(t)) : t \in [0,T] \}$. The uniform metric is natural here, because $ \hh_N $ does not make macroscopic jumps up to superexponentially small probabilities. Further, since the uniform topology is stronger than Skorohod's $ J_1 $ topology, proving an \ac{LDP} in the former automatically implies the latter.

The height function enjoys an analogous hydrodynamic limit.
Integrating \eqref{e.burgers} in $ x $ gives the limiting PDE:
\begin{align}
	\tag{HJ Burgers}
	\label{e.intburgers}
	\partial_t h  = -\tfrac12\big( 1 - ( \partial_x h )^2 \big),
\end{align}
the Hamilton--Jacobi equation of Burgers.
We call $ h\in\Dsp([0,T],\Lip) $ a \tdef{weak solution of \eqref{e.intburgers}} if for almost every $ (t,x)\in[0,T]\times\R $ the derivative $ \partial_t h $ exists and \eqref{e.intburgers} holds classically; see Remark~\ref{r.Lip}\ref{r.Lip.express}.
The entropy solutions of Burgers translate into the Hopf--Lax solutions. 
Set
\begin{align}
	\label{e.parabola}
	\parab(t,x) = \big( - \tfrac{t}{2}-\tfrac{x^2}{2t} \big) \ind_{|x|<t} -|x|\ind_{|x|\geq t},
	\qquad
	\parab[\icx,\ica](t,x) := \parab(t,x-\icx) + \ica.
\end{align}
The \tdef{Hopf--Lax solution} of \eqref{e.intburgers} from an initial condition $ f\in\Lip $ is given by
\begin{align}
	\label{e.HLf}
	\big( \HL_t(f) \big)(x) 
	:=
	\sup\big\{ \parab[\icx,f(\icx)](t,x) : \icx\in\R \big\}
	=
	\sup\big\{ \parab(t,x-\icx) + f(\icx) : \icx\in\R \big\},\quad  t>0,
\end{align}
and we call $ \HL_t $ the \tdef{forward (in time) Hopf-Lax evolution}.

While $ \Dsp([0,T],\Lip) $ is the space where $ \hh_N $ lives, the large deviations of $ \hh_N $ actually concentrate around a much smaller subspace.
In \cite{jensen00,varadhan04}, the subspace is taken to be the space of all weak solutions of Burgers.
Here we formulate and work with a more tractable space (and the space is essentially the same as the former: Proposition~\ref{p.HLc.topo.weak.HLc}\ref{p.weak.HLc}).
\begin{defn}
We say $ h \in \Dsp([0,T],\Lip) $ satisfies the \tdef{Hopf--Lax condition} if
\begin{align}
	\label{e.HLc}
	\big( \HL_{t}(h(t_0)) \big) \le  h(t_0+t) \le h(t_0),
	\qquad
	t_0< t_0+t \in[0,T],
\end{align}
and let $ \HLsp $ denote the space of all such functions --- \tdef{the Hopf--Lax space}.
\end{defn}
\noindent{}%
The following proposition will be proven in Appendix~\ref{s.a.HLsp}.

\begin{prop}
\label{p.HLc.topo.weak.HLc}
\begin{enumerate}[leftmargin=20pt, label=(\alph*)]
\item[]
\item \label{p.HLc.topo}
Given any open $ \calO\supset\HLsp $, there exists $ c=c(\calO,T)>0 $ such that
$
	\P[ \hh_N \notin \calO ] \leq c\,\exp(-\frac{1}{c}N^{2}).
$
Here the initial condition $ \hh_N(0) $ is arbitrary, possibly $ N $-dependent.
\item \label{p.weak.HLc}
The space $ \HLsp $ is the closure (under the uniform topology) of weak solutions of \eqref{e.intburgers}.
\end{enumerate}
\end{prop}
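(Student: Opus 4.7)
For Part~(b), both inclusions are deterministic. For the forward direction, a weak solution $h$ of~\eqref{e.intburgers} has $\partial_t h \in [-\tfrac12, 0]$ a.e.; combined with $h(t,\cdot)$ being $1$-Lipschitz, a Fubini argument upgrades $h$ to be globally $\tfrac12$-Lipschitz in $t$, giving the upper HLc $h(t_0+t) \le h(t_0)$. For the lower HLc, fix $y_0, x_0$ and $p := (x_0-y_0)/t$, and for each $\beta \in \R$ set $\phi_\beta(s) := h(s,\, y_0 + (s-t_0)p + \beta)$. By Fubini over this family of parallel lines, for a.e.\ $\beta$ the Lipschitz function $\phi_\beta$ admits the chain rule a.e., yielding $\phi_\beta'(s) = \partial_t h + p\,\partial_x h = (q+p)^2/2 - (p^2+1)/2$ with $q := \partial_x h \in [-1,1]$; the minimum over $|q| \le 1$ equals $-(p^2+1)/2$ when $|p| \le 1$ and $-|p|$ when $|p| > 1$. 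Integrating, and then passing to $\beta = 0$ by continuity of $h$, yields $h(t_0+t, x_0) - h(t_0, y_0) \ge \parab(t, x_0-y_0)$ in both regions of $\parab$; taking $\sup_{y_0}$ produces the lower HLc. Conversely, every $h \in \HLsp$ is uniformly approximated on compacts by the piecewise Hopf--Lax functions $h_\varepsilon(t) := \HL_{t-t_k}(h(t_k))$ on $[t_k, t_{k+1})$ obtained from a mesh-$\varepsilon$ partition of $[0,T]$: each restriction is a genuine Hopf--Lax solution and hence a weak solution; the countable set of jump times has zero Lebesgue measure in $(t,x)$; and $0 \le h(t) - h_\varepsilon(t) \le \varepsilon/2$ follows from HLc on $h$ and the bound $f - \HL_s(f) \le s/2$ for $1$-Lipschitz $f$.

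For Part~(a), the upper HLc $\hh_N(t_0+t) \le \hh_N(t_0)$ holds pathwise: every Poisson clock event converts a local max into a local min, decreasing the height by $2/N$ at one site. The lower HLc is the content. My plan is to exploit a variational / last-passage representation of the TASEP height with general initial data,
\begin{equation*}
\hh(s+r, x) = \sup_{y \in \Z}\big[\hh(s,y) + L(s, s+r;\, y, x)\big],
\end{equation*}
where $L$ is assembled from the Poisson clocks on the time window. Under hyperbolic scaling $N^{-1} L \to \parab$ on compact sets with the super-exponential rate $\exp(-cN^2)$, by the lower-tail large-deviation regime of exponential LPP: making a last-passage time atypically small requires all weights in a macroscopic region to be small simultaneously, which costs $\exp(-\Theta(N^2))$. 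Discretizing $(t_0, t, x, y)$ on a grid of mesh $\delta$, applying a union bound over the polynomially many grid points, and interpolating via the deterministic $1$-Lipschitz bound on $\hh_N$ and the $\tfrac12$-Lipschitz bound on $t \mapsto \HL_t f$, yields
\begin{equation*}
\P\!\left[\,\sup_{t_0,\,t,\,x}\big(\HL_t(\hh_N(t_0))(x) - \hh_N(t_0+t, x)\big) > \delta\,\right] \le c_\delta\, e^{-N^2/c_\delta}.
\end{equation*}
To convert this ``approximate HLc'' into $\hh_N \in \calO$, I combine with a deterministic tightness bound: since $\hh_N(0,0) = 0$ by construction, the spatial $1$-Lipschitzness and the approximate temporal $\tfrac12$-Lipschitzness (a consequence of the approximate lower HLc) place $\hh_N$ inside a fixed compact $\calK \subset \Dsp([0,T], \Lip)$ outside a super-exponentially rare event. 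The closed sets $\HLsp \cap \calK$ and $\calO^c \cap \calK$ are disjoint in the compact $\calK$ and hence at positive distance; choosing $\delta$ to be half that distance completes the estimate.

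The principal obstacle is the $N^2$ rate of the lower-tail LPP estimate, uniform in the (possibly $N$-dependent) initial data and in all macroscopic parameters. The classical upper-tail LDP for LPP provides only rate $N$, which is too weak for the target super-exponential bound; one must work in the harder lower-tail regime $\exp(-\Theta(N^2))$ and carry it uniformly through the variational formula. Part~(b) is comparatively soft once the chain-rule computation along characteristic lines is made precise via Fubini on parallel translates.
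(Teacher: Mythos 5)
Your treatment of Part (b) is correct. The forward direction (weak solution $\Rightarrow$ Hopf--Lax condition) via integration along characteristic lines, with Fubini over parallel translates to justify the chain rule for the Lipschitz function $h$, is essentially the paper's argument (the paper integrates over thin corridors $\Omega'_\e$ and sends $\e\to0$, an averaged version of the same idea); your minimization over $q=\partial_x h\in[-1,1]$ correctly reproduces the two regimes of $\parab$. For the converse you give a direct piecewise-Hopf--Lax approximation with the error bound $0\le h-h_\e\le\e/2$; the paper instead invokes the density of elementary solutions in $\HLsp$, so your route is self-contained where the paper defers. For Part (a), your probabilistic core coincides with the paper's: the speed-$N^2$ lower-tail estimate for the wedge (Johansson's theorem for exponential LPP) fed through a max-plus variational formula is exactly the paper's comparison with a wedge attached below the initial profile under the basic coupling. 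One caution on your uniform-in-$(t_0,t)$ formulation: $\hh_N$ is not temporally Lipschitz with probability $1-e^{-N^2/c}$ (the single-site Poisson tails only give speed $N$), so the interpolation between grid times must go through the pathwise monotonicity of $t\mapsto\hh_N(t)$ together with the $\tfrac12$-Lipschitz continuity of $t\mapsto\HL_t(f)$, not through a temporal Lipschitz bound on $\hh_N$; the paper avoids the issue by proving the estimate only on a fixed finite partition and handling intermediate times deterministically.

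The genuine gap is the final step of Part (a). Having placed $\hh_N$ in a compact $\calK$ satisfying the approximate Hopf--Lax condition with defect $\delta$, you conclude by ``choosing $\delta$ to be half the distance'' between $\HLsp\cap\calK$ and $\calO^c\cap\calK$. This conflates two different quantities: $\delta$ measures the pointwise violation of $\HL_t(\hh_N(t_0))\le\hh_N(t_0+t)$, whereas what you need is $\dist_{[0,T]}(\hh_N,\HLsp)<\e_*$. A path with HLc-defect $\delta$ does not come with any a priori bound on its uniform distance to $\HLsp$ in terms of $\delta$ alone; producing an actual element of $\HLsp$ uniformly close to $\hh_N$ is the nontrivial content of this step. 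The paper does it by an explicit construction: on each partition interval it defines $g(t)$ as the median of $\HL_{t-t_{i-1}}(g(t_{i-1}))$, $\hh_N(t_i)+i\e$, and $g(t_{i-1})$, verifies $g\in\HLsp$, and propagates a sandwich inductively; the resulting distance bound degrades with the number of time steps and the spatial cutoff, not merely with $\delta$. Alternatively, a compactness argument works: the closed sets $A_\delta\subset\calK$ of paths with HLc-defect at most $\delta$ decrease to $\HLsp\cap\calK$ as $\delta\to0$, hence are eventually contained in the $\e_*$-neighborhood of $\HLsp$. Either repair is routine, but as written the implication is asserted rather than proved.
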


\begin{rmk}
\label{r.Lip}
\begin{enumerate}[leftmargin=20pt, label=(\alph*)]
\item[]
\item \label{r.Lip.express} 
It is not hard to show that every weak solution of \eqref{e.burgers} can be expressed as $ \partial_x h $ for some weak solution $ h $ of \eqref{e.intburgers}.
It follows from definition that every weak solution $ h $ of \eqref{e.intburgers} is $ \frac12 $-Lipschitz in $ t $.
\item \label{r.Lip.} 
By Proposition~\ref{p.HLc.topo.weak.HLc}\ref{p.weak.HLc}, any $ h\in\HLsp $ is $ \frac12 $-Lipschitz in $ t $, i.e. $|h(x,t)-h(x,t')|\le \tfrac12 |t-t'|$, because weak solutions of \eqref{e.intburgers} are and the Lipschitz bound is preserved under uniform limits.
\end{enumerate}
\end{rmk}

\subsection{Fixed-time large deviations}
\label{s.results.fixedtime}
Our proof of the \ac{LDP} proceeds by discretization in time.
Fix a metric space $ \mathfrak{L} $,
$ \Dsp([0,T],\mathfrak{L}) $-valued Markov processes $ X_N(t) $,
and a $ \rate(g\xrightarrow{\scriptscriptstyle t} f) : (t,g,f) \in(0,T]\times\mathfrak{L}^2 \to [0,\infty] $ that is \ac{lsc} in $ (g,f) $.
\begin{defn}
We say $ X_N $ satisfies \tdef{a fixed-time \ac{LDP} locally uniformly in the initial condition} with rate function $ \rate(g\xrightarrow{\scriptscriptstyle t}f) $ if,
for all $ (t,g,f)\in(0,T]\times\mathfrak{L}^2 $,
\begin{align*}
	\lim_{\delta\to 0}\limsup_{N\to\infty} \sup_{  g'\in B_\delta(g) }
	\Big| 
		\frac{1}{N} \log \P_{g'}\big[ X_N(t) \in B_\delta(f) \big]
		+ \rate\big(g\xrightarrow{t} f\big)
	\Big|
	= 0,
\end{align*}
where $ B_\delta(f) \subset \mathfrak{L} $ denotes the open ball with radius $ \delta $ centered at $ f $,
and $ \P_{g'}[\,\Cdot\,] := \P[\,\Cdot\,| X_N(0)=g'] $. 
\end{defn}

The major step toward the full \ac{LDP} is the following fixed-time result.
The proof will take up Sections~\ref{s.overview}--\ref{s.pftrw}.

\begin{thmfixedtime}
\label{t.rw}
The process $ \hh_N $ satisfies a fixed-time \ac{LDP} locally uniformly in the initial condition with rate function $ \raterw(g\xrightarrow{\scriptscriptstyle t}f) $
given in \eqref{e.raterw.gf} below.
\end{thmfixedtime}

We next define $ \raterw(\Cdot\xrightarrow{\scriptscriptstyle t}\Cdot) $,
and will verify that it is \ac{lsc} in Lemma~\ref{l.raterw.lsc}.
For $ f\in \Lip $, let
\begin{align}
	\tag{Irw f/\!/p}
	\label{e.raterwrel}
	\raterwREl{f}{\parab[\icx,\ica](t)} := 
	\left\{\begin{array}{l@{,\quad}l}
		\int_\R \d x \big( \rateber(\partial_x f) - \rateber(\partial_x \parab[\icx,\ica](t)) \big)& \text{when }  \parab[\icx,\ica](t) \leq f \leq \parab[\icx,\ica](0), 
		\\
		+\infty &\text{otherwise}.
	\end{array}\right.
\end{align}
Note that the integrand is nonzero only when $ |x-\icx|\leq t $, because $\parab[\icx,\ica](t)=\parab[\icx,\ica](0)=\ica-|x-\icx|$ when $ |x-\icx|> t $.
This, together with $\rateber$ being bounded, ensures that the integral in \eqref{e.raterwrel} is well-defined.
The rate function $ \raterw(g\xrightarrow{\scriptscriptstyle t}f) $ will be built from \eqref{e.raterwrel}.
Consider first the discretized setting.
Fix $ (\icvecx,\icveca) = (\icx_{1}<\ldots<\icx_{\icm},\ica_1,\ldots,\ica_{\icm}) \in \R^{2\icm} $
and $ (\vecx,\veca) = (x_1<\ldots<x_m,a_1<\ldots<a_m)\in\R^{2m} $,
which are respectively the discretized initial and terminal conditions.
Let
\begin{align*}
	\parab[k](t,y) 
	:= 
	\parab[\icx_{k},\ica_{k}](t,y),
	\qquad
	\parab[k_1\ldots k_n](t,y) 
	:= 
	\max\{\parab[k_1](t,y),\ldots,\parab[k_n](t,y)\}.
\end{align*}
We call $ \parab[k_1\ldots k_n](0)\in\Lip $ a \tdef{massif}.
The rate of $ \P_{\parab[1\cdots\icm](0)}[\hh_N(t,x_i)\approx a_i , i=1,\ldots,m ] $ reads
\begin{align}
	\tag{Irw xa-xa}
	\label{e.raterw.xa}
	\raterw\big((\icvecx,\icveca)\xrightarrow{t} (\vecx,\veca)\big)
	:=
	\min \big\{ \raterwRel{ f_k }{\parab[k](t)} : \max_{k=1,\ldots,\icm} f_k(x_j)=a_j, \ j=1,\ldots,m \big\},
\end{align}
where that the minimum is taken over $f_k$ satisfying the post-colon constraint.
Let $ \hyp(f):= \{ (x,a)\in\R^2 : f(x) \leq a \} $ denote the hypograph of $ f $.
By definition, $ \raterw((\icvecx,\icveca)\xrightarrow{\scriptscriptstyle t} (\vecx,\veca)) $ is finite if and only if the points satisfy the following discretized Hopf--Lax condition:
\begin{align}
	\label{e.HLc.xa}
	(x_i,a_i) \in \hyp( \parab[1\cdots\icm](0) ) \setminus \hyp( \parab[1\cdots\icm](t) )^\circ,
	\quad
	i=1,\ldots,m,
\end{align}
where ${}^\circ$ denotes the interior.
Having defined $ \raterw $ in the discretized setting,
we take the continuum limits in the initial and terminal conditions to get
\begin{align}
	\tag{Irw g-xa}
	\label{e.raterw.gxa}
	\raterw\big(g\xrightarrow{t} (\vecx,\veca)\big)
	&:=
	\min \big\{ \raterw\big((\icx_k,g(\icx_k))_{k=1}^{\icm}\xrightarrow{t} (\vecx,\veca)\big) : \icx_1 < \ldots<\icx_{\icm} \big\},
\\
	\raterw\big(g\xrightarrow{t} f\big)
	&:=
	\lim_{\e\to 0} \ \inf \big\{ \raterw\big(g\xrightarrow{t} (x_j,f(x_j))_{j=1}^m\big) : x_1 < \ldots< x_m, M(\vecx)<\e \big\},
	\tag{Irw g-f}
	\label{e.raterw.gf}	
\end{align}
where $ M(\vecx):= \max_{i}| x_{i}-x_{i-1} | + (-x_1)_+^{-1} + (x_m)_+^{-1} $.
It is readily checked that \eqref{e.raterwrel}, \eqref{e.raterw.xa}--\eqref{e.raterw.gf} are mutually consistent.
For example, setting $ g=\parab[\icx,\ica](0) $ in \eqref{e.raterw.gf} reduces the result to \eqref{e.raterwrel},
and setting $ g=\parab[1\ldots\icm](0) $ in  \eqref{e.raterw.gxa} reduces the result to \eqref{e.raterw.xa}.

To gain some intuition for \hyperref[t.rw]{Fixed-time Theorem}, consider the wedge initial condition, $ \hh_N(0) = \parab[\icx_1,\ica_1](0)= \parab[1](0) $. \hyperref[t.rw]{Fixed-time Theorem} states that the probability of $ \hh_N(t)\approx f $ is approximately $ \exp(-N\raterwRel{f}{\parab[\icx,\ica](t)}) $. This result has an interpretation in terms of a random walk. Consider a random walk $ R(n) $,  with the time variable $ n\in[N(-t+\icx_{1}), N(\icx_{1}+t)]\cap\Z $ and i.i.d.\ $ \set{\pm 1} $-valued, mean-zero increments and scale the walk as $ R_N(\Cdot) := \frac{1}{N}R(N\Cdot) $. The walk starts at $ R_N(-t+\icx_{1}) = \parab[1](t,-t+\icx_1) $, ends at $ R_N(\icx_{1}+t)=\parab[1](t,\icx_1+t) $, and is conditioned to stay above $ \parab[1](t) $, namely $ R_N(\Cdot) \geq \parab[1](t,\Cdot) $. Such a walk enjoys an \ac{LDP} with $ \P[ R_N \approx f ] \approx \exp( - N \raterwRel{ f }{ \parab[1](t) }) $.
That is, $ \hh_N(t) $ enjoys the same \ac{LDP} as the conditioned random walk $ R_N $.

This random-walk interpretation originates from an integrable structure of the \ac{TASEP}.
Under the wedge initial condition $ \hh_N(0)=\parab[1](0) $,
the fixed-time height function $ \hh_N(t) $ can be realized as the top curve of a collection of mutually non-intersecting random walks.
This property is featured in a class of models in random matrix theory and the \ac{KPZ} universality class; see \cite[Section~4.6]{anderson10}.
One can think of the second-to-top curve concentrating around $ \parab[1](t) $, and the top curve behaving like a random walk conditioned to lie above it.
\hyperref[t.rw]{Fixed-time Theorem} asserts that at the level of large deviations, this intuition is correct. A second scenario, in which all the curves are lower than expected, corresponds to the  far less likely deviations off $ \HLsp $ described in Proposition~\ref{p.HLc.topo.weak.HLc}\ref{p.HLc.topo}.

In the case of a massif initial condition, the intuition for \eqref{e.raterw.xa} comes from the max structure of the \ac{TASEP}.
Let $ \hh^{k} $ denote the height function initiated from the wedge $ \hh^{k}(0) = \parab[{k}](0) $. 
Couple the dynamics of $ \hh $, $\hh^{1} $, \ldots , $ \hh^{\icm} $ through the basic coupling; see \cite[pp 215--219]{liggett99}.
Doing so gives $ \hh(t)= \max\{ \hh^{k}(t) : k=1,\ldots,\icm\} $.
Non-rigorously dismissing the dependence among $ \hh^{1},\ldots,\hh^{\icm} $,
one treats the \ac{LDP} off a massif as separately obtaining the \ac{LDP} for each wedge and taking the maximum of the result.
This heuristic leads the rate function~\eqref{e.raterw.xa}. 

\subsection{Determinantal analysis}
\label{s.results.det}

While it is plausible that the preceding intuition could be turned into a proof for the wedge initial condition, 
our proof of \hyperref[t.rw]{Fixed-time Theorem} does not utilize the preceding non-intersecting-random-walk structure 
and is instead based on the determinantal formula of \cite{matetski16}.

Our determinantal analysis will be carried out in a discretized (in space) setting. 
Initiating $ \hh_N $ from a massif initial condition $ \hh_N(0) = \parab[1\ldots\icm](0) $,
we seek to extract the rate of the probability that $ \hh_N(t) $ is pinned around $ a_i $ at $ x=x_i $:
\begin{align*}
	\ppin \,\text{``}\!=\!\text{''}\, \P_{\parab[1\ldots\icm](0)} \big[ \hh_N(t,x_i)\approx a_i, \, i=1,\ldots,m \big].
\end{align*}
The precise definition of $ \ppin $ will be given in \eqref{e.ppin}.
The work \cite{matetski16} offers an expression of the \tdef{under probability}:
\begin{align*}
	\punder := \P_{\parab[1\ldots\icm](0)}\big[ \hh_N(t,x_i) \leq a_i, i=1,\ldots,m \big] = \det(\Id+A_N),
\end{align*}
where $ A_N $ is an explicit trace-class operator parameterized by $ (\icx_k,\ica_k)_{k=1}^{\icm} $ and $ (x_i,a_i)_{i=1}^m $;
see~\eqref{e.det..} and \eqref{e.det.}.
The desired probability $ \ppin $ can be obtained from $ \punder $
through the inclusion-exclusion formula; see \eqref{e.inclusion.exclusion}--\eqref{e.sumie}.

Given the explicit determinantal formula, one may be tempted to think that the problem can be straightforwardly solved.
This is far from the case.
Our analysis utilizes a Plemelj-like expansion:
\begin{align}
	\label{e.plemelj}
	1- \det(\Id+A_N) =  - \tr(A_N) - \tfrac{1}{2}(\tr(A_N))^2 + \tfrac{1}{2}(\tr(A_N^2)) + \ldots.
\end{align}
For the one-point case $ m=1 $,
the deviations of current interest (those in $ \HLsp $) correspond to the so-called upper (right) tail in the random matrix literature.
For this tail, the operator converges to zero in the trace norm, namely $ \norm{A_N}_1 \to 0 $.
Consequently, $ 1 \gg \tr(A_N) \gg (\tr(A_N))^2, (\tr(A_N^2)) \gg \ldots $,
and extracting the rate amounts to just estimating the leading term $ \tr(A_N) $.
On the other hand, when $ m>1 $, generally we have $ \norm{A_N}_1 \to +\infty $;
this fact will be demonstrated in the last paragraph of Section~\ref{s.overview.updown.isle}.
Ill behaviors of this type are ubiquitous and well-known challenges.
Techniques have been developed to tackle them \cite{tracy94,dean06,dean08,baik08,deift08,ramirez11},
almost always by reformulating the problem and solving the reformulated problem.
To the best of our knowledge, these techniques do not apply to our setting --- arbitrary $ m>1 $ and general initial conditions.
Here, we solve the problem by working directly with \eqref{e.plemelj} despite its ill behavior.
Our approach is to perform exact cancellations through algebraic manipulations of the determinant.

\subsection{Elementary solutions and the Onsager--Machlup principle}
\label{s.results.matching}

To connect the random-walk rate function to the Jensen--Varadhan rate function, we will construct a class of weak solutions, called elementary solutions, and show

\begin{prop}
\label{p.matching}
For any single-layer elementary solution $ h_\star $ on $ [0,t] $ (defined below), 
\begin{align*}
	\raterw\big( h_\star(0)\xrightarrow{t} h_\star(t)\big) = \rateJV|_{[0,t]}(\partial_y {h}_\star).
\end{align*} 
\end{prop}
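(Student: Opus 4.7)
The plan is to compute both sides explicitly using the geometric structure of a single-layer elementary solution. Anticipating the definition, I treat $h_\star$ as built from a single Hopf--Lax parabola $\parab[\icx,\ica]$ and the initial data $h_\star(0)$, glued along two \emph{antishock} curves $x=\xl(s),\xr(s)$ for $s\in[0,t]$, with $\xl(0)=\xr(0)=\icx$. Inside the region $\xl(s)\leq x\leq \xr(s)$ the solution equals the smooth Hopf--Lax branch $\parab[\icx,\ica](s,x)$; outside, it equals the frozen initial data $h_\star(0,x)$.

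For the left-hand side, I first argue that the infimum defining $\raterw(h_\star(0)\xrightarrow{t} h_\star(t))$ is attained by using only the single parabola $(\icx,\ica)$. In the discretization \eqref{e.raterw.xa}, any competing parabola $\parab[\icx_k,h_\star(0,\icx_k)]$ lies below $\parab[\icx,\ica](t)$ on the slab $\{|x-\icx|<t\}$ by the Hopf--Lax inequality and the single-layer hypothesis, so in the continuum limit \eqref{e.raterw.gxa}--\eqref{e.raterw.gf} the parabola $(\icx,\ica)$ is the unique active one wherever $h_\star(t)<h_\star(0)$, and one obtains
\begin{equation*}
\raterw\big(h_\star(0)\xrightarrow{t} h_\star(t)\big) = \int_{\icx-t}^{\icx+t} \Big(\rateber(\partial_x h_\star(t,x)) - \rateber(\partial_x \parab[\icx,\ica](t,x))\Big)\,dx.
\end{equation*}
Outside the slab, both terms vanish because $h_\star(t,\Cdot)$ and $\parab[\icx,\ica](t,\Cdot)$ there both have slope $\pm 1$, so that $\rateber$ evaluates to $\log 2$ on each and cancels.

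For the right-hand side, I apply the divergence theorem to the Kruzhkov entropy production of $u:=\partial_x h_\star$ on $[0,t]\times\R$. Inside $\Omega_{\mathrm{in}}:=\{(s,x):\xl(s)<x<\xr(s)\}$, $u=\partial_x\parab[\icx,\ica]$ is a classical Burgers solution and $\partial_s\rateber(u)-\partial_x\rateberc(u)$ vanishes pointwise; in the complement, $u(s,\Cdot)=u(0,\Cdot)$ is $s$-independent, so the time derivative vanishes and the spatial derivative integrates to zero against test functions that vanish at $s=0,T$. The entropy production is therefore a measure supported on the antishock curves $\xl,\xr$; positivity of the jump along each (the defining feature of an antishock versus a Lax-admissible shock) ensures the $(\cdot)_+$ in \eqref{e.rateJV} acts trivially, so that $\rateJV|_{[0,t]}(\partial_y h_\star)$ equals the total Rankine--Hugoniot jump integral along $\xl\cup\xr$.

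The matching is then a second application of the divergence theorem inside $\Omega_{\mathrm{in}}$: the $1$-form $\rateber(u)\,dx+\rateberc(u)\,ds$ is closed there, and Stokes' theorem together with the degeneracy $\xl(0)=\xr(0)=\icx$ equates the antishock boundary integral with the time-$t$ slice $\int_{\xl(t)}^{\xr(t)}\rateber(u(t,x))\,dx$ minus the corresponding parabolic integral, reproducing the single-parabola formula of the second paragraph (noting $u(t,\Cdot)=\partial_x\parab[\icx,\ica](t,\Cdot)\in\{\pm 1\}$ outside $[\xl(t),\xr(t)]$). The main obstacle will be the first reduction: controlling the infimum in \eqref{e.raterw.gxa}--\eqref{e.raterw.gf} over all competing discretizations of $h_\star(0)$ and showing that none can beat the single-parabola value. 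This requires a convexity argument for $\rateber$ combined with careful handling of the max structure \eqref{e.raterw.xa} near the antishock endpoints $\xl(t),\xr(t)$, where the parabolic and frozen branches meet and where a spurious competing parabola could in principle lower the cost locally; ruling this out is the technical heart of the proposition.
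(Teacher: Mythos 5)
Your geometric model of a single-layer elementary solution is incorrect, and both of your divergence-theorem steps rest on that model. For a wedge initial condition, $h_\star(\tau) = \HLb_{t-\tau}(\Rwf)$ where $\Rwf$ is a \emph{piecewise-linear} minimizer with $\wedge$ and $\vee$ kinks at the pinning points $x_j$. Running it backward produces a whole family of antishock trajectories $\shock_1(\tau),\ldots,\shock_{n(\tau)}(\tau)$, which can merge as $\tau\downarrow 0$ and are interspersed with rarefaction fans, constant states, and segments coinciding with $\parab[1](\tau)$. So inside your region $\{\xl(s)<x<\xr(s)\}$ the derivative $u_\star(s,\cdot)$ is \emph{not} $\partial_x\parab[\icx,\ica](s)$; the points $\xl,\xr$ where $\Rwf$ departs from the parabola are tangential merges (no jump in $u$), not antishocks; and the entropy production is a measure supported on \emph{all} the $\shock_i(\tau)$, not on two boundary curves. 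Even in the simplest case with a single pinning point, $u_\star$ is piecewise constant inside the region with one stationary antishock, not a parabolic branch. The Stokes computation must therefore be carried out along the full tree of shock curves. The paper implements the same idea as a time derivative: it differentiates $\raterw(\parab[1](0)\xrightarrow{\tau}h_\star(\tau))$ in $\tau$, converts the $\parab[1]$ contribution to a boundary term using $\partial_x\parab[1](\tau,\icx_1\pm t)=u_\star(\tau,\icx_1\pm t)$, recognizes the remainder as the weak entropy production $\partial_\tau\rateber(u_\star)+\partial_x\rateberc(u_\star)$, and integrates; positivity at each antishock comes from Rankine--Hugoniot, as you anticipate.

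The larger omission is that your proposal only addresses the wedge case. Proposition~\ref{p.matching} covers general LdQ initial data, where $h_\star(\tau)=\max\{h_{\star1}(\tau),\ldots,h_{\star\icm}(\tau),\HLf_\tau(g)\}$ is a pointwise maximum of several backward evolutions. The crux there is Proposition~\ref{p.shock.survive}: the antishocks of each $h_{\star k}$ must \emph{survive the maximum}, i.e.\ $h_{\star k_0}(\tau,\shock_{k_0i_0}(\tau))$ strictly exceeds every competitor $h_{\star k}(\tau,\cdot)$ and $\HLf_\tau(g)(\cdot)$ at that location, so that $\rateJV(\partial_x h_\star)=\sum_k\rateJV(\partial_x h_{\star k})$ and one can reduce to the wedge case. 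Proving this requires a nontrivial cost-transport argument between the competing parabolas; nothing in your proposal addresses it, and your left-hand-side reduction to a single active parabola simply fails for $\icm>1$, where the minimizer $\{F_k\}_k$ assigns letters to several parabolas simultaneously. Conversely, the step you flag as ``the technical heart'' --- controlling the infimum over competing discretizations of $h_\star(0)$ --- is, for the wedge case $\icm=1$, essentially immediate from the definition of $\raterw$ in \eqref{e.raterwrel}.
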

\noindent{}%
Here $ \rateJV|_{[0,t]} $ is defined by restricting the time integral in \eqref{e.rateJV} to $ [0,t] $.
This proposition identifies $ \rateJV $ as the excess Kruzhkov entropy production, and will be proven in Section~\ref{s.matching}.

Elementary solutions are special solutions similar 
to those considered in the lower bound in \cite{jensen00,varadhan04,vilensky08},
but the solutions enter our proof \emph{in a very different way}.
To demonstrate this fact, we start the \ac{TASEP} from the wedge initial condition $ \hh_N(0,x) = \parab(0,x) = -|x| $,
and for fixed $ a \in (-\frac{T}{2}, 0] $ consider the one-point large deviations
\begin{align}
	\label{e.one-point}
	\lim_{\delta\to 0} \lim_{N\to\infty}
	\tfrac{1}{N}
	\log\P_{\parab(0)}\big[ \hh_N(T,0) \in (-\delta+a,a+\delta) \big].
\end{align}
Under the conditioning $ \hh_N(T,0)\approx a $, a natural weak solution is constructed by having a constant antishock at $ x=0 $:
$
	h_\star(t,x) 
	:=
	( -\tfrac12(1-r^2)t -r|x| ) \ind_{|x|\leq rt }
	+
	\parab(t,x) \ind_{|x|> rt},
$
where $ r:= (1-\frac{2a}{T})^{1/2} $.
The upper bound in \cite{jensen00,varadhan04} implies $ \eqref{e.one-point} \leq -\rateJV(\partial_x h_\star) = -\frac{T}{2}(1-r^2)\log(\frac{1+r}{1-r})+Tr $.
This bound is sharp, but to prove it using analytic methods requires minimizing $ \rateJV $ over \emph{all} weak solutions subject to the relevant constraints.
Our approach is different. We extract the rate $ \eqref{e.one-point} = -\raterw((0,0)\xrightarrow{\scriptscriptstyle T}(0,a))= -\frac{T}{2}(1-r^2)\log(\frac{1+r}{1-r})+Tr $ \emph{directly} from the determinant, and show that the rate coincides with $ \rateJV\circ\partial_x $ evaluated on the natural $ h_\star $.

We now begin the construction of elementary solutions, which is motivated by the Onsager--Machlup principle.
Consider first a wedge initial condition $ \parab[1](0)=\parab[\icx_1,\ica_1](0) $.
Fix any terminal condition $ (\vecx,\veca) $ that satisfies the discretized Hopf--Lax condition~\eqref{e.HLc}.
Recall $ \raterw((\icx_1,\ica_1)\xrightarrow{\scriptscriptstyle t} (\vecx,\veca)) $ from \eqref{e.raterw.xa}, and let $ \Rwf \in \Lip $ be the unique minimizer of it;
the minimizer will be described in detail in Section~\ref{s.results.notation}.
Define the \tdef{backward Hopf--Lax evolution}
\begin{align}
	\label{e.HLb}
	\big( \HLb_{\tau}(f) \big)(y) 
	:= 
	\sup \big\{ f(x) - \parab(\tau,y-x) : x \in \R \big\},
\end{align}
which gives the Hopf--Lax solution of the time-reversed equation $ \partial_\tau h = \frac12(1-(\partial_y h)^2) $.
The Onsager--Machlup principle states that, given a terminal condition in the infinite-time setting, the optimal deviation is achieved by running the system backward in time.
Generalizing this principle to the finite-time setting, we set $ h_\star(\tau) := \HLb_{t-\tau}(\Rwf) $.
This function is indeed an weak solution of \eqref{e.intburgers} and satisfies the initial condition $ h_\star(0)=\parab[1](0) $.
Next we consider massif initial conditions.
Fix a minimizer $ \{ F_k \}_k $ of \eqref{e.raterw.xa}; the minimizer will be characterized in Section~\ref{s.results.notation}.
For a massif initial condition, the Onsager--Machlup principle needs to be modified according to the max structure (described in the last paragraph of Section~\ref{s.results.fixedtime}), namely
$
	h_\star(\tau) := \max\{ \HLb_{t-\tau}(F_k): k=1,\ldots,\icm \}.
$
This $ h_\star $ satisfies the terminal condition $ h_\star(x_j)=a_j $, for all $ j $, and the initial condition $ h_\star(0)=\parab[1\ldots\icm](0) $.
We will show in Section~\ref{s.matching.general} that $ h_\star $ is a weak solution of \eqref{e.intburgers}.
We call the above special weak solutions $ h_\star $ \tdef{single-layer elementary solutions on $ [0,t] $}.

We seek to concatenate, in time, single-layer elementary solutions to produce multi-layer ones.
However, the single-layer elementary solutions previously constructed have different types of initial and terminal conditions.
We hence need to relax the class of initial conditions.
Call $ g\in\Lip $ \tdef{Linear and downward Quadratic (LdQ)} if there exists a finite partition of $ \R $ into intervals such that, 
on each interval $ U $ the function is $ g(y)|_{U}=(\alpha_2 y^2 + \alpha_1 y + \alpha_0)|_{U} $, an $ \alpha_2\leq 0 $ if the interval $ U $ is unbounded.
Let $ \ldq $ denote the set of all LdQ functions.
We now extend the definition of elementary solutions to allow $ h_\star(0)\in\ldq $.
Fix $ t>0 $, $ g\in\ldq $, and $ (\vecx,\veca) $, recall $ \raterw(g\xrightarrow{\scriptscriptstyle t}(\vecx,\veca)) $ from \eqref{e.raterw.gxa},
and let $ \raterw((\icx_k,g(\icx_k))_{k=1}^{\icm}\xrightarrow{\scriptscriptstyle t}(\vecx,\veca)) $ be a minimizer.
View $ (\icx_k,\ica_k) := (\icx_k,g(\icx_k)) $ as a discrete initial condition,
adopt the notation $ \parab[1\ldots\icm](0) $ with $ \ica_k := g(\icx_k) $,
and consider the corresponding elementary solution $ h_\star $ with $ h_\star(0)=p_{1\ldots\icm}(0) $.
The elementary solution with initial condition $ g $ is then defined to be $ \til{h}_\star(\tau) := \max\{ h_\star(\tau), \HLf_\tau(g) \} $.
Indeed, $ \til{h}_\star(0) = g $.
In Section~\ref{s.matching.general}, we will show that $ \til{h}_\star $ is an weak solution of \eqref{e.intburgers},
and $ \til{h}_\star(\tau)\in\ldq $ for all $ \tau\in[0,t] $.
We can now concatenate single-layer elementary solutions to form multi-layer ones.
Let $ h_{\star 1},\ldots, h_{\star n} $ be single-layer elementary solutions that live respectively on $ [t_0,t_1],\ldots,[t_{n-1},t_n] $.
with $ h_{\star i}(t_i) = h_{\star (i+1)}(t_{i}) $.
A \tdef{multi-layer elementary solution} with layers $ t_0<t_1<\ldots<t_n $ is
$ h_\star(\tau) := h_{\star 1}(\tau)\ind_{[t_{0},t_1]}(\tau) + \ldots + h_{\star n}(\tau)\ind_{(t_{n-1},t_n]}(\tau) $.

Let $ \Elem $ denotes the set of all elementary solutions that live on $ [0,T] $.
The set $ \Elem $ is indeed dense in $ \HLsp $.
Proposition~\ref{p.matching} immediately generalizes to

\begin{customprop}{\ref*{p.matching}'}
\label{p.matching.}
For any $ h_\star\in\Elem $ with layers $ 0=t_0<t_1<\cdots<t_n=T $, 
\begin{align*}
	\sum_{i=1}^n \raterw\big( h_\star(t_{i-1})\xrightarrow{t_{i}-t_{i-1}} h_\star(t_i)\big) = \rateJV(\partial_y {h}_\star).
\end{align*} 
\end{customprop}

\subsection{The main result}
\label{s.results.spacetime}

The fixed-time \ac{LDP} in \hyperref[t.rw]{Fixed-time Theorem} can be leveraged into a full \ac{LDP} under suitable assumptions.
Recall the general setup in the first paragraph of Section~\ref{s.results.fixedtime},
let $ \dist^\mathfrak{L} $ denote the metric on $ \mathfrak{L} $,
and equip the space $ \Dsp([0,T],\mathfrak{L}) $ with the uniform metric $ \sup_{t\in[0,T]} \dist^\mathfrak{L}(h_1(t),h_2(t)) $.
\begin{itemize}[leftmargin=10pt]
\item[$ \circ $] Every closed ball in $ \mathfrak{L} $ is compact.
\item [$ \circ $] Assume that $ X_N $ satisfies the fixed-time \ac{LDP} locally uniformly in the initial condition with rate function $ \rate(g\xrightarrow{\scriptscriptstyle t}f) $.
\item [$ \circ $] Fix an $ \hic\in\mathfrak{L} $ and
initiate  $ X_N $ from deterministic initial conditions such that $ \dist^\mathfrak{L}(X_N(0),\hic) \to 0 $.
\item [$ \circ $] Assume that there exists an equicontinuous set $ \calK \subset \Csp([0,T],\mathfrak{L})\subset \Dsp([0,T],\mathfrak{L}) $
such that for any open $ \calO \supset \calK $,
$ \limsup_{N\to\infty} \frac{1}{N} \log \P[ X_N \notin \calO] = -\infty. $
\end{itemize}
For a partition $ \vect =(0=t_0<\ldots<t_n=T) $ of $ [0,T] $, we write $ \norm{\vect} := \max_{i=1}^n |t_i-t_{i-1}| $ for the mesh.

\begin{lem}(from fixed-time to full LDP)
\label{l.discrete.to.continuous}
The process $ X_N $ satisfies the \ac{LDP} with speed $ N $ and rate function
\begin{align*}
	\Rate(h) 
	:=
	\liminf_{ \norm{\mathbf{t}} \to 0 } \ \sum_{i=1}^{n} \rate\big( h(t_{i-1}) \xrightarrow{t_i-t_{i-1}} h(t_i) \big)
	\ \
	\text{ if } h(0)=\hic, 
	\qquad
	\Rate(h):=+\infty \ \ \text{ otherwise}.
\end{align*}
\end{lem}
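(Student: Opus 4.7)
The plan is a projective passage from the fixed-time LDP to a full LDP, combining three ingredients: (i)~the fixed-time LDP on each subinterval of a partition $\vect$, (ii)~the Markov property of $X_N$ to chain the fixed-time LDPs across intervals, and (iii)~equicontinuity of $\calK$ to convert events measured in the uniform metric on $\Dsp([0,T],\mathfrak{L})$ into finite-dimensional events depending only on the partition times $t_0,\ldots,t_n$. The assumed super-exponential concentration of $X_N$ on $\calK$ lets us intersect any closed or open event freely with a small enlargement $\calK_\gamma$ at no cost in the LDP.

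\textbf{Lower bound.} Fix open $\calO$ and $h\in\calO$ with $\Rate(h)<\infty$, so in particular $h(0)=\hic$. Pick $\gamma>0$ with $B_{3\gamma}(h)\subset\calO$. By the $\liminf$ definition of $\Rate(h)$, choose a partition $\vect$ so fine that
\begin{align*}
\sum_{i=1}^n \rate\big(h(t_{i-1})\xrightarrow{t_i-t_{i-1}} h(t_i)\big) \leq \Rate(h)+\gamma,
\end{align*}
and simultaneously fine enough that the joint modulus of continuity of $\calK\cup\{h\}$ at scale $\|\vect\|$ is smaller than $\gamma$. The latter guarantees that $\{X_N(t_i)\in B_\gamma(h(t_i))\text{ for all }i\}\cap\{X_N\in\calK_\gamma\}$ implies $X_N\in B_{3\gamma}(h)\subset\calO$. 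Iterating the fixed-time LDP with local uniformity along the partition (using $X_N(0)\to\hic=h(0)$ to launch the first step, and using the Markov property to chain subsequent steps) yields
\begin{align*}
\liminf_{N\to\infty}\tfrac{1}{N}\log\P\Big[\bigcap_{i=1}^n\{X_N(t_i)\in B_\gamma(h(t_i))\}\Big]
\;\geq\; -\sum_{i=1}^n \rate\big(h(t_{i-1})\xrightarrow{t_i-t_{i-1}}h(t_i)\big)
\;\geq\; -\Rate(h)-\gamma.
\end{align*}
Subtracting the negligible $\P[X_N\notin\calK_\gamma]$, sending $\gamma\downarrow 0$, and optimizing over $h\in\calO$ gives the lower bound.

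\textbf{Upper bound.} Fix closed $\calC$ and $M<\inf_\calC \Rate$. Choose $\gamma>0$ small enough that the concentration assumption delivers $\P[X_N\notin\calK_\gamma]\leq e^{-(M+1)N}$ eventually; it then suffices to control $\P[X_N\in\calC\cap\calK_\gamma]$. The closure $\overline{\calC\cap\calK_\gamma}$ is compact in $\Dsp([0,T],\mathfrak{L})$ by an Arzel\`a--Ascoli argument using equicontinuity of $\calK$ and compactness of closed balls in $\mathfrak{L}$. For each $h$ in this compact set, $\Rate(h)>M$ forces $\inf_{\|\vect\|\leq\delta(h)}\sum_i\rate(h(t_{i-1})\xrightarrow{}h(t_i))>M$ for some $\delta(h)>0$; fix any partition $\vect^h$ with $\|\vect^h\|\leq\delta(h)$ realizing this strict inequality. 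Lower semicontinuity of each $\rate(\cdot\xrightarrow{}\cdot)$ in its spatial arguments, combined with equicontinuity of $\calK$ at scale $\|\vect^h\|$, yields a uniform-topology neighborhood $U^h$ of $h$ on which $\sum_i\rate(h'(t^h_{i-1})\xrightarrow{}h'(t^h_i))>M$ for all $h'\in U^h\cap\calK_\gamma$. Cover $\overline{\calC\cap\calK_\gamma}$ by finitely many $U^{h^{(j)}}$. On each piece, the Markov property and the fixed-time LDP upper bound (with local uniformity absorbing the initial-condition slack) give $\limsup_N\tfrac{1}{N}\log\P[X_N\in U^{h^{(j)}}\cap\calK_\gamma]\leq -M$. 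Taking a union bound and then sending $M\nearrow\inf_\calC\Rate$ closes the argument.

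\textbf{Main obstacle.} The delicate step is the upper bound's transfer of the discrete inequality $\sum_i\rate(h(t^h_{i-1})\xrightarrow{}h(t^h_i))>M$ from the single trajectory $h$ to a uniform-topology neighborhood $U^h$. One must choose $\vect^h$ fine enough that equicontinuity of $\calK$ makes the finite-dimensional neighborhood $\{h':h'(t^h_i)\in B_\eta(h(t^h_i))\}$ capture a genuine uniform-topology neighborhood, while simultaneously keeping $\eta$ small enough that the spatial lsc of each $\rate(\cdot\xrightarrow{}\cdot)$ preserves $\sum_i\rate>M$ under the resulting perturbation of partition values. A secondary technicality is verifying that $\Rate$ is itself lower semicontinuous, which may be deduced via a Fatou-type manipulation of the $\liminf$ definition using spatial lsc of $\rate$, or extracted a posteriori from the LDP upper bound.
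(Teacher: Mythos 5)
The paper does not actually supply a proof of this lemma (it is dismissed with ``standard point-set topology arguments''), so the only meaningful comparison is with the argument the authors gesture at, and your architecture --- chain the locally uniform fixed-time LDPs through the Markov property, use the super-exponential concentration on $\calK$ together with equicontinuity to pass between cylinder events at the partition times and uniform-metric events, and a finite covering for the upper bound --- is exactly that argument. Most of your steps are sound; the upper bound in particular is essentially right once one notes that the locally uniform fixed-time LDP applied at the points $h(t_i^{h})$ with radius $\eta$ already controls the cylinder neighborhood directly (your detour through lower semicontinuity of $\rate$ is harmless but unnecessary).

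There is, however, one genuine gap, in the lower bound. You require ``the joint modulus of continuity of $\calK\cup\{h\}$ at scale $\norm{\vect}$'' to be small, and the implication $\{X_N(t_i)\in B_\gamma(h(t_i))\ \forall i\}\cap\{X_N\in\calK_\gamma\}\subset B_{3\gamma}(h)$ rests on it. But $h$ is an arbitrary element of $\calO$ with $\Rate(h)<\infty$, a priori just a c\`adl\`ag path; nothing in the definition of $\Rate$ says it is continuous, let alone shares $\calK$'s modulus. If $h$ has a jump, no partition makes that ``joint modulus'' small, and the displayed implication fails: a path that is $\gamma$-close to the equicontinuous family $\calK$ and matches $h$ at the $t_i$'s can still be macroscopically far from $h$ between partition times. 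So as written your lower bound is established only for $h$ already known to lie in (the uniform closure of) $\calK$, and the bound $\liminf\geq-\inf_\calO\Rate$ does not follow unless you first prove that $\Rate(h)<\infty$ forces $h\in\overline{\calK}$. This is provable from the hypotheses, but it is a real step: the chained fixed-time lower bound gives $\P[\cap_i\{X_N(t_i)\in B_\delta(h(t_i))\}]\gtrsim e^{-N(\Rate(h)+\gamma)}$, while concentration makes $\P[X_N\notin\calK_\gamma]$ super-exponentially small, so for every fine partition the intersection event is nonempty and yields $k\in\calK$ with $\dist(k(t_i),h(t_i))\leq\gamma+\delta$ for all $i$ and $k(0)$ near $\hic$; compactness of $\calK\cap\{\dist(k(0),\hic)\leq 1\}$ (Arzel\`a--Ascoli, using that closed balls in $\mathfrak{L}$ are compact) and a diagonal argument over partitions with dense union of nodes then produce a continuous uniform limit agreeing with $h$ on a dense set, hence everywhere since $h$ is c\`adl\`ag. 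A secondary, fixable inaccuracy: $\overline{\calC\cap\calK_\gamma}$ need not be compact, since only $\calK\cap\{h(0)=\hic\}$ (not all of $\calK$) has pointwise relatively compact values; you must additionally restrict to paths with $h(0)$ near $\hic$, which is free because $X_N(0)\to\hic$ deterministically, and treat separately the points of $\calC$ with $h(0)\neq\hic$, where $\Rate=+\infty$ by fiat but the discretized sums may be finite.
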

\noindent{}%
This lemma can be proven by standard point-set topology arguments.
We omit the proof but note that, given the assumptions on $ \mathfrak{L} $ and $ \calK $, the set $ \calK \cap\{ h : h(0)=\hic \} $ is compact.
For our purpose $ \mathfrak{L} = \Lip $, $ X_N = \hh_N $, and $ \calK=\HLsp $.
The required conditions on $ \calK $ are satisfied thanks to Proposition~\ref{p.HLc.topo.weak.HLc}\ref{p.HLc.topo} and Remark~\ref{r.Lip}\ref{r.Lip.}.

Fix any $ \hic \in \Lip $
and initiate the \ac{TASEP} from possibly $ N $-dependent deterministic initial conditions such that
$
	\lim_{N\to\infty } \dist( \hic ,\, \hh_N(0) ) = 0.
$
Generalization to a random initial condition is straightforward by conditioning on $ \hh_N(0) $, and will add a static part (which depends on the law of the random initial condition) to the rate function. 
Proposition~\ref{p.matching.} shows that the full rate function can be approximated by $ \rateJV $ along elementary solutions.
\begin{defn}
\label{d.Rate}
The rate function for the \ac{TASEP} height function is $ \RAte: \Dsp([0,T],\Lip) \to [0,\infty] $,
\begin{align*}
	\RAte(h) 
	:= 
	\liminf_{ \Elem \ni \til{h} \to h } \ \rateJV(\partial_x \til{h}) \ \ \text{ if }  h(0)=\hic \text{ and } h \in \HLsp,
	\qquad
	\RAte(h) :=+\infty \ \ \text{ otherwise}.
\end{align*}
\end{defn}

\begin{thmmain}
\label{t.main}
Under the preceding setup,
$ \hh_N $ satisfies the \ac{LDP} with rate function $ \RAte $ and speed $ N $.
\end{thmmain}
\noindent{}%
The rate function $ \RAte $ is \ac{lsc} by definition. We will show in Section~\ref{s.matching.pfmain} that $ \RAte|_{\Elem}=\rateJV\circ\partial_x|_{\Elem} $.

It is important to note that, compared with the existing works \cite{jensen00,varadhan04,vilensky08}, the true novelty here is the improved \emph{upper} bound.
In \cite{vilensky08}, a lower bound with the rate function $ \rateJV $ is obtained around what is referred to as a `nice' weak solution,  a solution that has jumps along finitely many piecewise $ \Csp^1 $ curves, is $ \Csp^1 $ off the jumps, and has one-sided limits at both sides of each jump. In fact, such `nice' weak solutions are very similar to the elementary weak solutions that we consider here.
\hyperref[t.main]{Main Theorem} proves a matching upper bound approximated by elementary solutions.

\subsection{Some more notation}
\label{s.results.notation}
We introduce some notation so that those who wish to first skip the determinantal analysis can do so after finishing this section.
For fixed $ t>0 $, $ (\icx_k,\ica_k)_{k=1}^{\icm} $, and $ (x_i,a_i)_{i=1}^{m} $,
we identify the indices $ 1,2,\ldots,m $ in the \emph{terminal} condition as \tdef{letters}, $ \alphabet := \{ 1,\ldots, m\} $,
and call a strictly increasing list of letters a \tdef{word}.
Formally,
$	\wordset 
	:=
	\{ \, \word = \, i_1\cdots i_n : i_1<\ldots<i_n \in \alphabet, \ n\geq 0 \, \}.
$
Note that our definition of words \emph{differs} from the standard one, which does not require letters to increase.
The empty word, denoted by $ \emptyset $, is included in our definition, and
we let $ \wordsett := \wordset \setminus \{\emptyset\} $
denote the set of nonempty words.
We write $ \word_j = i_j $ for the $ j $-th letter and $ |\word|=n $ for the length of $ \word\in\wordset $.
Since letters in  $ \word $ are strictly ordered, slightly abusing notation, we will also view $ \word $ as a \emph{set of letters}.
For example, $ i\in\word $ means ``$ \word $ contains the letter $ i $'', and $ \word\cup\word' $ denotes the word formed by taking the union of letters in $ \word $ and $ \word' $.

Refer to \eqref{e.raterw.xa} and consider first $ \icm=1 $, a wedge initial condition.
Let $ \Rwf=\Rwf( t; \icx_1,\ica_1; \vecx,\veca ) \in \Lip $ denote the unique minimizer in \eqref{e.raterw.xa}.
It is readily verified that the function is $ \Csp^1 $ except at $ y=x_j $, $ j\in\word $,
and is linear except when $ \Rwf(y) = \parab[k](t,y) $ or $ y= x_j $ for $ j\in\word $.
We will often abbreviate $ \Rwf( \wordkl[\word][k][k] ):= \Rwf( t; \icx_k,\ica_k; (x_j,a_j)_{j\in\word} ) $.
Section~\ref{s.overview.example} contains some illustrations of $ \Rwf $.
When $ \icm>1 $, the minimizers of \eqref{e.raterw.xa} may no longer be unique.
For example, take $ \icm=2 $, $ m=1 $ with $ \icx_{1}=-\icx_{2} $, $ \ica_{1}=\ica_{2} $, and $ x_1 =0 $.
By symmetry, $ (F_1,F_2) = ( \Rwf(\wordkl[1][1][1]), \parab[2](t) ) $ and $ (F_1,F_2)=( \parab[1](t), \Rwf(\wordkl[1][2][2])) $ are both minimizers.
We say $ f\in\Lip $ \tdef{passes through} $ j $ if $ f(x_j)=a_j $,
and let $ \wordF(f) $ denote the word formed by all letters $ f $ passes through.
Even though minimizers $ \{F_k\}_k $ of \eqref{e.raterw.xa} may be non-unique when $ \icm>1 $,
they are uniquely characterized by $ \{\wordF(F_k)\}_k $.
More explicitly, $ F_k = \Rwf(\wordkl[\wordF(F_k)][k][k]). $
%
\begin{rmk}
\label{r.Rwf}
For a generic word $ \word \subsetneq 12\ldots m $,
the function $ \Rwf(\wordkl[\word][k][k]) $ may violate the condition $ \Rwf(\wordkl[\word][k][k])|_{y=x_j} \leq a_j $ for $ j=1,\ldots,m $.
For example, in Figure~\ref{f.ex.ud}, $ \Rwf(\wordkl[1][1][1])|_{y=x_2} > a_2 $.
\end{rmk}

\section{Determinantal analysis: formulas and examples}
\label{s.overview}

\subsection{The determinantal formula and related operators}
\label{s.overview.det}
Here we recall the determinantal formula of \cite{matetski16}.
Fix $ t\in\R_{>0} $, $ (\icvecx,\icveca)\in\R^{2\icm} $ and $ (\vecx,\veca)\in\R^{2m} $.
We will be considering the massif initial condition $ \parab[1\cdots\icm](0) $.
The $ x $'s and $ a $'s respectively label the horizontal (space) and vertical (height) coordinates.
It will be convenient to also consider the $\pm$ 45$^\circ$ coordinates, with the variables $ (s,d):=(\frac12(x+a),\frac12(-x+a))\in\R^2 $.
Accordingly, set 
$ s_{i} := \tfrac12(x_i+a_i) $, $ \ics_{i} := \tfrac12(\icx_i+\ica_i) $, $ d_{k} := \tfrac12(-x_{k}+a_{k}) $, $ \icd_{k} := \tfrac12(-\icx_{k}+\ica_{k}) $,
and express the height function in the $ s,d $ coordinates as
\begin{align}
	\label{e.hd}
	\hd_N(t,s) := \inf\big\{ \tfrac12(-x+\hh_N(t,x)) : \tfrac12(x+\hh_N(t,x)) \leq s \big\},
\end{align}
which is decreasing and right-continuous-with-left-limit in $ s $.
Throughout the determinant analysis we assume
\begin{align}
	\label{e.nondeg.sd}
	&\ics_1<\ldots<\ics_{\icm},&
	&\icd_1>\ldots>\icd_{\icm},&
	&s_1<\ldots<s_{m},&
	&d_1\geq \ldots\geq d_{m},&	
\end{align}
and the discretized Hopf--Lax condition \eqref{e.HLc.xa}. Consider the \tdef{under probability}:
\begin{align}
	\label{e.under}
	\punder(\vecs,\vecd,N)
	:=
	\P_{\parab[1\cdots\icm](0)} \big[ \hh_N(t,x_i) \leq a_i,\,i=1,\ldots,m \big]
	=
	\P_{\parab[1\cdots\icm](0)} \big[ \hd_N(t,s_i^-) \leq d_i,\,i=1,\ldots,m \big].	
\end{align}

\begin{con}
\label{con.lattice}
Strictly speaking, 
$ (x_i,a_i) $ and $ (\icx_{k},\ica_{k}) $ should take values in $ \{ (x,a): x,\frac{1}{2}(x+a-\hh_N(0)) \in\frac{1}{N}\Z \} $.
To alleviate heavy notation, however, we will operate with $ (x_i,a_i)\in\R^2 $ and $ (\icx_{k},\ica_{k})\in\R^2 $
with the consent that proper integer parts are implicitly taken whenever necessary.
\end{con}

We next introduce the relevant operators, all of which act on $ \lsp^2(\Z) $. 
The underlying variable of $ \lsp^2(\Z) $ will be denoted by $ \mu\in\Z $. 
This variable has the same geometric meaning as $ d $ and should be viewed as the variable along the northwest-southeast axis.
Let $ \rwop(\mu,\mu')=\rwop(\mu-\mu'):= 2^{-1-\mu+\mu'} \ind_{\mu-\mu' \geq 0} $ so that, for $ n\in\Z_{> 0} $, $ \rwop^n(\mu,\mu') $ gives the $ n $-step transition probability of a random walk with i.i.d.\ increments $ \sim - $ Exp$ (2) $.
It is readily checked that $ \rwop $ has the inverse $ \rwop^{-1}(\mu-\mu') = \ind_{\mu=\mu'}-2\ind_{\mu-\mu'=1} $ and, for $ n,\mu,\mu'\in\Z $,
\begin{align}
	\label{e.rwop}
	\rwop^n(\mu,\mu')
	=
	\rwop^n(\mu-\mu')
	=
	\oint_{\Circ_r} \frac{\d z}{2\pi\img} \fnrww(z;n,\mu-\mu'), 
	\qquad
	\fnrww(z;n,\mu):=
	\frac{1}{z^{1+\mu}(2-z)^n}.
\end{align}
Hereafter $ \Circ_r:=\{z:|z|=r\} $ denotes a counterclockwise circle, and we assume $ r\in(0,2) $ throughout the paper.
Note that our definition of $ \rwop $ \emph{differs} from that of \cite{matetski16} by a shift in $ \mu $.
Next, define the operators
\begin{align}
	\label{e.Slop}
	\Slop_{-t,-n}(\icmu,\mu)
	=
	\Slop_{-t,-n}(\icmu-\mu)
	&:=
	\oint_{\Circ_r} \frac{\d z}{2\pi\img} \fnSll(z;t,-n,\icmu-\mu),&
	&
	\fnSll(z;t,n,\mu)
	:=
	\frac{e^{\frac{t}{2} (z-1)}}{z^{1+\mu}(2-z)^{n}},
\\
	\label{e.Srop}
	\Srop_{-t,n}(\mu,\icmu)
	=
	\Srop_{-t,n}(\mu-\icmu)
	&:=
	\oint_{\Circ_r} \frac{\d z}{2\pi\img} \fnSrr(z;t,n,\mu-\icmu),&
	&
	\fnSrr(z;t,n,\mu)
	:=
	\frac{e^{\frac{t}{2} (z-1)}}{ z^{n} (2-z)^{1+\mu} },
\end{align}
Let $ \ind_{ \geq d} $ and $ \ind_{ < d} $ act on $ \lsp^2(\Z) $ by multiplication, namely $ (\ind_{ \geq d}f)(\mu) := \ind_{\mu\geq d}f(\mu) $ and $ (\ind_{ < d}f)(\mu) = \ind_{\mu< d} f(\mu)$.
Note that the notation $\ind_{ \geq d}$ and $\ind_{ < d}$ are in operator form so the variables $\mu, \icmu$ no longer appear.

To facilitate subsequent presentation, we introduce some shorthand notation.
First, 
\begin{align*}
	\indup_{\ick} := \ind_{\geq N\icd_{\ick}},\qquad
	\inddown_{\ick} := \ind_{<N\icd_{\ick}},\qquad
	\indup_{i} := \ind_{\geq Nd_i},\qquad
	\inddown_{i} := \ind_{ < Nd_i}.
\end{align*}
The arrows $ \up $ and $ \down $ hint at the geometric meaning of the $ d $'s and $ \mu $'s, and should be read as `up' and `down', respectively.
Recall letters and words from Section~\ref{s.results.notation}.
For $ \word\in\wordset $ and $ \vecUD\in \{\up,\down\}^\word $, define the shorthand
\begin{align*}
	\opuMQR[k][k']{\word_\vecUD}
	:=
	\Slop_{-Nt,N(-\ics_{k}+s_{1})} (
		\rwop^{N(-s_1+s_{\word_1})}
		\indsign{\UD_{\word_1}}_{\word_1}
		\rwop^{N(-s_{\word_1}+s_{\word_2})}
		\indsign{\UD_{\word_2}}_{\word_2}
		\cdots 
		\indsign{\UD_{\word_{|\word|}}}_{\word_{|\word|}}
		\rwop^{N(-s_{|\word|}+s_{1})}
	) \Srop_{-Nt,N(-s_1+\ics_{k'})},
\end{align*}
with the convention
$
	\opuMQR[k][k']{\emptyset}
	:=
	\Slop_{-Nt,N(-\ics_{k}+s_{1})} \Srop_{-Nt,N(-s_1+\ics_{k'})}.
$  

Under the preceding notation, the determinantal formula of \cite{matetski16} reads
\begin{align}
	\label{e.det..}
	\punder
	&=
	\det\Big( 
		\Id +
		\Big(\inddown_{\ick}\big(
			 \kdelta_{k<k'} \rwop^{N(-\ics_{k}+\ics_{k'})}				
			- \opuMQR[k][k']{\emptyset}
			+ \opuMQR[k][k']{(12\cdots m)_{\downs\downs\cdots\downs} } 
		\big)\inddown_{\ick'}\Big)_{k,k'=1}^{\icm}
	\Big)_{  \lsp^2(\Z)^{\otimes\icm} }.
\end{align}
We use $ \kdelta_{k<k'} $ for the Kronecker delta instead of $ \ind $ to avoid confusion.
The expression $ (\cdots)_{k,k'=1}^{\icm} $ denotes an $ \icm\times\icm $ matrix with entries being operators on $ \lsp^2(\Z) $ or equivalently an operator on $  \lsp^2(\Z)^{\otimes\icm} $.
We call the operator within $ (\cdots)_{k,k'=1}^{\icm} $ the \tdef{\boldmath{$ kk' $-th entry}}.
The formula~\eqref{e.det..} follows from \cite[Theorem~2.6]{matetski16} and the time-reversal symmetry $ \P_{g}[\hh_N(t) \leq f] = \P_{-f(-\Cdot)}[ \hh_N(t,\Cdot) \leq -g(-\Cdot) ] $.


\subsection{The operator $ \opu[k][k']{\ldots} $}
\label{s.det.opu}
As already noted in \cite{matetski16}, $\Srop_{-t,n}$ involve a poorly behaved polynomial extension of the transition probabilities $\rwop^n$.
We now define a well-behaved variant of
the operator $ \opuMQR[k][k']{\word_\vecUD} $.
Specialize the functions $ \fnSl,\fnSr,\fnrw $ into the relevant scaled parameters as
\begin{align}
	\notag
	\fnSll_{ki}(z;\icmu) &:= \fnSll(z;Nt,N(-\ics_k+s_i),\icmu-Nd_i),
	&
	\fnSrr_{jk'}(z;\icmu') &:= \fnSrr(z;Nt,N(-s_j+\ics_{k'}),Nd_j-\icmu'),	
\\
	\label{e.fncontour}
	\fnSl_{ki}(z) &:= \fnSll_{ki}(z;N\icd_{k}),
	&
	\fnSr_{jk'}(z) &:= \fnSrr_{ki}(z;N\icd_{k'}),
\\
	\label{e.fncontour.}
	\fnrw_{ij}(z) &:= \fnrww(z;N(-s_i+s_j),N(d_i-d_j)),
	&	
	\fnrw_{\ick\ick'}(z) &:= \fnrww(z;N(-\ics_k+\ics_{k'}),N(\icd_{k}-\icd_{k'})).
\end{align}
We put hats over the $ k $'s in $ \fnrw_{\ick\ick'} $ to distinguish it from $ \fnrw_{ij} $.
Set $ \sgn(\up):=+1 $ and $ \sgn(\down):=-1 $. 

\begin{defn}
\label{d.opu}
Set $ \opu[k][k']{\emptyset} := \rwop^{N(-\ics_k+\ics_{k'})} $; for $ \word\in\wordsett $ and $ n=|\word| $, set
\begin{align}
	\label{e.opu}
	\hspace{-10pt}
	\opu[k][k']{\word_\vecUD}(\icmu,\icmu')
	:=
	\prod_{i=0}^{n} \oint \frac{\d z_{i}}{2\pi\img} \cdot \fnSll_{k\word_1}(z_0;\icmu)  
	\cdot
	\prod_{i=1}^{n-1}
	\frac{\sgn(\UD_{\word_i})z_{i}}{z_{i}-z_{i-1}} 
	\fnrw_{\word_{i}\word_{i+1}}(z_i)
	\cdot
	\frac{\sgn(\UD_{\word_n})(2-z_n)}{2-z_n-z_{n-1}}
	\fnSrr_{\word_nk'}(z_n;\icmu').
\end{align}
The contours are counterclockwise loops that satisfy the following conditions.
\begin{enumerate}[leftmargin=20pt, label=(\roman*)]
\item \label{d.opu.1} Each $ z_i $ contour encloses $ 0 $ but not $ 2 $.
\item For $ i=1,\ldots,n-1 $, the $ z_{i} $ contour encloses $ z_{i-1} $ if $ \UD_{\word_i} = \up $, and the $ z_{i-1} $ contour encloses $ z_{i} $ if $ \UD_{\word_i} = \down $.
\item \label{d.opu.3} If $ \UD_{\word_n}=\up $, the $ z_n $ contour does not enclose $ 2-z_{n-1} $, and the $ z_{n-1} $ contour does not enclose $ 2-z_{n} $.
\item \label{d.opu.4} If $ \UD_{\word_n}=\down $ and $ n>1 $, the $ z_{n-1} $ contour encloses $ 2-z_n $.
\item \label{d.opu.5} If $ \UD_{\word_n}=\down $ and $ n=1 $, the $ z_{1} $ contour encloses $ 2-z_0 $.
\end{enumerate}
The integrals are iterated in a suitable order so that the preceding conditions make sense.
For example, if $ \word=12 $ and $ \vecUD=(\up\down) $, we can take $ \prod_{i=0}^{3} \oint \frac{\d z_{i}}{2\pi\img}(\Cdot) := \oint \frac{\d z_0}{2\pi \img}(\oint \frac{\d z_2}{2\pi \img}(\oint \frac{\d z_1}{2\pi \img}\Cdot))  $ or $ := \oint \frac{\d z_2}{2\pi \img}(\frac{\d z_0}{2\pi \img}(\frac{\d z_1}{2\pi \img}\Cdot)) $.
\end{defn}

The operator $ \opu[k][k']{\word_\vecUD} $ is a variant of $ \opuMQR[k][k']{\word_\vecUD} $ in the following sense.
First, they enjoy the identities
\begin{align}
	\tag{flip'}
	\label{e.flip.Opu}
	\opuMQR[k][k']{ (\cdots j i j' \cdots)_{\cdots \UD_{j} \downs \UD_{j'} \cdots} }
	&=
	\opuMQR[k][k']{  (\cdots j j' \cdots)_{\cdots \UD_{j}\UD_{j'} \cdots} }
	- \opuMQR[k][k']{  (\cdots j i j' \cdots)_{\cdots \UD_{j} \ups \UD_{j'} \cdots} },
\\
	\tag{flip}
	\label{e.flip}
	\opu[k][k']{ (\cdots j i j' \cdots)_{\cdots \UD_{j} \downs \UD_{j'} \cdots} }
	&=
	\opu[k][k']{  (\cdots j j' \cdots)_{\cdots \UD_{j}\UD_{j'} \cdots} }
	- \opu[k][k']{  (\cdots j i j' \cdots)_{\cdots \UD_{j} \ups \UD_{j'} \cdots} }.
\end{align}
We call these identities flips because they effectively flip a $ \down $ to an $ \up $.
The identity \eqref{e.flip.Opu} follows from $ \inddown_{i} = \Id - \indup_{i} $, and the identity \eqref{e.flip} can be verified from Definition~\ref{d.opu}.
Second, it can be checked from \eqref{e.rwop}--\eqref{e.Srop} that the operators coincide when $ \vecUD=(\up\ldots\up) $ and $ \word\in\wordsett $, namely $ \opuMQR[k][k']{\word_{\ups\ldots\ups} }=\opu[k][k']{\word_{\ups\ldots\ups}} $.
The operators are however different in general, specifically when $ \word=\emptyset $.
It is readily checked that $ \opuMQR[k][k']{\emptyset} = \Srop_{0,N(-\ics_k+\ics_{k'})} $, whose kernel exhibits oscillatory behavior when $ k<k' $.
On the other hand, $ \opu[k][k']{\emptyset} := \rwop^{N(-\ics_k+\ics_{k'})} $, which is well-behaved.

We now express the determinantal formula \eqref{e.det..} in terms of $ \opu[k][k']{\word_{\vecUD}} $.
In \eqref{e.det..}, apply \eqref{e.flip.Opu} to flip every $ \down $ into an $ \up $ to get
$
	- \opuMQR[k][k']{\emptyset}
	+ \opuMQR[k][k']{1\cdots m_{\downs\cdots\downs}}
	=
	\sum_{\word\in\wordsett} (-1)^{|\word|} \opuMQR[k][k']{\word_{\ups\cdots\ups}},
$
use $ \opuMQR[k][k']{\word_{\ups\ldots\ups}}=\opu[k][k']{\word_{\ups\ldots\ups}} $, apply \eqref{e.flip} in reverse, and cancel $ \kdelta_{k<k'} \rwop^{N(-\ics_{k}+\ics_{k'})} $ with $ \opu[k][k']{\emptyset}:= \rwop^{N(-\ics_k+\ics_{k'})} $.
We have
\begin{align}
	\label{e.det.}
	\punder
	&=
	\det\Big( 
		\Id +
		\Big(\inddown_{\ick}\big(
			- \kdelta_{k\geq k'} \cdot \opu[k][k']{\emptyset}
			+ \opu[k][k']{(12\cdots m)_{\downs\downs\cdots\downs}} 
		\big)\inddown_{\ick'}\Big)_{k,k'=1}^{\icm}
	\Big).
\end{align}

For our subsequent analysis, we need to develop a factorization-like identity for $ \opu[k][k']{\word_{\vecUD}} $.
Take $ \word=12 $ for example.
We will construct an operator $ \opu[k][k']{(1_{\UD_1})_{k''}(2_{\UD_2})} $ so that the following identity holds for any $ k'' $:
\begin{align*}
	\opu[k][k']{12_{\vecUD}} = \opu[k][k']{(1_{\UD_1})_{k''}(2_{\UD_2})} + \opu[k][]{1_{\UD_1}}\inddown_{\ick''}\opu[][k']{2_{\UD_2}},
\end{align*}
where we have omitted repeated indices, namely $ \opu[k][]{1_{\UD_1}}\inddown_{\ick''}\opu[][k']{2_{\UD_2}} := \opu[k][k'']{1_{\UD_1}}\inddown_{\ick''}\opu[k''][k']{2_{\UD_2}} $.
We will adopt this convention hereafter.
Note that $ \opu[k][k']{12_{\vecUD}} \neq \opu[k][k'']{1_{\UD_1}}\,\opu[k''][k']{2_{\UD_2}} $, so in particular $ \opu[k][k']{(1_{\UD_1})_{k''}(2_{\UD_2})} \neq \opu[k][]{1_{\UD_1}}\indup_{\ick''}\opu[][k']{2_{\UD_2}} $; one may naively hope otherwise.
The construction of $ \opu[k][k']{(1_{\UD_1})_{k''}(2_{\UD_2})} $ needs to involve contour integrals.

\begin{defn}
\label{d.<<}
For $ \word,\word'\in\wordset $, we write $ \word\llo\word' $ if $ j<j' $ for all $ (j,j')\in\word\times\word' $.

In particular, $ \emptyset \llo \emptyset $, and $ \emptyset \llo \word $ and $ \word\llo\emptyset $ for all $ \word\in\wordsett $.
\end{defn}

\begin{defn}
\label{d.opu.extended}
For $ \wordsub^{(1)}\llo \ldots \llo \wordsub^{(\ell)} $, $ \vecUD^{(i)}\in\{\up,\down\}^{\wordsub^{(i)}} $, and $ k_0,\ldots,k_\ell\in\{1,\ldots,\icm\} $, set
\begin{align}
\label{e.opu.extended}
\begin{split}
	& \Opu[k_0][k_n]{ (\wordsub^{(1)}_{\vecUD^{(1)}})_{k_1} \cdots {}_{k_{n-1}} (\wordsub^{(n)}_{\vecUD^{(n)}}) } (\icmu,\icmu')
	:=
	\prod_{i=1}^{\ell}
	\oint \frac{\d z_{i}}{2\pi\img} \oint \frac{\d z'_{i}}{2\pi\img}
	\cdot
	U_1(z_1,z'_1;\icmu,\icd_{k_1}) V_1(z'_1,z_2)
\\
	&\qquad\qquad
	\cdot\,U_2(z_2,z'_2;\icd_{k_1},\icd_{k_2}) V_2(z'_2,z_3)
	\cdots
	V_{\ell-1}(z'_{\ell-1},z_\ell) U_\ell(z_\ell,z'_\ell;\icd_{k_{\ell}},\icmu').
\end{split}
\end{align}
\begin{description}[leftmargin=10pt]
\item[When \tdef{$ \wordsub^{(i)}\neq\emptyset $}]
$ U_i(z,z';\icmu,\icmu') $ denotes the right side of \eqref{e.opu} with the $ z_0 $ and $ z_n $ integrals removed and with $ (\word,\vecUD,z_0,z_n,k,k')\mapsto (\wordsub^{(i)},\vecUD^{(i)},z,z',k_{i-1},k_i) $, and $ V_i(z_i',z_{i+1}) := \frac{-z_{i+1}}{2-z_i'-z_{i+1}} $.
When $ i<\ell $ and $ \wordsub^{(i+1)}\neq\emptyset $, the $ z'_i $ contour encloses $ 2-z_{i+1} $ ; when $ i<\ell $ and $ \wordsub^{(i+1)}=\emptyset $, the $ z_{i+1} $ contour encloses $ 2-z'_{i} $.

\item[When \tdef{$ \wordsub^{(i)}=\emptyset $}]
We set $ U_i(z_i,z_i';\icmu,\icmu'):= \fnrw(z_i; N(-\ics_{k}+\ics_{k'}), N(\icmu-\icmu')) $, interpret $ z_i=z'_i $ and $ (\oint \frac{\d z_{i}}{2\pi\img} \oint \frac{\d z'_{i}}{2\pi\img}) := \oint \frac{\d z_{i}}{2\pi\img} $, and set $ V_i(z_i',z_{i+1}) := \frac{-z_{i+1}}{z'_i-z_{i+1}} $. When $ i<\ell $, the $ z_{i+1} $ contour encloses $ z'_{i} $.
\end{description}
The contours are counterclockwise loops such that all conditions in Definition~\ref{d.opu} and the preceding ones are satisfied.
\end{defn}

The operator $ \opu[k][k']{ \ldots } $ enjoys a factorization-like identity.
\begin{lem}
\label{l.opu.id}
If $ \wordsub^{(i)} = \wordsub\cup\wordsub' $ with $ \wordsub\llo\wordsub' $, then for $ \vecUD=\vecUD^{(i)}|_{\wordsub} $, $ \vecUD'=\vecUD^{(i)}|_{\wordsub'} $, and any $ k $,
\begin{align*}
	\Opu[k_0][k_n]{ (\wordsub^{(1)}_{\vecUD^{(1)}})_{k_1} \cdots {}_{k_{n-1}} (\wordsub^{(n)}_{\vecUD^{(n)}}) }
	=&
	\Opu[k_0][k_n]{ (\wordsub^{(1)}_{\vecUD^{(1)}})_{k_1} \cdots {}_{k_{i-1}}(\wordsub_{\vecUD})_k (\wordsub'_{\vecUD'})_{k_{i}} \cdots {}_{k_{n-1}} (\wordsub^{(n)}_{\vecUD^{(n)}}) }
\\
	&+
	\Opu[k_0]{ (\wordsub^{(1)}_{\vecUD^{(1)}})_{k_1} \cdots {}_{k_{i-1}}(\wordsub_{\vecUD}) }
	\inddown_{\ick}
	\Opu[][k_n]{ (\wordsub'_{\vecUD'})_{k_{i}} \cdots {}_{k_{n-1}} (\wordsub^{(n)}_{\vecUD^{(n)}}) }.	
\end{align*}
\end{lem}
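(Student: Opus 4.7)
The plan is to prove the identity by reducing it to a contour-integral identity localized at the split site, and then extracting the $\inddown_{\ick}$ piece via a convergent geometric sum.

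First, I would observe that the identity is \emph{local} to the block $\wordsub^{(i)}$. Unfolding Definition~\ref{d.opu.extended}, every factor indexed by $j\neq i$ --- the $U_j$ blocks, the connecting $V_j$ factors, and their contour integrals --- appears identically in all three $\Opu$-operators in the claimed identity and therefore cancels. It suffices to prove the single-block statement
\[
\opu[k_0][k_1]{(\wordsub\cup\wordsub')_{\vecUD\cup\vecUD'}}
=\Opu[k_0][k_1]{(\wordsub_{\vecUD})_{k}(\wordsub'_{\vecUD'})}
+\opu[k_0][k]{\wordsub_{\vecUD}}\,\inddown_{\ick}\,\opu[k][k_1]{\wordsub'_{\vecUD'}},
\]
viewed as a kernel identity in $(\icmu,\icmu')\in\Z^2$.

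Second, I would compute the convolution in the second RHS term explicitly. Writing it as $\sum_{\icmu<N\icd_k}\opu[k_0][k]{\wordsub_{\vecUD}}(\icmu_0,\icmu)\,\opu[k][k_1]{\wordsub'_{\vecUD'}}(\icmu,\icmu_1)$ and substituting the contour-integral formula from Definition~\ref{d.opu}, the only $\icmu$-dependent factors come from the boundary kernels $\fnSrr_{\word_p k}(z_p;\icmu)$ and $\fnSll_{k\word_{p+1}}(\til{z}_0;\icmu)$, where $\word_p$ is the last letter of $\wordsub$, $\word_{p+1}$ is the first letter of $\wordsub'$, and $z_p,\til{z}_0$ are the corresponding boundary integration variables in the two $\opu$-operators. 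Their combined $\icmu$-dependence is $\bigl((2-z_p)/\til{z}_0\bigr)^{\icmu}$, so summing over $\icmu<N\icd_k$ gives the closed form $\frac{(2-z_p)^{N\icd_k}}{\til{z}_0^{\,N\icd_k-1}\bigl((2-z_p)-\til{z}_0\bigr)}$, valid on contours where $|2-z_p|>|\til{z}_0|$. This converts the composition into a single multi-contour integral with the same overall contour topology as $\Opu[k_0][k_1]{(\wordsub_{\vecUD})_{k}(\wordsub'_{\vecUD'})}$.

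Third, I would compare the three objects at the split. By~\eqref{e.opu}, the LHS has at the split the single factor $\frac{\sgn(\UD_{\word_{p+1}})\,\til{z}_0}{\til{z}_0-z_p}\,\fnrw_{\word_p\word_{p+1}}(\til{z}_0)$, whereas Term 1 (by Definition~\ref{d.opu.extended}) replaces this by $V_1(z_p,\til{z}_0)=\frac{-\til{z}_0}{2-z_p-\til{z}_0}$ sandwiched between the boundary kernels $\fnSrr_{\word_p k}(z_p;\icd_k)$ and $\fnSll_{k\word_{p+1}}(\til{z}_0;\icd_k)$. After factoring out the common $e^{\frac{Nt}{2}(z_p+\til{z}_0-2)}$ and the monomials in $z_p,\til{z}_0$ that depend only on $(s_j,d_j)_{j\in\wordsub\cup\wordsub'}$, the identity reduces to a rational identity in $(z_p,\til{z}_0)$: the $\icd_k$-dependent monomials produced by the boundary kernels in Term 1 cancel exactly against the $(2-z_p)^{N\icd_k}\til{z}_0^{1-N\icd_k}$ factor from Term 2's geometric sum, and what remains reassembles the LHS coupling $\frac{\sgn(\UD_{\word_{p+1}})\til{z}_0}{\til{z}_0-z_p}\fnrw_{\word_p\word_{p+1}}(\til{z}_0)$.

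The main obstacle is contour compatibility: the geometric series in the second step converges only when $|2-z_p|>|\til{z}_0|$, matching the enclosure rule ``the $z'_i$ contour encloses $2-z_{i+1}$'' from Definition~\ref{d.opu.extended} used by Term 1, whereas the LHS comes with a different enclosure rule between $z_p$ and $\til{z}_0$ inherited from $\UD_{\word_{p+1}}$ (items~\ref{d.opu.1}--\ref{d.opu.5} of Definition~\ref{d.opu}). I would therefore need to deform contours carefully and confirm that no spurious residues arise from the poles $\til{z}_0=z_p$ and $\til{z}_0=2-z_p$ when passing between the configurations; the assumption $\wordsub\llo\wordsub'$ is what guarantees that the exponents of $z_p$ and $2-z_p$ in $\fnrw_{\word_p\word_{p+1}}$ have the correct signs for this deformation to be admissible. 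Once contour compatibility is settled, the proof concludes by the purely algebraic rational identity described above.
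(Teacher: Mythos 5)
Your overall route --- localize to the split block, evaluate the $\inddown_{\ick}$ composition as a geometric sum over the intermediate lattice variable, then compare the resulting contour integrals --- is the natural one, and your Steps 1--2 are correct: the sum over $\mu<N\icd_k$ does produce $\frac{(2-z_p)^{N\icd_k}\til{z}_0^{\,1-N\icd_k}}{(2-z_p)-\til{z}_0}$ on contours with $|2-z_p|>|\til{z}_0|$. The gap is in Step 3. Writing $\fnSrr_{\word_p k}(z_p;\mu)=A(z_p)(2-z_p)^{\mu}$ and $\fnSll_{k\word_{p+1}}(\til{z}_0;\mu)=B(\til{z}_0)\til{z}_0^{-\mu}$, Term 1's junction factor is $A(2-z_p)^{N\icd_k}\,V_1(z_p,\til{z}_0)\,B\,\til{z}_0^{-N\icd_k}$ while Term 2's (after the geometric sum) is $A(2-z_p)^{N\icd_k}\,(-V_1(z_p,\til{z}_0))\,B\,\til{z}_0^{-N\icd_k}$: the $\icd_k$-dependent monomials are \emph{identical} in the two terms, not cancelling, and the full integrands are exact negatives of each other. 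So there is no integrand-level ``rational identity'' reassembling the left side --- if the contours could be deformed to coincide without crossing the pole $\til{z}_0=2-z_p$, the two terms would sum to zero. Relatedly, a one-variable junction integral (the LHS) cannot equal a two-variable one by algebra alone; one integration must be performed.

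The mechanism you are missing is precisely the residue you propose to rule out as ``spurious.'' Term 1's contours enclose the pole of $V_1$ at $z_p=2-\til{z}_0$ (Definition~\ref{d.opu.extended}: the $z'_i$ contour encloses $2-z_{i+1}$), while Term 2's convergence condition $|2-z_p|>|\til{z}_0|$ places it outside; hence Term 1 $+$ Term 2 $=\oint_{C_1}F-\oint_{C_2}F$ equals the residue of $F$ at $z_p=2-\til{z}_0$. Evaluating there trivializes the $\icd_k$-monomials ($(2-z_p)^{N\icd_k}\til{z}_0^{-N\icd_k}\to 1$), cancels the exponentials $e^{\frac{Nt}{2}(z_p-1)}e^{\frac{Nt}{2}(\til{z}_0-1)}$, collapses $\fnSrr_{\word_p k}\cdot\til{z}_0\cdot\fnSll_{k\word_{p+1}}$ into $\fnrw_{\word_p\word_{p+1}}(\til{z}_0)$, and turns $\frac{\sgn(\UD_{\word_p})(2-z_p)}{2-z_p-z_{p-1}}$ into the LHS junction factor $\frac{\sgn(\UD_{\word_p})\til{z}_0}{\til{z}_0-z_{p-1}}$ --- note the sign index is that of the \emph{last} letter of $\wordsub$ and the coupling is to $z_{p-1}$, not $\sgn(\UD_{\word_{p+1}})$ and $z_p$ as you wrote. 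What remains to check is then only that the nesting conditions inherited by the surviving variables match Definition~\ref{d.opu}(i)--(v), and you must also treat the cases $\wordsub=\emptyset$ or $\wordsub'=\emptyset$ (allowed by $\llo$ and actually used in the isle decomposition), where the relevant pole sits at $\til{z}_0=z_p$ and $V_i=\frac{-z_{i+1}}{z'_i-z_{i+1}}$.
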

\noindent{}%
This lemma can be proven from Definition~\ref{d.opu.extended}. We omit the proof.

\subsection{The pinning probability and a Plemelj-like expansion}
\label{s.overview.pinning}

Recall from \eqref{e.hd} that $ \hd_N(t,s) $ denotes the height function in the $ (s,d) $ coordinates.
For $ \vecd^{\pm} = (d_i^{\pm})_{i=1}^m $, where $ d_i^-<d_i^+ $,
we consider the \tdef{pinning probability}
\begin{align}
	\label{e.ppin}
	\ppin = \ppin(\vecs,\vecd^{\pm},N)
	:=
	\P_{\parab[1\cdots\icm](0)}\big[  d_i^{-} < \hd_N(t,s_i) \leq \hd_N(t,s_i^-) \leq d_i^+, \ i=1,\ldots,m \big].
\end{align}
By taking $ |d_i^+-d_i^-| $ and the mesh of $ \vecs $ to zero,
the pinning probability can approximate the probability that $ \hh_N(t)\approx f $, for a generic $ f\in\Lip $.
Further, $ \ppin $ can be expressed by $ \punder $ via the inclusion-exclusion formula:
\begin{align}
	\label{e.inclusion.exclusion}
	\ppin(\vecs,\vecd^\pm,N)
	=
	\Sumie
	\punder(\vecs,\vecd^{\vecrho},N).
\end{align}
Here $ \sumie $ denotes the inclusion-exclusion sum, which acts on a function $ \phi $ of $ \vecd $ by
\begin{align}
	\label{e.sumie}
	\Sumie \big( \phi(\vecd^{\vecrho}) \big)
	:= 
	\sum_{\rho_1=\pm 1} \rho_1 \cdots \sum_{\rho_m=\pm 1} \rho_m 
	\, \phi(d_1^{\rho_1},\ldots,d_m^{\rho_m}).
\end{align}

We will analyze the determinantal formula~\eqref{e.det.} by a Plemelj-like expansion.
Recall that, for a trace-class operator $ A $ on a Hilbert space $ H $, the Fredholm determinant is defined as
\begin{align}
	\label{e.fredholm}
	\det(\Id+A)
	=
	\sum_{n=0}^\infty \frac{1}{n!} \D_n(A),
\end{align}
where $ \D_n(A) := n! \tr(\wedge^n A) $, and $ (\wedge^n A) $ is the natural lifting of $ A $ onto $ \wedge^n H $.
By \cite[Lemma~6.7]{simon77}, 
\begin{align}
	\label{e.simon}
	\D_n(A)
	=
	\begin{vmatrix}
		\tr(A) & n-1 & &&\\
		\tr(A^2) & \tr(A) & n-2 &\\
		\vdots & \ddots & \ddots & \ddots &\\	 
		\tr(A^{n-1}) & \ldots & \ldots & \tr(A) & 1\\
		\tr(A^{n}) & \tr(A^{n-1}) & \ldots & \tr(A^2) &  \tr(A)\\
	\end{vmatrix}.	
\end{align}
Equations~\eqref{e.fredholm}--\eqref{e.simon} are essentially equivalent to Plemelj's expansion,
except that the latter requires $ \Vert A\Vert_1<1 $ while \eqref{e.fredholm} is absolutely convergent for all $ \Vert A\Vert_1<\infty $.
The expansion also holds in the $ \icm\times\icm $ setting for $ A=(A_{kk'})_{k,k'=1}^{\icm} $, 
where each $ \conj^k A_{kk'} \conj^{-k'} $ is trace-class on $ H $ for some operator $ \conj $ on $ H $.
In this case the traces are given by
\begin{align}
	\label{e.trace}
	\tr(A^n)
	:=
	\sum_{k_1,\ldots,k_{n-1}} \tr( A_{k_1k_2} \cdots A_{k_{n-1}k_1} ).
\end{align}

\subsection{Examples}
\label{s.overview.example}
To explain the idea of our determinantal analysis, here we work out a few examples. All examples here will assume $ \icm=1 $, namely starting the \ac{TASEP} from a wedge, and we write $ \P_{\parab[1](0)}=\P $ to simplify notation. For this subsection only, for quantities $ \alpha(N) $ and $ \beta $, we write $ \alpha(N)\approx \exp(N\beta) $ if $ \lim_{N\to\infty} \frac{1}{N} \log \alpha(N) = \beta $ and refer to $ \beta $ as the rate of $ \alpha(N) $.

As the main purpose here is to explain the idea, in the following examples we will not carry out the estimate of the entire expansion~\eqref{e.fredholm}--\eqref{e.simon}, but just the first few terms; the full estimate will be carried out in Section~\ref{s.asymptotics}. Below we list the estimates (which are special cases of the results from Section~\ref{s.asymptotics}) that will be used in the examples here.
Recall $ \Rwf( \wordkl[\word][1][1] ) $ from Section~\ref{s.results.notation} and recall $ \raterwRel{f}{\parab[1](t)} $ from \eqref{e.raterwrel}.
\begin{enumerate}[leftmargin=20pt,label=(\roman*)]
\item \label{e.ex}
Regardless of the configuration, 
$
	\tr( (\inddown_{\Ic{1}} \opu{i_{\ups}} \inddown_{\Ic{1}})^n) 
	\approx
	\exp( -N n \raterwRel{ \Rwf(\wordkl[i][1][1]) }{ \parab[1](t) }\,)
$,
for all $ i $ and $ n\geq 1 $.

\item \label{e.ex.uu}
For the configuration depicted in Figure~\ref{f.ex.uu},
$
	\tr( \inddown_{\Ic{1}} \opu{12_{\ups\ups}} \inddown_{\Ic{1}} )
	\approx \exp( -N \raterwRel{ \Rwf(\wordkl[12][1][1]) }{ \parab[1](t) }\,)
$.

\item \label{e.ex.ud}
For the configuration depicted in Figure~\ref{f.ex.ud},
$
	\tr( \inddown_{\Ic{1}} \opu{12_{\ups\downs}} \inddown_{\Ic{1}} )
	\approx
	\exp( -N \raterwRel{ \Rwf(\wordkl[12][1][1]) }{ \parab[1](t) }\,)
$.

\item \label{e.ex.u-u}
For the configuration depicted in Figure~\ref{f.ex.u-u},\\%
$
	\tr( \inddown_{\Ic{1}} \opu{ (1_{\ups})_{1} (2_{\ups}) } \inddown_{\Ic{1}} )
	\approx
	\exp( -N (\raterwRel{ \Rwf(\wordkl[1][1][1]) }{ \parab[1](t) }+\raterwRel{ \Rwf(\wordkl[2][1][1]) }{ \parab[1](t) })\,)
$.
\end{enumerate}
Under the current setup, \hyperref[t.rw]{Fixed-time Theorem} asserts $ \P[ \hh_N(t,x_i) \approx a_i, i=1,\ldots,m ] \approx \exp( -N \raterwRel{\Rwf(\wordkl[12\ldots m][1][1])}{\parab[1](t)} ) $. We will demonstrate the procedure for obtaining this rate in the examples below.

\begin{ex}
\label{ex.m=1}
Consider $ m=1 $. 
The pinning probability can be obtained from $ \P[ \hd_N(t,s_1) > d_1 ] $ by varying $ d_1 $.
For $ \icm=m=1 $ the determinantal formula~\eqref{e.det.} reads
$
	\P[ \hd_N(t,s_1) > d_1 ]
	=
	1 - \det( \Id +
		\inddown_{\Ic{1}}\big(
			-\opu[1][1]{\emptyset} + \opu[1][1]{1_{\downs}}
		)\inddown_{\Ic{1}}
		).
$
Use \eqref{e.flip} in the last expression to write $ -\opu[1][1]{\emptyset} + \opu[1][1]{1_{\downs}} = \opu[1][1]{1_{\ups}}  $, and use~\eqref{e.fredholm}--\eqref{e.simon}.
We have
\begin{align*}
	\P[ \hh_N(t,x_1) > a_1 ]
	&=
	\tr( \inddown_{\Ic{1}}\opu{1_{\ups}}\inddown_{\Ic{1}} )
	-
	\tfrac12 \tr((\inddown_{\Ic{1}} \opu{1_{\ups}} \inddown_{\Ic{1}})^2)
	+
	\tfrac12 (\tr(\inddown_{\Ic{1}}\opu{1_{\ups}}\inddown_{\Ic{1}}))^2
	+
	\ldots.
\end{align*}
Applying the estimate~\ref{e.ex} on the right side shows that $ \P[ \hd_N(t,s_1) > d_1 ] \approx \exp(- N \raterwRel{ \Rwf(\wordkl[1][1][1]) }{ \parab[1](t)}) $.
\end{ex}

\begin{figure}[h]
\centering
\begin{minipage}[t]{.327\linewidth}
	\frame{\includegraphics[width=\linewidth]{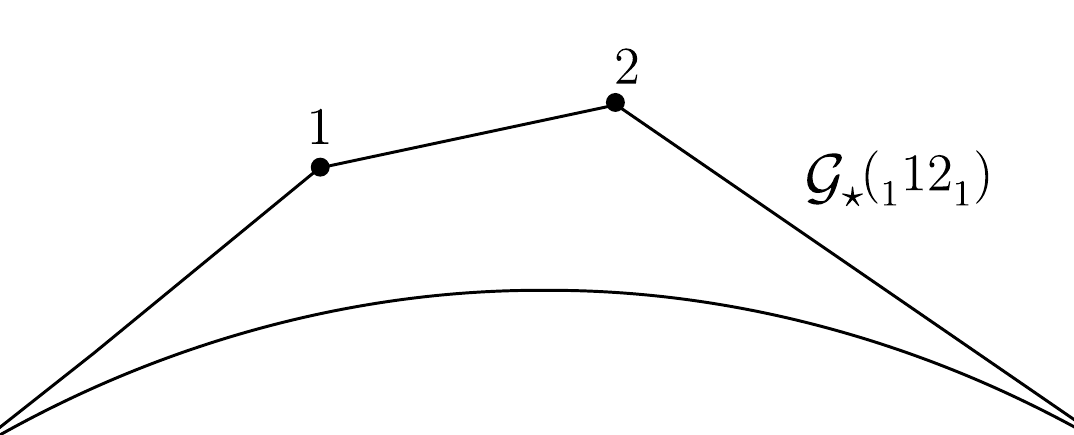}}
	\caption{%
	$ \Rwf(\wordkl[12][1][1]) $ has $ \wedge $ kinks at $ x_1, x_2 $;
	$ \hyp(\Rwf(\wordkl[12][1][1])) \setminus \hyp(\parab[1](t))^\circ $ is connected.%
	}
	\label{f.ex.uu}
\end{minipage}
\hfill
\begin{minipage}[t]{.327\linewidth}
	\frame{\includegraphics[width=\linewidth]{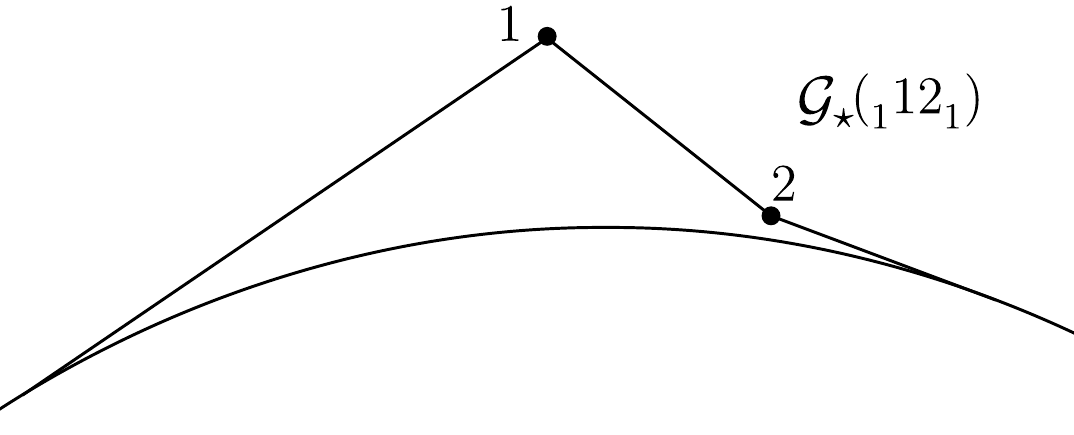}}
	\caption{%
	$ \Rwf(\wordkl[12][1][1]) $ has a $ \wedge $ kink at $ x_1 $ and a $ \vee $ kink at $x_2 $.}%
	\label{f.ex.ud}%
\end{minipage}
\hfill
\begin{minipage}[t]{.327\linewidth}
	\frame{\includegraphics[width=\linewidth]{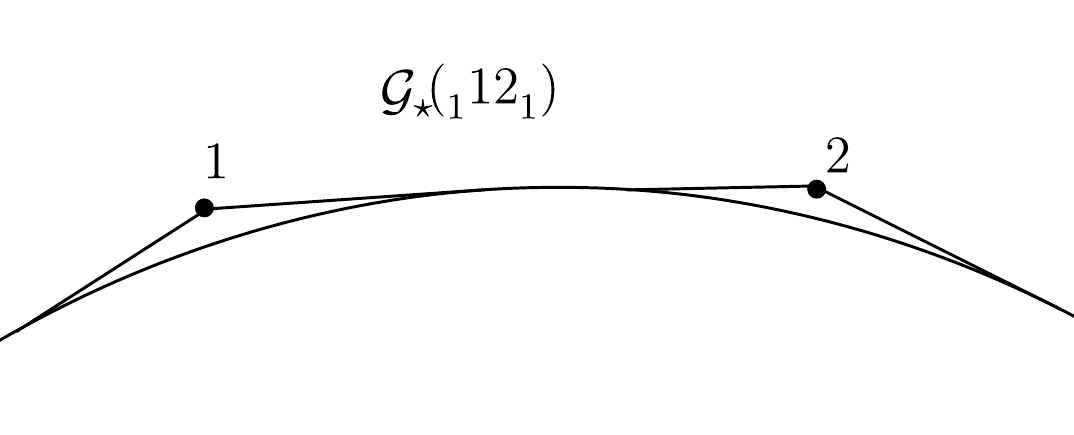}}
	\caption{%
	$ \hyp(\Rwf(\wordkl[12][1][1]))\setminus\hyp(\parab[1](t))^\circ $ is not connected.%
	}%
	\label{f.ex.u-u}%
\end{minipage}
\vspace{5pt}
\centering
\frame{\includegraphics[width=.35\linewidth]{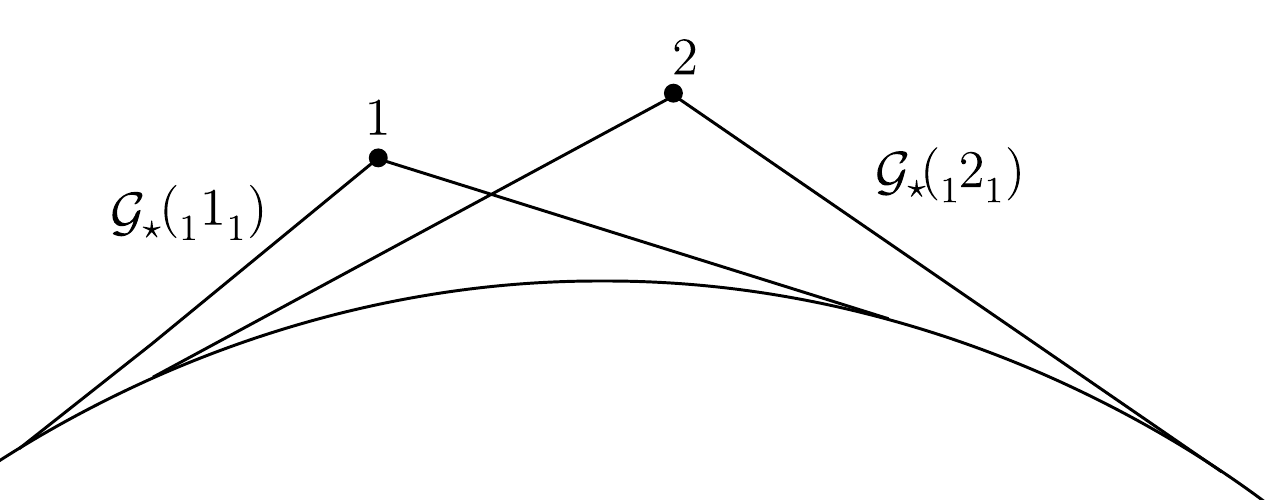}}%
\hspace{5pt}
\frame{\includegraphics[width=.35\linewidth]{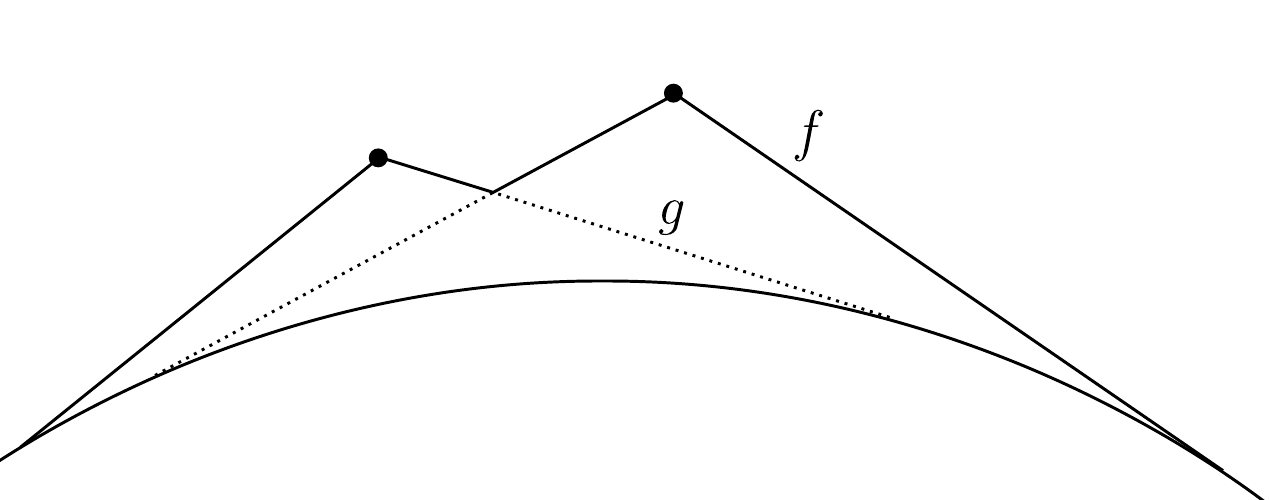}}
\caption{%
Rewiring of $ \Rwf(\wordkl[1][1][1]) $ and $ \Rwf(\wordkl[2][1][1]) $.
In the right figure, the solid line is $ f $; the dashed line is $ g $.%
}
\label{f.ex.uu.rewire}
\vspace{-10pt}
\end{figure}

\begin{ex}
\label{ex.uu}
Consider $ m=2 $ and the configuration depicted in Figure~\ref{f.ex.uu}.
Invoke the determinantal formula \eqref{e.det.} and use \eqref{e.flip} for $ i=1,2 $ to get
\begin{align}
	\label{e.ex.uu.0}
	\punder
	&=
	\det\big( 
		\Id - \inddown_{\Ic{1}} \opu{1_{\ups}} \inddown_{\Ic{1}}
		- \inddown_{\Ic{1}} \opu{2_{\ups}} \inddown_{\Ic{1}}
		+ \inddown_{\Ic{1}} \opu{12{\ups\ups}} \inddown_{\Ic{1}}
	\big)
\\
	\label{e.ex.uu.1}
	&= 1 
	- \tr\big( \inddown_{\Ic{1}} \opu{1_{\ups}} \inddown_{\Ic{1}} \big)
	- \tr\big( \inddown_{\Ic{1}} \opu{2_{\ups}} \inddown_{\Ic{1}} \big)
	+ \tr\big( \inddown_{\Ic{1}} \opu{12_{\ups\ups}} \inddown_{\Ic{1}} \big)
	+ \ldots.
\end{align}
Each term in \eqref{e.ex.uu.1} is a function of $ \vecd=(d_1,d_2) $ through Definition~\ref{d.opu}.
Fix small $ \e>0 $ and apply $ \sumie $ on both sides with $ d_i^\pm = d_i\pm\e $.
Upon the application of $ \sumie $ some terms vanish.
We call a term \tdef{degenerate} if it does not involve all the letters in $ \{1,2\} $. 
For example the first three terms on the right side of \eqref{e.ex.uu.1} are degenerate.
Referring to~\eqref{e.sumie}, one sees that degenerate terms vanish upon the application of $ \sumie $.
Hence
\begin{align}
	\label{e.ex.uu.2}
	\ppin
	=&
	\Sumie \tr\big( \inddown_{\Ic{1}} \opu{12_{\ups\ups}} \inddown_{\Ic{1}} \big)
	+
	\ldots.
\end{align}
The asymptotics of the first term is given in \ref{e.ex.uu}.
The rate $ \exp(-N\raterwRel{ \Rwf(\wordkl[12][1][1]) }{ \parab[1](t) }) $ depends on $ d_1,d_2 $.
Referring to Figure~\ref{f.ex.uu}, we see that among the four possible perturbations $ (d_1,d_2) \mapsto (d_1\pm\e,d_2\pm\e) $ of this rate, 
the perturbation $ (d_1-\e,d_2-\e) $ produces the smallest rate and hence the dominant contribution.
Therefore,
\begin{align}
	\label{e.ex.uu.3}
	\Sumie\tr\big( \inddown_{\Ic{1}} \opu{12_{\ups\ups}} \inddown_{\Ic{1}} \big)
	\approx
	\exp\big( -N \raterwREl{ \Rwf(\wordkl[12][1][1]) }{ \parab[1](t) }\,\big)\big|_{d_1-\e,d_2-\e}.
\end{align}

Other terms in \eqref{e.ex.uu.2} can be shown to be subdominant to \eqref{e.ex.uu.3}. The proof will be carried out in Section~\ref{s.asymptotics} in full generality, and here we work out one term $ \gterm := \tr ( \inddown_{\Ic{1}} \opu{1_{\ups}} \inddown_{\Ic{1}} )\tr(\inddown_{\Ic{1}} \opu{2_{\ups}} \inddown_{\Ic{1}} ) $ to demonstrate the idea.
The estimate~\ref{e.ex} asserts that $ \gterm $ has rate
$ 
	\raterwRel{ \Rwf(\wordkl[1][1][1]) }{ \parab[1](t) } 
	+\raterwRel{ \Rwf(\wordkl[2][1][1]) }{ \parab[1](t) }.
$
Since $ \Rwf(\wordkl[12][1][1])|_{[x_1,x_2]} $ is strictly above $ \parab[1](t)|_{[x_1,x_2]} $, 
the graphs of $ \Rwf(\wordkl[1][1][1]) $ and $ \Rwf(\wordkl[2][1][1]) $ intersect above $ \parab[1](t) $.
Follow Figure~\ref{f.ex.uu.rewire} to `rewire' the functions at the intersection to get $ f $ and $ g $,
or equivalently $ f:= \max\{ \Rwf(\wordkl[1][1][1]), \Rwf(\wordkl[2][1][1]) \} $ and $ g:= \min\{ \Rwf(\wordkl[1][1][1]), \Rwf(\wordkl[2][1][1]) \} $.
We have
\begin{align*}
	\raterwRel{ \Rwf(\wordkl[1][1][1]) }{ \parab[1](t) } 
	+
	\raterwRel{ \Rwf(\wordkl[2][1][1]) }{ \parab[1](t) }
	=
	\raterwRel{ f }{ \parab[1](t) } 
	+
	\raterwRel{ g }{ \parab[1](t) }.
\end{align*}
From Figure~\ref{f.ex.uu.rewire} we see $ \raterwRel{ f }{ \parab[1](t) } >\raterwRel{ \Rwf(\wordkl[12][1][1]) }{ \parab[1](t) } $
and $ \raterwRel{ g }{ \parab[1](t) } >0 $.
Hence
$ \gterm $ is subdominant.
\end{ex}

As shown in Example~\ref{ex.uu}, the basic idea is to develop an expansion with one or few dominant terms.
In general obtaining such a nice expansion requires careful algebraic manipulations, as illustrated in Examples~\ref{ex.ud}--\ref{ex.u-u}.

\begin{ex}
\label{ex.ud}
Consider $ m=2 $ and the configuration depicted in Figure~\ref{f.ex.ud}.
Here the expansion~\eqref{e.ex.uu.1} does not work.
To see why, write
$ 
	\tr( \inddown_{\Ic{1}} \opu{12_{\ups\ups}} \inddown_{\Ic{1}} )
	=
	\tr( \inddown_{\Ic{1}} \opu{1_{\ups}} \inddown_{\Ic{1}} )
	-
	\tr( \inddown_{\Ic{1}} \opu{12_{\ups\downs}} \inddown_{\Ic{1}} )
$
and apply the estimates \ref{e.ex} and \ref{e.ex.ud}.
The strict convexity of $ \rateber $ gives
$
	\raterwRel{ \Rwf(\wordkl[12][1][1]) }{ \parab[1](t) }
	> 
	\raterwRel{ \Rwf(\wordkl[1][1][1]) }{ \parab[1](t) },
$
thereby 
$
	\tr( \inddown_{\Ic{1}} \opu{12_{\ups\ups}} \inddown_{\Ic{1}} )
	\approx
	\exp( -N \raterwRel{ \Rwf(\wordkl[1][1][1]) }{ \parab[1](t) }).
$
On the other hand, assuming the conclusion of \hyperref[t.rw]{Fixed-time Theorem},
we know that the entire series \eqref{e.ex.uu.2} is 
$ \approx \exp( -N \raterwRel{ \Rwf(\wordkl[12][1][1]) }{ \parab[1](t) }\,) $.
Hence, as $ N\to\infty $, the term $ \tr( \inddown_{\Ic{1}} \opu{12_{\ups\ups}} \inddown_{\Ic{1}} ) $ is \emph{exponentially larger} than the entire series \eqref{e.ex.uu.2}.
This fact implies that there must be cancellations within~\eqref{e.ex.uu.1}.

The cure to this issue is to develop a different expansion than~\eqref{e.ex.uu.1}.
As seen in~\ref{e.ex.ud}, 
the term that captures the desired asymptotics is $ \tr( \inddown_{\Ic{1}} \opu{12_{\downs\ups}} \inddown_{\Ic{1}} ) $,
not $ \tr( \inddown_{\Ic{1}} \opu{12_{\ups\up}} \inddown_{\Ic{1}} ) $.
To obtain the former, use \eqref{e.flip} for $ i=1 $ to write
\begin{align*}
	\punder
	=
	\det\big( 
		\Id 
		- \inddown_{\Ic{1}} \opu{\emptyset} \inddown_{\Ic{1}}
		+ \inddown_{\Ic{1}} \opu{2_{\downs}} \inddown_{\Ic{1}}
		- \inddown_{\Ic{1}} \opu{12_{\ups\downs}} \inddown_{\Ic{1}}
	\big),
\end{align*}
and use \eqref{e.flip} for \emph{just} the operator $ \inddown_{\Ic{1}} \opu{2_{\downs}} \inddown_{\Ic{1}} $ to get
\begin{align*}
	\punder
	=
	\det\big( 
		\Id 
		- \inddown_{\Ic{1}} \opu{2_{\ups}} \inddown_{\Ic{1}}
		- \inddown_{\Ic{1}} \opu{12_{\ups\downs}} \inddown_{\Ic{1}}
	\big).
\end{align*}
Expanding this determinant and estimating the result yield $ \ppin \approx \exp(-N\raterwRel{\Rwf(\wordkl[12][1][1])}{\parab[1](t)}) $.
\end{ex}

\begin{ex}
\label{ex.u-u}
Consider $ m=2 $ and the `two-isle' configuration depicted in Figure~\ref{f.ex.u-u}.
Just like in Example~\ref{ex.ud}, the expansion~\eqref{e.ex.uu.1} does not work here.
Invoke the identity from Lemma~\ref{l.opu.id} for $ \wordsub=1 $ and $ \wordsub'=2 $ in \eqref{e.ex.uu.0} to get
\begin{align}
	\label{e.ex.u-u.0}
	\punder
	&=
	\det\big( 
		\Id - \inddown_{\Ic{1}} \opu{1_{\ups}} \inddown_{\Ic{1}}
		- \inddown_{\Ic{1}} \opu{2_{\ups}} \inddown_{\Ic{1}}
		+ \inddown_{\Ic{1}} \opu{1_{\ups}} \inddown_{\Ic{1}} \opu{2_{\ups}} \inddown_{\Ic{1}}
		+ \inddown_{\Ic{1}} \opu{(1_{\ups})_{1}(2_{\ups})} \inddown_{\Ic{1}}
	\big)
\\
	\label{e.islefact.ex1}
	&=
	\det\Big( 
		\big( \Id - \inddown_{\Ic{1}} \opu{1_{\ups}} \inddown_{\Ic{1}} \big)
		\big( \Id - \inddown_{\Ic{1}} \opu{2_{\ups}} \inddown_{\Ic{1}} \big)
		+ \inddown_{\Ic{1}} \opu{(1_{\ups})_{1}(2_{\ups})} \inddown_{\Ic{1}}
	\Big).
\end{align}
It is possible to show that the trace norm of $ \inddown_{\Ic{1}} \opu{i_{\ups}} \inddown_{\Ic{1}} $ tends to $ 0 $ as $ N\to\infty $,
thereby
$	
	(\Id - \inddown_{\Ic{1}} \opu{i_{\ups}} \inddown_{\Ic{1}})^{-1}
	=
	\sum_{n=0}^\infty (\inddown_{\Ic{1}} \opu{i_{\ups}} \inddown_{\Ic{1}})^n
$.
Using this identity gives
\begin{align}
	\notag
	\punder
	=
	&\det\big( \Id - \inddown_{\Ic{1}} \opu{1_{\ups}} \inddown_{\Ic{1}} \big)
	\cdot
	\det\big( \Id - \inddown_{\Ic{1}} \opu{2_{\ups}} \inddown_{\Ic{1}} \big)
\\
	\label{e.islefact.ex2}
	&\cdot	
	\det\Big( 
		\Id
		+ 
		\sum_{n=0}^\infty (\inddown_{\Ic{1}} \opu{2_{\ups}} \inddown_{\Ic{1}})^n
		\sum_{\ell=0}^\infty (\inddown_{\Ic{1}} \opu{1_{\ups}} \inddown_{\Ic{1}})^\ell
		\cdot\,
		\inddown_{\Ic{1}} \opu{(1_{\ups})_{1}(2_{\ups})} \inddown_{\Ic{1}}
	\Big).
\end{align}
Expand these determinants, fix a small $ \e>0 $, and apply $ \sumie $ to the result with $ d_i^\pm=d_i\pm\e $.
We have
\begin{align*}
	\ppin
	=
	\Sumie \tr\big(\inddown_{\Ic{1}} \opu{1_{\ups}}\inddown_{\Ic{1}}\big)
	\tr\big(\inddown_{\Ic{1}} \opu{2_{\ups}}\inddown_{\Ic{1}}\big)
	+
	\Sumie \tr\big( \inddown_{\Ic{1}} \opu{(1_{\ups})_{1}(2_{\ups})} \inddown_{\Ic{1}} \big)
	+
	\ldots.
\end{align*}
By \ref{e.ex} and \ref{e.ex.u-u} the first two terms both produce the expected asymptotics for $ \ppin $. Note that $ \raterwRel{ \Rwf(\wordkl[12][1][1]) }{ \parab[1](t) } = \raterwRel{ \Rwf(\wordkl[1][1][1]) }{ \parab[1](t) }+\raterwRel{ \Rwf(\wordkl[2][1][1]) }{ \parab[1](t) } $ under the configuration depicted in Figure~\ref{f.ex.u-u}.

Another approach is to use the so-called extended kernel formula; see \cite{widom04,corwin14}. This approach leads to the same result in simpler configurations including the one in Figure~\ref{f.ex.u-u}, but does not seem to work generally. 
\end{ex}

\subsection{Up-down iteration and isle factorization}
\label{s.overview.updown.isle}
There are two types of algebraic manipulations that will be needed:
The \tdef{up-down iteration} will be performed in Section~\ref{s.updown}, and the \tdef{isle factorization} will be performed in Section~\ref{s.isle}.
These manipulations respectively ensure that the Plemelj-like expansion
\begin{enumerate}[leftmargin=20pt,label=(\roman*)]
\item \label{goal.updown}
only involves $ \opu[k][k']{\word_{\vecUD}} $ with $ \vecUD = \vecUDc(\wordkl) $, which will be defined in Section~\ref{s.geo.rwf.updown.isle}, and
\item \label{goal.isle}
only involves preferred terms, which will be defined in Definition~\ref{d.prefer}.
\end{enumerate}
These are the terms that are controllable as $ N\to\infty $. Condition~\ref{goal.updown} requires the $ \vecUD $ to be consistent with the kinks, as illustrated in Example~\ref{ex.ud}. Condition~\ref{goal.isle} is tied to the isle geometry, as illustrated in Example~\ref{ex.u-u}.

Let us note a subtlety regarding Example~\ref{ex.u-u}. There, we manipulate the determinant by the factorization \eqref{e.islefact.ex1}--\eqref{e.islefact.ex2}. This factorization, however, does not work directly for more complicated geometric configurations, and we need a more flexible variant of the factorization procedure. The key is to return to the pre-factorized determinant \eqref{e.islefact.ex1}. Expanding this determinant using~\eqref{e.fredholm}--\eqref{e.simon} gives
\begin{align*}
	\punder 
	= 
	(\text{degenerate terms})
	+\tr\big((\inddown_{\Ic{1}} \opu{1_{\ups}} \inddown_{\Ic{1}})(\inddown_{\Ic{1}} \opu{2_{\ups}} \inddown_{\Ic{1}})\big)
	-\tr\big((\inddown_{\Ic{1}} \opu{1_{\ups}} \inddown_{\Ic{1}})(\inddown_{\Ic{1}} \opu{2_{\ups}} \inddown_{\Ic{1}})\big)
	+ \ldots.
\end{align*}
The last two terms are non-preferred (defined in Definition~\ref{d.prefer}),
which we would like to avoid. Here, they cancel exactly, which is unsurprising given that \eqref{e.islefact.ex1} and \eqref{e.islefact.ex2} are the same. This observation suggests that, in fact, we can just work with the prefactorized determinant \eqref{e.islefact.ex1}, and use the factorization procedure \emph{only to argue} that any non-preferred term has a zero net coefficient in the prefactorized determinant. For this idea to work, we need a new notion of determinants where traces are viewed as \emph{indeterminate variables} in the sense of abstract algebra. We call such determinants formal determinants and develop them in Section~\ref{s.isle.fdet}.

The use of formal determinants has a technical bonus: We will only need to take inverses at the level of formal power series. More explicitly, the step of taking the actual inverse of $ (\Id - \inddown_{\Ic{1}} \opu{i_{\ups}} \inddown_{\Ic{1}}) $ in Example~\ref{ex.u-u} will be replaced by taking formal inverses, which do not require conditions for convergence.  
Note that we do still need to show convergence later in our proof.
What just said means that at the step of exhibiting cancellation---which is combinatorial in nature---we can do things without worrying about convergence.
After all required cancellations are exhibited, we will bound all the remaining terms via steepest descent analysis of the contour integrals, as in done in Section~\ref{s.asymptotics}.

Let us finally note the ill-behaved nature of the Plemelj-like expansion, using \eqref{e.ex.u-u.0} as an example.
(As said, \eqref{e.islefact.ex1} works in Example~\ref{ex.u-u} but not in general.)
Refer to Figure~\ref{f.ex.u-u}. Consider the function $ G\in\Lip $ such that $ G|_{\R\setminus(x_1,x_2)} = \Rwf(\wordkl[12][1][1])|_{\R\setminus(x_1,x_2)} $ and $ G|_{[x_1,x_2]} $ is linear.
This function, unlike $ \Rwf(\wordkl[12][1][1]) $, cuts through $ \parab(t) $ in $ (x_1,x_2) $.
It is possible to show that $ \tr( \inddown_{\Ic{1}} \opu{1_{\ups}} \inddown_{\Ic{1}} \opu{2_{\ups}} \inddown_{\Ic{1}} ) \approx \exp( -N \raterwRel{ G }{\parab[1](0)} $. The rate is \emph{negative} when the points in Figure~\ref{f.ex.u-u} are close to $ \parab(t) $. When this happens, at least one of the operators in \eqref{e.ex.u-u.0} is exponentially divergent in the trace norm.

\section{Determinantal analysis: geometric inputs}
\label{s.geo}
Here we introduce some notation and basic properties, with an emphasis on their geometric meanings.

\subsection{Trimming}
\label{s.geo.trim}

In the determinantal formula~\eqref{e.det.},
each $ kk' $-th entry involves all letters $ 1,\ldots,m $. 
On the other hand, based on geometric intuition one would expect that only those letters in
\begin{align}
	\label{e.alphabetkl}
	\alphabetkl
	=
	\big\{ j\in\{1,\ldots,m\} : d_j \leq \icd_{k} \text{ and } s_j \leq \ics_{k'} \big\}
\end{align}
should matter for the $ kk' $-th entry.
Here we show that indeed the determinantal formula~\eqref{e.det.} can be `trimmed' so that only letters in $ \alphabetkl[kk'] $ get involved in the $ kk' $-th entry.

Let us prepare some notation.
Let 
$
	\wordsetkl
	:=
	\{ \, \word = \, i_1\cdots i_n : i_1<\ldots<i_n \in \alphabet(kk'), \ n\geq 0 \, \}
$
denote the set of words formed by letters in $ 	\alphabetkl[kk'] $.
As before, the empty word $ \emptyset $ is included in $ \wordsetkl[kk'] $, and
we let $ \wordsettkl := \wordsetkl \setminus \{\emptyset\} $.
Let $ \wordfullkl $ denote the \tdef{$ kk' $-th full word} formed by all letters in $ \alphabetkl $.
Note that $ \wordfullkl = \emptyset $ is possible, whence $ \wordsetkl = \{ \emptyset \} $.

The key property to perform the trimming is the following, which is readily verified from Definition~\ref{d.opu}.
\begin{align}
	\label{e.trimming}
	\inddown_{\ick}\opu{\word_{\vecUD}}\inddown_{\ick'} = 0
	\quad
	\text{if }\
	 d_{\word_1} > \icd_{k} \text{ and } \UD_{\word_1} = \up,
	\qquad
	\text{or } \
	s_{\word_{|\word|}} > \ics_{k'} \text{ and } \UD_{\word_{|\word|}} = \up.
\end{align}
\begin{lem}
\label{l.trimming}
Fix $ k,k' $. For any $ \letterfirst \leq \min\{ j : d_j \leq \icd_k \} $
and $ \letterlast \geq \max\{ j : s_j \leq \ics_{k'} \} $,\\
$
	\inddown_{\ick}\opu{ (1\cdots m)_{\downs\ldots\downs} }\inddown_{\ick'}
	=
	\inddown_{\ick}\opu{\ [\letterfirst,\letterlast]_{\downs\ldots\downs}}\inddown_{\ick'}\,.
$
\end{lem}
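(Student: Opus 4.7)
The strategy is to induct on $(\letterfirst - 1) + (m - \letterlast)$, the number of letters to trim. The base case $[\letterfirst, \letterlast] = [1, m]$ is tautological. By the symmetric roles of $\fnSll$ and $\fnSrr$ in Definition~\ref{d.opu}, it suffices to treat the back trim: starting from any word $\word$ whose last letter $j$ satisfies $s_j > \ics_{k'}$, show that $j$ can be dropped under conjugation by $\inddown_{\ick}$ and $\inddown_{\ick'}$. The hypothesis $\letterlast \geq \max\{j : s_j \leq \ics_{k'}\}$ combined with the monotonicity $s_1 < \cdots < s_m$ in \eqref{e.nondeg.sd} supplies this condition at every iteration.

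The key ingredient I would establish is an \emph{endpoint insertion identity}, complementing the middle-letter flip \eqref{e.flip}: for any word $\word$, sign vector $\vecUD$ on $\word$, and letter $j > \word_{|\word|}$,
\begin{align*}
	\opu[k][k']{(\word, j)_{(\vecUD, \downs)}} + \opu[k][k']{(\word, j)_{(\vecUD, \ups)}} = \opu[k][k']{\word_{\vecUD}}.
\end{align*}
Granting this, rearrange to $\opu[k][k']{(\word, j)_{(\vecUD, \downs)}} = \opu[k][k']{\word_{\vecUD}} - \opu[k][k']{(\word, j)_{(\vecUD, \ups)}}$; the right-most term is annihilated upon conjugation by $\inddown_{\ick'}$ thanks to \eqref{e.trimming}, whenever $s_j > \ics_{k'}$. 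Iterating this peels off trailing letters until the back of the word reaches $\letterlast$; the analogous front-insertion identity, applied to the $\fnSll$-side factors with $d_j > \icd_k$, trims the front down to $\letterfirst$.

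The substantive step is proving the insertion identity, and this is where I expect the only real difficulty. The two summands on the left-hand side have integrands (per Definition~\ref{d.opu}) that differ only in the overall sign $\sgn(\UD_j)$ of the last factor and in whether the $z_{|\word|}$ contour encloses $2 - z_{|\word|+1}$ (conditions~\ref{d.opu.3}--\ref{d.opu.5}). Writing the $\down$-contour as the $\up$-contour plus a small loop around $z_{|\word|} = 2 - z_{|\word|+1}$, the sum collapses to minus the residue at that pole. The algebraic heart of the residue computation is the exponent-bookkeeping identity
\begin{align*}
	(2-z)\, \fnrw_{\word_{|\word|},\, j}(2-z)\, \fnSrr_{j,\, k'}(z;\, \icmu') = \fnSrr_{\word_{|\word|},\, k'}(z;\, \icmu'),
\end{align*}
which follows by direct calculation from \eqref{e.fncontour} and \eqref{e.fncontour.}; this is exactly what absorbs the excised letter $j$ into the right-boundary factor of the shorter word. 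After relabeling the dummy variable $z_{|\word| + 1} \mapsto z_{|\word|}$, the resulting integrand matches that of $\opu[k][k']{\word_{\vecUD}}$. The remaining nuisance is verifying consistency of the contour-ordering conditions through the deformation, which is a finite case check rather than a conceptual obstacle.
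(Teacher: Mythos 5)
Your proposal is correct and follows essentially the same route as the paper: the paper's proof applies the flip identity \eqref{e.flip} to every letter outside $[\letterfirst,\letterlast]$ at once, expanding $\opu{(1\cdots m)_{\downs\ldots\downs}}$ into a sum over words $\word\supset[\letterfirst,\letterlast]$ with those extra letters marked $\up$, and then kills every term except $\word=[\letterfirst,\letterlast]$ via \eqref{e.trimming}; your one-letter-at-a-time peeling is just the inductive reorganization of the same two ingredients. Your ``endpoint insertion identity'' is exactly \eqref{e.flip} read at the first or last position (which the paper uses there without comment, asserting only that \eqref{e.flip} ``can be verified from Definition~\ref{d.opu}''), and your residue bookkeeping $(2-z)\,\fnrw_{\word_{|\word|}j}(2-z)\,\fnSrr_{jk'}(z;\icmu')=\fnSrr_{\word_{|\word|}k'}(z;\icmu')$ is the correct verification of it.
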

\begin{proof}
Use \eqref{e.flip} for each $ i\notin\alphabetkl $ to get
$
	\inddown_{\ick}\opu{(1\cdots m)_{\downs\ldots\downs}}\inddown_{\ick'}
	=
	\sum\nolimits \inddown_{\ick}\opu{\word_{\vecUD(\word)}}\inddown_{\ick'}.
$
Here the sum goes over all $ \word\in\wordset $ such that $ \word \supset [\letterfirst,\letterlast]  $,
and $ \UD(\word)_j := \down $ for $ j\in[\letterfirst,\letterlast]  $ and $ \UD(\word)_j := \up $ for $ j\notin[\letterfirst,\letterlast]  $.
Each $ j\notin[\letterfirst,\letterlast]  $ has either $ s_j>\ics_{k'} $ or $ d_j>\icd_{k} $.
Hence, by \eqref{e.trimming}, each term in the last sum is zero except when $ \word=[\letterfirst,\letterlast]  $.
The desired result follows.
\end{proof}

Invoking Lemma~\ref{l.trimming} for $ [\letterfirst,\letterlast] = \wordfullkl $ and inserting the result into the determinantal formula~\eqref{e.det.} give
\begin{align}
	\label{e.det}
	\punder
	=
	\det\big( \Id + \inddown_{\ick} \big(
		- \kdelta_{k\geq k'} \cdot \opu[k][k']{\emptyset}  
		+ \opu[k][k']{(\wordfullkl)_{\downs\ldots\downs}} 
	\big)\inddown_{\ick'} \big)_{k,k'=1}^{\icm}\big).
\end{align}

Hereafter $ \wordkl $ denotes a triplet $ (k,k',\word) $ that respects Convention~\ref{con.wordkl}.
This convention may appear to be the opposite of the $ kk' $-th entry in \eqref{e.det}, but actually is \emph{not}.
Later in Section~\ref{s.updown}, we will perform the up-down iteration to $ \opu[k][k']{(\wordfullkl)_{\downs\cdots\downs}} $.
When $ k \geq k' $, the iteration produces one copy of $ \opu[k][k']{\emptyset} $, which cancels the same term in \eqref{e.det}.
As a result, the empty word $ \emptyset $ is irrelevant when $ k \geq k' $.

\begin{con}
\label{con.wordkl}
The notation $ \wordkl $ assumes $ \word\in\wordsettkl[kk'] $ for $ k\geq k' $
and  $ \word\in\wordsetkl[kk'] $ for $ k<k' $.
\end{con}

\subsection{The function $ \rwf $ and related notation and properties}
\label{s.geo.rwf.updown.isle}
We begin by defining a function $ \rwf \in \Lip $,
which should be viewed as a generalization of $ \Rwf $ (defined in Section~\ref{s.results.notation}) but tailored to our algebraic manipulations.
Fix $ k,k'=1,\ldots,\icm $ and $ \word \in \wordsettkl $ and consider $ f\in\Lip $ that satisfy
\begin{subequations}
\label{e.rwfn.space}
\begin{align}
	\label{e.rwfn.space.<}
	&f(y) = \parab[k](t,y), \quad y \in (-\infty, -t + \icx_{k}],
\\
	\label{e.rwfn.space.<1}
	&f(y) \geq \parab[k\cdots\icm](t,y), \quad y\in[-t+\icx_{k}, x_{\word_1}],
\\	
	&
	\label{e.rwfn.space.1n}
	f(y) \geq \parab[1\cdots\icm](t,y), \quad y\in[x_{\word_1}, x_{\word_{|\word|}}],
	\qquad
	f(x_j) = a_j, \quad j\in\word,
\\
	\label{e.rwfn.space.>n}
	&f(y) \geq \parab[1\cdots k'](t,y), \quad y\in[x_{\word_{|\word|}},\icx_{k'}+t],
\\
	\label{e.rwfn.space.>}
	&f(y) = \parab[k'](t,y), \quad y \in [\icx_{k'}{+t},+\infty).
\end{align}
\end{subequations}
\begin{defn}
\label{d.rwf}
For $ \word\neq\emptyset $,
we define $ \rwf\in\Lip $ to be the unique minimizer of $ \int_{-t+\icx_{k}}^{\icx_{k'}+t} \d y \rateber(\partial_y f) $ among all $ f $'s that satisfy \eqref{e.rwfn.space};
when $ k<k' $ and $ \word=\emptyset $, we define $ \rwf[\wordkl[\emptyset]]\in\Lip $ to be the unique minimizer of $ \int_{-t+\icx_{k}}^{\icx_{k'}+t} \d y \rateber(\partial_y f) $ among 
all $ f\in\Lip $ that satisfy \eqref{e.rwfn.space.<}, \eqref{e.rwfn.space.>},
and $ f(y) \geq \parab[k\cdots k'](t,y) $, for all $ y\in\R $.
\end{defn}
\noindent{}%
Some realizations of $ \rwf $ are shown in Figures~\ref{f.rwf-1}--\ref{f.rwf-3}.
There the graphs of $ \parab[1](t) $, $ \parab[2](t) $, and $ \parab[3](t) $ 
are colored green, blue, and red, respectively;
the graph of $ \rwf $ is the black solid curve;
the dotted lines on the left and right are respectively $ s=\ics_{k} $ and $ d=\icd_{k'} $.

\begin{rmk}
The functions $ \Rwf(\wordkl[\word][k][k]) $ and $ \rwf[\wordkl[\word][k][k]] $ are different in general.
The latter imposes more constraints than the former; compare \eqref{e.raterw.xa} and \eqref{e.rwfn.space}.
Yet, as will be shown in Appendix~\ref{s.a.raterw}, these two functions are actually the same in any minimizer of \eqref{e.raterw.xa}.
Let us mention one related subtle point.
The definition of $ \rwf[\wordkl[\word][k][k]] $ does \emph{not} impose the condition $ f(x_j) \leq a_j $ for all $ j $,
which is required by any minimizer of \eqref{e.raterw.xa}.
In general, it is possible that $ \rwf[\wordkl[\word][k][k]]|_{y=x_j} > a_j $ for some $ j \notin\word $.
Nevertheless, as will be shown later in Section~\ref{s.updown} that,
for those $ \word $ that are relevant to our determinant analysis, $ \rwf[\wordkl[\word][k][k]]|_{y=x_j} \leq a_j $ for all $ j $;
specifically, this statement follows from Proposition~\ref{p.tree.prop}\ref{p.IHC}.
\end{rmk}

\begin{figure}[h]
\begin{minipage}[t]{.40\linewidth}
	\frame{\includegraphics[height=93pt]{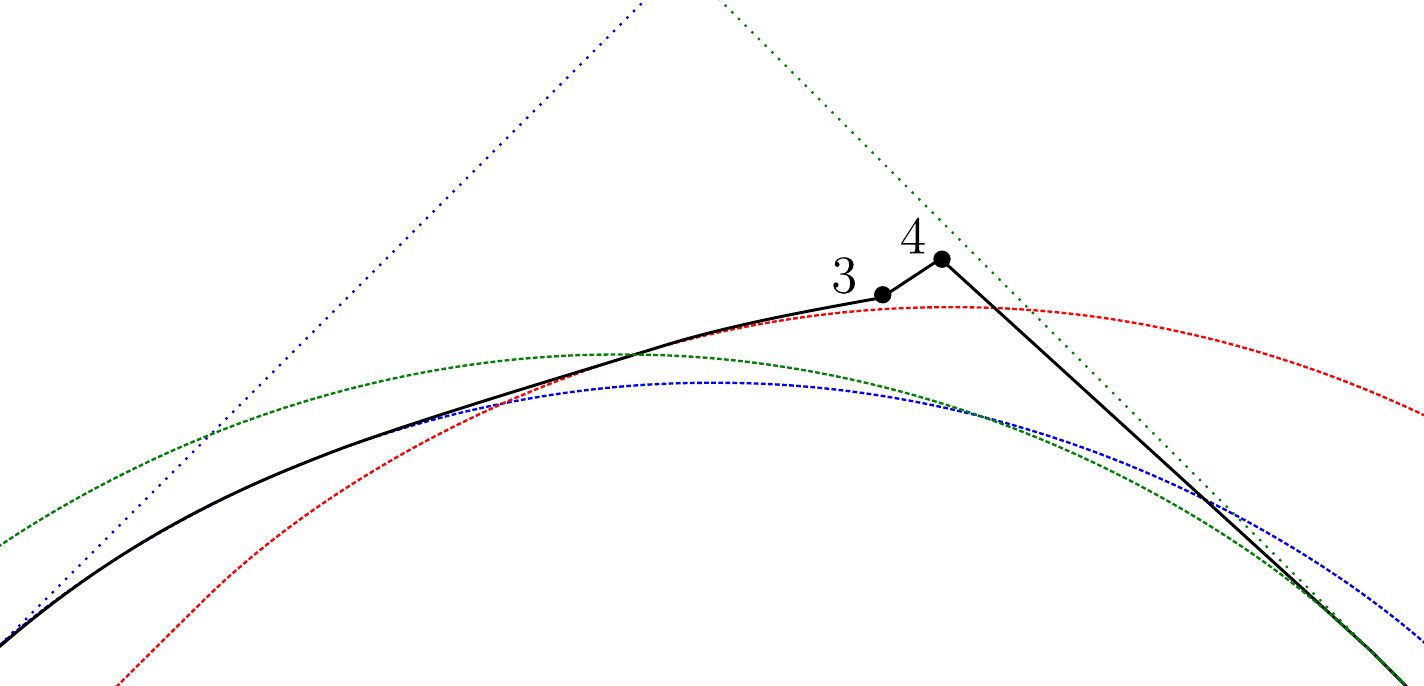}}
	\caption{An example of $ \rwf[\wordkl[34][2][1]] $, $ \icm=3 $.}
	\label{f.rwf-1}
\end{minipage}
\hfill
\begin{minipage}[t]{.58\linewidth}
	\frame{\includegraphics[height=93pt]{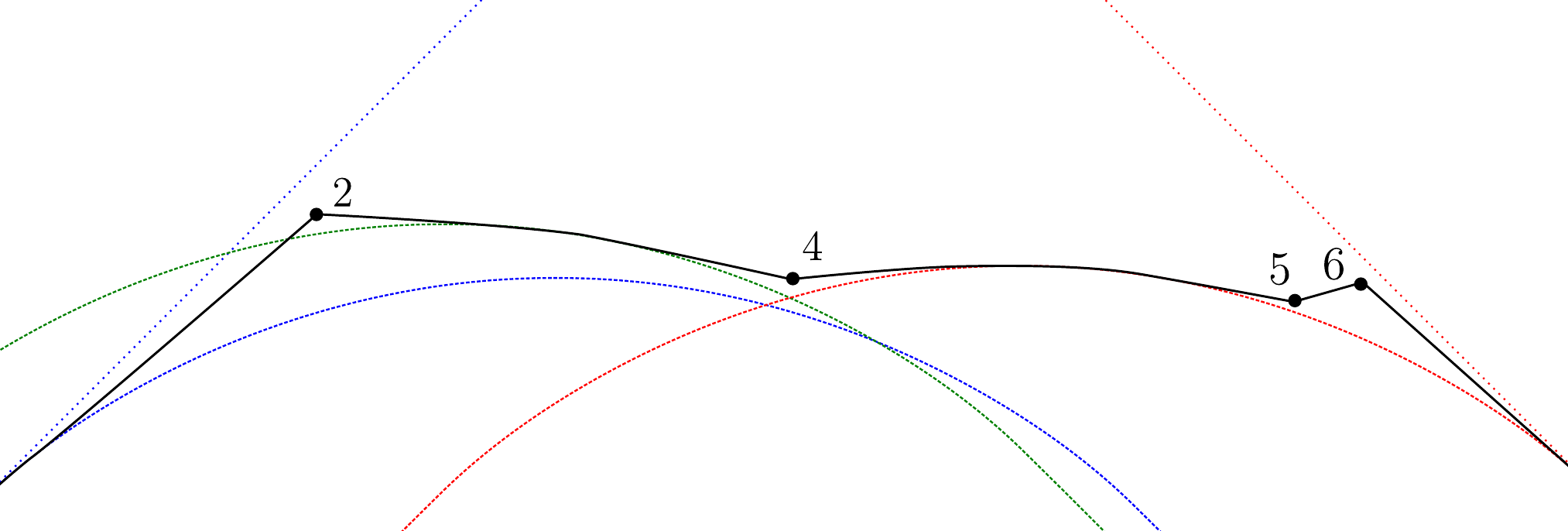}}
	\caption{An example of $ \rwf[\wordkl[2456][2][3]] $, $ \icm=3 $.}
	\label{f.rwf-2}%
\end{minipage}\\
%
\begin{minipage}[t]{.66\linewidth}
	\frame{\includegraphics[height=93pt]{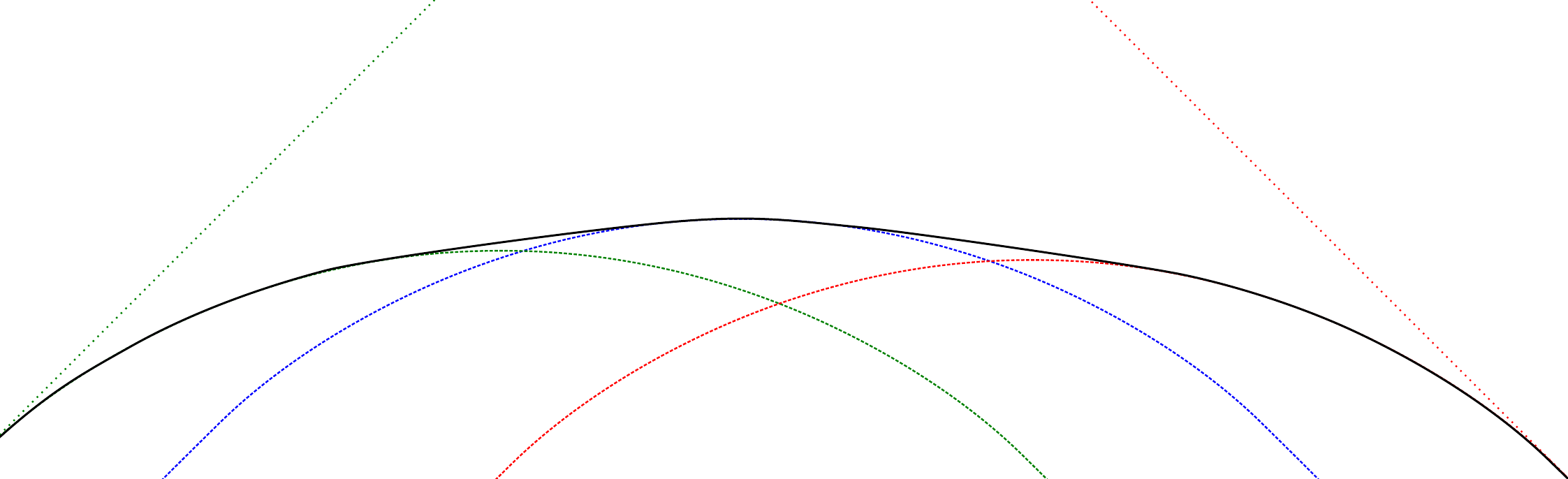}}
	\caption{An example of $ \rwf[\wordkl[\emptyset][1][3]] $, $ \icm=3 $.}
	\label{f.rwf-3}
\end{minipage}
\hfill
\begin{minipage}[t]{.32\linewidth}
	\frame{\includegraphics[height=93pt]{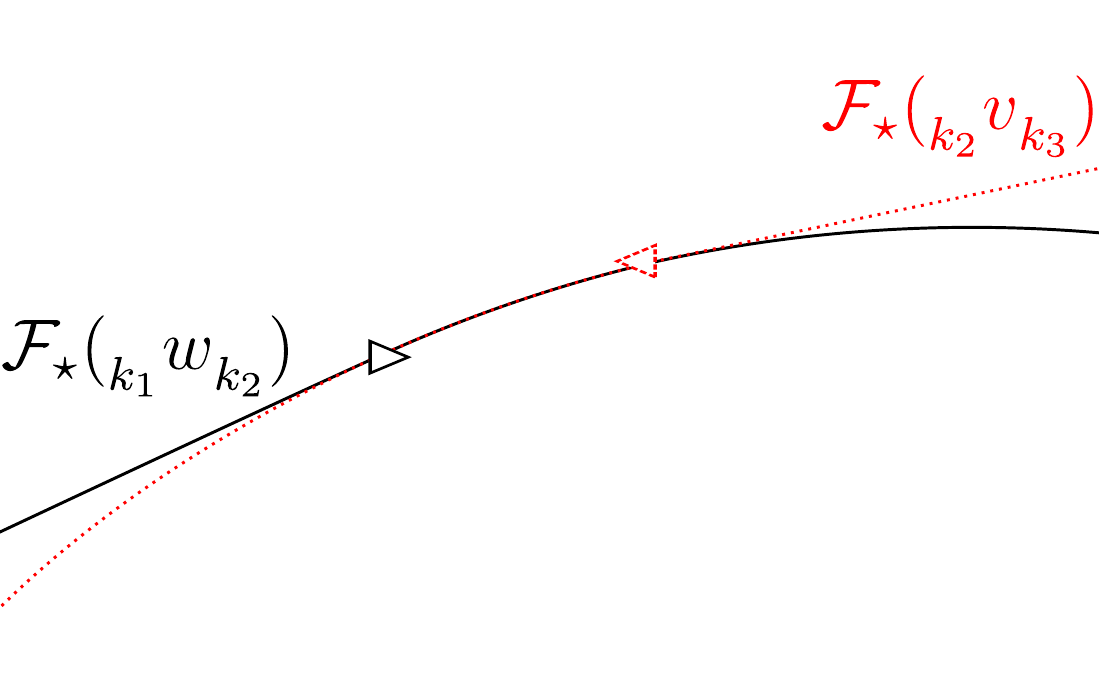}}
	\caption{$ \wordkl[\word][k_1][k_2] \lli \wordkl[\wordsub][k_2][k_3] $.}
	\label{f.isle-sep}%
\end{minipage}
\vspace{-10pt}
\end{figure}

The function $ \rwf $ induces certain geometric information, which we introduce next.
First,
\begin{align}
	\label{e.UDc}
	\UDc(\wordkl)_j := 
	\left\{\begin{array}{l@{}l}
		\up\,, \text{ if } \partial_y \rwf(y)|_{y=x_j^-} \geq \partial_y \rwf(y)|_{y=x_j^+},
	\\
		\down\,, \text{ if } \partial_y \rwf(y)|_{y=x_j^-} < \partial_y \rwf(y)|_{y=x_j^+}.
	\end{array}\right.	
\end{align}
Namely, $ \UDc(\wordkl)_j := \up $ when the graph of $ \rwf $ has a $ \wedge $ kink or is flat at $ y=x_j $ ($ j=2,6 $ in Figure~\ref{f.rwf-2})
and $ \UDc(\wordkl)_j := \down $ when the graph of $ \rwf $ has a $ \vee $ kink at $ y=x_j $ ($ j=4,5 $ in Figure~\ref{f.rwf-2}).
Next, let
\begin{align}
	\label{e.xL.xR}
	{\xl}(\wordkl) := \inf\big\{ y\in\R \, \big| \, \rwf(y)\neq \parab[k](t,y) \big\},
	\
	{\xr}(\wordkl) := \sup\big\{ y\in\R \, \big| \, \rwf(y)\neq \parab[k'](t,y) \big\}
\end{align}
denote where $ \rwf $ merges with $ \parab[k](t) $ and $ \parab[k'](t) $, respectively. Recall $ \word\llo\wordsub $ from Definition~\ref{d.<<}.
\begin{defn}
\label{d.<<i}
Under Convention~\ref{con.wordkl},
write $ \wordkl[\word][k_1][k_2] \lli \wordkl[\wordsub][k_2][k_3] $ if
one of the following equivalent conditions holds:
\begin{enumerate}[leftmargin=20pt, label=(\alph*)]
\item \label{d.<<i.x}
${\xr}(\wordkl[\word][k_1][k_2]) \leq {\xl}(\wordkl[\wordsub][k_2][k_3])$.
\item \label{d.<<i.slope}
$ 
	\partial_y \rwf[\wordkl[\word][k_1][k_2]](y)|_{y={\xr}(\wordkl[\word][k_1][k_2])} 
	\geq
	\partial_y \rwf[\wordkl[\word][k_1][k_2]](y)|_{y={\xl}(\wordkl[\word][k_2][k_3])}.
$
\item \label{d.<<i.hypo}
$ \word \llo \wordsub $  and
$
	( \hyp( \rwf[\wordkl[\word][k_1][k_2]] ) \setminus (\hyp(\parab[k_2](t))^\circ) )
	\cap
	( \hyp( \rwf[\wordkl[\wordsub][k_2][k_3]] ) \setminus (\hyp(\parab[k_2](t))^\circ))
	=
	\emptyset.
$
\end{enumerate}
We write $ \wordkl = \wordkl[\wordsub][k][k''] \cupi \wordkl[\wordsub'][k''] $ if $ \word=\wordsub\cup\wordsub' $ and $  \wordkl[\wordsub][k][k''] \lli  \wordkl[\wordsub'][k''] $.
\end{defn}
\noindent{}%
An illustration of $ \wordkl[\word][k_1][k_2] \lli \wordkl[\wordsub][k_2][k_3] $ is shown in Figure~\ref{f.isle-sep}.
There, the black solid curve is the graph of $ \rwf[\wordkl[\wordsub][k_1][k_2]] $,
with $ \triangleright $ being the location of $ \xr(\wordkl[\word][k_1][k_2]) $;
the red dashed curve is the graph of $ \rwf[\wordkl[\wordsub][k_2][k_3]] $,
with $ \triangleleft $ being the location of $ \xl(\wordkl[\wordsub][k_2][k_3]) $.	

\begin{defn}
\label{d.isle}
We say $ \wordkl $ is a \tdef{$ kk' $-th isle}, or just an \tdef{isle}, if one of the following equivalent conditions holds:
\begin{enumerate}[leftmargin=20pt, label=(\alph*)]
\item \label{d.isle.1}
It is impossible to decompose the word as  $ \wordkl = \wordkl[\wordsub][k][k''] \cupi \wordkl[\wordsub'][k''] $,
under Convention~\ref{con.wordkl}.
\item \label{d.isle.2}
The function $ \rwf[\wordkl]\in\Lip $ is piecewise linear for $ y \in [{\xl}(\wordkl),{\xr}( \wordkl)] $.
\end{enumerate}
We let $ \islesetkl $ denote the set of $ kk' $-th isles.
\end{defn}

\begin{rmk}
When $ \icm=1 $, $ \wordkl[\word][1][1] $ is an isle if and only if $ \hyp(\rwf[\wordkl[\word][1][1]]) \setminus \hyp(\parab[1](t))^\circ $ is a connected set. When this happens the set looks like an `isle', hence the name.
\end{rmk}

\begin{rmk}
Note that $ \wordkl[\emptyset][k][k'] $ is not necessarily an isle. For example, in Figure~\ref{f.rwf-3}, $ \wordkl[\emptyset][1][3]=\wordkl[\emptyset][1][2]\cupi \wordkl[\emptyset][2][3] $.
\end{rmk}

Any $ \wordkl[\word][k_0][k_n] $  can be uniquely decomposed into isles.
Inspect where of $ \rwfsymb=\rwf[\wordkl[\word][k_0][k_n]] $ touches $ \parab[k](t) $, $ k=1,\ldots,\icm $.
By touch we mean the values of the two functions coincide on a nontrivial interval.
When such a touch happens, `cut' the graph of $ \rwfsymb $ there.
These cuts then produce a decomposition.
For example, in Figure~\ref{f.rwf-1}, $ \wordkl[34][2][1] = \wordkl[\emptyset][2][3] \cupi \wordkl[34][3][1] $,
and in Figure~\ref{f.rwf-2}, $ \wordkl[2456][2][3] = \wordkl[2][2][1] \cupi \wordkl[4][1][3] \cupi \wordkl[56][3][3] $.
In general,
$
	\wordkl[\word][k_0][k_n] 
	= 
	\wordkl[\word^{(1)}][k_0][k_1] 
	\cupi 
	\wordkl[\word^{(2)}][k_1][k_2] 
	\cupi \cdots 
	\cupi 
	\wordkl[\word^{(n)}][k_{n-1}][k_{n}],
$
where each $ \wordkl[\word^{(i)}][k_{i-1}][k_{i}] \in \islesetkl[k_{i-1}k_i] $.

The following properties of $ \vecUDc $ are proven from the geometry of $ \rwf $.

\begin{lem}
\label{l.rwf.fulldown}
Consider the statement ``$ \vecUDc(\wordkl) = (\down\ldots\down) $ for some $ \word \in \wordsettkl $''.
\begin{enumerate}[leftmargin=20pt,label=(\alph*)]
\item \label{l.rwf.fulldown.k>=k'}
	When $ k \geq k' $, the statement is false.
\item \label{l.rwf.fulldown.k<k'}
	When $ k < k' $,  the statement holds only if
	$ (x_j,a_j) \in \hyp( \rwf[\wordkl[\emptyset]] )^\circ $ for some $ j\in\alphabetkl $.
\end{enumerate}
\end{lem}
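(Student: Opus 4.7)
My approach for both parts is a slope analysis based on the isle decomposition of $\wordkl[\word]$ combined with the free-boundary (smooth-pasting) property of the entropy minimizer $\rwf[\wordkl]$. Write $\wordkl[\word][k_0][k_n]=\wordkl[\word^{(1)}][k_0][k_1]\cupi\cdots\cupi\wordkl[\word^{(n)}][k_{n-1}][k_n]$ with $k_0=k$ and $k_n=k'$. In isle $i$ let $n_i:=|\word^{(i)}|$ and let $\sigma_0^{(i)},\ldots,\sigma_{n_i}^{(i)}$ denote the consecutive slopes of $\rwf[\wordkl]$ on $[\xl^{(i)},\xr^{(i)}]$. Since $\xl^{(i)}$ and $\xr^{(i)}$ are free parameters of the minimization, first variation at these free boundaries yields the smooth-pasting conditions
\[
\sigma_0^{(i)}=-\frac{\xl^{(i)}-\icx_{k_{i-1}}}{t}=:u_L^{(i)},
\qquad
\sigma_{n_i}^{(i)}=-\frac{\xr^{(i)}-\icx_{k_i}}{t}=:u_R^{(i)}.
\]
The hypothesis $\vecUDc(\wordkl)=(\down,\ldots,\down)$ forces $\sigma_0^{(i)}<\cdots<\sigma_{n_i}^{(i)}$ whenever $n_i\geq 1$, while smooth pasting alone gives $\sigma_0^{(i)}=\sigma_{n_i}^{(i)}$ when $n_i=0$. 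In either case $u_L^{(i)}\leq u_R^{(i)}$, strict when $n_i\geq 1$; rearranging, $\icx_{k_i}-\icx_{k_{i-1}}\geq \xr^{(i)}-\xl^{(i)}\geq 0$, strict when $n_i\geq 1$. As $\word\neq\emptyset$ yields some $n_i\geq 1$, summing over $i$ gives the strict bound $\icx_{k'}-\icx_k>0$.

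Part \ref{l.rwf.fulldown.k>=k'} is then immediate: from \eqref{e.nondeg.sd}, $k\geq k'$ gives $\ics_k\geq\ics_{k'}$ and $\icd_k\leq\icd_{k'}$, so $\icx_k=\ics_k-\icd_k\geq\ics_{k'}-\icd_{k'}=\icx_{k'}$, contradicting $\icx_{k'}>\icx_k$.

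For part \ref{l.rwf.fulldown.k<k'}, the strict inequality $\icx_k<\icx_{k'}$ is consistent, so I compare $G:=\rwf[\wordkl[\word]]$ with $F:=\rwf[\wordkl[\emptyset]]$. Assume by contradiction $a_j\geq F(x_j)$ for every $j\in\alphabetkl$. Both $F$ and $G$ agree with the appropriate parabola on the tails and satisfy $F$'s (weakest) parabolic lower bound $f\geq\parab[k\cdots k'](t)$ in the middle. Moreover $(F\vee G)(x_j)=\max(F(x_j),a_j)=a_j$, so $F\vee G$ meets the pinning of $G$ (and inherits $G$'s stricter parabolic constraints, being $\geq G$), while $F\wedge G$ is a valid competitor for $F$. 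Using $F\vee G$ as a competitor for $G$ and $F\wedge G$ as a competitor for $F$, together with the rearrangement identity $\rateber(f')+\rateber(g')=\rateber((f\vee g)')+\rateber((f\wedge g)')$, the strict convexity of $\rateber$, and the uniqueness of the minimizer, we conclude $G\geq F$ pointwise. Then $G-F\geq 0$ vanishes on the tails and is not identically zero, so it attains a positive interior maximum. Since $G$'s isle-boundary kinks are $\vee$-type by smooth pasting and $F$ is piecewise $\Csp^1$, any $\wedge$-type peak of $G-F$ must occur at some pinning point $x_j\in\word$, and yields $\UDc_j=\up$, contradicting the hypothesis.

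The main obstacle lies in the final step of \ref{l.rwf.fulldown.k<k'}: rigorously justifying that the interior maximum of $G-F$ is attained at a genuine $\wedge$-kink of $G$, rather than a smooth slope-coincidence or a flat plateau. Handling this will require a careful case analysis of how the isle decomposition of $G$ overlays that of $F$, together with the strict-inequality slope bounds derived in the first paragraph to rule out degenerate coincidences.
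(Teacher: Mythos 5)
Part \ref{l.rwf.fulldown.k>=k'} of your argument is correct and is essentially the paper's: both boil down to the fact that all-down forces the tangency slope of $ \rwf $ at $ \xl(\wordkl) $ (namely $ \partial_y\parab[k](t,\cdot) $ there) to be $ \leq $ the tangency slope at $ \xr(\wordkl) $ (namely $ \partial_y\parab[k'](t,\cdot) $ there), strictly on a nonempty isle, which is incompatible with $ \icx_{k}\geq\icx_{k'} $; your telescoping over the isle decomposition just makes the non-isle case explicit. Your derivation of $ G\geq F $ in part \ref{l.rwf.fulldown.k<k'} (with $ G:=\rwf $, $ F:=\rwf[\wordkl[\emptyset]] $) via the $ \vee/\wedge $ rearrangement, admissibility of both competitors, and uniqueness of the minimizers is also sound; you should add a line ruling out $ G\equiv F $, since in that case $ G $ is $ \Csp^1 $ everywhere and $ \UDc(\wordkl)_j=\up $ for all $ j\in\word $ already gives the contradiction.

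The obstacle you flag at the end is, however, a genuine gap, and it sits at the crux of part \ref{l.rwf.fulldown.k<k'}. At the rightmost maximizer $ y_* $ of $ G-F $ you only obtain $ \partial_yG|_{y_*^-}\geq\partial_yG|_{y_*^+} $ (using that $ F $ is globally $ \Csp^1 $, being a tangential concatenation of parabolic arcs and lines), and this yields $ \UDc(\wordkl)_j=\up $ only if $ y_* $ is a pinning point. The configuration you cannot dismiss is not a degenerate coincidence: near $ y_* $ the minimizer $ G $ may ride an obstacle $ \parab[k''](t) $ while $ F $ is locally linear with $ \partial_yF(y_*)=\partial_y\parab[k''](t,y_*) $; then $ F(y_*)=\parab[k''](t,y_*)-\max(G-F)<\parab[k''](t,y_*) $, which forces $ k''\notin[k,k'] $ --- and these are precisely the obstacles that constrain $ G $ through \eqref{e.rwfn.space.1n} but do not constrain $ F $ at all, so nothing you have established excludes this. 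The paper sidesteps the comparison of minimizers entirely: for an isle, all-down makes $ \rwf $ convex on $ [\xl(\wordkl),\xr(\wordkl)] $ with endpoint slopes matching $ \parab[k](t) $ and $ \parab[k'](t) $, which places its graph strictly below the common tangent line $ L $ of these two parabolas; one then checks that $ L $ coincides with $ \rwf[\wordkl[\emptyset]] $ between the tangency points (any $ \parab[k''](t) $ reaching $ L $ there would force $ \rwf $ above $ L $ as well), so every $ (x_j,a_j) $, $ j\in\word $, lies strictly below $ \rwf[\wordkl[\emptyset]] $. In other words, all-down alone already implies $ G<F $ on the isle, giving the conclusion in one step; if you wish to keep your route, the convexity of $ G $ on each isle is the missing input needed to pin down where the maximum of $ G-F $ can sit.
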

\begin{proof}
Throughout the proof  $ \rwfsymb := \rwf $, $ \rwf[\wordkl[\emptyset]] := \mathcal{F}_\emptyset $, $ n:=|\word| $, $ {\xl}:={\xl}(\wordkl) $, and $ {\xr}:={\xr}(\wordkl) $.

First assume $ \wordkl $ is an isle.
By Definition~\ref{d.isle}\ref{d.isle.2} the graph of $ \rwfsymb(y) $ is piecewise linear between $ y\in[\xl,\xr] $.
List the slopes as $ \slope_0,\slope_1,\ldots,\slope_n $ from left to right.
The condition $ \vecUDc(\wordkl) = (\down\ldots\down) $ forces $ \slope_0<\slope_1<\ldots<\slope_n $ so
\begin{align}
	\label{e.l.rwf.fulldown}
	\slope_0 = \partial_y \parab[k](t,y)|_{y={\xl}} < \partial_y \parab[k'](t,y)|_{y={\xr}} = \slope_{n}.
\end{align}
When $ k \geq k' $, \eqref{e.l.rwf.fulldown} is impossible because
$
	\partial_y \parab[k](t,y)|_{y={\xl}}
	=
	-(\xl-\icx_{k})/t 
	> 
	-(\xr-\icx_{k'})/t 
	=
	\partial_y \parab[k'](t,y)|_{y={\xr}}.
$
When $ k > k' $, consider the (unique) line $ L $ in $ \R^2 $ that intersects both $ \parab[k](t) $ and $ \parab[k'](t) $ tangentially,
and let $ y^\triangleleft $ and $ y^\triangleright $ denote the respective horizontal coordinates of the intersections.
It is straightforward to check that \eqref{e.l.rwf.fulldown} implies $ y^\triangleleft<\xl<\xr<y^\triangleright $.
This property together with $ \slope_0<\slope_1<\ldots<\slope_n $ implies that
the graph of $ \rwfsymb|_{[y^\triangleleft,y^\triangleright]} $ lies strictly below $ L $.
We claim that, between $ y\in [y^\triangleleft,y^\triangleright] $, 
the line $ L $ actually coincides with the graph of $ \mathcal{F}_\emptyset $.
Otherwise there exists $ \parab[k''](t) $ that either forces the graph of $ \mathcal{F}_\emptyset $ to bulge above $ L $ or touches $ L $,
but either scenario contradicts the statement that $ \rwfsymb|_{[y^\triangleleft,y^\triangleright]} $ lies strictly below $ L $.
The desired property now follows by taking any $ j\in\word $.

When $ \wordkl $ is not an isle,
decompose it into a union of $ \lli $-ordered isles and use the results proven for isles.
\end{proof}

\section{Determinantal analysis: the up-down iteration}
\label{s.updown}

We begin by preparing some notation and basic properties.
Recall that $ \sgn(\up) := +1 $ and $ \sgn(\down) := -1 $.
For $ \vecUD \in \{\up,\down\}^\word $, define the parity $ \parity(\vecUD) := \prod_{j\in\word} (-\sgn(\UD_j)) $ for the number of $ \up $'s in $ \vecUD $.
Set
\begin{align*}
	\opuc[k][k']{\word}
	:=	
	\opu[k][k']{\word_{\vecUDc(\wordkl)}}\,,
	\qquad
	\parityc(\wordkl)
	:=
	\parity(\vecUDc(\wordkl)).
\end{align*}
The following lemma will be frequently used throughout this section.
For $ \word\in\wordsetkl $ and a subset $ \alpha\subset\word $ of letters in $ \word $, 
we write $ \vecUDc(\wordkl)|_{\alpha} := (\vecUDc(\wordkl)_j)_{j\in\alpha} $.
\begin{lem}
\label{l.up.criterion}
Consider $ \word\subset\word'\in\wordsetkl $ such that $ \vecUDc(\wordkl)|_{\word'\setminus\word} = (\up\ldots\up) $.
Then for any $ j\in\word\subset\word' $ with $ \vecUDc(\wordkl[\word'])_j = \up $, we have $ \vecUDc(\wordkl)_j = \up $.

In words, deleting `up letters' in $ \word' $ keeps other `up letters' (those $ j $'s) up.
\end{lem}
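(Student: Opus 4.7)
The plan is to induct on $|\word' \setminus \word|$ to reduce to the single-letter case $\word' = \word \cup \set{i}$ with $\UDc(\wordkl[\word'])_i = \up$, and then to establish the pointwise comparison $\rwf[\wordkl] \le \rwf[\wordkl[\word']]$, from which the slope statement falls out by a local argument. The induction is routine: after one application of the single-letter case to remove $i$ from $\word'$, the remaining letters in $(\word' \setminus \word) \setminus \set{i}$ retain their $\up$-status in the intermediate configuration, so the hypothesis persists and the inductive hypothesis applies to $\word' \setminus \set{i} \supset \word$. So fix such a single-letter $\word' = \word \cup \set{i}$ and write $f := \rwf[\wordkl]$, $F := \rwf[\wordkl[\word']]$.

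Granted the key pointwise inequality $f \le F$, the conclusion is immediate: for any $j \in \word$ with $\UDc(\wordkl[\word'])_j = \up$, the nonnegative function $F - f$ vanishes at $x_j$ (both equal $a_j$) and hence attains a local minimum there, so the one-sided derivatives satisfy
\[
\partial_y F(x_j^-) \le \partial_y f(x_j^-), \qquad \partial_y f(x_j^+) \le \partial_y F(x_j^+).
\]
Chaining these with the hypothesis $\partial_y F(x_j^-) \ge \partial_y F(x_j^+)$ gives $\partial_y f(x_j^+) \le \partial_y f(x_j^-)$, which is $\UDc(\wordkl)_j = \up$ by \eqref{e.UDc}.

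For the comparison $f \le F$, I would use the standard min/max swap for strictly convex variational problems. Set $h := \min(f,F)$ and $g := \max(f,F)$; the pointwise lattice identity $\rateber(\partial_y h) + \rateber(\partial_y g) = \rateber(\partial_y f) + \rateber(\partial_y F)$ integrates to an equality of the corresponding integrals. If $h$ is admissible for the $\word$-problem of Definition~\ref{d.rwf} and $g$ is admissible for the $\word'$-problem, then the optimality inequalities $\int \rateber(\partial_y h) \ge \int \rateber(\partial_y f)$ and $\int \rateber(\partial_y g) \ge \int \rateber(\partial_y F)$ must both be equalities, and strict convexity of $\rateber$ together with uniqueness of the minimizers forces $h = f$ and $g = F$, so that $f \le F$. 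The $\word$-admissibility of $h$ is a routine region-by-region verification on the five-region decomposition in \eqref{e.rwfn.space}: since $\word \subset \word'$ and $\parab[k\cdots \icm] \le \parab[1\cdots \icm] \ge \parab[1\cdots k']$, every obstacle that $h$ must beat on a given interval is already beaten by both $f$ and $F$ there, with the endpoint shifts between $\word_1, \word'_1$ and $\word_{|\word|}, \word'_{|\word'|}$ handled by the same parabola monotonicity.

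The main obstacle is admissibility of $g$ for the $\word'$-problem, which reduces to $g(x_i) = a_i$, i.e.\ the inequality $f(x_i) \le a_i$ at the removed letter. Intuitively, the hypothesis $\UDc(\wordkl[\word'])_i = \up$ (a $\wedge$-kink or flat passage of $F$ at $x_i$) says that the pinning $F(x_i) = a_i$ in the $\word'$-problem has a nonnegative KKT multiplier, so it binds only from below or is inactive; relaxing it (as the $\word$-problem does) can only lower the minimizer at $x_i$. A rigorous derivation proceeds by constructing an explicit downward perturbation of $F$ near $x_i$ --- concretely, the minimum of $F$ with the linear extension of its two pieces adjacent to $x_i$ --- that is admissible for the intermediate problem with only the pinning at $x_i$ relaxed, and using strict convexity of $\rateber$ to derive a contradiction from the assumption $f(x_i) > a_i$. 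The three subcases $i < \word_1$, $\word_1 < i < \word_{|\word|}$, and $i > \word_{|\word|}$ must be treated separately because in each case the perturbation must respect a different obstacle parabola; this bookkeeping is the main technical cost of the argument.
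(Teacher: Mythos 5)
Your skeleton is exactly the paper's: reduce by induction on $|\word'\setminus\word|$ to the deletion of a single up-letter $i$, establish the pointwise monotonicity $\rwf[\wordkl]\le\rwf[\wordkl[\word']]$, and read off $\UDc(\wordkl)_j=\up$ from the one-sided slopes at a point where the smaller function touches the larger one; your chain $\partial_yf(x_j^-)\ge\partial_yF(x_j^-)\ge\partial_yF(x_j^+)\ge\partial_yf(x_j^+)$ is correct. The paper disposes of the monotonicity in one sentence (``deleting $j_*$ only decreases the hypograph''), so you have correctly isolated the only substantive point, namely the admissibility of $\max(f,F)$ for the $\word'$-problem, i.e.\ $f(x_i)\le a_i$.

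Two problems with your treatment of that point. First, the concrete perturbation you propose is vacuous: at a $\wedge$ kink the linear extension of the left piece of $F$ lies \emph{above} $F$ to the right of $x_i$, and the extension of the right piece lies above $F$ to the left of $x_i$, so the pointwise minimum of $F$ with these extensions is $F$ itself near $x_i$ --- nothing is perturbed downward. (A genuine downward perturbation is, e.g., replacing $F$ on $[x_i-\eta,x_i+\eta]$ by its chord, which by Jensen strictly decreases $\int\rateber(\partial_y\,\Cdot\,)$ whenever the kink is genuine and $a_i$ is strictly above the obstacle.) Second, and more seriously, even a correct local perturbation only shows that $F$ is strictly suboptimal for the problem with the pin at $x_i$ relaxed; that gives $f\neq F$ and a strict energy gap, but it does \emph{not} give $f(x_i)\le a_i$. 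To exclude $f(x_i)>a_i$ one still has to work on the connected component $(\alpha,\beta)$ of $\{f>F\}$ containing $x_i$ (on the other components your swap does close, by strict convexity and uniqueness), observe that $f$ is linear there (no pins of $\word$ and $f>F\ge$ obstacle), and then derive a contradiction from the local minimality of $F$ on $[\alpha,\beta]$ together with the hypothesis $\partial_yF(x_i^-)\ge\partial_yF(x_i^+)$; this is where the hypothesis genuinely enters, and neither your sketch nor the paper's one-liner carries out that exclusion. So the proposal is faithful to the paper's route and no less rigorous than the printed proof, but the step you yourself flag as ``the main obstacle'' is not actually closed by the argument you give.
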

\begin{proof}
Assume $ \word'\setminus \word $ consists of one letter, denoted $ j_* $. The general case follows by induction on $ |\word'\setminus \word| $.
View $ \rwf[\wordkl[\word']] \mapsto \rwf[\wordkl] $ as a transformation by deleting the letter in $ j_* $ from $ \word' $.
By assumption, $ \rwf[\wordkl[\word']] $ has a $ \wedge $ kink (or is flat) at $ y=x_{j_*} $.
Hence deleting $ j_* $ only decreases the hypograph: $ \hyp(\rwf[\wordkl[\word']] ) \supset \hyp(\rwf[\wordkl] ) $.
Fix any $ j\in\word\subset\word' $ with $ {\UDc(\wordkl[\word'])}_j = \up $.
The graph of $ \rwf[\wordkl[\word']] $ has $ \wedge $ kink (or is flat) at $ y=x_j $.
Since $ \hyp(\rwf[\wordkl[\word']] ) \supset \hyp(\rwf[\wordkl] ) $, it is impossible that this $ \wedge $ kink turns into a $ \vee $ kink upon the transformation.
\end{proof}

The iteration and related properties depend on $ k,k' $. 
To alleviate heavy notation, however, in this section we will often omit the dependence on $ k,k' $.
For clarity we list the omissions here. Some notation will be defined later.
\begin{align*}
	&\wordfull = \wordfullkl,&
	&\opu{\cdots} = \opu[k][k']{\cdots},&
	&\vecUDc(\word) = \vecUDc(\wordkl),&
	&\parityc(\word) = \parityc(\wordkl),
\\
	&\activated(\word) = \activated(\wordkl),&
	&\children(\word) = \children(\wordkl),&
	&\vecUDi(\word) = \vecUDi(\wordkl).
\end{align*}

\subsection{The iteration}
\label{s.updown.iteration.}

As was explained in Section~\ref{s.overview.updown.isle},
in order to control the determinant we would like to \emph{only} have operators of the form
$ 
	\opu[k][k']{\word_{\vecUDc(\wordkl)}}
	=	
	\opuc{\word}	
$.
The goal of the iteration is hence to express the operator
$ \opu{(\wordfull)_{\downs\ldots\downs}} = \opu[k][k']{(\wordfullkl)_{\downs\ldots\downs}} $
in \eqref{e.det} as a linear combination of $ \opu[k][k']{\word_{\vecUDc(\wordkl)}}=  \opuc{\word} $.

Starting with $ \opu{(\wordfull)_{\downs\ldots\downs}} $, we seek to convert $ (\down\ldots\down) $ into $ \vecUDc(\wordfull) $.
To this end, examine which letters $ i\in\wordfull $ have $ \vecUDc(\wordfull)_{j} = \up $.
Call them the \tdef{activated letters} of $ \wordfull $ and let $ \activated(\wordfull) $ denote the set of activated letters of $ \wordfull $.
Apply \eqref{e.flip} for each $ i \in \activated(\wordfull) $.
We have
\begin{align*}
	\opu{(\wordfull)_{\downs\ldots\downs}}
	=
	\parityc({\wordfull}) \
	\opuc{\wordfull}
	+
	\hspace{-10pt}
	\sum_{\word\in\children(\wordfull)}
	\hspace{-10pt}
	\parity(\vecUD_\mathrm{inh}(\word)) \
	\opu{\word_{\vecUD_\mathrm{inh}(\word)}}\,.
\end{align*}
The sum goes over all 
$
	\children(\wordfull) 
	:=
	\{  \word\subsetneq\wordfull : (\wordfull\setminus \word) \subset \activated(\wordfull)  \},
$
and each child inherits a $ \vecUDi(\word) \in \{\up,\down\}^\word $ given by $ \UDi(\word)_j = \UDc(\wordfull)_j $ for all $j\in\word$.

The iteration proceeds by taking each child $ \word' $ of $ \wordfull $ as a new parent.
For each such $ \word' $, examine which letters $ i\in\word' $ have 
$ \vecUDc(\word')_i = \up $ and $ \vecUD_\mathrm{inh}(\word')_i = \down $.
These $ i $'s are called the \tdef{activated letters of $ \word' $}: 
\begin{align*}
	\activated(\word')
	:=
	\big\{ i\in\word' : \,\vecUDc(\word')_i = \up \text{ and } \vecUDi(\word')_i = \down \big\}.
\end{align*}
Applying \eqref{e.flip} for each $ i\in\activated(\word') $ in $ \opu{\word'_{\vecUDi(\word')}} $ gives
\begin{align}
	\label{e.iteration}
	\parity(\vecUDi(\word')) \
	\opu{\word'_{\vecUDi(\word')}}
	=
	\parityc(\word') \
	\opuc{\word'}
	+
	\hspace{-10pt}
	\sum_{\word\in\children(\word')}
	\hspace{-10pt}
	\parity(\vecUDi(\word)) \
	\opu{\word_{\vecUDi(\word)}},
\end{align}
where the sum goes over 
$
	\word \in
	\children(\word') 
	:=
	\{  \word\subsetneq\word' : (\word'\setminus \word) \subset \activated(\word')  \}
$
and $ \vecUDi(\word) \in \{\up,\down\}^{\word} $ is given by $ \UDi(\word)_j = \UDc(\word')_j $ for all $j\in\word$.

The following lemma follows immediately from Lemma~\ref{l.up.criterion}.
\begin{lem}
\label{l.no.reactivate}
For any finite sequence $ (\word,\word',\word'',\ldots) $ in $ \wordsetkl $ such that consecutive words are parent-child, namely
$ \word \in \children(\word') $, $ \word' \in \children(\word'') $, \ldots,
we have $ (\activated(\word) \cap \activated(\word') \cap \activated(\word'') \cap \ldots) = \emptyset $.
\end{lem}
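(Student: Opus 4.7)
The plan is to prove the stronger consecutive disjointness statement, which immediately implies the big intersection in the conclusion is empty. For any consecutive pair $\word \in \children(\word')$ along the chain, I claim $\activated(\word) \cap \activated(\word') = \emptyset$, and this follows by unpacking the definitions introduced just before \eqref{e.iteration}. The inheritance rule specifies that for $i \in \word \subset \word'$,
\begin{align*}
	\vecUDi(\word)_i = \up \ \iff \ i \in \activated(\word'),
	\qquad
	\vecUDi(\word)_i = \down \ \iff \ i \notin \activated(\word').
\end{align*}
Meanwhile $i \in \activated(\word)$ requires $\vecUDi(\word)_i = \down$, so any such $i$ satisfies $i \notin \activated(\word')$. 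This gives the consecutive disjointness, and the big intersection, being contained in $\activated(\word) \cap \activated(\word')$, is therefore empty.

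Lemma~\ref{l.up.criterion} can be used to recast the argument more geometrically, which is presumably what the hint has in mind: for any hypothetical $i$ in the big intersection, one has $\vecUDc(\word^{(k)})_i = \up$ for every $\word^{(k)}$ in the chain, and this is internally consistent because in passing from $\word^{(k+1)}$ to its child $\word^{(k)}$ only letters in $\activated(\word^{(k+1)})$ (which are $\up$ in $\vecUDc(\word^{(k+1)})$ by definition of $\activated$) are removed, so Lemma~\ref{l.up.criterion} confirms that $i$ stays $\up$ throughout. The contradiction is then read off from the inheritance rule exactly as in the previous paragraph.

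The main obstacle is purely notational: one must keep straight that in $\word \in \children(\word')$ the child $\word$ is the \emph{smaller} word, and that $\vecUDi(\word)$ is dictated by $\activated(\word')$ of the parent rather than the other way around. Once the parent-child direction is pinned down, the lemma reduces to a short definition-chase.
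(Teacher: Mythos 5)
Your proof is correct, and the core of it — consecutive disjointness read off from the requirement $\vecUDi(\word)_i=\down$ in the definition of $\activated(\word)$ together with the fact that a kept activated letter of the parent is inherited as $\up$ — is exactly the mechanism behind the lemma; the intersection of all the activated sets is contained in that of the first two, so the stated conclusion follows. Two remarks. First, the paper attributes the lemma to Lemma~\ref{l.up.criterion}, which is a statement about $\vecUDc$ (deleting up-letters keeps other letters up in the \emph{target} assignment); your argument shows that the contradiction actually comes entirely from the $\vecUDi$ side, so Lemma~\ref{l.up.criterion} is not needed here — it is needed elsewhere, e.g.\ to guarantee that the $S=\emptyset$ term in each flip step really equals $\parityc(\word')\,\opuc{\word'}$ and that the iteration terminates when $\vecUDi=\vecUDc$. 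Your isolation of the combinatorial part is, if anything, cleaner than the paper's one-line citation. Second, the displayed ``iff'' is not literally correct at depth greater than one: a letter activated and kept by an earlier ancestor has $\vecUDi(\word)_j=\up$ even though $j\notin\activated(\word')$ (in Example~\ref{ex.updown.1}, $\vecUDi(34)_4=\up$ while $4\notin\activated(234)=\{2\}$), so the correct statement is $\vecUDi(\word)_j=\up$ iff $j$ lies in the union of the activated sets of all ancestors. This is harmless for your proof, since you only use the implication $i\in\activated(\word')\Rightarrow\vecUDi(\word)_i=\up$ (equivalently $\vecUDi(\word)_i=\down\Rightarrow i\notin\activated(\word')$), which holds in either reading; in fact under the corrected rule your argument yields full pairwise disjointness along the chain, which is the form the paper actually invokes later in the proof of Lemma~\ref{l.tree.tower}.
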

\begin{proof}
Take any word $\word_1$, take any $i\in\activated(\word_1)$, and consider all the descendants of $\word_1$, namely the children of $\word_1$, the children of children $\word_1$, and so on.
By the definition of activated letters, $\vecUDc(\word_1)_i=\up$.
Any child is obtained from its parent by deleting some letters that are ``up'' for the parent.
The deletion procedure can change the up-down type, but Lemma~\ref{l.up.criterion} ensures that any up letter for the parent remains up for the child, if not deleted.
Therefore, for all descendants that contains the letter $i$, the letter remains up for those descendants, so $\vecUDi(\word_\mathrm{d})_i=\up$ for all descendant $\word_\mathrm{d}$ such that $i\in\word_\mathrm{d}$.
By the definition of activated letters, $i\notin\activated(\word_\mathrm{d})$.
This proves $\activated(\word_1)\cap\activated(\word_d)=\emptyset$, for any descendant $\word_\mathrm{d}$ of any word $\word_1$.
\end{proof}

The iteration proceeds as described and terminates when $ \activated(\word) = \emptyset $, or equivalently $ \vecUDi(\word)=\vecUDc(\word) $. 
Lemma~\ref{l.no.reactivate} ensures that in each step of the iteration, the children have length strictly less than their parent.
Hence the iteration must terminate in finitely many steps.

\begin{ex}
\label{ex.updown.1}
Take the geometry depicted in Figure~\ref{f.iter.ex1}, with $ \icm=3 $, $ k=2 $, $ k'=1 $, and $ \wordfullkl=\wordfullkl[21]=1234 $.
The iteration starts with $ \opu[2][1]{1234_{\downs\downs\downs\downs}}=\opu{1234_{\downs\downs\downs\downs}} $.
Figure~\ref{f.iter.ex1} gives $ \vecUDc( \wordkl[1234][2][1])=\vecUDc(1234)=(\up\down\down\up) $,
so we need to activate the letters in $ \activated(1234) = \{1,4\} $.
Doing so gives 
\begin{align*}
	 \opu{1234_{\downs\downs\downs\downs}}
	 =
	 (-1)^2\opuc{1234}
	 +
	 (-1)\opu{234_{\downs\downs\ups}}
	 +
	 (-1)\opu{123_{\ups\downs\downs}}
	 +
	 \opu{23_{\downs\downs}}.
\end{align*}
From Figure~\ref{f.iter.ex1}, $ \vecUDc(234)=(\up\down\up) $, so $ \activated(234)=\{2\} $;
$ \vecUDc(123)=(\up\down\up) $, so $ \activated(234)=\{3\} $;
$ \vecUDc(23)=(\up\up) $, so $ \activated(23)=\{2,3\} $.
Accordingly,
\begin{align*}
	(-1)\opu{234_{\downs\downs\ups}}
	&=
	(-1)^2\opuc{234}
	+
	(-1)\opu{34_{\downs\ups}},
\\
	(-1)\opu{123_{\ups\downs\downs}}
	&=
	(-1)^2\opuc{123}
	+
	(-1)\opu{12_{\ups\downs}},
\\
	\opu{23_{\downs\downs}}
	&=
	(-1)^2\opuc{23} + (-1)\opu{2_{\ups}} + (-1)\opu{3_{\ups}} + \opu{\emptyset}.
\end{align*}
All $ \opu{\word_{\vecUDi(\word)}} $ on the right sides have $ \vecUDi(\word) = \vecUDc(\word) $, so the iteration is completed.
Altogether
\begin{align*}
	 \opu{1234_{\downs\downs\downs\downs}}
	 =
	 \opuc{1234} -\opuc{234} -\opuc{123} + \opuc{23}
	 -\opuc{34} -\opuc{12} - \opuc{2} - \opuc{3} + \opu{\emptyset}.
\end{align*}
\end{ex}

\begin{figure}[h]
\begin{minipage}[b]{.58\linewidth}
\begin{minipage}[t]{.495\linewidth}
	\frame{\includegraphics[width=\linewidth]{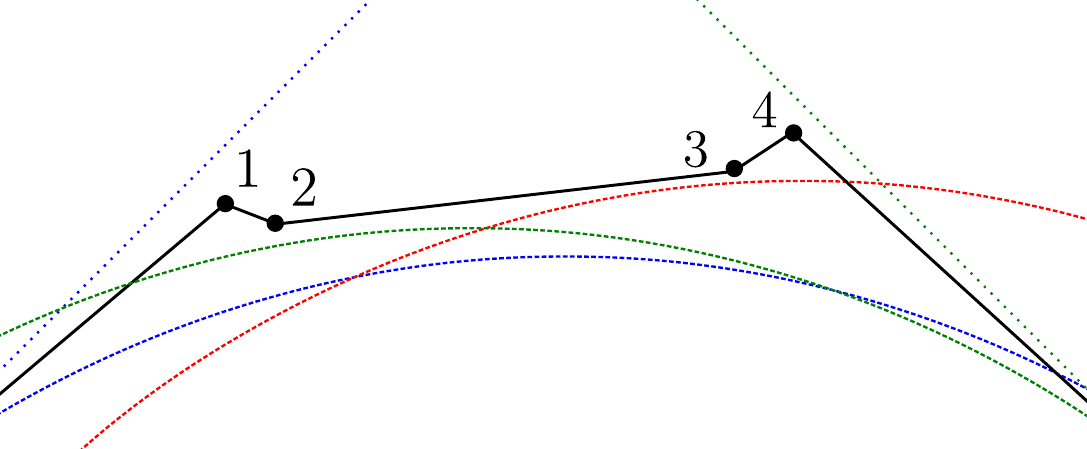}}
\end{minipage}
\hfill
\begin{minipage}[t]{.495\linewidth}
	\frame{\includegraphics[width=\linewidth]{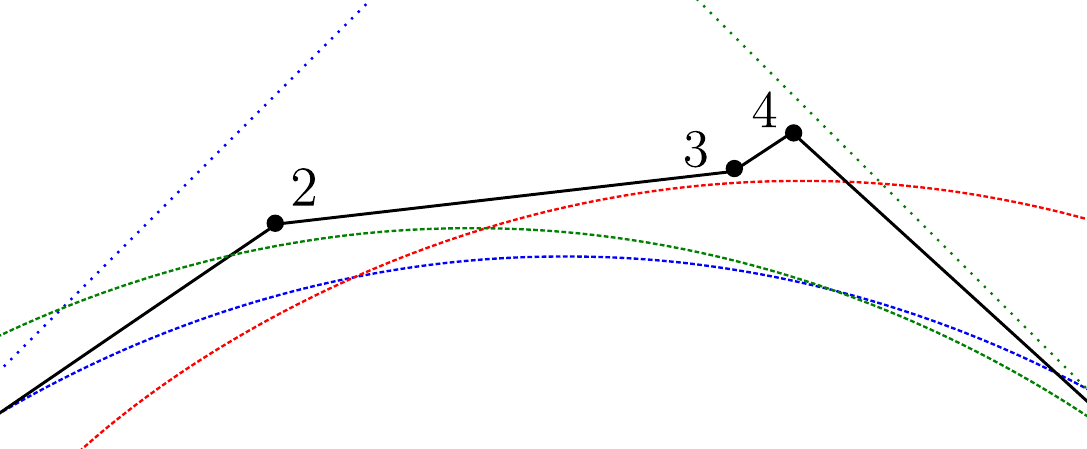}}
\end{minipage}
\\
~\vspace{-10pt}
\\
\begin{minipage}[t]{.495\linewidth}
	\frame{\includegraphics[width=\linewidth]{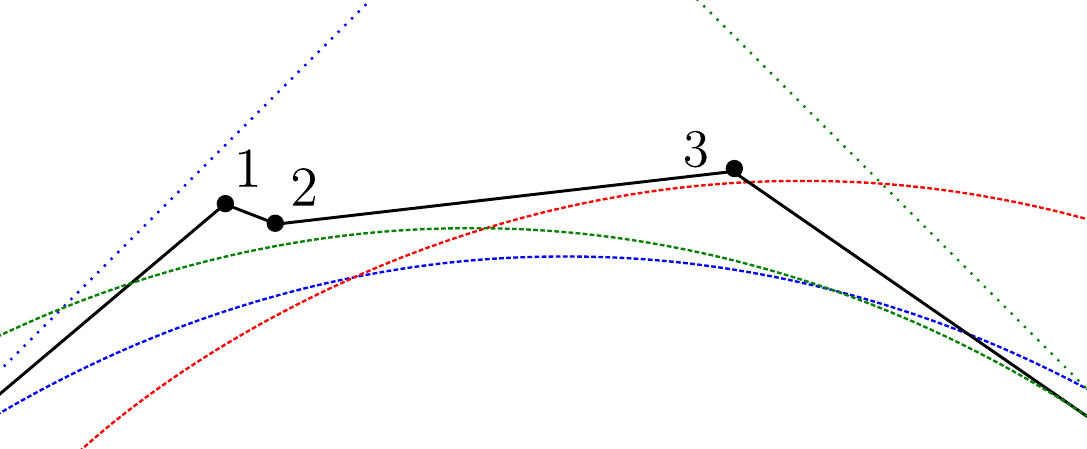}}
\end{minipage}
\hfill
\begin{minipage}[t]{.495\linewidth}
	\frame{\includegraphics[width=\linewidth]{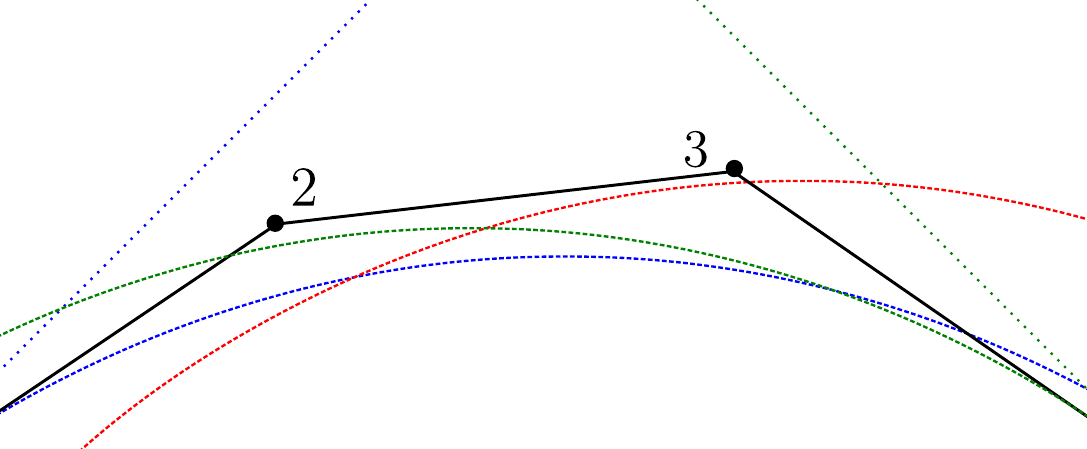}}
\end{minipage}
\\
~\vspace{-10pt}
\\
\begin{minipage}[t]{.495\linewidth}
	\frame{\includegraphics[width=\linewidth]{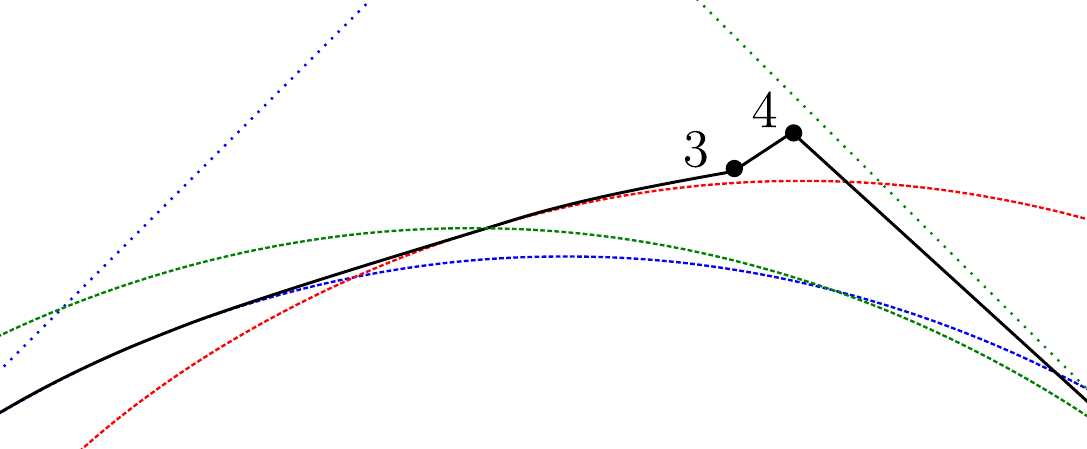}}
\end{minipage}
\hfill
\begin{minipage}[t]{.495\linewidth}
	\frame{\includegraphics[width=\linewidth]{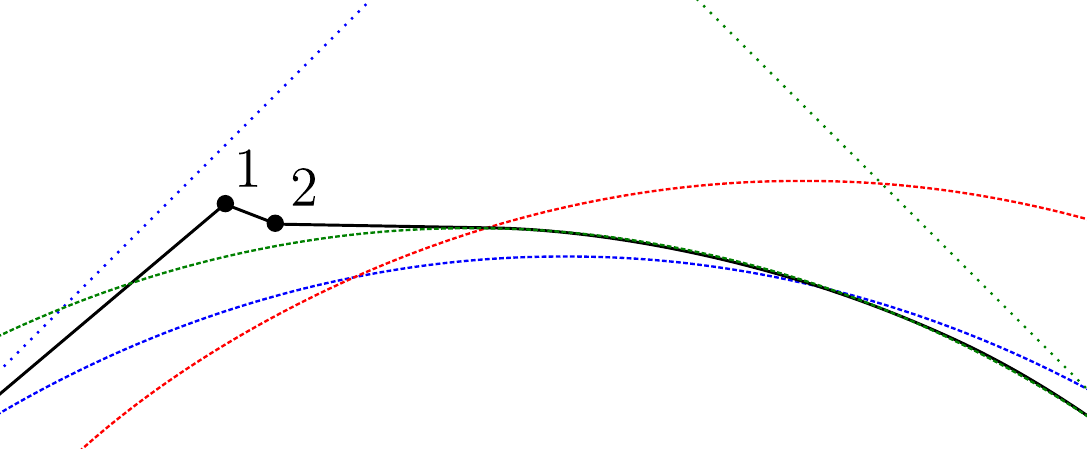}}
\end{minipage}
\\
~\vspace{-10pt}
\\
\begin{minipage}[t]{.495\linewidth}
	\frame{\includegraphics[width=\linewidth]{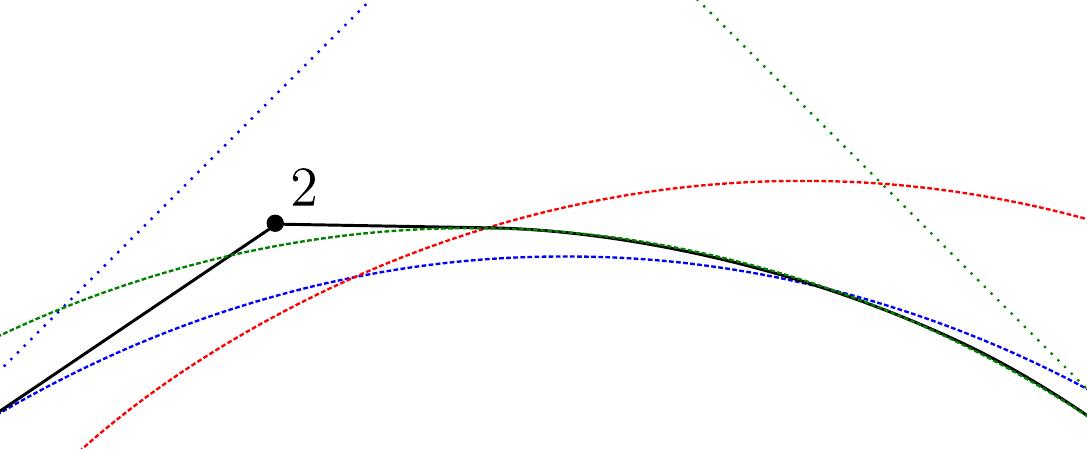}}
\end{minipage}
\hfill
\begin{minipage}[t]{.495\linewidth}
	\frame{\includegraphics[width=\linewidth]{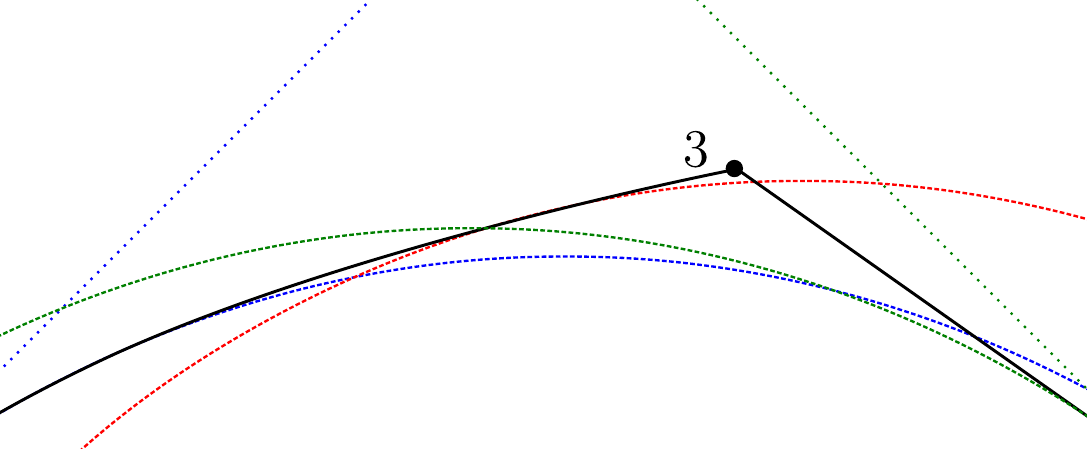}}
\end{minipage}
\caption{A example with $ \icm=3 $, $ k=2 $, $ k'=1 $.}
\label{f.iter.ex1}
\end{minipage}
\hfill
\begin{minipage}[b]{.39\linewidth}
\includegraphics[width=\linewidth]{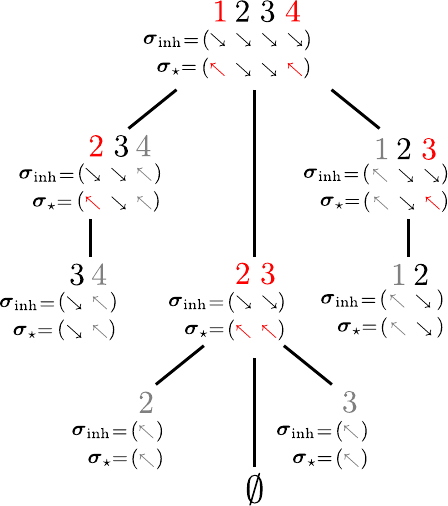}
\caption{The tree of Example~\ref{ex.updown.1}.}
\label{f.iter.ex1.tree}
\end{minipage}
\end{figure}

\subsection{The tree}
\label{s.updown.tree}
It is instructive to describe the process of the iteration.
We say $ \wordsub $ is a \tdef{descendant} of $ \word $
if there exists a tower of words $ \wordsub = \word^{(1)} \subset \word^{(2)} \ldots \subset \word^{(n)} = \word $
such that $ \word^{(i)} \in \children(\word^{(i+1)}) $ for all $ i $.
The iteration consists of steps given by \eqref{e.iteration}.
In each step, a word $ \word' $ activates a number of letters,
and each $ \word\in\children(\word') $ decides to keep some of those activated letters and delete at least one activated letter.
By Lemma~\ref{l.no.reactivate}, the kept letters will be kept in all descendants of $ \word $, and the deleted letters will be permanently deleted from all descendants of $ \word $.
Furthermore, since each step of the iteration~\eqref{e.iteration} only converts $ \down $'s to $ \up $'s,
any kept letter $ j $ of $ \word $ will have $ \vecUDi(\word_\mathrm{desc})_j=\up $ and $ \vecUDc(\word_\mathrm{desc})_j=\up $ for any descendant $ \word_\mathrm{desc} $ of $ \word $.

Let us encode the iteration by a graph.
The vertices are words that are parents and/or children in the iteration, with the full word $ \wordfull $ being the root, and the bonds connect parent-child pairs.
\emph{The graph is a tree}.
To see why, consider any $ \word' $ in the iteration and its children.
The description in the last paragraph shows that
any descendant $ \word_\text{desc} $ of $ \word' $ can just go through the letters $ j\in \activated(\word') $, 
and see which has $ \UDc(\word_\text{desc})_j = \up $ and which has been deleted from $ \word_\text{desc} $.
The result uniquely reconstructs which child $ \word $ of $ \word' $ the descendant $ \word_\text{desc} $ belongs to.
We let $ \treekl $ denote this rooted tree. 
The tree for Example~\ref{ex.updown.1} is shown in Figure~\ref{f.iter.ex1.tree}.
Activated letters are colored red.
Letters that were previously activated and kept are colored gray.

\begin{lem}
\label{l.tree.empty}
\begin{enumerate}[leftmargin=20pt, label=(\alph*)]
\item[]
\item \label{l.tree.empty.k>=k'} When $ k \geq k' $, $ \emptyset\in\treekl $.
\item \label{l.tree.empty.k<k'} When $ k < k' $, $ \emptyset\in\treekl $ if and only if $ (x_i,a_i) \notin \hyp( \rwf[\wordkl[\emptyset]] )^\circ $ for all $ i=1,\ldots,m $.
\end{enumerate}
\end{lem}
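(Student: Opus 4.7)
My strategy is to analyze the greedy descent $\word^{(0)} := \wordfullkl$, $\word^{(n+1)} := \word^{(n)} \setminus \activated(\word^{(n)})$, viewed as a candidate root-to-leaf path in $\treekl$. An induction on $n$ gives $\vecUDi(\word^{(n)}) = (\down\ldots\down)$ throughout: the base case is by definition of $\vecUDi(\wordfull)$, and the inductive step holds because $\word^{(n+1)}$ consists of non-activated letters of $\word^{(n)}$. Hence $\activated(\word^{(n)}) = \{j \in \word^{(n)} : \vecUDc(\word^{(n)})_j = \up\}$, which is nonempty iff $\vecUDc(\word^{(n)}) \neq (\down\ldots\down)$. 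Provided this non-vanishing holds at every step, the greedy descent strictly shrinks the word and constitutes a bona fide root-to-$\emptyset$ path in $\treekl$, so $\emptyset \in \treekl$.

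Part~(a) then follows directly from Lemma~\ref{l.rwf.fulldown}\ref{l.rwf.fulldown.k>=k'}: when $k \geq k'$, no nonempty $\word \in \wordsettkl$ has $\vecUDc(\word) = (\down\ldots\down)$, so the greedy descent never stalls. For the $(\Leftarrow)$ direction of part~(b), the hypothesis restricts in particular to $i \in \alphabetkl$, and the contrapositive of Lemma~\ref{l.rwf.fulldown}\ref{l.rwf.fulldown.k<k'} then yields $\vecUDc(\word) \neq (\down\ldots\down)$ for every nonempty $\word \in \wordsettkl$, so the greedy descent again succeeds.

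The $(\Rightarrow)$ direction of part~(b) is the main obstacle and splits into two tasks. First, I would reduce the conclusion to the weaker statement ``$(x_i, a_i) \notin \hyp(\rwf[\wordkl[\emptyset]])^\circ$ for every $i \in \alphabetkl$'': any $i \notin \alphabetkl$ satisfies $d_i > \icd_k$ or $s_i > \ics_{k'}$, and a $1$-Lipschitz estimate starting from the boundary identities $\rwf[\wordkl[\emptyset]](-t + \icx_k) = -t + \ica_k$ and $\rwf[\wordkl[\emptyset]](\icx_{k'} + t) = -t + \ica_{k'}$ bounds the $d$- and $s$-coordinates of every point on the graph of $\rwf[\wordkl[\emptyset]]$ by $\icd_k$ and $\ics_{k'}$ respectively, forcing $(x_i, a_i) \notin \hyp(\rwf[\wordkl[\emptyset]])^\circ$ automatically. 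Second --- the delicate part --- I would argue by contrapositive: assuming $(x_j, a_j) \in \hyp(\rwf[\wordkl[\emptyset]])^\circ$ for some $j \in \alphabetkl$, show $\emptyset \notin \treekl$. A direct proof that ``greedy is canonical'' is not obvious because non-greedy children retain activated letters whose inherited $\vecUDi = \up$ then blocks their re-activation (Lemma~\ref{l.no.reactivate}) and produces a different sub-tree. Instead I plan to prove by induction on $|\word|$ that every $\word \in \wordsettkl$ containing such a $j$ satisfies $\vecUDc(\word)_j = \down$, which prevents $j$ from ever being activated (and hence deleted) along any path in $\treekl$, ruling out $\emptyset \in \treekl$. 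The induction would localize via the isle decomposition of $\wordkl[\word]$ from Section~\ref{s.geo.rwf.updown.isle} to the single isle containing $j$; within that isle, strict convexity of $\rateber$ together with the variational characterization of $\rwf$ in Definition~\ref{d.rwf} should force a $\vee$-kink at $x_j$ whenever $(x_j, a_j)$ lies strictly below $\rwf[\wordkl[\emptyset]]$. Tracking how this kink structure persists as isles merge, split, or reshape when neighboring letters are added or removed is the hardest step.
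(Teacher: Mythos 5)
Your treatment of part~(a) and of the ``if'' direction of part~(b) is correct and is essentially the paper's own argument: the endpoint of your greedy descent is exactly what the paper calls the terminal word (the set of never-activated letters), it carries $\vecUDc=(\down\ldots\down)$ and inherited all-$\down$ signs, and Lemma~\ref{l.rwf.fulldown} rules out its being nonempty under the respective hypotheses. Your reduction of the ``only if'' direction to letters of $\alphabetkl$ is also sound (and is a point the paper passes over silently): since $\rwf[\wordkl[\emptyset]]$ is $1$-Lipschitz and coincides with $\parab[k](t)$ and $\parab[k'](t)$ on the two outer rays, the $d$- and $s$-coordinates along its graph are monotone with suprema $\icd_k$ and $\ics_{k'}$, so any point of $\hyp(\rwf[\wordkl[\emptyset]])^\circ$ automatically has $d<\icd_k$ and $s<\ics_{k'}$ and hence lies in $\alphabetkl$.

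The gap is in your induction target for the hard direction. The claim that \emph{every} $\word\in\wordsettkl$ containing a letter $j$ with $(x_j,a_j)\in\hyp(\rwf[\wordkl[\emptyset]])^\circ$ satisfies $\UDc(\wordkl)_j=\down$ is false, so the induction cannot start from the hypothesis you are given. Concretely, take $\icm=2$, $k=1<k'=2$, and let $L$ be the common tangent segment of $\parab[1](t)$ and $\parab[2](t)$, which $\rwf[\wordkl[\emptyset][1][2]]$ follows in the middle. Place $j_1$ barely below $L$ and $j_2$ far below $L$ and to its right, consistently with \eqref{e.nondeg.sd} and \eqref{e.HLc.xa}. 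In $\rwf[\wordkl[j_1j_2][1][2]]$ the segment leaving $x_{j_1}$ toward $(x_{j_2},a_{j_2})$ is much steeper (more negative slope) than the near-tangent segment entering $x_{j_1}$ from the left, so $x_{j_1}$ is a $\wedge$ kink and $\UDc(\wordkl[j_1j_2][1][2])_{j_1}=\up$, even though $(x_{j_1},a_{j_1})$ lies strictly inside $\hyp(\rwf[\wordkl[\emptyset]])^\circ$. In other words, lying below $\rwf[\wordkl[\emptyset]]$ does not force a $\vee$ kink in $\rwf[\wordkl]$: other constraint points of $\word$ can pull the minimizer below $(x_j,a_j)$, and then the variational characterization gives a $\wedge$ kink there. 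This is precisely why the paper's proof asserts only that \emph{among} the points inside $\hyp(\rwf[\wordkl[\emptyset]])^\circ$ there exists \emph{one} extremal $(x_{i_*},a_{i_*})$ (roughly, a deepest one, whose $\vee$ kink no other constraint point can flip) that is never activated; that single surviving letter is what blocks $\emptyset$ from $\treekl$. Your strategy becomes correct once the universal quantifier over $j$ is replaced by this existential selection, and the selection argument is then the real content of the ``geometric considerations'' — but as written the key step fails.
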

\begin{proof}
In this proof we restore the independence on $ k,k' $ for clarity.
We begin by preparing an equivalent statement of $ \emptyset \in \treekl $.
Refer to the description in the first paragraph of this subsection.
If a letter $ j\in\alphabetkl $ has never been activated throughout the iteration, 
the letter must belong to all $ \word\in\treekl $ and $ \vecUDc(\wordkl)_{j} = \down $.
Gathering such $ j $'s forms the \tdef{terminal word} $ \word^{\text{term},kk'} \in \treekl $ such that $ \word^{\text{term},kk'} \subset \word $ for all $ \word\in\tree $.
Having $ \emptyset \in \treekl $ is equivalent to $ \word^{\text{term},kk'}=\emptyset $.
This is so because $ \word^{\text{term},kk'} $ is the (necessarily unique) word in $ \treekl $ that is a subword of any word in $ \treekl $, and because $ \emptyset $ is a subword of any word.

\ref{l.tree.empty.k>=k'}\ 
By construction $ \UDc(\wordkl[ \word^{\text{term},kk'}] )_j = \down $, for all $ j\in \word^{\text{term},kk'} $.
Hence Lemma~\ref{l.rwf.fulldown}\ref{l.rwf.fulldown.k>=k'} 
implies $ \word^{\text{term},kk'} = \emptyset $.

\ref{l.tree.empty.k<k'}\
Assume $ \word^{\text{term},kk'}\neq\emptyset $.
Lemma~\ref{l.rwf.fulldown}\ref{l.rwf.fulldown.k<k'} implies the existence of $ (x_i,a_i) \in \hyp( \rwf[\wordkl[\emptyset]] )^\circ $.
Conversely, assume the existence of such points.
Geometric considerations show that, among all such points, there exists one $ (x_{i_*},a_{i_*}) $ such that $ \UDc(\wordkl[\word^{\text{term},kk'}])_{i_*} = \down $.
Having $ \UDc(\wordkl[\word^{\text{term},kk'}])_{i_*} = \down $ implies $ i_*\in\alphabetkl $ can never be activated throughout the up-down iteration.
Hence $ \word^{\text{term},kk'}\neq\emptyset $.
\end{proof}

Sum \eqref{e.iteration} over all $ \word'\in\treekl $, insert the result into the determinantal formula \eqref{e.det}, use Lemma~\ref{l.tree.empty}.
We have
\begin{align}
	\label{e.det.iter}
	\punder
	=
	\det\Big( \Id + \Big( 
		\sum_{\wordkl \in\,\treekl} 
		\parityc(\wordkl) \ \inddown_{\ick}\opuc{\word} \inddown_{\ick'}
	\Big)_{k,k'=1}^{\icm}\Big),
\end{align}
where, with a slight abuse of notation, $ \wordkl \in \treekl $ means $ \word\in\treekl $ and $ \word\neq\emptyset $ when $ k \geq k' $.

\subsection{The increasing hypograph condition and the isle structure}
\label{s.updown.IHC}
Here we state some useful properties of $ \treekl $.
The proof is deferred to Sections~\ref{s.updown.pf1}--\ref{s.updown.pf2}.
For words $ \word\subset\word' $,
we call $ \delta = \word'\setminus\word $ the \tdef{increment} and write $ \word \ \increment{\delta} \word' $.
For $ \word\in\wordsetkl $,
consider a tower of words 
$
	\word^{(0)} 
	\increment{\delta_0}
	\word^{(1)} 
	\increment{\delta_1}
	\word^{(2)} 
	\increment{\delta_2}
	\cdots
	\word^{(n)}
$
that ascends from $ \word^{(0)} $ to $ \word^{(n)} $.
We say the tower \tdef{increases by one letter at a time} if $ |\delta_0|=|\delta_1| = \ldots = 1 $.
We say the tower satisfies the \tdef{$ kk' $-th increasing hypograph condition}, denoted \tdef{$ \IHC $}, if 
\begin{enumerate}[leftmargin=20pt, label=(\roman*)]
\item the tower increases by one letter at a time, 
\item each word in this tower belongs to $ \wordsetkl $, and 
\item $ \hyp( \rwf[\wordkl[\word^{(0)}]] ) \subset \hyp( \rwf[\wordkl[\word^{(1)}]] ) \subset \hyp( \rwf[\wordkl[\word^{(2)}]] ) \subset \ldots  $.
\end{enumerate}
Recall $ \cupi $ from Definition~\ref{d.<<i}.

\begin{prop}
\label{p.tree.prop}
\begin{enumerate}[leftmargin=20pt, label=(\alph*)]
\item[]
\item \label{p.IHC}
We have $ \word \in \treekl $ if and only if there exists a tower that ascends from $ \word $ to $ \wordfullkl $ and satisfies $ \IHC $.
\item
\label{p.tree.isle}
For any $ \wordkl[\wordsub][k_0][k_n] = \wordkl[\wordsub^{(1)}][k_0][k_1] \cupi\cdots\cupi \wordkl[\wordsub^{(n)}][k_{n-1}][k_n] $ that respects Convention~\ref{con.wordkl},\\
$ \wordsub\in\treekl[k_0k_n] $ 
if and only if $ (\wordsub^{(1)},\ldots,\wordsub^{(n)})\in\treekl[k_{0}k_{1}]\times\cdots\times \treekl[k_{n-1}k_{n}] $.
\end{enumerate}
\end{prop}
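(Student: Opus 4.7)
The proof hinges on a \emph{single-step comparison lemma}: for $\word \subsetneq \word' \in \wordsetkl$ with $\word' \setminus \word = \{j\}$, the inclusion $\hyp(\rwf[\wordkl[\word]]) \subset \hyp(\rwf[\wordkl[\word']])$ holds if and only if $\UDc(\wordkl[\word'])_j = \up$. I would prove this by relating the sign of $a_j - \rwf[\wordkl[\word]](x_j)$ to both the kink type of $\rwf[\wordkl[\word']]$ at $x_j$ and the pointwise ordering of the two minimizers: the condition $\UDc(\wordkl[\word'])_j = \up$ is equivalent to the pin $(x_j, a_j)$ lying on or above the graph of $\rwf[\wordkl[\word]]$, from which strict convexity of $\rateber$ and a lattice-theoretic variational argument (testing the pointwise max/min of the two minimizers as competitors in their respective problems) upgrade to the pointwise comparison $\rwf[\wordkl[\word]] \leq \rwf[\wordkl[\word']]$. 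The parabola lower bounds in \eqref{e.rwfn.space.<1}--\eqref{e.rwfn.space.>n} change monotonically as a single letter is inserted, contributing only in a consistent direction and so not disrupting the comparison.

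For Part (a), the forward direction proceeds by tracing the tree descent from $\wordfullkl$ to $\word$: each parent-to-child step deletes some subset of activated letters, and Lemma~\ref{l.up.criterion} lets us refine this into a sequence of single-letter deletions where each removed letter is still up in the current word. Reading the refined chain from bottom to top and applying the single-step comparison at each link produces the IHC hypograph chain. For the backward direction I would induct on $|\wordfullkl \setminus \word|$. Given an IHC tower with $\word^{(1)} := \word \cup \{j_1\}$, the inductive hypothesis places $\word^{(1)} \in \treekl$, and the goal is to continue some tree descent through $\word^{(1)}$ down to $\word$. This requires $j_1 \in \activated(\word^{(1)})$ on the chosen descent, equivalently $\UDi(\word^{(1)})_{j_1} = \down$. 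I plan to engineer such a descent greedily, guided by the IHC tower: at each tree level, delete all currently activated letters that are scheduled for removal on the way to $\word$ but are distinct from $j_1$. Lemma~\ref{l.no.reactivate} then prevents $j_1$ from ever being retained as an up-letter before arriving at $\word^{(1)}$, while the IHC hypograph chain guarantees a fresh supply of activated scheduled letters at each level, so the procedure terminates at $\word^{(1)}$ with $j_1$ still inherited $\down$. Verifying termination and correctness of this greedy construction is the main obstacle of the proof.

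Part (b) is deduced from Part (a) through isle localization. Given the decomposition $\wordkl[\wordsub][k_0][k_n] = \wordkl[\wordsub^{(1)}][k_0][k_1] \cupi \cdots \cupi \wordkl[\wordsub^{(n)}][k_{n-1}][k_n]$, the minimizer $\rwf[\wordkl[\wordsub][k_0][k_n]]$ splits into independent pieces — each agreeing with $\rwf[\wordkl[\wordsub^{(i)}][k_{i-1}][k_i]]$ on the interval $[\xl(\wordkl[\wordsub^{(i)}][k_{i-1}][k_i]), \xr(\wordkl[\wordsub^{(i)}][k_{i-1}][k_i])]$ and coinciding with $\parab[k_i](t)$ between consecutive isles, by Definition~\ref{d.isle}\ref{d.isle.2}. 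An IHC tower for $\wordsub$ preserves these interstitial contact points with $\parab[k_i](t)$ as it ascends, because the single-step comparison shows hypograph enlargement is localized to the region near the newly added pin; restricting such a tower to the letters of each $\wordsub^{(i)}$ therefore yields a valid IHC tower in $\wordsetkl[k_{i-1}k_i]$. Conversely, independent IHC towers in each isle can be interleaved in any order respecting each local tower into an IHC tower for $\wordsub$, since the spatial separation in Definition~\ref{d.<<i}\ref{d.<<i.x} guarantees that hypograph changes within one isle do not disturb the others. Invoking Part (a) on each side converts these IHC statements to the tree-membership equivalence.

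The principal technical difficulty is the greedy descent construction in the backward direction of Part (a), which requires careful interplay between IHC, the inherited direction $\vecUDi$, and Lemma~\ref{l.no.reactivate}; the single-step comparison is conceptually clean via convexity, and Part (b) is essentially localization once IHC is shown to respect isles.
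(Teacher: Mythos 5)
The forward direction of Part (a) and the overall architecture of Part (b) are broadly in line with the paper's, but the backward (``if'') direction of Part (a) --- which you correctly identify as the crux --- contains a genuine gap: the greedy descent you propose fails, and the failure is visible on the paper's own Example~\ref{ex.updown.1}. There $ \wordfullkl[21]=1234 $, $ \vecUDc(1234)=(\up\down\down\up) $, and the IHC tower is $ 2\subset 12\subset 123\subset 1234 $, so in your notation $ \word=2 $, $ j_1=1 $, $ \word^{(1)}=12 $. Your rule protects $ j_1=1 $ from deletion; but $ 1\in\activated(1234) $, so the child your rule produces at the root (delete $4$, retain $1$) inherits $ \UDi(123)_1=\up $, and by Lemma~\ref{l.no.reactivate} the letter $1$ can never be activated again along that branch. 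The descent therefore terminates at the leaf $12$ with $ \activated(12)=\emptyset $ and no way to remove $1$; in fact $2$ is not a descendant of $12$ in $ \treekl[21] $ at all (Figure~\ref{f.iter.ex1.tree}) --- the only branch reaching $2$ is $ 1234\to 23\to 2 $, which deletes $ j_1=1 $ at the \emph{first} step. The structural point your sketch misses is that along any branch each letter is activated at most once, so a letter must be deleted exactly at the level where it becomes activated; it cannot be ``saved for last.'' Hence one cannot realize $ \word $ as a child of $ \word\cup\{j_1\} $ on a continuation of an existing descent; the entire tree tower must be rerouted. This is precisely the content of the paper's Lemma~\ref{l.tree.tower} (given a tree tower and a letter $ j_* $ activated at its top, produce a new tree tower ending at the bottom word minus $ j_* $), whose proof is a nontrivial induction that transports increments between levels; nothing in your proposal substitutes for it.

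Two smaller remarks. In Part (b), restricting an IHC tower for $ \wordsub $ to the letters of $ \wordsub^{(1)} $ ascends only to $ \wordfullkl[k_0k_n]|_{\leq \letterlast} $, not to $ \wordfullkl[k_0k_1] $, so one must still prove that this truncated full word lies in $ \treekl[k_0k_1] $; the paper does this with a separate geometric argument (the region $ \Omega $ of Figure~\ref{f.wing}) ruling out never-activated excess letters. Your single-step comparison lemma is essentially the paper's equivalence of IHC with \eqref{e.IHC'}, which the paper takes as read from the geometry of $ \rwf $; your convexity/rewiring route to it is plausible and not the issue.
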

\begin{rmk}
Note that $ \word\in\treekl $ does \emph{not} require $ \word\neq\emptyset $ when $ k \geq k' $,
which \emph{differs} from $ \wordkl \in \treekl $.
\end{rmk}

\subsection{Proof of Proposition~\ref{p.tree.prop}\ref{p.IHC}}
\label{s.updown.pf1}
Throughout the proof we will label towers top-down instead of bottom-up
and will continue to omit most $ k,k' $ dependence as declared at the beginning of this section.
We call 
$
	\word \subset \cdots \subset \word^{[2]} \subset \word^{[1]} \subset \word^{[0]} = \wordfullkl = \wordfull
$
a \tdef{tree tower} if consecutive levels $ \word^{[i]}\subset\word^{[i-1]} $ in this tower are parent-child in $ \treekl=\tree $, but \emph{not} necessarily $ |\word^{[i-1]}\setminus\word^{[i-1]}| = 1 $.
Indeed, $ \word\in\tree $ is equivalent to having a tree tower that ascends from $ \word $ to $ \wordfull $.

Referring to the definitions of $ \rwfsymb $ and $ \vecUDc $ in Section~\ref{s.geo.rwf.updown.isle},
we see that $ \IHC $ is equivalent to 
\begin{align}
	\tag{IHC'}
	\label{e.IHC'}
	(\cdots \subset \word^{(i+1)} \Increment{j_{i+1}} \word^{(i)} \subset \cdots),
	\qquad
 	\UDc(\word^{(i)})_{ j_{i+1} } = \up \ \text{for all } i.
\end{align}

To prove the `only if part', take a tree tower
$
	(\word \subset \cdots \subset \word^{[2]} \subset \word^{[1]} \subset \word^{[0]} = \wordfull).
$
We seek to modify it so that the resulting tower satisfies \eqref{e.IHC'}.
Examine each consecutive levels $ \word^{[i]} \subset \word^{[i-1]} $ in this tree tower.
When $ |\word^{[i-1]}\setminus\word^{[i]} | > 1 $, insert some children of $ \word^{[i-1]} $ in between
\begin{align}
	\label{p.criteria.insert}
	\word^{[i]} \subset \word' \subset \word'' \subset \ldots \subset \word^{[i-1]}.
\end{align}
The choices of the children $ \word',\word,'',\ldots $ are arbitrary as long as they form a tower that increases by one letter at a time.
That $ \word^{[i-1]} \subset \word^{[i]} $ being parent-child implies $ \word^{[i]}\setminus\word^{[i-1]} \subset\activated(\word^{[i-1]}) $.
Hence any increment in \eqref{p.criteria.insert} belongs to $ \activated(\word^{[i-1]}) $.
Since $ \word^{[i]}, \word',\word'',\ldots $ are children of $ \word^{[i-1]} $, the desired condition~\eqref{e.IHC'} holds.

To prove the `if part', fix a tower
$
	(\word
	\
	\cdots
	\Increment{j_3}
	\word^{(2)} 
	\Increment{j_2}
	\word^{(1)} 
	\Increment{j_1}
	\word^{(0)} = \wordfull)
$
that satisfies \eqref{e.IHC'}.
Note that such a tower may fail to be a tree tower in an essential way.
For example, in Figure~\ref{f.iter.ex1.tree}, the tower $ (2 \Increment{1} 12 \Increment{3} 123 \Increment{4} 1234) $ satisfies \eqref{e.IHC'},
but the words $ 2 $ and $ 12 $ belong to different branches in $ \tree $: the middle and right branches in Figure~\ref{f.iter.ex1.tree}.
Still, our strategy is to modify this tower, inductively top-down, 
so that the end result is a tree tower that ascends from $ \word $ to $ \wordfull $.
To begin, the condition $ \UDc(\word^{(0)})_{j_1}=\up $ from \eqref{e.IHC'} together with $ \word^{(0)}=\wordfull $ guarantees that $ \word^{(1)} $ is a child of $ \wordfull $.
No modification is needed at this point.
Next, the condition $ \UDc(\word^{(1)})_{j_2}=\up $ from \eqref{e.IHC'} implies that
$ j_2 $ is from $ \activated(\word^{(1)}) $ or $ \activated(\word^{(0)}) $.
In the former case $ \word^{(2)} $ is a child of $ \word^{(1)} $, so no modification is needed.
In the latter case, $ \word^{(2)} $ is a child of $ \wordfull $, and we modify the tower by deleting $ \word^{(1)} $:
\begin{align}
	\label{e.p.criteria.n=2}
	\cdots \
	\word^{(3)}
	\Increment{j_3}
	\big(
		\word^{(2)} 
		\subset
		\word^{(1)} 
		\subset 
		\word^{(0)}
	\big)
\quad
\longmapsto
\quad
	\cdots \
	\word^{(3)}
	\Increment{j_3}
	\big(
		\word^{(2)} 
		\subset 
		\word^{(0)}
	\big).
\end{align}

Proceed inductively. 
Assume, for some $ n\in \Z_{> 0} $, we have modified the tower from the top to just before $ \word^{(n+1)} $:
\begin{align*}
	\cdots \
	\word^{(n+1)}
	\Increment{j_{n+1}}
	\word^{[\ell_n]} 
	\subset
	\cdots 
	\subset 
	\word^{[0]} = \wordfull
\end{align*}
so that
$
	(\word^{[\ell_n]} 
	\subset
	\cdots 
	\subset 
	\word^{[0]})
$ 
is itself a tree tower.
As seen in \eqref{e.p.criteria.n=2}, the modifications may change the length of the tower, so we allow $ \ell_n\neq n $.
To further the induction,
note that the condition $ \UDc(\word^{(n)})_{j_{n+1}} = \up $ from \eqref{e.IHC'} implies that 
$ j_{n} $ is from one of $ \activated(\word^{[0]}) $, $ \activated(\word^{[1]}) $, \ldots, or $ \activated(\word^{[\ell_n]}) $.
If $ j_{n+1} \in \activated(\word^{[\ell_n]}) $, then $ \word^{(n+1)} $ is a child of $ \word^{[\ell_n]} $, and the induction proceeds by setting $ \ell_{n+1}:=n+1 $ and $ \word^{[\ell_{n+1}]} := \word^{(n+1)} $.
Otherwise $ j_{n+1} \in \activated(\word^{[i_0]}) $, for some $ i_0\in\{1,\ldots,\ell_n-1\} $.
In this case we seek to modify the segment within the following parentheses.
\begin{align}
	\label{e.replace.tree}
	\cdots \
	\big( \word^{(n+1)} \Increment{j_{n+1}} \word^{[\ell_n]} \subset \cdots \subset \word^{[i_0]}  \big)
	\subset
	\cdots
	\subset
	\word^{[0]} = \wordfull.
\end{align}
At this point we invoke Lemma~\ref{l.tree.tower}, which is stated and proven after the current proof.
Apply Lemma~\ref{l.tree.tower} with $ \tower \mapsto (\word^{[\ell_n]} \subset \ldots \subset \word^{[i_0]} ) $ and $ j_* \mapsto j_{n+1} $.
The result provides a tree tower $ \tower' $ that ascends from $ \word^{(n+1)}=\word^{[\ell_n]}\setminus\{j_{n+1}\} $ to $ \word^{[i_0]} $.
Replacing the segment in the parentheses in \eqref{e.replace.tree} with $ \tower' $ completes the induction.
The proof is completed contingent on proving Lemma~\ref{l.tree.tower}.

Before proving Lemma~\ref{l.tree.tower}, let us reformulate Lemma~\ref{l.up.criterion} in the following form.
\begin{customlem}{\ref*{l.up.criterion}'}
\label{l.up.criterion.}
Given words $ \word,\word'\in\wordsetkl $ and a set of letters $ \alpha $, if
\begin{center}
\begin{enumerate*}[label=(\alph*)]
\item \label{l.up.criterion.0} $ \word\subset\word' $, \quad
\quad
\item \label{l.up.criterion.1} $ \vecUDc(\word')|_{\alpha\cap\word'} = (\up\ldots\up) $, \quad and \quad \
\item \label{l.up.criterion.2} $ \vecUDc(\word')|_{\word'\setminus\word} = (\up\ldots\up) $, 
\end{enumerate*}
\end{center}
then $ \vecUDc(\word)|_{\alpha\cap\word} = (\up\ldots\up) $.
\end{customlem}

\begin{lem}[Finishing the proof of Proposition~\ref{p.tree.prop}\ref{p.IHC}]
\label{l.tree.tower}
For any tree tower
$
	\tower =
	(\word^{[n]}
	{\subset}
	\ldots
	{\subset} 
	\word^{[0]})
$
and $ j_*\in\activated(\word^{[0]}) $,
there exists a tree tower that ascends from $ (\word^{[n]} \setminus \set{j_*}) $ to $ \word^{[0]} $.
\end{lem}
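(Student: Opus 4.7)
The plan is to proceed by strong induction on the tower length $n$. The base case $n=0$ (the tower is a single word $\word^{[0]}$) is immediate: since $j_*\in\activated(\word^{[0]})$, the pair $(\word^{[0]}\setminus\{j_*\},\word^{[0]})$ is the desired tree tower. Likewise, when $j_*\notin\word^{[n]}$, the given tower already ascends from $\word^{[n]}\setminus\{j_*\}=\word^{[n]}$ to $\word^{[0]}$. So the substantive case is $n\geq 1$ with $j_*\in\word^{[n]}$, whence $j_*\in\word^{[i]}$ for every $i$.

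The first thing I would establish is the \textbf{Key Claim}: $\UDc(\word^{[i]})_{j_*}=\up$ for every $i=0,\ldots,n$. This is proved by an inner induction on $i$; the base $i=0$ is the assumption $j_*\in\activated(\word^{[0]})$, and the inductive step applies Lemma~\ref{l.up.criterion.}' with $(\word,\word',\alpha)=(\word^{[i]},\word^{[i-1]},\{j_*\})$. The hypothesis $\UDc(\word^{[i-1]})_{j_*}=\up$ supplies condition (b), and the tree-tower property $\word^{[i-1]}\setminus\word^{[i]}\subset\activated(\word^{[i-1]})$ supplies condition (c); the conclusion yields $\UDc(\word^{[i]})_{j_*}=\up$.

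Having the Key Claim in hand, I would propose the candidate tree tower
\[
\bigl(\word^{[n]}\setminus\{j_*\}\,\subset\,\word^{[n-1]}\setminus\{j_*\}\,\subset\,\cdots\,\subset\,\word^{[1]}\setminus\{j_*\}\,\subset\,\word^{[0]}\bigr)
\]
and check that each consecutive pair is a parent-child step in $\treekl$. The topmost step works because $\word^{[0]}\setminus(\word^{[1]}\setminus\{j_*\})=(\word^{[0]}\setminus\word^{[1]})\cup\{j_*\}$, and both pieces lie in $\activated(\word^{[0]})$. For each subsequent step, the required deletion is $\word^{[i-1]}\setminus\word^{[i]}$; for every $j$ in this set, a second application of Lemma~\ref{l.up.criterion.}' (with $(\word,\word',\alpha)=(\word^{[i-1]}\setminus\{j_*\},\word^{[i-1]},\{j\})$ and condition (c) supplied by the Key Claim) gives $\UDc(\word^{[i-1]}\setminus\{j_*\})_{j}=\up$. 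The remaining task is to verify the $\UDi$ condition along the new tree path.

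The main obstacle will be exactly this $\UDi$ verification: removing the kept letter $j_*$ can introduce additional $\wedge$-kinks in intermediate words and thereby activate some letters earlier in the new tree than in the original. When no such premature activation occurs, the naive tower works verbatim. When it does occur, I would fall back on the induction hypothesis: apply it to the shorter sub-tower $(\word^{[n-1]}\subset\cdots\subset\word^{[0]})$ to obtain a tree tower $\tower'$ from $\word^{[n-1]}\setminus\{j_*\}$ up to $\word^{[0]}$, and then attach $\word^{[n]}\setminus\{j_*\}$ at the bottom as a child of the appropriate ancestor in $\tower'$. The premature activation that obstructs the direct step is precisely what enlarges $\activated$ at some ancestor of $\word^{[n-1]}\setminus\{j_*\}$ enough to contain all the missing deletions, so that $\word^{[n]}\setminus\{j_*\}$ appears as a legitimate child there; this may shorten the tower but still produces a valid tree tower in $\treekl$.
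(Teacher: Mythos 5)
Your Key Claim, the candidate tower obtained by deleting $j_*$ from every level, and the verification that each required deletion consists of letters with $\UDc=\up$ are all correct, and they match the first stage of the paper's argument (the construction of its tower $\tower^{[1]}$ and the hypotheses (i-1), (iii-1)). You have also correctly located the hard part: the inherited-$\UDi$ condition can fail because deleting $j_*$ may activate letters at an earlier level than in the original tree. The gap is in your fallback. Applying the induction hypothesis to $(\word^{[n-1]}\subset\cdots\subset\word^{[0]})$ gives \emph{some} tree tower $\tower'$ from $\word^{[n-1]}\setminus\set{j_*}$ to $\word^{[0]}$, over whose internal structure you have no control, and $\word^{[n]}\setminus\set{j_*}$ cannot in general be attached to it. If a letter $j\in\word^{[n-1]}\setminus\word^{[n]}$ was activated at some strict ancestor $\wordsub^{[i]}$ in $\tower'$ and then kept down to $\word^{[n-1]}\setminus\set{j_*}$, the tree structure (first paragraph of Section~\ref{s.updown.tree}, via Lemma~\ref{l.no.reactivate}) forbids $j$ from ever being deleted in any descendant of $\word^{[n-1]}\setminus\set{j_*}$, so the bottom word cannot be its child. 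Attaching instead to the ancestor $\wordsub^{[i]}$ fares no better: the required single-step deletion $\wordsub^{[i]}\setminus(\word^{[n]}\setminus\set{j_*})$ then contains letters that were deleted at the intermediate levels of $\tower'$, and those letters lie in $\activated(\wordsub^{[i+1]}),\activated(\wordsub^{[i+2]}),\ldots$, which by Lemma~\ref{l.no.reactivate} are disjoint from $\activated(\wordsub^{[i]})$; so the deletion is not contained in a single activated set and no legitimate child step exists. "Shortening the tower" cannot merge parent--child steps for the same reason.

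The repair cannot be done by reattaching the bottom word; it requires rebuilding the \emph{interior} of the tower. The paper's proof inducts on the level $\ell$ being processed (not on the tower length) and, at each step, transports every letter of the later increments that has become prematurely activated at $\wordsub^{[\ell]}$ into the increment $\epsilon_{\ell+1}$, so that such letters are deleted immediately upon activation; see \eqref{e.tree.replace}--\eqref{e.tree.replace..}. Making this consistent is exactly what the three induction hypotheses (i-$\ell$)--(iii-$\ell$) track: the top $\ell$ levels form a genuine tree tower, no later increment meets an earlier activated set, and the remaining increments still consist of $\up$ letters. Your argument needs an analogue of this interior surgery together with invariants strong enough to propagate it; as written, the strong induction on $n$ does not supply them.
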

\begin{proof}
The strategy is to use induction.
For each $ \ell=1,\ldots,n $, we will construct a tower
\begin{align*}
	\tower^{[\ell]}
	=
	\big(
	(\word^{[n]} \setminus \set{j_*}) = \wordsub^{[n]}
	\
	\increment{\epsilon_n}
	\ldots
	\wordsub^{[\ell+1]}
	\
	\increment{\epsilon_{\ell+1}}
	\wordsub^{[\ell]}
	\
	\increment{\epsilon_{\ell}}
	\cdots
	\
	\wordsub^{[1]}
	\
	\increment{\epsilon_{1}}
	\wordsub^{[0]}
	=
	\word^{[0]}
	\big).
\end{align*}
Throughout the induction,
the top level $ \word^{[0]} $ and bottom level $ (\word^{[n]}\setminus\{j_*\}) $ will remain unchanged.
On the other hand, 
the intermediate levels $ \wordsub^{[1]},\ldots,\wordsub^{[n-1]} $ and the increments $ \epsilon_1,\ldots,\epsilon_n $ will change as $ \ell $ varies.
We however omit the dependence on $ \ell $ in the notation $ \{\wordsub^{[i]}\}_{i=1}^{n-1} $ and $ \{\epsilon_i\}_{i=1}^n $ for better readability.
The tower $ \tower^{[\ell]} $ will be constructed to satisfy the induction hypotheses:
\begin{enumerate}[leftmargin=20pt,label=(\roman*-$\ell$) ] %
\item \label{l.tree.induction.1} $ \epsilon_i \subset \activated(\wordsub^{[i-1]}) $, for $ i=1,\ldots,\ell $.
\item \label{l.tree.induction.2} $ \epsilon_{i'} \cap \activated(\wordsub^{[i-1]}) = \emptyset $, for $ i=1,\ldots,\ell $ and $ i'=\ell+1,\ldots,n $.
\item \label{l.tree.induction.3} $ \vecUDc(\wordsub^{[i]})|_{\epsilon_{i+1}} = (\up\ldots\up) $, for $ i=\ell,\ldots,n $.
\end{enumerate} 
The hypothesis~\ref{l.tree.induction.1} implies that the top $ \ell $ levels in $ \tower^{[\ell]} $ form a tree tower, except for possible repetitions, namely $ \epsilon_{i} = 0 $ so that $ \wordsub^{[i]} = \wordsub^{[i-1]} $.
Such repetitions can be removed by deleting words from the tower.
Once completed, the induction produces the desired tree tower $ \tower^{[n]} $.

We now begin the induction, starting with $ \ell=1 $.
Construct $ \tower^{[1]} $ by setting $ \wordsub^{[i]}:=\word^{[i]}\setminus\{j_*\} $ for $ i=1,\ldots,n $. Note that this construction gives
\begin{align}
	\label{e.towerl=1}
	\epsilon_{1}:=
	\wordsub^{[0]} \setminus \wordsub^{[1]} = (\word^{[0]} \setminus \word^{[1]})\cup\{j_*\},
	\qquad	
	\epsilon_{i}:=
	\wordsub^{[i-1]} \setminus \wordsub^{[i]} = \word^{[i-1]} \setminus \word^{[i]},
	\
	i=2,\ldots,n-1.
\end{align}
We next check the hypotheses for $ \ell=1 $.
\begin{enumerate}[leftmargin=20pt]
\item [(i-1)] The fact that $ \tower $ is a tree tower implies $ (\word^{[0]} \setminus \word^{[1]}) \subset \activated(\word^{[0]}) $.
	This fact together with \eqref{e.towerl=1} and the assumption $ j_*\in\activated(\word^{[0]}) $ 
	verifies the desired hypothesis $ \epsilon_{1} \subset \activated(\word^{[0]}) $.
\item [(ii-1)] 
	By \eqref{e.towerl=1} and the fact that $ \tower $ is a tree tower, 
	we have $ \epsilon_{i'} \subset \activated(\word^{[i'-1]}) $, for $ i'=2,\ldots,n $.
	Further, Lemma~\ref{l.no.reactivate} implies that $ \activated(\word^{[i'-1]}) \cap \activated(\word^{[0]})  = \emptyset $.
	Hence the hypothesis (ii-1) holds.
\item [(iii-1)]
Fix $ i\in\{1,\ldots,n\} $ and apply Lemma~\ref{l.up.criterion.} with $ (\word,\word',\alpha) \mapsto(\wordsub^{[i]},\word^{[i]},\epsilon_{i+1}) $. 
The result of this application verifies the hypothesis (iii-1).
For Lemma~\ref{l.up.criterion.} to apply let us check the required conditions:
	\begin{enumerate}
	\item This follows by construction: $ \wordsub^{[i]}:=\word^{[i]}\setminus\{j_*\} $.
	\item
	Use $ \vecUDc(\word^{[i]})|_{\epsilon_{i+1}} = \vecUDc(\word^{[i]})|_{\word^{[i]}\setminus \word^{[i+1]}} $
	and $ \word^{[i+1]} \in \children(\word^{[i]}) $.
	\item
	Referring to description in the first paragraph in Section~\ref{s.updown.tree}, we see that $ \word^{[i]} $ being a descendant of $ \word^{[0]} $ implies $ \vecUDc(\word^{[i]})|_{ \activated(\word^{[0]})\cap\word^{[i]}} = (\up\ldots\up) $.
	Combining this fact with $ j_*\in\activated(\word^{[0]}) $ verifies the condition~\ref{l.up.criterion.2}.
	\end{enumerate}
\end{enumerate}

Assume $ \tower^{[\ell]} $ has been constructed and satisfies the induction hypotheses.
Let $ \epsilon_{i} := \wordsub^{[i-1]} \setminus \wordsub^{[i]} $ denote the increments of $ \tower^{[\ell]} $.
Define a new set of increments $ \til\epsilon_{\ell+1}, \ldots, \til\epsilon_{n} $, as
\begin{align}
	\label{e.tree.replace}
	\til\epsilon_{\ell+1} 
	&:= 
	\epsilon_{\ell+1} \cup \bigcup\nolimits_{i>\ell+1} \big( \epsilon_{i}\cap \activated(\wordsub^{[\ell]}) \big),
\\
	\label{e.tree.replace.}
	\til\epsilon_{i} 
	&:=
	\epsilon_{i} \setminus \activated(\wordsub^{[\ell]}),
	\quad
	\text{for } i = \ell+2,\ldots,n.
\end{align}
Namely, we examine the letters in $ \epsilon_{\ell+2},\epsilon_{\ell+3},\ldots,\epsilon_n $ 
to see which belong to $ \activated(\wordsub^{[\ell]}) $, and transport all such letters into $ \epsilon_{\ell+1} $.
Now use these new increments to construct $ \tower^{[\ell+1]} $:
\begin{align}
	\label{e.tree.replace..}
	\til\wordsub^{[i]} &:= \wordsub^{[\ell]} \setminus ( \til\epsilon_{\ell+1} \cup \cdots \cup \til\epsilon_i ), 
	\quad 
	i=\ell+1,\ldots,n,
\\
	\notag
	\tower^{[\ell+1]}
	&:=
	\big(
	\wordsub^{[n]}
	=
	(\word^{[n]} \setminus \set{j_*})
	\
	\Increment{\til\epsilon_n}
	\ldots
	\
	\Increment{\til\epsilon_{\ell+2}}
	{\til\wordsub}^{[\ell+1]}
	\
	\Increment{\til\epsilon_{\ell+1}}
	{\wordsub}^{[\ell]}
	\
	\Increment{\epsilon_{\ell}}
	\cdots
	\
	\wordsub^{[1]}
	\
	\Increment{\epsilon_{1}}
	\
	\word^{[0]}
	\big).
\end{align}
We next verify the hypotheses.
\begin{enumerate}[leftmargin=20pt]
\item [(i-($\ell$+1))] Since the top $ \ell $ levels were not altered, it suffices to check the hypothesis for $ i=\ell+1 $, namely
\begin{align}
	\label{e.inductioni.goal}
	\til\epsilon_{\ell+1} \subset \activated(\wordsub^{[\ell]}).
\end{align}
 Recall that \ref{l.tree.induction.1} implies that the top $ \ell $ levels in $ \tower^{[\ell]} $ form a tree tower (with possible repetitions). This property together with \ref{l.tree.induction.3} for $ i=\ell $ implies that
$ \epsilon_{\ell+1} \subset \cup_{i=0}^{\ell} \activated(\wordsub^{[i]}) $.
On the other hand \ref{l.tree.induction.2} for $ i'=\ell+1 $ asserts that 
$ \epsilon_{\ell+1} \cap (\cup_{i=0}^{\ell-1} \activated(\wordsub^{[i]}))=\emptyset $.
These properties together imply $ \epsilon_{\ell+1} \subset \activated(\wordsub^{[\ell]}) $.
By construction, $ \til{\epsilon}_{\ell+1} $ is obtained by taking a union of $ \epsilon_{\ell+1} $ and some letters that are already in $ \activated(\wordsub^{[\ell]}) $.
Hence \eqref{e.inductioni.goal} holds.

\item [(ii-($\ell$+1))] 
Since the top $ \ell $ levels were not altered, it suffices to check the hypothesis for $ i=\ell $, namely
$ \til\epsilon_{i'} \cap \activated(\wordsub^{[\ell]}) = \emptyset $ for all $ i'=\ell+2,\ldots,n $.
This statement holds by construction, \eqref{e.tree.replace.}.

\item [(iii-($\ell$+1))] 
Apply Lemma~\ref{l.up.criterion.} with $ (\word,\word',\alpha) \mapsto (\wordsub^{[\ell+1]},\wordsub^{[\ell]},\activated(\wordsub^{[\ell]})) $.
The conditions~\ref{l.up.criterion.0}--\ref{l.up.criterion.1} therein clearly hold, while the condition~\ref{l.up.criterion.2} holds because of \ref{l.tree.induction.3} for $ i=\ell $.
The result asserts that 
\begin{align}
	\label{e.multiple.1}
	\vecUDc(\wordsub^{[\ell+1]})|_{\wordsub^{[\ell+1]}\cap\activated(\wordsub^{[\ell]})}=(\up\ldots\up).
\end{align}
Next, apply Lemma~\ref{l.up.criterion.} with $ (\word,\word',\alpha) \mapsto (\wordsub^{[\ell+2]},\wordsub^{[\ell+1]},\activated(\wordsub^{[\ell]})) $.
The condition~\ref{l.up.criterion.0} therein clearly holds; 
the condition~\ref{l.up.criterion.1} holds because of \eqref{e.multiple.1};
the condition~\ref{l.up.criterion.2} holds because of \ref{l.tree.induction.3} for $ i=\ell+1 $.
The result asserts that 
$
	\vecUDc(\wordsub^{[\ell+2]})|_{\wordsub^{[\ell+2]}\cap\activated(\wordsub^{[\ell]})}=(\up\ldots\up).
$
Continue this procedure inductively.
Namely apply Lemma~\ref{l.up.criterion.} with $ (\word,\word',\alpha) \mapsto (\wordsub^{[i+1]},\wordsub^{[i]},\activated(\wordsub^{[\ell]})) $ inductively for $ i=\ell+2,\ell+3,\ldots,n $. 
The result gives
\begin{align}
	\label{e.multiple.2}
	\vecUDc(\wordsub^{[i]})|_{\wordsub^{[i]}\cap\activated(\wordsub^{[\ell]})}=(\up\ldots\up),
	\quad
	\text{for } i = \ell+1, \ell+2, \ldots, n.
\end{align}

Fix $ i \in [\ell+1,n] $.
Apply Lemma~\ref{l.up.criterion.} with $ (\word,\word',\alpha) \mapsto (\til\wordsub^{[i]},\wordsub^{[i]},\til\epsilon_{i+1}) $.
Contingent on validation of the conditions therein,
the result of this application verifies the hypothesis (iii-($\ell$+1)).
We now check the conditions:
	\begin{enumerate}
	\item This follows by construction, \eqref{e.tree.replace..}.
	\item This follows from \ref{l.tree.induction.3} and $ \til{\epsilon}_{i+1}\subset\epsilon_{i+1} $ (by \eqref{e.tree.replace.}).
	\item Referring to the construction in \eqref{e.tree.replace}--\eqref{e.tree.replace..}, 
	we see that $ \wordsub^{[i]} \setminus \til\wordsub^{[i]} \subset \activated(\wordsub^{[\ell]}) $.
	This property together with \eqref{e.multiple.2} checks the condition~\ref{l.up.criterion.2}.
	\end{enumerate}
\end{enumerate}
We have completed the induction and hence the proof.
\end{proof}

\subsection{Proof of Proposition~\ref{p.tree.prop}\ref{p.tree.isle}}
\label{s.updown.pf2}
It suffices to prove the statement for two words:

\begin{customprop}{\ref{p.tree.prop}\ref{p.tree.isle}'}
\label{p.tree.isle.}
For any
$
	\wordkl[\wordsub][k_\tL][k_\tR] 
	=
	\wordkl[\wordsub^\tL][k_\tL][k] 
	\cupi
	\wordkl[\wordsub^\tR][k][k_\tR]
$
that respects Convention~\ref{con.wordkl}, $ \wordsub\in\treekl[k_\tL k_\tR] $ if and only if
$ (\wordsub^\tL,\wordsub^\tR)\in\treekl[k_\tL k]\times\treekl[kk_\tR] $.
\end{customprop}
\noindent{}%
This proposition immediately implies Proposition~\ref{p.tree.prop}\ref{p.tree.isle}.

Let us prepare some notation and tools for the proof of Proposition~\ref{p.tree.isle}.
For clarity we will \emph{restore the dependence on $ k,k' $, etc}.
For the rest of this subsection we will only consider towers that increase by one letter at a time.
Given such a tower $ \tower = (\word^{(0)} \Increment{i_0} \word^{(1)} \Increment{i_1} \cdots ) $ and a set $ \alpha $ of letters, let $ \tower|_{\alpha} := ((\word^{(0)}\cap\alpha) \subset (\word^{(1)}\cap\alpha) \subset \cdots ) $ denote the restricted tower.
This new tower may contain repeated words, and we operate under the consent that repeated words are removed from $ \tower|_{\alpha} $ so that the resulting tower also increases by one letter at a time.
We further adopt the shorthand $ \tower|_{[1,j]} := \tower|_{\leq j} $, $ \tower|_{[j,m]} := \tower|_{\geq j} $, and $ \hyp(\wordkl) := \hyp( \rwf ) $.

The following lemma will come in handy.

\begin{lem}
\label{l.IHC.cut}
Let $ \tower $ be a tower that satisfies $ \IHC $ and suppose all words in $ \tower $ contain the letter $ j $.
For all $ k'' $ such that $ \ics_{k''}\geq s_j $, $ \tower|_{\leq j} $ satisfies $ \IHC[kk''] $;
for all $ k'' $ such that $ \icd_{k''} \geq d_j $, $ \tower|_{\geq j} $ satisfies $ \IHC[k''k'] $.
\end{lem}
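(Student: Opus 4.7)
The plan is to verify the three defining conditions of $\IHC[kk'']$ for $\tower|_{\leq j}$; the verification for $\tower|_{\geq j}$ is analogous by a left-right reflected argument. The one-letter-at-a-time condition is built into our convention of discarding repetitions. Each restricted word $\word \cap [1,j]$ lies in $\wordsetkl[kk'']$ because any letter $j' \leq j$ appearing in some $\word \in \tower$ satisfies $d_{j'} \leq \icd_k$ (from the original $\IHC$) and $s_{j'} \leq s_j \leq \ics_{k''}$ by \eqref{e.nondeg.sd} and the hypothesis.

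The substance is the nested hypograph condition, for which the key tool is a \emph{cutting lemma}: for any $\word \in \wordsetkl[kk']$ with $j \in \word$,
\begin{align*}
  \rwf[\wordkl[\word][k][k']](y) = \rwf[\wordkl[{\word \cap [1,j]}][k][k'']](y), \quad y \leq x_j.
\end{align*}
Both sides pin to $a_j$ at $x_j$; and since $\int \rateber(\partial_y f)\,\d y$ decomposes additively across $y = x_j$, strict convexity of $\rateber$ forces each restriction to $(-\infty, x_j]$ to minimize $\int_{-t+\icx_k}^{x_j}\rateber(\partial_y f)\,\d y$ over the same class of $1$-Lipschitz functions with the prescribed boundary values at $-t+\icx_k$ and $x_j$. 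The above-parabola families \eqref{e.rwfn.space.<1}--\eqref{e.rwfn.space.1n} on $(-\infty, x_j]$ coincide between the two problems because $j \in \word$ yields both $\word_1 = (\word \cap [1,j])_1$ and $x_{\word_{|\word|}} \geq x_j$; uniqueness of the minimizer then gives the lemma. With this in hand, the hypograph inclusion on $\{y \leq x_j\}$ along $\tower|_{\leq j}$ descends directly from the original $\IHC$. For $y \geq x_j$, the letter $j$ is the largest in every restricted word, so the shape on $[x_j, \icx_{k''}+t]$ is the unique minimizer determined only by $a_j$, $\parab[k''](t)$, and the constraint $f \geq \parab[1\cdots k''](t)$ --- data independent of $\word$ --- whence all functions along $\tower|_{\leq j}$ share the same graph on $[x_j, \infty)$ and the inclusion is automatic there.

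The main obstacle, and precisely where the hypothesis $\ics_{k''} \geq s_j$ enters, is the feasibility of the new minimization defining $\rwf[\wordkl[{\word \cap [1,j]}][k][k'']]$: one must produce a $1$-Lipschitz function connecting $(x_j, a_j)$ to $\parab[k''](t)$ while remaining above $\parab[1\cdots k''](t)$. The bound $s_j \leq \ics_{k''}$ places $(x_j, a_j)$ on or below the slope-$+1$ ascending branch of $\parab[k''](t)$, so tracking that branch rightward until merging with $\parab[k''](t)$ yields an admissible extension; symmetrically, $\icd_{k''} \geq d_j$ handles feasibility for $\tower|_{\geq j}$.
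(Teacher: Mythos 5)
Your proposal follows the same route as the paper's: the paper's entire argument is the assertion that $\hyp(\wordkl)\cap D_{\tL}=\hyp(\wordkl[{(\word\cap[1,j])}][k][k''])\cap D_{\tL}$ for $D_{\tL}=(-\infty,x_j]\times\R$, which is exactly your cutting lemma, and you supply the variational justification (additive splitting of $\int\rateber(\partial_y f)\,\d y$ at $x_j$ plus uniqueness of minimizers, together with the observation that the restricted functions all agree on $[x_j,\infty)$) that the paper leaves implicit. One small slip in your feasibility paragraph: since $s$ indexes the slope-$(-1)$ lines $x+a=\mathrm{const}$, the hypothesis $s_j\leq\ics_{k''}$ places $(x_j,a_j)$ weakly below the \emph{descending} (slope $-1$) branch of $\parab[k''](t)$, so the admissible extension goes rightward with slope $-1$ (taking a maximum with $\parab[1\cdots k''](t)$ if needed) until it merges with that branch; the ascending branch and the condition $\icd_{k''}\geq d_j$ play the symmetric role for $\tower|_{\geq j}$. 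This swap does not affect the correctness of the overall argument.
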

\begin{proof}
Consider the regions $ D_{\tL}:=(-\infty,x_j]\times \R $ in the $ (x,a) $ plane. 
For $ k'' $ such that $ \ics_{k''} \geq s_j $ and any $ \word\in\tower $, we have $ \hyp( \wordkl )\cap D_{\tL} = \hyp( \wordkl[(\word\cap[1,j])][k][k''] ) \cap D_{\tL} $.
This property ensures that if $ \tower $ satisfies $ \IHC $ then $ \tower|_{\leq j} $ satisfies $ \IHC[kk''] $.
The other statement is proven similarly by considering $ D_{\tR}:=[x_j,\infty)\times \R $.
\end{proof}

\begin{proof}[Proof of the only if part of Proposition~\ref{p.tree.isle.}]
Assume $ \wordsub\in\treekl[k_\tL k_\tR] $. We seek to prove $ \wordsub^\tL\in\treekl[k_\tL k] $.

Consider first the case $ \wordsub^\tL=\emptyset $.
We claim that $ (x_j,a_j) \notin \hyp(\wordkl[\emptyset][k_\tL][k])^\circ $, for all $ j=1,\ldots,m $.
Once this claim is proven, Lemma~\ref{l.tree.empty}\ref{l.tree.empty.k<k'} gives the desired result $ \wordsub^\tL=\emptyset\in\treekl[k_\tL k] $.
To prove the claim, first note that by Proposition~\ref{p.tree.prop}\ref{p.IHC}, the word $ \wordsub $ satisfies $ \IHC[k_\tL k_\tR] $,
so $ (x_j,a_j) \notin \hyp(\wordkl[\wordsub][k_\tL][k_\tR])^\circ $, for all $ j\notin\wordsub $.
Next, under the current assumptions, $ \wordkl[\wordsub][k_\tL][k_\tR] = \wordkl[\emptyset][k_\tL][k] \cupi \wordkl[\wordsub][k][k_\tR] $,
so $ \hyp(\wordkl[\wordsub][k_\tL][k_\tR]) \setminus \hyp(\parab[k](t))^\circ $
is a disjoint union of $ \hyp(\wordkl[\emptyset][k_\tL][k]) \setminus \hyp(\parab[k](t))^\circ $
and $ \hyp(\wordkl[\wordsub][k][k_\tR]) \setminus \hyp(\parab[k](t))^\circ $.
Recall that the discretized Hopf--Lax condition \eqref{e.HLc.xa} forbids any $ (x_j,a_j) $ to be in $ \hyp(\parab[k](t))^\circ $.
Combining the preceding properties gives the claim.

Next consider $ \wordsub^\tL\neq\emptyset $.
Let $ \letterlast := (\wordsub^\tL)_{|\wordsub^\tL|} $ denote the last letter in $ \wordsub^\tL $,
and consider the word
$
	\word^* := \wordfullkl[k_\tL k]|_{\leq \letterlast}
$
formed by all letters in $ \alphabetkl[k_\tL k] $ that are $ \leq \letterlast $.
We next show that $ \word^* \in \treekl[k_\tL k] $.
Examine the up-down iteration corresponding to $\treekl[k_\tL k] $.
Start from $ \wordfullkl[k_\tL k] $.
In each step of the iteration, follow the child that chooses to delete every activated letter that is $ > \letterlast $ and chooses to keep every activated letter that is $ \leq \letterlast $.
At the end of this procedure, we arrive at a word $ \til\word^{*}\in\treekl[k_\tL k] $ that contains $ \word^* $.
In order to conclude $ \word^* \in \treekl[k_\tL k] $, it suffices to show $ \til\word^{*} = \word^* $.
Assume the contrary: $ \til\word^{*} = \word^* \cup\{j_1,\ldots\} $, with $ \letterlast < j_1 <\ldots $.
Consider the region $ \Omega := \{ (y,a): x_{\letterlast} \leq y \leq \xr(\wordkl[\wordsub^{\tL}][k_\tL][k]), \, \parab[k](t,y) \leq a < \rwf[ \wordkl[\wordsub^{\tL}][k_\tL][k] ](y) \} $
as depicted in Figure~\ref{f.wing}, and note that $ \Omega \subset \hyp( \wordkl[\wordsub^\tL][k_\tL][k] )^\circ \subset \hyp( \wordkl[\wordsub][k_\tL][k_\tR] )^\circ $.
Those excess letters $ j_1,\ldots $ have never been been activated throughout the construction of $ \til\word^* $, so $ \vecUDc(\wordkl[\til\word^{*}][k_\tL][k])|_{\set{j_1\ldots }} = (\down\ldots\down) $.
Referring to Figure~\ref{f.wing-under},
we see that the last property forces $ (x_{j_1},a_{j_1}),\ldots \in \Omega \subset \hyp( \wordkl[\wordsub][k_\tL][k_\tR] )^\circ  $.
However, the fact that $ \wordsub $ satisfies $ \IHC[k_\tL k_\tR] $ (by Proposition~\ref{p.tree.prop}\ref{p.IHC}) 
forbids the existence of any $ (x_j,a_j) \in \hyp( \wordkl[\wordsub][k_\tL][k_\tR] )^\circ $.
Hence those excess letters $ j_1,\ldots $ do not exist, and $ \til\word^{*} = \word^* $.

We now show $ \wordsub^\tL\in\treekl[k_\tL k] $.
The current assumption $ \wordsub\in\treekl[k_\tL k_\tR] $ gives (by Proposition~\ref{p.tree.prop}\ref{p.IHC}) a tower $ \tower $ that satisfies $ \IHC[k_\tL k_\tR] $ and ascends from $ \wordsub $ to $ \wordfullkl[k_\tL k_\tR] $. 
Applying Lemma~\ref{l.IHC.cut} with $ (j,k,k'',k')\mapsto(\letterlast,k_\tL,k,k_\tR ) $ gives a tower $ \tower' $ that satisfies $ \IHC[k_\tL k] $ and ascends from $ \word^\tL $ to $ \word^* $. 
On the other hand, the conclusion $ \word^* \in \treekl[k_\tL k] $ from the last paragraph gives (by Proposition~\ref{p.tree.prop}\ref{p.IHC}) a tower $ \tower'' $  that satisfies $ \IHC[k_\tL k ] $ and ascends from $ \word^* $ to $ \wordfullkl[k_\tL k] $.
Concatenating $ \tower' $ and $ \tower'' $ proves that $ \wordsub^{\tL}\in\treekl[k_\tL k] $.

The proof of $ \wordsub^\tR\in\treekl[kk_\tR] $ is similar, which we omit.
\end{proof}

\begin{figure}[h]
\begin{minipage}[t]{.5\linewidth}
	\frame{\includegraphics[height=82pt]{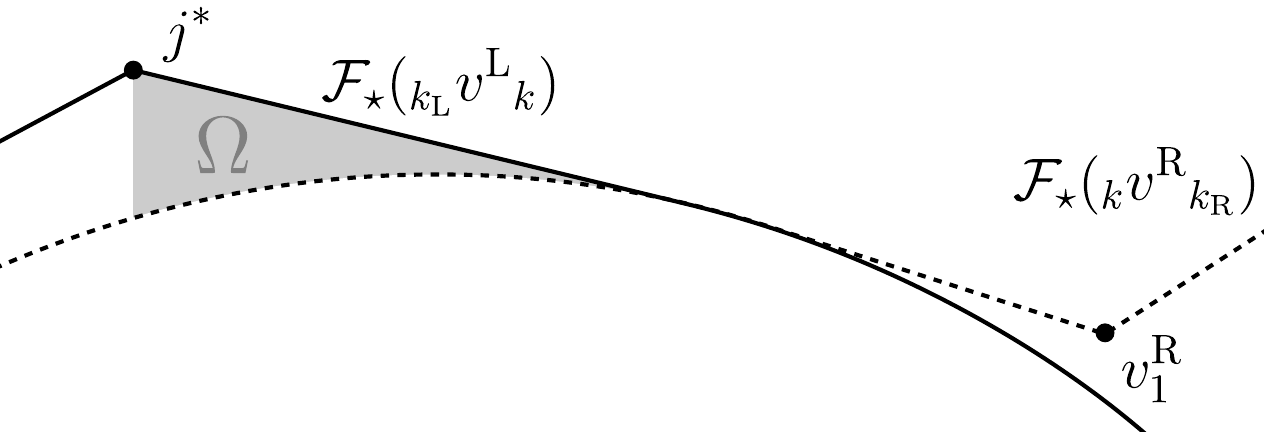}}
	\caption{The region $ \Omega $. %
	The solid line is the graph of $ \rwf[\wordkl[\wordsub^\tL][k_\tL][k]] $, %
	and the dashed line is the graph of $ \rwf[\wordkl[\wordsub^\tR][k][k_\tR]] $.}
	\label{f.wing}
\end{minipage}
\hfill
\begin{minipage}[t]{.48\linewidth}
	\frame{\includegraphics[height=82pt]{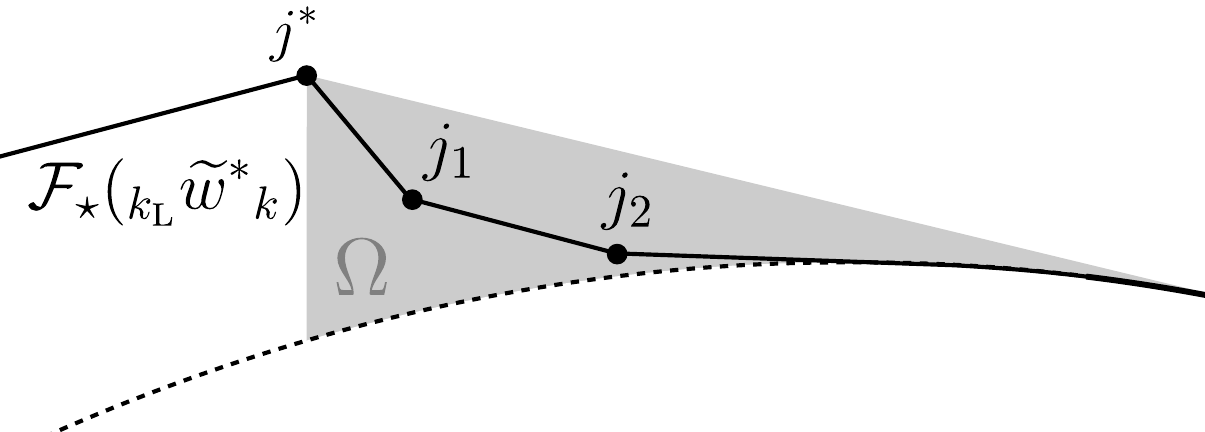}}
	\caption{An illustration of the graph of $ \rwf[\wordkl[\til\word^*][k_\tL][k]] $.}
	\label{f.wing-under}%
\end{minipage}
\vspace{-10pt}
\end{figure}

\begin{proof}[Proof of the if part of Proposition~\ref{p.tree.isle.}]
Assume $ (\wordsub^\tL,\wordsub^\tR)\in\treekl[k_\tL k]\times\treekl[kk_\tR] $. We seek to prove $ \wordsub\in\treekl[k_\tL k_\tR] $.

Consider first the case $ \wordsub^\tL = \wordsub^\tR = \emptyset $.
Applying Lemma~\ref{l.tree.empty}\ref{l.tree.empty.k<k'} with $ (k,k')\mapsto (k_\tL,k) $ and with $ (k,k')\mapsto (k,k_\tR) $ 
shows that $ (x_j,a_j)\notin\hyp( \wordkl[\emptyset][k_\tL][k] )^\circ $ and $ (x_j,a_j)\notin\hyp( \wordkl[\emptyset][k][k_\tR] )^\circ $, for all $ j = 1,\ldots,m $.
These properties together with $ \wordkl[\emptyset][k_\tL][k] \cupi \wordkl[\emptyset][k][k_\tR] = \wordkl[\emptyset][k_\tL][k_\tR] $ imply that $ (x_j,a_j)\notin\hyp( \wordkl[\emptyset][k_\tL][k_\tR] )^\circ $, for all $ j = 1,\ldots,m $ (see Definition~\ref{d.<<}\ref{d.<<i.hypo}).
Applying Lemma~\ref{l.tree.empty}\ref{l.tree.empty.k<k'} with $ (k,k')\mapsto (k_\tL,k_\tR) $ concludes $ \empty \in \treekl[k_\tL k_\tR] $.

Consider next the case $ \wordsub^\tL \neq \emptyset $ and $ \wordsub^\tR \neq \emptyset $.
Let $ \letterlast := (\wordsub^\tL)_{|\wordsub^\tL|} $ denote the last letter in $ \word^\tL $
and let  $ \letterfirst := (\wordsub^\tR)_{1} $ denote the first letter in $ \word^\tR $.
Consider the words
$
	\word^* := \wordfullkl[k_\tL k_\tR]|_{\leq \letterlast}
$
and
$
	\word_* := \wordfullkl[k_\tL k_\tR]|_{\geq \letterfirst}
$
formed by all letters in $ \alphabetkl[k_\tL k_\tR] $ 
that are respectively  $ \leq \letterlast $ and $ \geq \letterfirst $.
Similarly arguments in the `only if part' show that $ \word^*\cup \word_* \in \treekl[k_\tL k_\tR] $.
Hence, by Proposition~\ref{p.tree.prop}\ref{p.IHC}, there exists a tower 
\begin{align}
	\label{e.tree.isle.if.1}
	((\word^*\cup \word_*)  \subset \ldots \subset \wordfullkl[k_\tL k_\tR]) \text{ that satisfies } \IHC[k_\tL k_\tR].
\end{align}
Next, the current assumption $ (\wordsub^\tL,\wordsub^\tR)\in\treekl[k_\tL k]\times \treekl[kk_\tR] $ gives the towers
\begin{align*}
	(\wordsub^\tL \subset \ldots \subset \wordfullkl[k_\tL k]) \text{ that satisfies } \IHC[k_\tL k],
	\qquad
	(\wordsub^\tR \subset \ldots \subset \wordfullkl[kk_\tR]) \text{ that satisfies } \IHC[kk_\tR].
\end{align*}
Applying Lemma~\ref{l.IHC.cut} with $ (j,k,k'',k')\mapsto (\letterlast,k_\tL,k,k_\tR) $ and with $ (j,k,k'',k') \mapsto (\letterfirst,k_\tL,k,k_\tR) $ gives
\begin{align}
	\label{e.tree.isle.if.2}
	(\wordsub^\tL \subset \ldots \subset \word^*) \text{ that satisfies } \IHC[k_\tL k],
\\	
	\label{e.tree.isle.if.3}
	(\wordsub^\tR \subset \ldots \subset \word_*) \text{ that satisfies } \IHC[kk_\tR].
\end{align}
Take the union of each word in \eqref{e.tree.isle.if.2} with $ \word^\tR $.
Under the condition $ \wordsub^\tL \lli \wordsub^\tR $, referring to Definition~\ref{d.<<i}\ref{d.<<i.hypo},
one sees that the resulting tower 
satisfies $ \IHC[k_\tL k_\tR] $:
\begin{align}
	\label{e.tree.isle.if.2'}
	((\wordsub^\tL\cup \wordsub^\tR) \subset \ldots \subset (\word^*\cup\wordsub^\tR)) \text{ satisfies } \IHC[k_\tL k_\tR].
\end{align}
Similarly, taking the union of each word in \eqref{e.tree.isle.if.3} with $ \word^* $ gives a tower $ ((\word^*\cup\wordsub^\tR)\subset \ldots \subset (\word^*\cup \word_*)) $ that satisfies $ \IHC[k_\tL k_\tR] $.
Concatenating the last two towers and the tower in \eqref{e.tree.isle.if.1} yields $ \wordsub=\wordsub^{\tL}\cup\wordsub^{\tR}\in\treekl[k_\tL k_\tR] $.

The case $ \wordsub^\tL \neq\emptyset $ or $ \wordsub^\tR \neq \emptyset $ can be treated similarly.
Say $ \wordsub^\tL \neq\emptyset $ and $ \wordsub^\tR =\emptyset $.
Similarly arguments in the `only if part' show that $ \word^* \in \treekl[k_\tL k_\tR] $.
Hence, by Proposition~\ref{p.tree.prop}\ref{p.IHC}, there exists a tower $ (\word^*  \subset \ldots \subset \wordfullkl[k_\tL k_\tR]) $ that satisfies $ \IHC[k_\tL k_\tR] $.
The same argument that leads up to \eqref{e.tree.isle.if.2'} produces a tower $ (\wordsub^\tL \subset \ldots \subset\word^*) $ that satisfies $ \IHC[k_\tL k_\tR] $.
Concatenating this tower with the previous one gives the desired result.
\end{proof}

\section{Determinantal analysis: the isle factorization}
\label{s.isle}

\subsection{Expanding the determinant, generic terms, and preferred terms}
\label{s.isle.processing}
We begin with a definition.
\begin{defn}
\label{d.opuci}
For any $ \wordkl $, decompose it into $ \lli $-ordered isles as 
$
	\wordkl
	= 
	\wordkl[\wordsub^{(1)}][k][k_1] 
	\cupi 
	\wordkl[\wordsub^{(2)}][k_1][k_2] 
	\cupi \cdots 
	\cupi 
	\wordkl[\wordsub^{(n)}][k_{n-1}],
$
where each $ \wordkl[\wordsub^{(i)}][k_{i-1}][k_{i}] \in \islesetkl[k_{i-1}k_i] $.
Recall $ \opu[k][k']{(\ldots)_{k_1}(\ldots)_{k_2}\ldots} $ from Definition~\ref{d.opu.extended}.
Set
\begin{align}
	\label{e.opuci}
	\opuci[k][k']{\word} 
	&:=
	\Opu[k][k']{%
		(\wordsub^{(1)} {}_{\vecUDc(\wordkl[\wordsub^{(1)}][k][k_1]) })_{k_1} 
		(\wordsub^{(2)} {}_{\vecUDc(\wordkl[\wordsub^{(2)}][k_1][k_2]) })_{k_2}
		\cdots
		{}_{k_{n-1}} (\wordsub^{(n)} {}_{\vecUDc(\wordkl[\wordsub^{(n)}][k_{n-1}][k']) })  
	}\ .
\end{align}
\end{defn}

The first step is to convert the relevant operators into $ \opuci[k][k']{\word} $.
Recall the parity from the beginning of Section~\ref{s.updown}.
\begin{lem}
\label{l.isle.prep}
We have $ \punder = \det(\Id + (K_{kk'\star})_{k,k'=1}^{\icm}) $, where
\begin{align}
	\label{e.det.islse}
	K_{kk'\star}
	&:= 
	\sideset{}{^\lli_{\tree}}{\sum}
	\parityc(\wordkl[\word^{(1)}][k][k_1]) \cdots \parityc(\wordkl[\word^{(\ell)}][k_\ell][k'])
	\
	\inddown_{\ick}
	\opuci{\word^{(1)}} 
	\inddown_{\ick_1}
	\cdots 
	\inddown_{\ick_\ell}
	\opuci{\word^{(\ell)}}
	\inddown_{\ick'}\ ,	
\end{align}
the sum goes over all $ \wordkl[\word^{(1)}][k][k_1] \lli \cdots \lli \wordkl[\word^{(\ell)}][k_\ell][k'] $, and each $ \word^{(i)} \in \treekl[k_{i-1}k_i] $, with the convention $ k_0:=k $ and $ k_n:=k' $.
\end{lem}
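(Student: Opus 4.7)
The plan is to start from the post-iteration formula \eqref{e.det.iter}, which already expresses $\punder$ as a Fredholm determinant whose $kk'$-th entry is a sum $\sum_{\wordkl\in\tree}\parityc(\wordkl)\,\inddown_{\ick}\opuc{\word}\inddown_{\ick'}$. The task is to decompose each $\opuc{\word}$ along the isle structure of $\word$ using Lemma~\ref{l.opu.id}, and then reindex the sum via Proposition~\ref{p.tree.prop}\ref{p.tree.isle}. Throughout, the empty-word case $\word=\emptyset$ (which is allowed when $k<k'$) is treated as a single ``empty isle'', contributing the term with $\ell=1$ and $\word^{(1)}=\emptyset$ in \eqref{e.det.islse}.

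The first key step is an iterated factorization. Write the isle decomposition $\word = \wordsub^{(1)}\cupi\cdots\cupi\wordsub^{(n)}$ with $\wordsub^{(i)}\in\islesetkl[k_{i-1}k_i]$. At each of the $n-1$ internal boundaries, apply Lemma~\ref{l.opu.id} with the splitting $\wordsub\cupi\wordsub'$ coming from that boundary. A binary recursion (merge vs.\ split) yields
\[
\opuc{\word} = \sum_{0=i_0<i_1<\cdots<i_\ell=n}\ \opuci{\wordsub^{(i_0+1)}\cup\cdots\cup\wordsub^{(i_1)}}\,\inddown_{\ick_{i_1}}\,\opuci{\wordsub^{(i_1+1)}\cup\cdots\cup\wordsub^{(i_2)}}\,\inddown_{\ick_{i_2}}\cdots\opuci{\wordsub^{(i_{\ell-1}+1)}\cup\cdots\cup\wordsub^{(i_\ell)}},
\]
where each block is itself a union of consecutive isles. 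Here I would need to verify that the ``merged'' $\opu[\cdots]{\cdots}$ operator produced by Lemma~\ref{l.opu.id} at each step genuinely matches $\opuci$ on the corresponding union of consecutive isles: this reduces to showing $\vecUDc(\wordkl)|_{\wordsub}=\vecUDc(\wordkl[\wordsub][k_{i-1}][k_i])$ whenever $\wordsub$ is a union of consecutive isles of $\word$, which in turn follows from the fact that $\rwf$ and $\rwf[\wordkl[\wordsub]]$ agree on the horizontal window delimited by the $\xl,\xr$ of that union (by Definition~\ref{d.isle}\ref{d.isle.1} and \eqref{e.rwfn.space}).

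Next I would check that the parity factorizes cleanly: $\parityc(\wordkl) = \prod_{i=1}^n\parityc(\wordkl[\wordsub^{(i)}][k_{i-1}][k_i])$, which is immediate from $\vecUDc(\wordkl)|_{\wordsub^{(i)}}=\vecUDc(\wordkl[\wordsub^{(i)}])$ and the multiplicative definition of $\parity$; grouping isles along $(i_0,\ldots,i_\ell)$ then produces the product of parities appearing in \eqref{e.det.islse}. With parities in hand, I would reindex the double sum (over $\wordkl\in\tree$ and over compositions of its isle count) as a single sum over tuples $\wordkl[\word^{(1)}][k][k_1]\lli\cdots\lli\wordkl[\word^{(\ell)}][k_\ell][k']$. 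The bijection is set up by taking $\word=\bigcup_j\word^{(j)}$; Proposition~\ref{p.tree.prop}\ref{p.tree.isle}, applied inductively, gives the equivalence $\word\in\treekl\iff\word^{(j)}\in\treekl[k_{j-1}k_j]$ for all $j$, so the indexing sets coincide. Substituting into \eqref{e.det.iter} yields exactly $A_{kk'}=K_{kk'\star}$ as in \eqref{e.det.islse}, completing the proof.

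The main obstacle I expect is the bookkeeping described in the second paragraph, namely ensuring that each iterative application of Lemma~\ref{l.opu.id} (which is stated for a single split $\wordsub^{(i)}=\wordsub\cup\wordsub'$ inside a generic $\Opu$) produces operators that align, after all splits, with $\opuci$ as introduced in Definition~\ref{d.opuci}. Verifying the equality of the relevant $\vecUDc$'s on subwords is the conceptual crux, since the definition of $\opuci$ is intrinsic to the isle geometry whereas Lemma~\ref{l.opu.id} manipulates $\vecUD$'s purely symbolically. Once this matching is established, the remainder is a combinatorial reindexing.
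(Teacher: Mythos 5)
Your proposal is correct and follows essentially the same route as the paper's proof: start from the post-iteration determinant~\eqref{e.det.iter}, decompose each word into $\lli$-ordered isles, apply Lemma~\ref{l.opu.id} iteratively at the isle boundaries to generate the binary sum over compositions, and reindex via Proposition~\ref{p.tree.prop}\ref{p.tree.isle}. The one place you are more careful than the paper is in flagging the identity $\vecUDc(\wordkl)|_{\wordsub^{(i)}}=\vecUDc(\wordkl[\wordsub^{(i)}][k_{i-1}][k_i])$ (and the corresponding agreement of isle decompositions on unions of consecutive isles), which the paper leaves implicit but which is indeed what makes the symbolic output of Lemma~\ref{l.opu.id} match $\opuci$ as introduced in Definition~\ref{d.opuci}.
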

\begin{proof}
The starting point is the output~\eqref{e.det.iter} of the up-down iteration.
Fix $ \wordkl\in\treekl $ and decompose this word into isles, namely
$
	\wordkl[\word][k][k'] 
	= 
	\wordkl[\wordsub^{(1)}][k][k_1] 
	\cupi 
	\wordkl[\wordsub^{(2)}][k_1][k_2] 
	\cupi \cdots 
	\cupi 
	\wordkl[\wordsub^{(n)}][k_{n-1}][k'],
$
and each $ \wordkl[\wordsub^{(i)}][k_{i-1}][k_{i}] \in \islesetkl[k_{i-1}k_i] $, with the convention $ k_0:=k $ and $ k_n:=k' $.
Invoke Lemma~\ref{l.opu.id} to factorize $ \inddown_{\ick} \opuc{\word} \inddown_{k'} $. 
For example, when $ n=3 $, 
\begin{align*}
	\inddown_{\ick} \opuc{\word} \inddown_{\ick'} 
	=
	&\inddown_{\ick} \opuci{\word} \inddown_{\ick'}
	+ 
	\inddown_{\ick} \opuci{\wordsub^{(1)}} \inddown_{\ick_1} \opuci{(\wordsub^{(2)} \cupi \wordsub^{(3)}) } \inddown_{\ick'}
\\
	&+ 
	\inddown_{\ick} \opuci{ (\wordsub^{(1)}\cupi \wordsub^{(2)}) } \inddown_{k_2} \opuci{\wordsub^{(3)} } \inddown_{\ick'}
	+ 
	\inddown_{\ick} \opuci{\wordsub^{(1)} } \inddown_{\ick_1} \opuci{\wordsub^{(2)} } \inddown_{k_2} \opuci{\wordsub^{(3)} } \inddown_{\ick'}.
\end{align*}
Sum the result over $ \wordkl\in\treekl $.
By Proposition~\ref{p.tree.prop}\ref{p.tree.isle}, the result can be written as \eqref{e.det.islse}.
\end{proof}

Lemma~\ref{l.trace-class} verifies that $ \inddown_{\ick}\opuci{\word} \inddown_{\ick'} $ is trace-class upon suitable conjugation.

We now invoke~\eqref{e.fredholm}--\eqref{e.trace} to expand the determinant into products of traces.
\begin{defn}
\label{d.gterm}
We call 
$
	\tr( \inddown_{\ick_0} \opuci{ \word^{(1)} } \inddown_{\ick_1} \opuci{ \word^{(2)} } \cdots \inddown_{\ick_{n-1}} \opuci{ \word^{(n)} } \inddown_{\ick_0} )
$
a \tdef{generic trace}, where $ k_n := k_0 $, each $ \word^{(i)} \in \islesetkl[k_{i-1}k_i] $, but the $ \word $'s are \emph{not} necessarily $ \lli $-ordered. 
We call a product of generic traces a \tdef{generic term}.
We call a generic term \tdef{degenerate} if the words involved (in the $ \word $'s) do not exhaust all the letters $ 1,\ldots,m $.
Let $ \norm{\gterm} $ count the total number of $ \opuci{\ldots} $ involved in $ \gterm $.
\end{defn}
\noindent{}%
For example, for $ m=3 $, the generic term $ \gterm = \tr(\inddown_{\Ic{1}}\opuci{ 13 }\inddown_{2} \opuci{ 13 }\inddown_{\Ic{1}}) \tr(\inddown_{3}\opuci{1}\inddown_{3}) $ is degenerate because the letter $ 2 $ has not been involved, and $ \norm{\gterm} =3 $.
Inserting $ K_{kk'\star} $ from \eqref{e.det.islse} into \eqref{e.simon}--\eqref{e.trace} gives
\begin{align}
	\label{e.DD}
	\DD_n := \D_n( (K_{kk'\star})_{k,k'=1}^{\icm}) = \sum_{\gterm} \coef_{n,\gterm} \, \gterm.
\end{align}
Here $ \coef_{n,\gterm} \in \Z $ is the coefficient, and, for each fixed $ n $, is nonzero for finitely many generic terms $ \gterm $.
We have
\begin{align}
	\tag{Expansion}
	\label{e.det.expanded}
	\punder
	=
	\sum_{n= 0}^\infty \frac{\DD_n}{n!} = \sum_{n=0}^\infty \Big( \sum_{\gterm} \frac{1}{n!}\coef_{n,\gterm} \, \gterm \Big),
\end{align}
where the sum over $ n $ converges absolutely.

We now define the type of terms that are amenable for the $ N\to\infty $ analysis.

\begin{defn}
\label{d.prefer}
We call a generic trace in Definition~\ref{d.gterm} \tdef{preferred} if
$ \wordkl[{\word^{(i)}}][k_{i-1}][k_{i}] \not\lli \wordkl[{\word^{(i+1)}}][k_{i}][k_{i+1}] $ for all $ i $,
under the cyclic convention $ \wordkl[{\word^{(n+1)}}][k_{n}][k_{n+1}] := \wordkl[{\word^{(1)}}][k_{0}][k_{1}] $.
We call a product of preferred traces a \tdef{preferred term}.
\end{defn}

The goal of the isle factorization is to show that all non-preferred terms in \eqref{e.det.expanded} cancel exactly:
\begin{prop}
\label{p.isle}
For any non-preferred term $ \gterm $, $ \sum_{n \geq 0} \tfrac{1}{n!} \coef_{n,\gterm} = 0 $.
\end{prop}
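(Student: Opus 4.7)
The plan is to work at the level of the formal logarithm of the Fredholm determinant. Treating each generic trace (in the sense of Definition~\ref{d.gterm}) as an indeterminate variable---the formal-determinant framework developed in Section~\ref{s.isle.fdet}---I would exploit the Newton-type identity
\begin{align*}
\punder \;=\; \sum_{n\geq 0}\frac{\DD_n}{n!} \;=\; \exp(\Phi),
\qquad
\Phi \;:=\; \sum_{k\geq 1}\frac{(-1)^{k-1}}{k}\,\tr\!\big((K_{\star})^k\big),
\end{align*}
where $K_\star:=(K_{kk'\star})_{k,k'=1}^{\icm}$ is the operator matrix from Lemma~\ref{l.isle.prep}. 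This is the standard translation between elementary and power-sum symmetric functions applied to the eigenvalues of $K_\star$; in the formal setting it is an identity in the polynomial algebra on generic-trace indeterminates, well-defined because $\Phi$ has no constant term. The key observation is that if every non-preferred generic trace has coefficient zero in the formal expansion of $\Phi$, then each $\Phi^k/k!$---and hence $\exp(\Phi)=\punder$---is a formal polynomial in preferred traces alone, so the coefficient of every non-preferred term automatically vanishes, which is exactly Proposition~\ref{p.isle}.

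To execute this, I would compute the coefficient of a fixed generic trace
\begin{align*}
\tau \;=\; \tr\!\big(\inddown_{\ick_0}\opuci{\word^{(1)}}\inddown_{\ick_1}\opuci{\word^{(2)}}\inddown_{\ick_2}\cdots\opuci{\word^{(N)}}\inddown_{\ick_0}\big)
\end{align*}
in $\Phi$. Expanding $\tr((K_\star)^k)=\sum_{k_0,\ldots,k_{k-1}}\tr(K_{k_0k_1\star}\cdots K_{k_{k-1}k_0\star})$ and substituting~\eqref{e.det.islse}, each copy of $\tau$ in the expansion corresponds to a partition of the cyclic isle-sequence $(\word^{(1)},\ldots,\word^{(N)})$ into $k$ contiguous $\lli$-ordered chunks, together with a choice of linear base point (the chunk that plays the role of the first factor $K_{k_0k_1\star}$). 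Let $s$ be the number of cyclic positions at which $\wordkl[{\word^{(i)}}][k_{i-1}][k_i]\not\lli\wordkl[{\word^{(i+1)}}][k_i][k_{i+1}]$ (the mandatory cuts; note that $s\geq 1$ because a full cyclic $\lli$-chain of non-degenerate isles would force $\xl(\cdot)=\xr(\cdot)$ via Definition~\ref{d.<<i}\ref{d.<<i.x}), set $r:=N-s$, and write $P:=\prod_{i=1}^{N}\parityc(\wordkl[\word^{(i)}][k_{i-1}][k_i])$. A valid $k$-chunk partition is obtained by choosing $k-s$ optional cuts from the $r$ allowed positions, and each such partition yields $k$ distinct ordered tuples in $\sum_{k_0,\ldots,k_{k-1}}$, which cancels the $1/k$ prefactor. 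Hence the coefficient of $\tau$ in $\Phi$ equals
\begin{align*}
P\cdot\sum_{k=s}^{N}(-1)^{k-1}\binom{r}{k-s}
\;=\; P\cdot(-1)^{s-1}\sum_{j=0}^{r}(-1)^j\binom{r}{j}
\;=\; P\cdot(-1)^{s-1}\cdot\ind_{r=0},
\end{align*}
which vanishes precisely when $r\geq 1$, i.e.\ precisely when $\tau$ is non-preferred.

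Formal exponentiation then concludes the argument: since $\Phi$ is a formal linear combination of preferred generic traces only, each $\Phi^k/k!$---and therefore $\exp(\Phi)=\punder$---is a formal polynomial in preferred terms, whence $\sum_{n\geq 0}\tfrac{1}{n!}\coef_{n,\gterm}=0$ for every non-preferred $\gterm$. The main obstacle I anticipate is the formal bookkeeping: because $\tr((K_\star)^k)$ can be trace-norm divergent (as emphasized at the end of Section~\ref{s.overview.updown.isle}), the identity $\punder=\exp(\Phi)$ must be read not analytically but as an equality in the algebra of formal polynomials in generic-trace indeterminates, which is precisely the purpose of the formal-determinant machinery of Section~\ref{s.isle.fdet}. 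A secondary combinatorial issue is the case of cyclic isle-sequences with nontrivial cyclic symmetry of order $q\mid N$, where both the number of distinct ordered tuples per cyclic partition and the number of base-point choices acquire a factor $1/q$, leaving the cancellation identity above unchanged.
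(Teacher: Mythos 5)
Your proposal is correct in its essentials, but takes a genuinely different route from the paper.  The paper's proof of Proposition~\ref{p.isle} proceeds by \emph{factorizing} the formal determinant: given a non-preferred monomial with a $\lli$-pair at some position, it splits the symbol set at a letter $i$ into $\symb_1$ (letters $\leq i$) and $\symb_2$ (letters $>i$), writes $\fdet(\Id+u_\star)=\fdet(\Id+u_1)\fdot\fdet(\Id+u_2)\fdot\fdet(\Id+\sum(-1)^{i+j}u_2^iu_1^ju_3)$ via Proposition~\ref{p.det.prop}\ref{p.det.factorize}, and argues that no factor can produce the given monomial.  You instead pass to the formal logarithm $\Phi=\sum_k\frac{(-1)^{k-1}}{k}\tr(K_\star^k)$ and compute the coefficient of each generic trace directly, which is a legitimate and in some ways more explicit approach; as you note, it reduces the proposition to the single assertion that $\Phi$ is a linear combination of \emph{preferred} traces.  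That reduction is sound: $\exp(\Phi)$ is then automatically a power series in preferred terms, and formal exponentiation is justified in $\C\bkk{\angg{\symb}/\cyclic}$ because $\Phi$ has no constant term, with the identity $\fdet(\Id+u_\star)=\exp(\Phi)$ verifiable by exactly the same finite-dimensional evaluation argument that proves Lemma~\ref{l.det} and Proposition~\ref{p.det.prop}\ref{p.det.factorize}.

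There are two spots you should tighten.  First, your whole calculation collapses if $s=0$: in that case the sum starts at $k=1$ rather than $k=s$, and $\sum_{k\geq 1}(-1)^{k-1}\binom{r}{k}=1-\ind_{r=0}$, which is nonzero for $r\geq 1$; so the statement ``a cyclic isle sequence cannot be fully $\lli$-ordered'' is load-bearing and not merely a remark.  Your sketch via $\xl\leq\xr$ is the right idea, but $\xl<\xr$ requires the non-degeneracy that the terminal points sit strictly above $\parab[1\cdots\icm](t)$ (as guaranteed, e.g., by Assumption~\ref{assu.strict}), and the edge case of a degenerate isle with $\rwf$ coinciding with a parabola should be explicitly excluded.  (The paper's factorization proof sidesteps this point entirely, relying only on the letter-order consequence $\word^{(1)}\llo\word^{(2)}$ of $\lli$, which is a small robustness advantage of their route.)  Second, your cyclic-symmetry remark is correct in outcome but confusingly worded: the correct correction is a single overall factor $1/q$ (the coefficient equals $\frac{P}{q}(-1)^{s-1}\ind_{r=0}$), which arises because summing the per-representative linear-decomposition counts over all $N$ cyclic shifts gives exactly $k\binom{r}{k-s}$, and the orbit of a representative in $\Monomials$ has size $N/q$ rather than $N$.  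Saying ``both ... acquire a factor $1/q$'' suggests $1/q^2$, which is not what happens; but since only the $\ind_{r=0}$ dichotomy matters for the proposition, the conclusion is unaffected.  As a side benefit, your explicit formula for the coefficient in $\Phi$ recovers, after exponentiating and restricting to the dominant term, the sign $\prod_k\parityc(\wordkl[\wordF(F_k)][k][k])$ computed differently in Proposition~\ref{p.term.dominanace}.
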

\noindent{}%
The proof of Proposition~\ref{p.isle} is in Section \ref{s.isle.}.
The sum in Proposition~\ref{p.isle} contains only finitely many non-zero terms. 
To see why, recall that $ \norm{\gterm} $ counts the total number of $ \opuci{\ldots} $ involved in $ \gterm $.
From \eqref{e.DD}, \eqref{e.det.islse}, and \eqref{e.simon}--\eqref{e.trace}, 
one sees that each generic term $ \gterm' $ in $ \D_n $ has $ \norm{\gterm'} \geq n $,
so $ \coef_{n,\gterm} =0 $ for all $ n > \norm{\gterm} $.

\subsection{Formal determinants}
\label{s.isle.fdet}
As explained in Section~\ref{s.overview.updown.isle}, to prove Proposition~\ref{p.isle} we need to develop and invoke a more flexible notion of determinants --- formal determinants. 

For a symbol $ X $, we view $ X $, $ XX $, $ XXX $, \ldots as \emph{distinct} indeterminates, and consider formal power series in these variables $ \C\bkk{X,XX,\ldots} = \C\bkk{ \angg{X} } $.
The \tdef{formal determinant} $ \fdet(\Id+X) $ is an element in $ \C\bkk{ \angg{X} } $ given as
\begin{align*}
	\fdet(\Id+X) 
	:=
	\sum_{n=0}^\infty 
	\frac{1}{n!}
	\begin{vmatrix}
		X & n-1 & &&\\
		XX & X & n-2 &\\
		\vdots & \ddots & \ddots & \ddots \\
		\vdots & \vdots & \ddots & X & 1 \\
		\cdots & \cdots & \cdots & XX & X
	\end{vmatrix}
	=
	1 + X + \tfrac12 X\fdot X - \tfrac12 XX + \cdots.
\end{align*} 
To avoid confusion, we will always use a $ \fdot $ for the (commutative) multiplication in $ \C\bkk{\angg{X} } $.
For example $ X \fdot X \neq XX = 1\fdot XX = XX \fdot 1 $.

Next we extend this definition of formal determinants to allow more symbols and to allow matrix-valued inputs.
Fix symbols $ X_1,\ldots,X_\ell $.
Consider \emph{non-commutative} formal series
\begin{align*}
	\C\Monomials
	=
	\Big\{ 
	\sum
	\alpha_{i_1\cdots i_j} 
	X_{i_1}\cdots X_{i_j} 
	:
	\alpha_{i_1\cdots i_j} \in \C
	\Big\}.
\end{align*}
The sum runs over \emph{finitely many} $j$ tuples $(i_1,\cdots,i_j)\in\{1,\ldots,\ell\}^j$, with $j\in\Z_{\geq 0}$ and $\alpha_\emptyset X_\emptyset:=\alpha_\emptyset\in\C$.%
Let us clarify that the `$ \cdots $' in $ X_{i_1}\cdots X_{i_j} := \prod_{j'=1}^j X_{i_{j'}} $ means omission of some symbols, and is not to be confused with the multiplication `$ \fdot $' in $ \C\bkk{\angg{X} } $. 
The non-commutative multiplication in $ \C\Monomials $ is done by juxtaposing symbols and assuming the associative law.
For example, $(X_1X_2)(X_1):=X_1X_2X_1\neq X_1X_1X_2$ in $ \C\angg{X_1,X_2} $.
This is not to be confused with $(X_1X_2)\Cdot X_1 = X_1\Cdot(X_1X_2)$ in $\C\bkk{\angg{X_1,X_2}}$.%
Fix $ u=(u_{kk'})_{k,k'=1}^{\icm} $ with $ u_{kk'} \in \C\Monomials $. Such a $ u $ induces an algebra homomorphism
\begin{align}
	\label{e.fdet.phiu}
	\varphi_u: \C\angg{X} \longrightarrow \C\Monomials,
	\qquad
	\underbrace{X\cdots X}_{n} \mapsto \sum u_{k_0k_1} \cdots u_{k_{n-1}k_0},
\end{align}
where the sums go over $ k_0,\ldots,k_{n-1} \in \{1,\ldots,\icm\} $.
This homomorphism can be lifted to
\begin{align}
	\label{e.fdet.phiu.}
	\bar\varphi_u: \C\bkk{\angg{X}} \longrightarrow \C\bkk{\Monomials}.
\end{align}
To incorporate the cyclic nature of traces,
we mod out cyclic relations in $ \C\bkk{\Monomials} $ by setting $ X_{i_1}\cdots X_{i_j} \sim X_{i_{\sigma(1)}} \cdots X_{i_{\sigma(j)}} $ for any cyclic permutation $ \sigma \in \mathbb{S}_j $.
Doing so produces the quotient algebra $ \C\bkk{\Monomials/\cyclic} $ along with the projection 
$
	\pi: \C\bkk{\Monomials} \to \C\bkk{\Monomials/\cyclic}.
$
\begin{defn}
$
	\fdet(\Id + u)
	:=
	\big( \pi \circ \bar\varphi_u \big)( \fdet(\Id+X) )
	\in
	\C\bkk{ \Monomials/\cyclic }.
$
\end{defn}

We next state the main properties of formal determinants that will be used later.
In view of Lemma~\ref{l.isle.prep}, we take 
$ \symb:=\{ X_1,\ldots,X_\ell\} = \{ \wordkl \in \wordsetkl : k,k'=1,\ldots,\icm \} $, with $\ell=\sum_{k,k'=1,\ldots,\icm}|\wordsetkl|$,
and consider 
\begin{align}
	\label{e.ustar}
	u_\star
	:=
	\Big( \sideset{}{^{\lli}_\symb}\sum
		\parityc(\wordkl[\word][k][k_1]) \cdots \parityc(\wordkl[\word'][k_\ell][k'])
		\
		\wordkl[\word][k][k_1]
		\cdots 
		\wordkl[\word'][k_\ell][k']	
	\Big)_{k,k'=1}^{\icm},
\end{align}
which is an $ \icm\times\icm $ matrix with $ \C\Monomials $-valued entries.
%

\begin{prop}
\label{p.det.prop}
\begin{enumerate}[leftmargin=20pt,label=(\alph*)]
\item[]
\item \label{p.det.formal}
For the $ u_\star $ given in \eqref{e.ustar},
\begin{align}
	\label{e.isle.fdet.}
	\fdet(\Id+u_\star)
	=
	\sum
	\fcoef_{\pmonomial} \, \pmonomial
	\in
	\C\bkk{\angg{\symb}/\cyclic}.
\end{align}
Here $ \fcoef_\pmonomial \in \C $;
the sum goes over $ \pmonomial=\monomial_1\fdot\cdots \fdot \monomial_\ell \in \C\bkk{ \angg{\symb}/\cyclic } $, where each $ \monomial_i\in \angg{\symb}/\cyclic $ is of the form
$
	\wordkl[\word^{(1)}][k_\ell][k_1] \cdots \wordkl[\word^{(\ell)}][k_{\ell-1}][k_\ell].
$
\item \label{p.det.sub}
The formal determinant \eqref{e.isle.fdet.} becomes the numerical determinant \eqref{e.det.expanded}  upon the substitution
\begin{align*}
	\wordkl[{\word^{(1)}}][k_0][k_1] \cdots \wordkl[{\word^{(\ell)}}][k_\ell][k_0]
	\longmapsto
	\tr( \inddown_{\ick_0}\opuci{{\word^{(1)}}}\inddown_{\ick_1} \cdots  \inddown_{\ick_{n-1}}\opuci{{\word^{(\ell)}}}\inddown_{\ick_0} ).
\end{align*}

\item \label{p.det.factorize}
For any $ u,v \in \C\Monomials $,
$	
	\fdet(\Id + u) \fdot \fdet(\Id + v) = \fdet(\Id + (u + v + uv)).
$
\end{enumerate}
\end{prop}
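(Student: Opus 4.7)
The three parts can be proven sequentially, with (a) and (b) being essentially definitional and (c) the substantive claim.

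For (a), apply the homomorphism $\bar\varphi_{u_\star}$ from \eqref{e.fdet.phiu.} term-by-term to the expansion of $\fdet(\Id+X)$ through \eqref{e.simon}. Using the definition of $u_\star$ in \eqref{e.ustar}, each monomial $\underbrace{X\cdots X}_n$ is sent to a sum over $k_0,\ldots,k_{n-1}$ of products $(u_\star)_{k_0 k_1}\cdots(u_\star)_{k_{n-1}k_0}$, and each factor $(u_\star)_{kk'}$ unpacks via \eqref{e.ustar} into a signed sum over $\lli$-ordered chains of isle-words. Projecting through $\pi$ to the cyclic quotient and reorganizing (using that $\fdot$ is commutative while juxtaposition is not) yields the decomposition \eqref{e.isle.fdet.}. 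For (b), the prescribed substitution is well-defined on $\C\bkk{\angg{\symb}/\cyclic}$ because the trace is cyclic on operator products. Applying it termwise to the expansion from (a) and comparing with the Plemelj expansion \eqref{e.simon}--\eqref{e.trace} of $\det(\Id + (K_{kk'\star})_{k,k'=1}^{\icm}) = \punder$ from Lemma~\ref{l.isle.prep} shows that the two agree, since the substitution intertwines $\bar\varphi_{u_\star}$ with the actual trace on $\lsp^2(\Z)^{\otimes\icm}$.

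For (c), the plan is to reduce the multiplicative claim through a formal exp-log mechanism. First, verify directly in $\C\bkk{\angg{X}}$ the identity
\begin{align*}
\fdet(\Id+X) = \exp_\fdot\Big(\sum_{n\geq 1}\tfrac{(-1)^{n-1}}{n}\underbrace{X\cdots X}_{n}\Big),
\end{align*}
which is the usual Newton relation between $\fdet$ and its formal traces and holds as a formal power series identity by matching coefficients. Writing $T_n(u) := \pi\bar\varphi_u(\underbrace{X\cdots X}_{n})$ and applying $\pi\circ\bar\varphi_{\cdot}$ to both sides of (c), the claim reduces to the additive identity
\begin{align*}
\sum_{n\geq 1}\tfrac{(-1)^{n-1}}{n}\bigl[T_n(u+v+uv) - T_n(u) - T_n(v)\bigr] = 0 \ \ \text{in } \C\bkk{\angg{\symb}/\cyclic},
\end{align*}
which is the formal cyclic counterpart of $\tr\log((\Id+u)(\Id+v)) = \tr\log(\Id+u) + \tr\log(\Id+v)$.

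The main obstacle is this last step. One approach is direct: expand $T_n(u+v+uv)$ into monomials in the entries $u_{kk'}, v_{kk'}$ and show that all monomials not coming from $T_n(u)$ or $T_n(v)$ can be grouped into formal commutators $AB - BA$ of matrix monomials, which vanish under $\pi$ since cyclic rotation identifies $AB$ with $BA$. A cleaner alternative is a universality argument: the claim is a polynomial identity in the entries $u_{kk'}, v_{kk'}$ modulo cyclic rearrangement, and it reduces to the classical multiplicativity $\det((\Id+U)(\Id+V)) = \det(\Id+U)\det(\Id+V)$ upon specializing to numerical matrices of arbitrary size; since the necklace-classes spanning $\C\bkk{\angg{\symb}/\cyclic}$ are separated by sufficiently many such specializations, the formal identity follows.
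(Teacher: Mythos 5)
Your proposal correctly identifies the paper's core strategy for part (c): reduce the formal identity to classical multiplicativity of numerical determinants, and conclude via faithfulness of the cyclic-quotient representation (what you call separation of necklace-classes). This is exactly what Section~\ref{s.isle.pf1} does. The exp-log reduction is a valid but unnecessary detour --- the paper works directly with the multiplicative form, first proving a two-symbol version (Proposition~\ref{p.det.prop}\ref{p.det.factorize}') and then deducing the matrix version via a commutative-diagram argument with the algebra homomorphisms $\bar\varphi_{u,v}$, $\psi_{u,v}$.

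However, the two substantive technical steps are left as gaps. First, the separation claim is precisely the content of Lemma~\ref{l.det}, and it requires a real argument: the paper constructs, for each monomial $\pmonomial$, a distinguished linear-algebra monomial $\LAmonomial_\pmonomial$ that detects $\pmonomial$ and no other element of $\calM$, using indices increasing in order and a reading-off procedure. You assert separation without indicating how it is achieved. Second, $\fdet(\Id+u)$ is a formal power series, not a polynomial, so one cannot simply evaluate it at a matrix. The paper handles this by decomposing the difference $f$ into bigraded pieces $f_{ij}$ of bounded $X$- and $Y$-power, observing that $\D_n(A)$ vanishes for $n>d^2$ when $A\in\Mat_d(\C)$, and separating the bidegrees with auxiliary scalars $z,w$ so that each $f_{ij}$ is a polynomial identity amenable to Lemma~\ref{l.det}. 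Your "the claim is a polynomial identity" glosses over this. Finally, the first alternative you offer --- that the cross terms in $T_n(u+v+uv)-T_n(u)-T_n(v)$ can be grouped term-by-term into commutators --- is incorrect as stated: already at $n=1$ the cross term is $\pi\bar\varphi(uv)$, the cyclic class of $\tr(uv)$, which is not a trace of a commutator. The cancellation only occurs across the full sum over $n$ with the $\tfrac{(-1)^{n-1}}{n}$ weights (a BCH-type phenomenon), so any purely combinatorial route must track the weighted sum, not individual $n$.
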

\noindent{}%
Parts~\ref{p.det.formal}--\ref{p.det.sub} follow by definition.
The proof of Part~\ref{p.det.factorize} is deferred to Section~\ref{s.isle.pf1}.

\subsection{Cancellation of non-preferred terms --- proof of Proposition~\ref{p.isle}}
\label{s.isle.}

Let us reformulate Proposition~\ref{p.isle}.
Consider a generic $ \monomial_i=\wordkl[\word^{(1)}][k_\ell][k_1] \cdots \wordkl[\word^{(\ell)}][k_{\ell-1}][k_\ell] \in \C\bkk{ \angg{\symb}/\cyclic } $.
Mimicking the definition of preferred traces in Definition~\ref{d.prefer}, we call such a $ \monomial_i \in \angg{\symb}/\cyclic $  \tdef{non-preferred} if $ \wordkl[\word^{(i-1)}][k_{i-1}][k_{i}] \lli \wordkl[\word^{(i+1)}][k_{i}][k_{i+1}]  $ for some $ i $, under the cyclic convention $ \wordkl[\word^{(\ell+1)}][k_{\ell}][k_{\ell+1}] := \wordkl[\word^{(1)}][k_{\ell}][k_{1}] $.
By Proposition~\ref{p.det.prop}\ref{p.det.formal}--\ref{p.det.sub},
Proposition~\ref{p.isle} is equivalent to the following.

\begin{customprop}{\ref*{p.isle}'}
\label{p.isle.}
For any non-preferred  $ \monomial\in\angg{\symb}/\cyclic $, 
and any monomial $ \pmonomial = (\cdots \monomial'\fdot \monomial \fdot \monomial'' \cdots ) \in \C\bkk{\angg{\symb}/\cyclic} $ that contains $ \monomial $ as a factor,
the corresponding coefficient in \eqref{e.isle.fdet.} vanishes, namely $ \fcoef_{\pmonomial} = 0 $.
\end{customprop}
\begin{proof}

Throughout this proof, irrelevant indices will be denoted by a $ * $.
For example $ \wordkl[\word][*][*] $.

Fix any non-preferred $ \monomial=(\cdots \wordkl[{\word^{(1)}}][*][k_0] \, \wordkl[{\word^{(2)}}][k_0][*] \cdots)\in\angg{\symb}/\cyclic $, 
where $ \wordkl[{\word^{(1)}}][*][k_0] \lli \wordkl[{\word^{(2)}}][k_0][*] $.
Recalling the definitions of $ \lli $ from Definition~\ref{d.<<i}\ref{d.<<i.hypo} and Definition~\ref{d.<<}, we find that there exists $ i\in\{0,\ldots,m\} $ such that all words in $ \wordkl[{\word^{(1)}}][*][k] $ consists of letters $ \leq i $, and all words in $ \wordkl[{\word^{(2)}}][k][*] $ consists of letters $ > i $.
The case $ i=0 $ corresponds to all words in $ \wordkl[{\word^{(1)}}][*][k] $ being empty, and likewise for $ i=m $. 
Set
\begin{align*}
	\symb_1 := \big\{ \wordkl[\word][*][*] \in \symb :
	 \word \subset [1,i] \big\},
	\quad
	\symb_2 := \big\{ \wordkl[\word][*][*] \in \symb :
	\word \subset (i,m] \big\},
\end{align*}
and consider
\begin{align}
	\label{e.isle.u1}
	u_{1}
	&:=
	\Big( \kdelta_{k'=k_0} \sideset{}{^{\lli}_{\symb_{1}}}{\sum} 
		\parityc(\wordkl[\word][k][*]) \cdots \parityc(\wordkl[\word'][*][k_0])
		\
		\wordkl[\word][k][*]
		\cdots 
		\wordkl[\word'][*][k_0]
	\Big)_{k,k'=1}^{\icm}
	\in
	\Mat_{\icm}(\C\angg{\symb_1}),
\\
	\label{e.isle.u2}
	u_{2}
	&:=
	\Big( \kdelta_{k=k_0} \sideset{}{^{\lli}_{\symb_{2}}}{\sum} 
		\parityc(\wordkl[\word][k_0][*]) \cdots \parityc(\wordkl[\word'][*])
		\
		\wordkl[\word][k_0][*]
		\cdots 
		\wordkl[\word'][*]
	\Big)_{k,k'=1}^{\icm}
	\in
	\Mat_{\icm}(\C\angg{\symb_2}).
\end{align}
Here $ \kdelta $ denotes the Kronecker delta so that $ u_1 $ and $ u_2 $ are nonzero only in the $ k_0 $-th column and $ k_0 $-th row, respectively.
Recall $ u_\star $ from \eqref{e.ustar} and set $ u_3 := u_\star - u_1 - u_2 - u_1u_2 $.
Namely, $ (u_3)_{kk'} $ is the sum of all those
$
	\parityc(\wordkl[\word][k][*]) \cdots \parityc(\wordkl[\word'][*])
	\
	\wordkl[\word][k][*]
	\cdots 
	\wordkl[\word'][*] 
$
in \eqref{e.ustar} that are not already accounted 
by the summand in \eqref{e.isle.u1} or by the summand in \eqref{e.isle.u2} or by concatenating the summands in \eqref{e.isle.u1}--\eqref{e.isle.u2}.
Schematically express $ u_3 $ as
\begin{align}
	\label{e.isle.u3}
	u_3
	=
	\Big( 
	\sideset{}{^{\lli}_{\ldots}}\sum
		\parityc(\wordkl[\word][k][*]) \cdots \parityc(\wordkl[\word'][*])
		\
		\wordkl[\word][k][*]
		\cdots 
		\wordkl[\word'][*]
	\Big)_{k,k'=1}^{\icm}
	\in
	\Mat_{\icm}(\C\angg{\symb}).
\end{align}
Let us note a few useful properties of $ u_3 $.
\begin{enumerate}[leftmargin=20pt, label=(\roman*)]
\item \label{p.factor.u3.2} When $ k=k_0 $ necessarily $ \wordkl[\word][k][*] \notin \symb_{2} $.
\item \label{p.factor.u3.3} When $ k'=k_0 $ necessarily $ \wordkl[\word'][*] \notin \symb_{1} $.
\item \label{p.factor.u3.4} Whenever the index $ k_0 $ appears in \eqref{e.isle.u3} in the middle, namely
	$
		(\wordkl[\word][k][*] \cdots \wordkl[\word'][*]) = ( \cdots \wordkl[\word''][*][k_0]\, \wordkl[\word'''][k_0][*] \cdots ),
	$
	necessarily $ (\wordkl[\word''][*][k_0],\wordkl[\word'''][k_0][*]) \notin \symb_1 \times \symb_{2} $.
\end{enumerate}
The property~\ref{p.factor.u3.2} holds because if $ \wordkl[\word][*]\in\symb_{2} $,
the condition $ \wordkl[\word'][k][*] \lli \cdots \lli \wordkl[\word'][*]  $ would force all the $ \wordkl[\word][*][*] $'s to be in $ \symb_2 $.
The properties~\ref{p.factor.u3.3}--\ref{p.factor.u3.4} hold for similar reasons.

We now factorize the formal determinant by mimicking the procedures in Example~\ref{ex.u-u},
specifically \eqref{e.islefact.ex1}--\eqref{e.islefact.ex2}.
Set $ v= u_1+u_2+u_1u_2 $ and $ v'=\sum_{i,j\geq 0} (-1)^{i+j}u^i_2 u^j_1 u_3 $.
It is straightforward to check that $ (1+v)(1+v')-1= u_1+u_2+u_1u_2+u_3 $.
Applying Proposition~\ref{p.det.prop}\ref{p.det.factorize} with this choice of $ (u,v) $ gives
$
	\fdet(\Id+u)
	=
	\fdet( \Id + u_1 + u_2 + u_1 u_2 ) \fdot \fdet( \Id + \sum_{i,j\geq 0} (-1)^{i+j}u^i_2 u^j_1 u_3 ).
$
For the first determinant on the right side, further apply Proposition~\ref{p.det.prop}\ref{p.det.factorize} with $ (u,v)\mapsto (u_1,u_2) $.
These factorization gives
\begin{align}
	\label{e.p.isle.fdet}
	\fdet(\Id+u)
	=
	\fdet\big( \Id + u_1 \big) \fdot \fdet\big( \Id + u_2 \big) \fdot \fdet \Big( \Id + \sum\nolimits_{i,j\geq 0} (-1)^{i+j}u^i_2 u^j_1 u_3 \Big).
\end{align}

We now argue that the right side of \eqref{e.p.isle.fdet} does not contain the given $ \monomial $.
For $ \fdet( \Id + u_1 ) $ and $ \fdet( \Id + u_2 ) $, the symbols involved are entirely from $ \symb_{1} $ or $ \symb_{2} $ and hence cannot produce $ \monomial $.
Next, use \eqref{e.isle.u1}--\eqref{e.isle.u3} to express 
\begin{align}
	\label{e.p.isle.type}
	\sum_{i,j\geq 0} (-1)^{i+j}u^i_2 u^j_1 u_3 
	=
	\Big(
		\sum\pm \big(\cdots\textcircled{2}\cdots\big)\big(\cdots\textcircled{1}\cdots\big)\big(\cdots\textcircled{3}\cdots\big)
	\Big)_{k,k'=1}^{\icm}.
\end{align}
\begin{enumerate}[leftmargin=20pt]
\item $ (\cdots\textcircled{1}\cdots) $ is a product of symbols from $ \symb_{1} $, or $ (\cdots\textcircled{1}\cdots)=1 $, which corresponds to $ j=0 $;
\item $ (\cdots\textcircled{2}\cdots) $ is a product of symbols from $ \symb_{2} $, or $ (\cdots\textcircled{2}\cdots)=1 $, which corresponds to $ i=0 $;
\item $ (\cdots\textcircled{3}\cdots) $ is an element from the summand of \eqref{e.isle.u3};
\item the sum on the right side could be empty, in which case $ \sum_{\emptyset} := 0 $.
\end{enumerate}
Hence $ \fdet(\Id+\eqref{e.p.isle.type}) = \sum \alpha_{\monomial_1 \fdot \cdots \fdot \monomial_\ell} \monomial_1 \fdot \cdots \fdot \monomial_\ell $, 
where $ \alpha_{\monomial_1 \fdot \cdots \fdot \monomial_\ell}\in\C $ and each $ \monomial_i $ is of the form
\begin{align*}
	\big(\cdots\textcircled{2}\cdots\big)\big(\cdots\textcircled{1}\cdots\big)\big(\cdots\textcircled{3}\cdots\big)
	\cdots
	\big(\cdots\textcircled{2}\cdots\big)\big(\cdots\textcircled{1}\cdots\big)\big(\cdots\textcircled{3}\cdots\big),
\end{align*}
interpreted with cyclic identification, namely as an element of $ \angg{\symb}/\cyclic $.
It is now readily checked from Properties~\ref{p.factor.u3.2}--\ref{p.factor.u3.4} that no $ \monomial_i $ of this type can be equal to the given $ \monomial $.
\end{proof}

\subsection{Factorization of formal determinants --- proof of Proposition~\ref{p.det.prop}\ref{p.det.factorize}}
\label{s.isle.pf1}

The first step of the proof is to establish the faithfulness of formal determinants.
A skeptic may question our definition of formal determinants.
Modding out cyclic permutations is indeed necessary, but is it sufficient to \emph{faithfully} represent numerical determinants?
The next lemma answers this question affirmatively for two symbols $ \{X,Y\} $,
and the generalization to finitely many symbols is straightforward.
Let $ \Mat_d(\C) $ denote the set of $ d\times d $ matrices over $ \C $.
\begin{lem}
\label{l.det}
For any polynomial $ g\in \C[ \MonomialXY/\cyclic ] $,
if $ g(\tr(A),\tr(B),\tr(A^2),\ldots) = 0 $ for all $ A,B\in\Mat_d(\C) $ and for all $ d\in\Z_{> 0} $, then necessarily $ g=0 $.
\end{lem}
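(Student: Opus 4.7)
The plan is to reduce the claim to the following algebraic-independence statement: for any finite collection of distinct cyclic classes $[w_1],\ldots,[w_r]$ of non-commutative words in $X,Y$, the trace functions $(A,B)\mapsto \tr(w_i(A,B))$ on $\Mat_d(\C)^2$ are algebraically independent over $\C$ once $d$ is sufficiently large. Granting this, fix a nonzero $g\in\C[\MonomialXY/\cyclic]$, let $[w_1],\ldots,[w_r]$ be the finitely many cyclic classes entering $g$, and let $N$ be the maximum total degree of a monomial in $X,Y$ among them. Choose $d>N$ so that the corresponding trace functions are algebraically independent; then $g$, viewed as a polynomial function on $\Mat_d(\C)^2$ via the trace evaluation, is nonzero, contradicting the hypothesis. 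So the whole argument reduces to the algebraic-independence assertion.

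To prove that assertion, I would work with generic matrices: let $A=(x_{ij})_{i,j=1}^d$ and $B=(y_{ij})_{i,j=1}^d$ have entries that are independent indeterminates, and view each $\tr(w_i(A,B))$ as an element of the polynomial ring $\C[x_{ij},y_{ij}]$. It then suffices to prove algebraic independence of these polynomials over $\C$. This is a classical input from the theory of matrix invariants (Procesi, Razmyslov): the ring of polynomial invariants of $\Mat_d(\C)^2$ under simultaneous $GL_d$-conjugation is generated by traces of words, and the only relations among such generators are consequences of the multilinearised Cayley--Hamilton identity, which forces no relation at all among words of total degree $\le d$. Hence, for $d>N$, no nontrivial polynomial relation can hold among $\tr(w_1),\ldots,\tr(w_r)$.

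An alternative, more elementary route avoids invoking the general theory by constructing an explicit specialisation. One takes $A$ to be a weighted cyclic shift on a $d$-dimensional space (so that $A^k$ is indexed by a cyclic path of length $k$ and $\tr(A^k)$ is a sum over closed walks of length $k$) and $B$ to be diagonal with independent entries $b_1,\ldots,b_d$. Then $\tr(w(A,B))$ becomes a combinatorial sum indexed by the cyclic structure of $w$, and for $d$ chosen large relative to $N$, distinct cyclic classes of words produce combinatorially distinct nonzero monomials in the shift weights and the $b_i$'s. Algebraic independence is then read off from the distinctness of these monomials.

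The main obstacle is really the invariant-theoretic step: getting algebraic independence in full generality. The elementary combinatorial specialisation has to be set up carefully so that the monomial signatures genuinely separate cyclic classes and accommodate arbitrarily high finite degree; in particular, the dimension $d$ must be chosen to exceed $N$ in order to avoid Cayley--Hamilton-type coincidences. Once that is done, the generic evaluation yields a nonzero polynomial in the indeterminate matrix entries, producing concrete $A,B\in\Mat_d(\C)$ on which $g$ fails to vanish, and the lemma follows by contrapositive.
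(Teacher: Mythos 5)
Your main route --- reducing the lemma to the algebraic independence of the trace functions $\tr(w(A,B))$ over $\C$ for generic $A,B \in \Mat_d(\C)$ when $d$ exceeds the maximal word-degree $N$, and then invoking the second fundamental theorem of matrix invariant theory (Procesi, Razmyslov) which guarantees that the ideal of relations among trace polynomials is generated in degree $d+1$ --- is correct and gives a valid proof by a genuinely different method. The paper's proof is more elementary and self-contained: rather than appealing to the structure of the full ideal of trace relations, it works directly with generic matrix entries and produces, for each trace monomial $\pmonomial = \monomial_1 \fdot \cdots \fdot \monomial_\ell$, an explicit distinguishing monomial in $\C[A_{ij},B_{ij}]$ by specializing the summation indices in $\varphi(\pmonomial)$ to consecutive integers $1,2,3,\ldots$ (choosing $d = i_* + j_*$ to have enough room). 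This yields $\C$-linear independence of the images of the relevant trace monomials by inspection, bypassing the invariant-theoretic machinery entirely. Your route is shorter on the page but leans on a deep classification theorem; the paper's gives a concrete combinatorial certificate with no external input.

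However, your proposed ``more elementary alternative route'' has a genuine flaw. If $A$ is a weighted cyclic shift on $\Z/d\Z$ (so $A e_i = a_i e_{i+1}$) and $B$ is diagonal, then in the expansion of $\tr(w(A,B))$ each $X$ advances the walk index by one step while each $Y$ leaves it fixed, so a nonzero diagonal contribution requires the walk to close up --- that is, the number of $X$-letters in $w$ must be a multiple of $d$. With $d > N$ (as you stipulate to sidestep Cayley--Hamilton), this forces $\tr(w(A,B)) = 0$ for every word $w$ containing at least one $X$, and the only surviving nonzero traces are $\tr(B^q) = \sum_i b_i^q$. Far from producing ``combinatorially distinct nonzero monomials,'' the specialization collapses almost all the trace functions to zero and cannot separate distinct cyclic classes. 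A single cyclic shift is too rigid; what is needed is a genuinely full generic matrix, and the distinguishing power must come from a careful bookkeeping of which matrix entries appear --- which is precisely what the paper's index-specialization argument supplies. So if you want a self-contained proof, the paper's approach (or something equally explicit) is required; the cyclic-shift shortcut does not work.
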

\begin{proof}
We begin with a reduction to linear algebra.
To set up notation, we define the \tdef{$X$ power} and \tdef{$Y$ power} of monomials in $ (\MonomialXY/\cyclic) $ by counting the total numbers of $ X $ and of $ Y $.
For example, the $ X $ power and $ Y $ power of $ \pmonomial = XX\fdot XY \fdot Y $ are respectively $ 3 $ and $ 2 $.
Note that here we are concerned with \emph{polynomials}, namely elements in $ \C[ \MonomialXY/\cyclic ] $.
Fix $ g_*\in\C[ \MonomialXY/\cyclic ] $, and let $ i_* $ and $ j_* $ denote the respective highest $ X $ power and $ Y $ power of monomials in $ g_* $. 
Consider the subset in $ \C[\angg{X,Y}/\cyclic] $:
\begin{align*}
	\calM := \big\{ \pmonomial = \monomial_1 \fdot \cdots \fdot \monomial_\ell  \, 
	\big|
	\monomial_i\in (\angg{X,Y}/\cyclic),
	\
	\pmonomial \text{ has } X \text{ power } \leq i_* \text{ and } Y \text{ power } \leq j_* \}
\end{align*}
and the $ \C $-linear space thus spanned: $ \C\calM = \bigoplus_{\pmonomial\in\calM} \C \pmonomial \subset \C[ \MonomialXY/\cyclic ] $.
Indeed $ g_* \in \C\calM $.
The evaluation $ g \mapsto g(\tr(A),\tr(B),\tr(A^2),\ldots) $ can be viewed as a $ \C $-linear map $ \C\calM \to \C[ A_{ij},B_{ij} ] $.
Namely, for fixed $ d\in\Z_{> 0} $,
we view entries of $ A,B \in \Mat_d(\C) $ as indeterminates, and accordingly the evaluation as a $ \C $-linear map
\begin{align*}
	\varphi:\,
	\C\calM \longrightarrow \C[ A_{ij},B_{ij} ],
	\qquad
	g \longmapsto g\Big( \sum A_{ii}, \sum B_{ii}, \sum A_{ij}A_{ji}, \ldots \Big).
\end{align*}

It suffices to show that $ \varphi $ is injective for a large enough $ d $.
It turns out $ d=i_*+j_* $ suffices, and we fixed such a $ d $.
We will achieve the goal by showing that $ \varphi(\calM) $ is $ \C $-independent.
Fix any $ \pmonomial\in\calM $.
To alleviate heavy notation we will often work with the example $ \pmonomial = XY\fdot XX \fdot Y $,
but the proof applies to general $ \pmonomial $.
Express $ \varphi(\pmonomial) $ as
\begin{align}
	\label{e.l.det.1}
	\varphi(\pmonomial) 
	=  
	\sum A_{i_1i'_1} B_{i'_1 i_1} \sum A_{i_2i'_2} A_{i'_2 i_2} \sum B_{i_3i_3}.
\end{align}
The right side is a linear combination of monomials in $ \C[ A_{ij},B_{ij} ] $.
Namely, letting $ \LAMonomials $ denote the set of monomials in $ \C[ A_{ij},B_{ij} ] $, we have
$
	\varphi(\pmonomial) 
	=
	\sum_{\LAmonomial\in\LAMonomials} c(\pmonomial,\LAmonomial) \, \LAmonomial,
$
where the coefficient $ c(\pmonomial,\LAmonomial) \in \Z_{\geq 0} $ counts how many times $ \LAmonomial $ appears in \eqref{e.l.det.1}. 
The proof strategy is to identify an $ \LAmonomial_\pmonomial\in\LAMonomials $ for each $ \pmonomial\in\calM $, such that $ c(\pmonomial,\LAmonomial_\pmonomial) \neq 0 $ but $ c(\pmonomial',\LAmonomial_\pmonomial) = 0 $ for all $ \pmonomial'\in\calM\setminus\set{\pmonomial} $.
Once this construction is done for every $ \pmonomial\in\calM $, since $ \LAMonomials \subset\C[ A_{ij},B_{ij} ]  $ is $ \C $-independent,
the desired $ \C $-independence of $ \varphi(\calM) $ follows.

To construct such an $ \LAmonomial_\pmonomial $, we specialize the indices in \eqref{e.l.det.1} into numbers in order, start from $ 1 $, and increase by $ 1 $ each time.
More explicitly, we specialize $ (i_1,i'_1,i_2,i'_2,i_3) \mapsto (1,2,3,4,5) $, which gives rise to $ \LAmonomial_\pmonomial = A_{12}B_{21} A_{34}A_{43} B_{55} $.
The assumption $  i_*+j_* \leq d $ ensures sufficient room to increase the index. 
A priori, such a construction does not uniquely specify $ \LAmonomial_\pmonomial $, because there are multiple ways of expressing $ \pmonomial \in \calM\subset \angg{X,Y}/\cyclic $, for example $ \pmonomial = Y\fdot XX \fdot YX = XX \fdot Y \fdot YX $.
We fix an arbitrary way of expressing $ \pmonomial $ so that $ \LAmonomial_\pmonomial:\calM\to\LAMonomials $ is a map.
(This fixing is actually unnecessary but makes the proof cleaner.) 
The condition $ c(\pmonomial,\LAmonomial_\pmonomial) \geq 1 $ holds by construction.
Conversely, given any $ \LAmonomial\in\{ \LAmonomial_\pmonomial: \pmonomial\in\calM \} $, we can identify a \emph{unique} $ \pmonomial\in\calM $ such that $ c(\pmonomial,\LAmonomial) \neq 0 $.
For example for $ \LAmonomial = A_{34}A_{12}B_{55} B_{21} A_{43} $,
we read the numerical indices, start from $ 1 $ and increase by $ 1 $ whenever possible, 
`group' terms together whenever the index loops, and continue onto the next largest index after each grouping.
The result produces the groups $ (A_{12}B_{21}) $, $ (A_{34}A_{43}) $, and $ (B_{55}) $, which tells us that $ c(\pmonomial,\LAmonomial) \neq 0 $ for $ \pmonomial=XY\fdot XX \fdot Y $ and for this $ \pmonomial $ \emph{only}.
We have obtained the desired $ \LAmonomial_\pmonomial $ and hence completed the proof.
\end{proof}

We now prove Proposition~\ref{p.det.prop}\ref{p.det.factorize} for the special case when $ u $ and $ v $ are each a symbol.

\begin{customprop}{\ref{p.det.prop}\ref{p.det.factorize}'}
\label{p.det.}
For any symbols $ X,Y $,
$	
	\fdet(\Id + X) \fdot \fdet(\Id + Y) = \fdet(\Id + (X + Y + XY)). 
$
\end{customprop}
\begin{proof}
We begin with a reduction.
Set
$ f:= \fdet(\Id + X) \fdot \fdet(\Id + Y) - \fdet(\Id + X+Y+XY))
\in\C\bkk{\MonomialXY} $ and decompose $ f = \sum_{i,j \geq 0} f_{ij} $, where $ f_{ij} \in \C[ \MonomialXY/\cyclic ] $ is a linear combination of monomials with $ X $ power $ i $ and $ Y $ power $ j $.
Note that, while $f$ is a formal power series and can contain infinitely many monomials, each $f_{ij}$ is a \emph{polynomial}, with finitely many monomials only.
It suffices to show $ f_{ij}=0 $, for all $ i,j \in \Z_{\geq 0} $.

To show $ f_{ij}=0 $, we appeal to the fact that $ f $ evaluates to zero on finite dimensional operators.
Fix $ d\in\Z_{>0} $. 
For any $ A,B\in\Mat_d(\C) $ and complex variables $ z,w\in\C $,
we evaluate $ f $ at $ (zA,wB) $ by substituting $ X\mapsto \tr(zA) $, $ Y\mapsto \tr(wB) $, $ XX\mapsto \tr(z^2A^2) $, $ XY \mapsto \tr(zwAB) $, etc.
In general such evaluation produces an infinite sum, but when the operators $A,B$ are finite-dimensional, the sum is in fact finite.
This is so because the quantity $ \D_n(A) $, given in \eqref{e.simon}, is equal to $ \tr(\wedge^kA) $, which vanishes when $ k>d^2 $ and $ A\in\Mat_d(\C) $.
The evaluation gives
\begin{align*}
	\sum\nolimits_{i,j} z^{i} w^{j} f_{ij}(\tr(A),\tr(B),\tr(A^2),\ldots) 
	=
	\det(\Id + zA) \det(\Id+wB) - \det(\Id + zA+wB+zwAB)
	=
	0.
\end{align*}
Again, the sum is \emph{finite} thanks to the preceding observation.
Viewing the left side as a \emph{polynomial} in $ (z,w)\in\C^2 $, we deduce that
$
	f_{ij}(\tr(A),\tr(B),\tr(A^2),\ldots) = 0,
$
for all $ i,j \in\Z_{\geq 0} $.
This holds for all $ A,B \in \Mat_d(\C) $ and $ d \in \Z_{> 0} $, so by Lemma~\ref{l.det} with $ g=f_{ij} $, we have $f_{ij}=0$.
Given $f_{ij}=0$ for all $i,j\in\Z_{\geq 0}$, it follows that $f=0$.
\end{proof}

%
\begin{proof}[Proof of Proposition~\ref{p.det.prop}\ref{p.det.factorize}]
Recall $ \varphi_u $ and its lift from \eqref{e.fdet.phiu}--\eqref{e.fdet.phiu.}.
Let $ \pi: \C\bkk{\MonomialXY} \to \C\bkk{\MonomialXY/\cyclic} $ denote the projection.
Proposition~\ref{p.det.} asserts that,
\begin{align}
	\label{e.det.factorize}
	\pi\big( \fdet(\Id+X) \fdot \fdet(\Id+Y) - \bar\varphi_{X+Y+XY}(\fdet(\Id+X_0)) \big) = 0,
\end{align}
where $ X,Y,X_0 $ are distinct symbols,  $ \bar\varphi_{X+Y+XY}: \C\bkk{\angg{X_0}} \to \C\bkk{\MonomialXY} $. Our goal is to convert \eqref{e.det.factorize} into its counterpart that concerns $ u,v $. To this end, consider the algebra homomorphism $ \varphi_{u,v}: \C\MonomialXY \longrightarrow \C\Monomials $,
\begin{align*}
	\underbrace{X\cdots X}_{n} \underbrace{Y\cdots Y}_{n'} \cdots
	\mapsto
	\sum u_{k_0k_1} \cdots u_{k_{n-1}k_n} v_{k_n k_{n+1}} \cdots v_{k_{n+n'-1}k_{n+n'}} \cdots,
\end{align*}
where the last index of the summand is $ k_0 $, and the sum runs through $ k_0,k_1,\ldots \in 1,\ldots,\icm $.
It is readily checked that $ \C\angg{X} \xrightarrow{\text{inclusion}} \C\MonomialXY \xrightarrow{\varphi_{u}} \C\Monomials $ gives the same map as $ \C\angg{X} \xrightarrow{\varphi_{u,v}}\C\Monomials $. Lifting this relation gives the first of the following commutative diagrams.
Similar considerations produce the other two diagrams.
\begin{center}
\begin{tikzcd}
	\C\bkk{\angg{X}} \arrow[r,"\text{inclusion}"]  \arrow[rd,"\bar\varphi_{u}"] 
 	&\C\bkk{\MonomialXY}  \arrow[d,"\bar\varphi_{u,v}"] \\
	&\C\bkk{\Monomials}
\end{tikzcd}
\hfil
\begin{tikzcd}
	\C\bkk{\angg{Y}} \arrow[r,"\text{inclusion}"]  \arrow[rd,"\bar\varphi_{v}"] 
 	&\C\bkk{\MonomialXY}  \arrow[d,"\bar\varphi_{u,v}"] \\
	&\C\bkk{\Monomials}
\end{tikzcd}
\hfil
\begin{tikzcd}
	\C\bkk{\angg{X_0}} \arrow[r,"\bar\varphi_{X+Y+XY}"]  \arrow[rd,"\bar\varphi_{u+v+uv}"] 
	&\C\bkk{\MonomialXY}  \arrow[d,"\bar\varphi_{u,v}"] \\
	&\C\bkk{\Monomials}
\end{tikzcd}

\begin{tikzcd}
	\C\bkk{\MonomialXY}  \arrow[r, "\bar\varphi_{u,v}"] \arrow[d, "\pi"] & \C\bkk{\Monomials} \arrow[d, "\pi'" ] 
\\
	\C\bkk{\MonomialXY/\cyclic}  \arrow[r, "\psi_{u,v}"] 
	& \C\bkk{\Monomials/\cyclic}
\end{tikzcd}
\end{center}
In the above diagram, it is readily checked that $ \ker(\pi) \subset \ker(\pi'\circ \bar\varphi_{u,v}) $, so there exists $ \psi_{u,v} $ that makes the diagram commute.
Applying $ \psi_{u,v} $ to \eqref{e.det.factorize} and using the commutative diagrams conclude the proof.
\end{proof}

\section{Determinantal analysis: asymptotics of the traces}
\label{s.asymptotics}

Here we extract the $ N \to \infty $ rate of a generic preferred trace, and identify the dominant term in \eqref{e.det.expanded}.

Recall the assumptions \eqref{e.HLc.xa} and \eqref{e.nondeg.sd} imposed on $ (\vecx,\veca) $.
For the purpose of the steepest descent analysis to be performed later,
it is convenient to impose a slightly stricter version of those assumptions.
Doing so loses access to some borderline configurations only, and the losses will be compensated by the approximation arguments in Section~\ref{s.pftrw}.
Throughout Section~\ref{s.asymptotics} the following assumption is in action.
\begin{assu}
\label{assu.strict}
Under Convention~\ref{con.wordkl}.
\begin{enumerate}[leftmargin=20pt, label=(\alph*)]
\item 
For all $ i $ and $ k $,
$ s_1 < \ldots < s_m $, \ $ d_1 > \ldots > d_m $, \ 
$ s_i \neq \ics_{k} $, \ $ d_i \neq \icd_{k} $, \\
$
	(x_i,a_i) \in \hyp( \parab[1\cdots\icm](0) )^\circ \setminus \hyp( \parab[1\cdots\icm](t)).
$
\item \label{assu.slopes1}
For all all $ \wordkl $ and $ j\in\word $, $ \partial_y \rwf(y)|_{y=x_j^-} \neq \partial_y \rwf(y)|_{y=x_j^+}  $.
\item \label{assu.slopes2}
For all $ \wordkl[\word][k_1][k_2] $ and $ \wordkl[\wordsub][k_2][k_3] $,
$ {\xr}(\wordkl[\word][k_1][k_2]) \neq {\xl}(\wordkl[\wordsub][k_2][k_3]) $.
\end{enumerate}
\end{assu}

\subsection{Critical points and critical values}
\label{s.asymptotics.contour}
Consider a preferred trace, defined in Definition~\ref{d.prefer},
\begin{align}
	\label{e.ptrace}
	\monomial
	=
	\tr\big( \inddown_{\ick_n} \opuci{ \word^{(1)} } \inddown_{\ick_1} \opuci{ \word^{(2)} } \cdots \inddown_{\ick_{n-1}} \opuci{ \word^{(n)} } \inddown_{\ick_n} \big).
\end{align}
This trace permits a contour integral expression that will be spelled out in Section~\ref{s.a.steep}.
The integrand is a product of the functions $ \fnSl_{ki}, \fnSr_{i'k'},\fnrw_{ii'},\fnrw_{\ick\ick'} $ given in \eqref{e.fncontour} and some rational functions.
The rational functions are $ N $-independent and hence do not directly contribute to the $ N\to\infty $ asymptotics.

To extract the asymptotics of $ \fnSl_{ki}, \fnSr_{i'k'},\fnrw_{ii'},\fnrw_{\ick\ick'} $, set
\begin{subequations}
\label{e.steep.expfn}
\begin{align}
	\label{e.steep.expfn.L}
	\expSl(z;t,s,d) &:= s \log(2-z) - d \log z + \tfrac{t}{2}(1-z),&
	\expSl_{k i}(z) &:= \expSl(z;t,-\ics_{k}+s_i,-\icd_{k}+d_i)
\\
	\label{e.steep.expfn.R}
	\expSr(z;t,s,d) &:= -s \log z + d \log (2-z) + \tfrac{t}{2}(1-z),&
	\expSr_{k'{i'}}(z) &:= \expSr(z;t,s_{{i'}}-\ics_{k'},d_{i'}-\icd_{k'}),
\\
	\label{e.steep.expfn.rw}
	\exprw(z;s,d) &:= s \log(2-z) + d \log z,&
	\exprw_{i{i'}}(z) &:= \exprw(z;-s_{i}+s_{{i'}},d_i-d_{i'}),
\\
	\label{e.steep.expfn.rw.}
	&&
	\exprw_{\ick{\ick'}}(z) &:= \exprw(z;-\ics_{k}+\ics_{{k'}},\icd_k-\icd_{k'}),
\end{align}
\end{subequations}
so that
\begin{subequations}
\label{e.steep.exp}
\begin{align}
	\fnSl_{k i}(z) &= z^{-1} e^{-N\expSl_{k i}(z)},
	& 
	\fnSr_{{i'}k'}(z) &= (2-z)^{-1} e^{-N\expSr_{k'{i'}}(z)},
\\
	\fnrw_{i{i'}}(z) &= z^{-1} e^{-N\exprw_{i{i'}}(z)},
	& 
	\fnrw_{\ick{\ick'}}(z) &= z^{-1} e^{-N\exprw_{\ick{\ick'}}(z)}.
\end{align}
\end{subequations}
Such expressions are suitable for steepest descent.
To proceed, differentiate \eqref{e.steep.expfn} in $ z $ to obtain the critical points:
\begin{subequations}
\label{e.zs}
\begin{align}
	\label{e.zls}
	\zls = \zls(t,s,d) &:= \tfrac{1}{t} ( t+s-d - \sqrt{ (t+s-d)^2 +4 td } ),
\\
	\label{e.zrs}
	\zrs =  \zrs(t,s,d) &:= \tfrac{1}{t} ( t-s+d - \sqrt{ (t-s+d)^2 +4 ts } ),
\\
	\label{e.zrws}
	\zrws = \zrws(s,d) &:= \tfrac{2d}{s+d}.
\end{align}
\end{subequations}
Specializing the $ z $'s and $ \Phi $'s into the relevant parameters gives the critical points and critical values:
\begin{align}
	\label{e.zs.specialized}
	\zls[k i] &:= \zls(t,-\ics_{k}+s_i,-\icd_{k}+d_i),
	&
	\zrs[{i'}k'] &:= \zrs(t,s_{{i'}}-\ics_{k'},d_{i'}-\icd_{k'}),
\\
	\label{e.zs.specialized.}
	\zrws[i{i'}] &:= \zrws(-s_{i}+s_{{i'}},d_i-d_{i'}),
	&
	\zrws[\ick{\ick'}] &:= \zrws(-\ics_{k}+\ics_{{k'}},\icd_k-\icd_{k'}),	
\\
	\notag
	\expSls[k i] &:= \expSl_{k i}(\zls[k i]),\qquad
	\expSrs[{i'}k'] := \expSr_{{i'}k'}(\zrs[{i'}k']),
	&
	\exprws[i{i'}] &:= \exprw_{i{i'}}(\zrws[i{i'}]), \qquad
	\exprws[\ick{\ick'}] := \exprw_{\ick{\ick'}}(\zrws[\ick{\ick'}]).
\end{align}
Next, according to the nested structure of the contour integrals in Definitions~\ref{d.opu} and \ref{d.opu.extended}, set
\begin{itemize}[leftmargin=10pt]
\item[$ \circ $]
	$ \expfn_\star(\wordkl[\wordsub]) := \expSls[k \wordsub_1] + \exprws[\wordsub_1\wordsub_2] + \cdots + \exprws[\wordsub_{\ell-1}\wordsub_n] + \expSrs[\wordsub_\ell k'] $,
	where $ \wordkl[\wordsub]\in\islesetkl $ and $ \ell := |\wordsub| >0 $,
\\
	$ \expfn_\star(\wordkl[\emptyset]) := \exprws[\ick\ick'] $, where $ k<k' $,
\item[$ \circ $] $ \expfnword[\wordkl[\word][k_0][k_\ell] ] := \sum_{i=1}^{\ell'} \expfn_\star(\wordkl[\wordsub^{(i)}][k_{i-1}][k_i]) $, where 
$ \wordkl[\word][k_0][k_{\ell'}] = \wordkl[\wordsub^{(1)}][k_0][k_1] \cupi \ldots \cupi \wordkl[\wordsub^{(\ell')}][k_{\ell-1}][k_{\ell'}] $, and
\item[$ \circ $] $ \expfnword[\monomial] := \sum_{i=1}^n \expfnword[\wordkl[{\word^{(i)}}][k_{i-1}][k_i]] $, where $ \monomial $ and $ n $ are as in \eqref{e.ptrace}.
\end{itemize}
Recall that $ \norm{\monomial} $ counts the total number of $ \opuci{\ldots} $ involved in $ \monomial $.

\begin{prop}
\label{p.trace}
Under Assumption~\ref{assu.strict}, there exists a $ c=c(t,\icvecx,\icveca,\vecx,\veca)<\infty $ such that for any preferred trace $ \monomial $,
\begin{align*}
	|\monomial| \leq e^{c\norm{\monomial}} \, \exp\big( -N \expfnword[\monomial]  \big),
\end{align*}
and for a fixed preferred trace $ \monomial $ there exists a $ c=c(t,\icvecx,\icveca,\vecx,\veca,\monomial)<\infty $ such that
\begin{align*}
	\monomial \geq (N+1)^{-c} \exp\big( -N\expfnword[\monomial]  \big).
\end{align*}
\end{prop}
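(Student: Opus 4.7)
The plan is to carry out a multi-dimensional steepest descent analysis on the contour integral expression for $\monomial$. First, I would spell out $\monomial$ as an iterated contour integral: taking the trace of \eqref{e.ptrace} pairs up each adjacent $\inddown_{\ick_i}\opuci{\word^{(i)}}\inddown_{\ick_{i+1}}$, so summing the trace in the $\lsp^2(\Z)$ variable at each junction $\ick_i$ produces (by geometric series in the indicator regions) an additional rational factor at each junction and eliminates the spatial variables. The result is a multi-contour integral whose integrand is a product of (i) the exponential factors $e^{-N\expSl_{k\word_1}}$, $e^{-N\expSr_{\word_n k'}}$, $e^{-N\exprw_{\word_i\word_{i+1}}}$, $e^{-N\exprw_{\ick\ick'}}$ via \eqref{e.steep.exp} and (ii) $N$-independent rational factors (the $z^{-1}$, $(2-z)^{-1}$, junction factors from the trace, and the combinatorial $\frac{\sgn(\UD)z}{z-z'}$ factors inside Definitions~\ref{d.opu}--\ref{d.opu.extended}).

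Next, I would apply steepest descent at each integration variable. By \eqref{e.zs.specialized}--\eqref{e.zs.specialized.}, each $\expfn$ has a unique real critical point in $(0,2)$ given by $\zls[ki], \zrs[i'k'], \zrws[ii'], \zrws[\ick\ick']$, and the real axis is a direction of steepest descent there (standard for these one-variable integrals). I would deform each contour to pass through its critical point along the steepest descent direction. For the upper bound, bounding the deformed integrand by its peak value yields $|\monomial| \leq C(\icvecx,\icveca,\vecx,\veca)^{\norm{\monomial}}\, e^{-N\expfnword[\monomial]}$; the $e^{c\norm{\monomial}}$ absorbs the combinatorial prefactors, the finite lengths of the steepest-descent contours, and the $z^{-1}$, $(2-z)^{-1}$ factors evaluated at critical points (all bounded uniformly away from $\{0,2\}$). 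For the lower bound at fixed $\monomial$, the standard Laplace/saddle-point expansion along the same contours gives a leading contribution proportional to $N^{-\alpha/2}\,e^{-N\expfnword[\monomial]}$ with an explicit positive constant (product of second-derivative reciprocals and rational prefactors), which is bounded below by $(N+1)^{-c}\,e^{-N\expfnword[\monomial]}$ for a suitable $c$; Assumption~\ref{assu.strict} ensures that all relevant second derivatives are nonzero and prefactors are nonvanishing.

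The main obstacle is checking that the contour deformations are \emph{legal} under the nested enclosing constraints of Definitions~\ref{d.opu}, \ref{d.opu.extended}. Naively moving a $z_i$-contour across $z_{i-1}$ or across $2-z_{i-1}$ would pick up residues that could change the value of the integral. What saves us is the combination of Assumption~\ref{assu.strict} (giving strict geometric ordering), the \emph{preferred} condition from Definition~\ref{d.prefer} (which forbids adjacent isles $\wordkl[\word^{(i)}]\lli \wordkl[\word^{(i+1)}]$, and hence obstructs exactly the degeneracies that would make two critical points coalesce or swap enclosing order), and the isle structure established in Section~\ref{s.geo.rwf.updown.isle}. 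Concretely, I would verify isle-by-isle that the critical-point values $\zls[k\word_1], \zrws[\word_1\word_2], \ldots, \zrs[\word_n k']$ inside one isle $\wordkl[\wordsub]$ satisfy the enclosing order required by Definition~\ref{d.opu}\ref{d.opu.1}--\ref{d.opu.5} (using that $\vecUDc$ is defined via kinks of $\rwf[\wordkl[\wordsub]]$, so $\UDc_j=\up$ precisely when $\zrws[\word_{j-1}\word_j]$ lies outside $\zrws[\word_j\word_{j+1}]$ at the critical point, and symmetrically for $\down$), and that across junctions $k_i$ the critical points for consecutive isles respect the junction rational factor without crossing poles.

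Finally, after these consistency checks, I would assemble the bound additively over isles: $\expfnword[\monomial] = \sum_{i,\text{isle}}\expfn_\star(\cdot)$ by construction, so the single-isle steepest descent estimates multiply to give the global bound. The exponent $c\norm{\monomial}$ in the upper bound is therefore genuinely linear in the total number of $\opuci$-factors, uniformly in $\monomial$; the non-uniformity in the lower bound is harmless because the lower bound is stated for fixed $\monomial$ and later used only to exhibit a dominant term whose rate matches the expected random-walk rate function in \hyperref[t.rw]{Fixed-time Theorem}.
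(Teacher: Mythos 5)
Your overall strategy --- write $\monomial$ as an iterated contour integral, run the contours through the critical points \eqref{e.zs.specialized}--\eqref{e.zs.specialized.}, and read off the rate $\expfnword[\monomial]$ --- is the same as the paper's. But there is a genuine gap at exactly the point you flag as ``the main obstacle.'' You claim that Assumption~\ref{assu.strict} together with the \emph{preferred} condition guarantees that contours through the critical points can be chosen to satisfy all the enclosing constraints of Definitions~\ref{d.opu} and \ref{d.opu.extended}, so that the deformation is legal and no residues are picked up. This is false, and the preferred condition is not the relevant hypothesis. The problematic constraints are \ref{d.opu.4}--\ref{d.opu.5} (and their analogues in Definition~\ref{d.opu.extended}), which arise \emph{inside a single word} whenever $\UDc(\wordkl)_{\word_n}=\down$: the $z_{n-1}$ contour must enclose the point $2-z_n$. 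If the $z_n$ contour is a closed loop around $0$ not enclosing $2$, then $\{2-z_n\}$ is a loop winding around $2$, and a $z_{n-1}$ contour satisfying condition \ref{d.opu.1} can enclose $2-z_n$ only for $z_n$ in a limited angular range near the critical point (one has $\zrws[\word_{n-1}\word_n]>2-\zrs[\word_n k']$ at $\theta_n=0$ but $|2-z_n|>2>\zrws[\word_{n-1}\word_n]$ near $\theta_n=\pi$). No choice of closed contours through the critical points satisfies the constraint globally, and checking the ordering of critical values, as you propose, only verifies the constraint at $\theta=0$. The preferred condition (Definition~\ref{d.prefer}) concerns the $\lli$-ordering \emph{between} adjacent isles in the cyclic trace and plays no role here.

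The paper's resolution is to keep the fixed circular contours and explicitly correct for the angular range where the enclosing condition fails, by inserting residue operators $\mathrm{Res}|_{\text{condition}}$; this turns $\monomial$ into a sum of $2^R$ terms as in \eqref{e.steep.formula}, where $R$ is the number of such corrections. One must then show that each residue term combines two $\fncontour$ factors into a single one whose critical value is \emph{strictly larger}, hence exponentially subdominant; this is what makes both bounds work. Without this bookkeeping your upper bound is computed for an integral that does not equal $\monomial$, and your lower bound is vulnerable to cancellation against the unaccounted residue contributions. Two smaller points: (i) bounding the integrand by its peak value over the whole contour requires the \emph{global} monotonicity of $\Re(\expfn(\theta))$ on $[0,\pi]$ (Lemma~\ref{l.steep}), not just that the circle is locally a steepest-descent direction at $\theta=0$; this is a genuine computation involving the factors $q_\triangleleft,q_\triangleright,q_{\trw}$ and uses \eqref{e.HLc.contour}. (ii) For the stated upper bound you also need $R+M\leq c\,\norm{\monomial}$ so that the prefactor $2^R c^{R+M}$ is absorbed into $e^{c\norm{\monomial}}$; your assertion that the prefactor is ``genuinely linear in the number of $\opuci$-factors'' is correct but needs this counting to be made explicit.
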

\noindent{}%
Proposition~\ref{p.trace} is proven in Section~\ref{s.a.steep} by steepest descent.

\subsection{The geometric/probabilistic meanings of the critical points and critical values}
\label{s.asymptotics.critical}
We begin with $ \zrws[i{i'}] $ and $ \exprws[i{i'}] $.
Let $\rategeo(v) := v\log v + (v+1) \log(2/(1+v)) $, $ v\in[0,1] $, denote the rate function of the sum of i.i.d.\ $ \Geo(2) $ variables.
Substituting in $ z=\zrws[i{i'}] = \frac{2(d_i-d_{i'})}{-s_i+s_{i'}+d_i-d_{i'}} $ in \eqref{e.steep.exp} gives $ \exprws[i{i'}] = (-s_i+s_{i'}) \rategeo(\tfrac{d_i-d_{i'}}{-s_i+s_{i'}}) $, which is consistent with $ \rwop^{n} $ being the transition kernel of a geometric walk.
Translating these identities into the $ (x,a) $ coordinate gives
\leavevmode
\begin{align}
	\label{e.geomeaning.rw}
	\zrws[i{i'}] = 1 - \tfrac{-a_i+a_{i'}}{-x_i+x_{i'}} = 1 - (\text{slope}_{ii'}), 
	\qquad
	\exprws[i{i'}] 
	= (-x_i+x_{i'}) \rateber(\text{slope}_{ii'}),
\end{align}
where $($slope$ {}_{ii'}) $ denotes the slope, in the $ (x,a) $ coordinate system, of the line joining $ (x_i,a_i) $ and $ (x_{i'},a_{i'}) $.

Next we turn to $ \zls[k i], \expSls[k i] $ and $ \zrs[{i'}k'], \expSrs[{i'}k'] $.
Fix any $ d_i<\icd_{k} $ and $ s_{i'} < \ics_{k'} $.
Consider the configuration depicted in Figures~\ref{f.zls-interpretation}--\ref{f.zrs-interpretation}.
Let $($slope$ {}_\triangleleft) $ denote the slope of the straight line between $ x_i $ and $ \xl $ in Figure~\ref{f.zls-interpretation}, and similar for $($slope$ {}_\triangleright) $.
Straightforward (though tedious) calculations from \eqref{e.steep.expfn.L}--\eqref{e.steep.expfn.R} and \eqref{e.zls}--\eqref{e.zrs} gives
\begin{align}
	\label{e.geomeaning.sl}
	\zls[k i] &= 1 -(\text{slope}_\triangleleft),
	&
	\expSls[k i] &= \int_{\xl}^{x_i} \d y\, \rateber(\text{slope}_\triangleleft) - \int_{\xl}^{\icx_{k}} \d y \rateber( \partial_y \parab[k](t,y) ),
\\
	\label{e.geomeaning.sr}
	\zrs[{i'}k'] &= 1 +(\text{slope}_\triangleright),
	&
	\expSrs[{i'}k'] &= \int_{x_{i'}}^{\xr} \d y\, \rateber(\text{slope}_\triangleright) - \int_{\icx_{k'}}^{\xr} \d y \rateber( \partial_y \parab[k'](t,y) ),
\end{align}
where the integrals assume the usual sign convention: $ \int_a^b = - \int_b^a $.
The rates $ \expSls[k i] $ and $ \expSrs[{i'}k'] $ can be geometrically encoded as follows. 
Evaluate the integral of $ \rateber( \partial_y\, \Cdot\, ) $ along the solid curves depicted in Figures~\ref{f.zls-interpretation}--\ref{f.zrs-interpretation},
sign each right-going segment positive, and sign each left-going segment negative.

Based on the preceding discussion, we next give a geometric description of $ \expfnword[\monomial] $, where $ \monomial $ is a preferred trace.
Recall $ \xl(\wordkl[\wordsub]) $ and $ \xr(\wordkl[\wordsub]) $ from \eqref{e.xL.xR}.
First, for any $ \wordkl[\wordsub]\in\islesetkl $, the rate $ \expfn_\star(\wordkl[\wordsub]) $ is obtained by evaluating $ \raterw(\partial_y\Cdot) $ along the curve depicted in Figure~\ref{f.rate-v}, signed positive along right-going segments and negative along left-going segments.
Next, consider $ \wordkl = \wordkl[\wordsub^{(1)}][k][k_1] \cupi \ldots \cupi \wordkl[\wordsub^{(n)}][k_{n-1}][k'] $, where each $ \wordsub^{(i)} $ is an isle.
Since consecutive isles are $ \lli $-ordered here, by definition we have 
$ \xr(\wordkl[\wordsub^{(i)}][k_{i-1}][k_{i}]) < \xl(\wordkl[\wordsub^{(i+1)}][k_{i}][k_{i+1}]) $.
This being the case, the curves can be joined together to form a longer curve; see Figure~\ref{f.join} for an example.
Hence the rate $ \expfnword[\wordkl] $ is also described by Figure~\ref{f.rate-v}
upon the replacement $ (\rwf[\wordkl[\wordsub]], \xl(\wordkl[\wordsub]), \xr(\wordkl[\wordsub])) \mapsto (\rwf, \xl(\wordkl), \xr(\wordkl)) $.
A minor difference here is that $ \rwf $ may touch some $ \parab[\ick''](t) $ in between $ [\xl(\wordkl), \xr(\wordkl)] $, since $ \wordkl $ is in general not an isle.
Next, consider the preferred trace $ \monomial $ as in \eqref{e.ptrace}.
For each $ \wordkl[\word^{(i)}][k_{i-1}][k_i] $ therein, set
\begin{align}
\label{e.graph.notation}	
	\xl_{i} &:= \xl(\wordkl[\word^{(i)}][k_{i-1}][k_i]),
	\quad
	\xr_{i} := \xr(\wordkl[\word^{(i)}][k_{i-1}][k_i]),
	\quad
	F_i := \rwf[ \wordkl[\word^{(i)}][k_{i-1}][k_i] ].
\end{align}
The rate $ \expfnword[\monomial] $ is similarly obtained by joining curves.
Recall that $ \monomial $ being preferred means $ \wordkl[\word^{(i)}][k_{i-1}][k_{i}] \not\lli \wordkl[\word^{(i+1)}][k_{i}][k_{i+1}] $
for all $ i $, under the cyclic convention.
Hence, unlike in the preceding, the joined curve must backtrack between each $ \xr_{i} $ and $ \xl_{i+1} $.
Also, by the cyclic nature of $ \monomial $ the joined curve is closed; see Figure~\ref{f.rate-trace} for an example.
Set $ \raterw(f|_{U}) := \int_U\d y\,\raterw(\partial_y f) $.
Formally,
\begin{align*}
	\expfnword[\monomial]
	=
	\raterw(F_1|_{[\xl_1,\xr_1]}) - \raterw( \parab[k_1](t)|_{[\xl_2,\xr_1]} ) + \raterw(F_2|_{[\xl_2,\xr_2]}) - \ldots 
	- \raterw( \parab[k_n](t)|_{[\xr_n,\xl_1]} ).
\end{align*}

\begin{figure}
\centering
\begin{minipage}[t]{.49\linewidth}
	\begin{minipage}[b]{\linewidth}
	\centering
	\frame{\includegraphics[width=.495\linewidth]{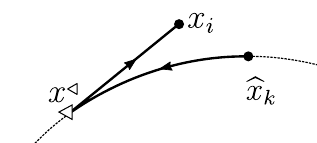}}\hfill%
	\frame{\includegraphics[width=.495\linewidth]{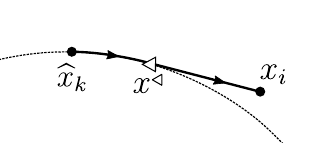}}
	\end{minipage}
	\caption{The geometry for \eqref{e.geomeaning.sl}. %
	The dashed line is the graph of $ \parab[k](t) $. %
	A straight line connects $ (x_i,a_i) $ to $ \parab[k](t) $ and hits the latter tangentially at $ y=\xl $. %
	The cases $ \xl< \icx_{k} $ and $ \icx_{k}<\xl $ are shown. %
	}
	\label{f.zls-interpretation}%
\end{minipage}
\hfill
\begin{minipage}[t]{.49\linewidth}
	\begin{minipage}[b]{\linewidth}
	\centering
	\frame{\includegraphics[width=.495\linewidth]{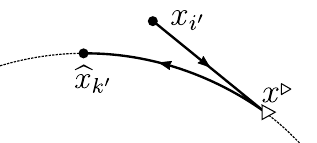}}\hfill%
	\frame{\includegraphics[width=.495\linewidth]{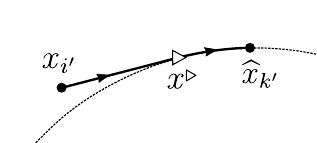}}
	\end{minipage}
	\caption{The geometry for \eqref{e.geomeaning.sr}. %
	The dashed line is the graph of $ \parab[k'](t) $. %
	A straight line connects $ (x_{i'},a_{i'}) $ to $ \parab[k'](t) $ and hits the latter tangentially at $ y=\xr $. %
	The cases $ \icx_{k'}< \xr $ and $ \xr<\icx_{k'} $ are shown. %
	}
	\label{f.zrs-interpretation}%
\end{minipage}
\\
\begin{minipage}[t]{.35\linewidth}
\frame{\includegraphics[height=78pt]{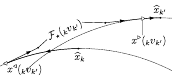}}
\caption{The curve for $ \expfn_\star(\wordkl[\wordsub]) $. The dashed curves are $ \parab[k](t) $ and $ \parab[k'](t) $.}
\label{f.rate-v}
\end{minipage}
\hfill
\begin{minipage}[t]{.63\linewidth}
	\frame{\includegraphics[height=78pt]{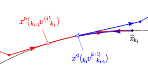}}
	\hfill
	\frame{\includegraphics[height=78pt]{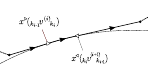}}
	\caption{Joining the curves.
	In the left figure, the blue and red curves cancel each other between $ \xl(\wordkl[\wordsub^{(i+1)}][k_{i}][k_{i+1}]) $ and $ \icx_{k_i} $. %
	}
	\label{f.join}%
\end{minipage}\\
\frame{\includegraphics[width=.9\linewidth]{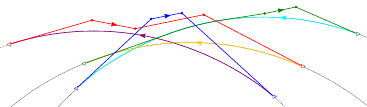}}
\caption{An example of the curve of $ \expfnword[\monomial] $. %
Here $ \icm=3 $, $ n=3 $, $ k_3=1 $, $ k_1=2 $, and $ k_2=3 $. %
The segments of the curve that coincide with $ F_1 $, $ \parab[k_1](t) $, $ F_2 $, $ \parab[k_2](t) $, $ F_3 $, and $ \parab[k_3](t) $
are respectively colored red, yellow, green, cyan, blue, and purple.}
\vspace{-10pt}
\label{f.rate-trace}
\end{figure}

\subsection{Identifying the dominant term}
\label{s.asymptotics.dominant}  
The next task is to identify the candidates for the dominant term in \eqref{e.det.expanded}.
By Proposition~\ref{p.trace}, this task boils down to minimizing the quantity $ \expfnword[\monomial] $ among preferred traces $ \monomial $.

The main step is to argue that $ \expfnword[\monomial] $ can be bounded below by rates that involve $ k=k' $ only.
Referring to the description of $ \expfnword[\monomial] $ in \eqref{e.graph.notation}, we see that the rate generally involves curves that start and end with different $ k $'s, namely $ k_{i-1}\neq k_{i} $.
Proposition~\ref{p.nondiag} below asserts that such a rate can be bounded below by those that involve only $ k_{i-1}= k_{i} $.
Let us prepare some notation.
Recall that $ f $ passes through a letter $ j $ if $ f(x_j)=a_j $
and that $ \wordF(f) \in \wordset $ denotes the words formed by all letters that $ f $ passes through.
Let
\begin{align}
	\label{e.HLspk}
	\HLspk(t;\icx,\ica) := \big\{ f\in\Lip : \parab[\icx,\ica](t) \leq f \leq \parab[\icx,\ica](0),\, f(x_j) \leq a_j, \,\forall j \big\},
	\quad
	\HLspk(k) := \HLspk(t;\icx_{k},\ica_{k}).
\end{align}
This space depends on $ (\vecx,\veca) $, but we omit the dependence in the notation.
The condition $  f(x_j) \leq a_j $ for all $ j $ is relevant because it is implied by the condition $ \max_{k} f_{k}(x_j) = a_j $  in \eqref{e.raterw.xa}.

\begin{prop}
\label{p.nondiag}
There exist finite sets $ \HLspkk(k) \subset \HLspk(k) \setminus \{ \parab[k](t) \} $, $ k=1,\ldots,\icm $, 
that depend only on $ (t,\icvecx,\icveca,\vecx,\veca) $ such that the following holds.
\begin{enumerate}[leftmargin=20pt, label=(a)]
\item \label{p.nondiag.1}
Given any preferred trace $ \monomial $ and $ k_{i}, \word^{(i)} $ as in~\eqref{e.ptrace},
there exists $ \til{F}_i \in \HLspkk(k_i) $ such that $ \wordF(\til{F}_1) \cup \ldots \cup \wordF(\til{F}_n) \supset \word^{(1)}\cup \ldots \cup \word^{(n)} $
and that
$
	\expfnword[\monomial]
	=
	\raterwRel{ \til{F}_{1} }{ \parab[k_1](t) } + \ldots +\raterwRel{ \til{F}_{n} }{ \parab[k_n](t) }.
$
\end{enumerate}
\end{prop}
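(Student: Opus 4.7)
The plan is to convert the rate $\expfnword[\monomial]$, described geometrically by the oriented line integral along the closed curve of Figure~\ref{f.rate-trace}, into a sum of ``diagonal'' rates via a sequence of arc swaps at curve crossings. Write $F_i := \rwf[\wordkl[\word^{(i)}][k_{i-1}][k_i]]$, $\xl_i := \xl(\wordkl[\word^{(i)}][k_{i-1}][k_i])$, $\xr_i := \xr(\wordkl[\word^{(i)}][k_{i-1}][k_i])$, with cyclic convention $k_0 := k_n$ and $\xl_{n+1} := \xl_1$. The preferred hypothesis (Definition~\ref{d.prefer}) together with Assumption~\ref{assu.strict}\ref{assu.slopes2} gives $\xr_i > \xl_{i+1}$ for every $i$. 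In the overlap $[\xl_{i+1}, \xr_i]$, both $F_i$ and $F_{i+1}$ lie above $\parab[k_i](t)$ by \eqref{e.rwfn.space}; moreover $F_i(\xr_i) = \parab[k_i](t,\xr_i)$ and $F_{i+1}(\xl_{i+1}) = \parab[k_i](t,\xl_{i+1})$, so $F_i - F_{i+1}$ changes sign on $(\xl_{i+1},\xr_i)$ and the two functions must cross at some $y_*^i$ in that interval.

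The fundamental swap is to define $\til F_i := F_{i+1}\mathbf{1}_{(-\infty,y_*^i]} + F_i\mathbf{1}_{(y_*^i,\infty)}$. Using that $F_i$ equals $\parab[k_i](t)$ far to the right (by \eqref{e.rwfn.space.>}) and $F_{i+1}$ equals $\parab[k_i](t)$ far to the left (by \eqref{e.rwfn.space.<}), the function $\til F_i$ agrees with $\parab[k_i](t)$ outside $[\xl_{i+1},\xr_i]$. Performing the swap at every $y_*^i$ simultaneously, a direct computation (which one checks explicitly in the $n=2$ case) gives
\[
\expfnword[\monomial] = \sum_{i=1}^n \raterwRel{\til F_i}{\parab[k_i](t)}\,,
\]
since each positive integral $\int_{\xl_i}^{\xr_i}\rateber(\partial_y F_i)$ appearing in $\expfnword[\monomial]$ gets repartitioned across the new functions $\til F_{i-1}$ and $\til F_i$, and each negative parabola segment $-\int_{\xl_{i+1}}^{\xr_i}\rateber(\partial_y \parab[k_i](t))$ matches the subtraction $-\int_\R \rateber(\partial_y\parab[k_i](t))$ appearing in the definition of $\raterwRel{\til F_i}{\parab[k_i](t)}$, whose support is exactly $[\xl_{i+1},\xr_i]$.

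For the hypograph constraints defining $\HLspk(k_i)$, I would argue that both $F_i$ and $F_{i+1}$ satisfy $f(x_j) \leq a_j$ on the intervals where they are used: this follows from Proposition~\ref{p.tree.prop}\ref{p.IHC} applied to $\word^{(i)} \in \treekl[k_{i-1}k_i]$ and $\word^{(i+1)} \in \treekl[k_i k_{i+1}]$, since every trace assembled in \eqref{e.det.islse} only involves words in $\treekl[\cdots]$. The coverage claim $\bigcup_i \wordF(\til F_i) \supset \bigcup_i \word^{(i)}$ is immediate: each point $(x_j,a_j)$ with $j \in \word^{(i)}$ remains on the rewired closed curve and therefore on some $\til F_{i'}$. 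Finally, for finiteness of $\HLspkk(k)$, once a set $\word \subset \alphabet$ of letters is fixed, the $\rateber$-minimizing function in $\HLspk(k)$ passing through $\{(x_j,a_j)\}_{j\in\word}$ is the unique $\Rwf(\wordkl[\word][k][k])$ of Section~\ref{s.results.notation}; the swap output equals this minimizer on each linear piece, so $\HLspkk(k) \subset \{ \Rwf(\wordkl[\word][k][k]) : \word \subset \alphabet,\ \Rwf(\wordkl[\word][k][k]) \in \HLspk(k) \setminus\{\parab[k](t)\}\}$, which has at most $2^m$ elements.

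The main obstacle will be making the simultaneous swap rigorous when $F_i$ and $F_{i+1}$ cross several times in their overlap, or when three or more consecutive isles overlap so that the $[\xl_i,\xr_i]$ are not pairwise nested in a simple way; in particular, if $\xr_i > \xr_{i+1}$ one must deal with $F_{i+1}$ reverting to $\parab[k_{i+1}](t)$ inside the overlap, which need not dominate $\parab[k_i](t)$. I expect these complications to be handled by choosing the swap point canonically (for instance, the rightmost crossing in each overlap interval), by performing the swaps inductively in an order that respects the isle decomposition of Section~\ref{s.geo.rwf.updown.isle}, and by running an induction on the number $\#\{i: k_{i-1}\neq k_i\}$ of nondiagonal indices, reducing each time to the already-diagonal base case that needs no rewiring.
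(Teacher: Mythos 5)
Your overall strategy --- rewiring the cyclic curve of $\expfnword[\monomial]$ at crossings of consecutive $F_i$'s so as to extract diagonal rates --- is the same idea as the paper's, but the central identity you present does not close, and the paper's proof is a \emph{sequential} downward induction on $n$ precisely because the simultaneous version fails. With $\til F_i := F_{i+1}\ind_{(-\infty,y_*^i]}+F_i\ind_{(y_*^i,\infty)}$, the sum $\sum_i \raterwRel{\til F_i}{\parab[k_i](t)}$ picks up the contribution of $F_i$ only on $[\xl_i,y_*^{i-1}]\cup[y_*^i,\xr_i]$ (the left piece via $\til F_{i-1}$, the right piece via $\til F_i$), whereas $\expfnword[\monomial]$ contains $\int_{\xl_i}^{\xr_i}\rateber(\partial_y F_i)$ in full. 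Since $y_*^{i-1}\in[\xl_i,\xr_{i-1}]$ and $y_*^i\in[\xl_{i+1},\xr_i]$ are crossings of different pairs in different windows, these two sets do not partition $[\xl_i,\xr_i]$ in general: a middle stretch of $F_i$ is dropped or double-counted, and when $\xr_{i-1}<\xl_{i+1}$ an entire portion of $F_i$ lies in neither overlap. The repair is exactly the paper's induction: normalize $k_1=\min_i k_i$, merge $F_1$ and $F_2$ at a single crossing $x_\circ$ into a longer curve $F_1'$ running from $\xl_1$ to $\xr_2$ that remains in the (suitably broadened) data class, extract one bubble $\til F_2$, and recurse on the length-$(n-1)$ data; nothing is lost because the merged curve carries the unused part of $F_2$ forward.

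Second, existence of a crossing by the intermediate value theorem is not enough --- you need to control its \emph{location}. Both the coverage claim and the membership $\til F_i\in\HLspk(k_i)$ through the induction hinge on the crossing satisfying $x_\circ\ge x_{\letterlast}$, where $\letterlast$ is the last letter of $\wordF(F_1)$; otherwise the rewired curves can lose letters of $\word^{(1)}$ and the hypograph condition propagated through the induction can fail. Establishing this is the bulk of the paper's argument: one compares $F_2$ against the auxiliary curve $L$ extending $F_1|_{[x_{\letterlast},\infty)}$ linearly to the left, splits into two cases according to whether $(\xr_2,F_2(\xr_2))$ lies inside or outside $L$, and uses $(x_{\letterlast},a_{\letterlast})\notin\hyp(F_2)^\circ$ to force the rightmost crossing past $x_{\letterlast}$. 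Your IVT argument also silently assumes $F_i\ge\parab[k_i](t)$ on all of $[\xl_{i+1},\xr_i]$, which \eqref{e.rwfn.space.<1} guarantees only when $k_i\ge k_{i-1}$. Finally, finiteness of $\HLspkk(k)$ does not follow from your claim that each output equals a minimizer $\Rwf(\wordkl[\word][k][k])$: the extracted bubbles are concatenations of minimizers for \emph{different} index pairs and need not be $\rateber$-minimizers for the diagonal pair $(k,k)$. The paper instead obtains finiteness by reducing to inputs with pairwise distinct $k_1,\ldots,k_n$, of which there are finitely many, and noting the construction is deterministic.
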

\noindent{}%
This proposition immediately implies 
\begin{align}
	\label{e.p.nondiag}
	\expfnword[\monomial]
	\geq 
	\min \Big\{ \sum_{i=1}^n \raterwREl{ f_{1} }{ \parab[k_i](t) } \,:\, 
	f_i \in \HLspk(k_i), \ \bigcup_{i=1}^n \wordF(f_i) \supset \bigcup_{i=1}^n \wordF(\word^{(i)}) \Big\}.
\end{align}
\begin{customprop}{\ref*{p.nondiag}}[continued]
\begin{enumerate}[leftmargin=20pt, label=(b)]
\item[]
\item \label{p.nondiag.2} The inequality in \eqref{e.p.nondiag} is strict unless $ k_1=k_2=\ldots=k_n $.
\end{enumerate}
\end{customprop}

\begin{proof}
The strategy is to apply a downward induction on $ n $ to the $ (F_i,\xl_i,\xr_i,k_i)_{i=1}^n $ given in \eqref{e.graph.notation}.
For the induction to work, however, we need to consider a \emph{broader} class of data.
Consider $ (F_i,\xl_i,\xr_i,k_i)_{i=1}^n \in (\Lip\times\R^2\times\{1,\ldots,\icm\})^n $ such that,
under the cyclic convention $ \xl_{n+1}:=\xl_n $, $ k_{0} := k_n $, etc.,
\begin{enumerate}[leftmargin=10pt]
\item[$ \circ $] $ F_i $ satisfies \eqref{e.rwfn.space} with $ (k,k')\mapsto(k_{i-1},k_{i}) $, 
\item[$ \circ $] $ \wordF(F_i)\neq\emptyset $ whenever $ k_{i-1}\geq k_{i} $, 
\item[$ \circ $] $ (x_j,a_j) \notin \hyp(F_i)^\circ $\, for all $ i $ and $ j $,
\item[$ \circ $] $ \xl_i = \inf\{ y\in\R : F_i(y)\neq \parab[k_{i-1}](t) \} $,
	$ \xr_i = \sup\{ y\in\R : F_i(y)\neq \parab[k_{i}](t) \} $, and
\item[$ \circ $] $ \xl_i<\xr_i $, $ \xl_{i+1} < \xr_i $. 
\end{enumerate}
Note that we do \emph{not} assume $ F_i $ to be tangent to $ \parab[k_i](t) $ at $ \xl_i $ or tangent to $ \parab[k_{i+1}](t) $ at $ \xr_i $.
The specific data in \eqref{e.graph.notation} satisfies these conditions.
In particular, $ (x_j,a_j) \neq \hyp(F_i)^\circ $ follows from the $ \IHC[k_{i-1}k_{i}] $ satisfied by $ \word^{(i)} $.

The starting point of the induction is to identify an intersection of two $ F_i $'s.
Fix any data $ (F_i,\xl_i,\xr_i,k_i)_{i=1}^n $ as described previously.
By the cyclic nature of this data, we assume without loss of generality that $ k_1 $ is the minimum of the $ k_i $'s.
In particular, $ k_1 \leq k_2 $ and $ k_n \geq k_1 $, and hence $ \wordF(F_1) \neq \emptyset $.
Let $ \letterlast $ denote the last letter in $ \wordF(F_1) $.
We would like to identify an intersection of $ F_2 $ and $ F_1|_{[x_{\letterlast},\xr_1]} $.
To this end we divide $ \R^2=\{(x,a)\} $ into two regions. 
Let $ L\in\Lip $ be the function that matches $ F_1 $ on $ y \in [x_{\letterlast},\infty) $ and extends linearly in $ y\in (-\infty,x_{\letterlast}) $.
We refer to `$ \in\hyp(L) $' as being \tdef{inside of $ L $} and `$ \in\R^2 \setminus \hyp(L) $' as being \tdef{outside of $ L $}.
First, the condition $ \xl_2 < \xr_1 $ implies that $ (\xl_2,F_2(\xl_2)) =  (\xl_2,\parab[k_1](t,\xl_2)) $ is inside of $ L $. See Figure~\ref{f.L}.
\begin{description}[leftmargin=10pt]
\item[Case 1] $ (\xr_2,F_2(\xr_2)) $ is outside of $ L $. See Figure~\ref{f.case1}.\\
In this case $ F_2 $ must intersect with $ L $ within $ y\in(-\infty,\xr_1) $.
\item[Case 2] $ (\xr_2,F_2(\xr_2)) $ is inside of $ L $. See Figure~\ref{f.case2}.\\
We appeal to $ F_2|_{[\xr_2,\infty)} = \parab[k_2](t)|_{[\xr_2,\infty)} $.
Using~\eqref{e.rwfn.space.>n} for $ (f,k,k')\mapsto (F_2,k_1,k_2) $ gives $ F_2(\xr_2) = \parab[k_2](t,\xr_2) \geq \parab[k_1](t,\xr_2) $.
Note that, if $ k_1<k_2 $, the functions $ \parab[k_1](t) $ and $ \parab[k_2](t) $ intersect at exactly one point, to the left of the intersection we have $ \parab[k_1](t) > \parab[k_2](t) $, and to the right of the intersection we have $ \parab[k_1](t) < \parab[k_2](t) $.
Combining these observations with $ \parab[k_2](t,\xr_2) \geq \parab[k_1](t,\xr_2) $ gives
$
	F_2|_{(\xr_2,\infty)} = \parab[k_2](t)|_{(\xr_2,\infty)} \geq \parab[k_1](t)|_{(\xr_2,\infty)} \text{ if } k_1< k_2,
$
and indeed $ F_2|_{(\xr_2,\infty)} = \parab[k_2](t)|_{(\xr_2,\infty)} = \parab[k_1](t)|_{(\xr_2,\infty)} $ if $ k_1= k_2 $.
Given this property, if $ (\xr_2,F_2(\xr_2)) $ is inside of $ L $, 
then $ F_2 $ and $ L $ must intersect within $ y\in(-\infty,\xr_1) $ when $ k_1<k_2 $,
and intersect at $ y=\xl_1 $ when $ k_1=k_2 $.
\end{description}
Pick the rightmost intersection of $ F_2 $ and $ L $ within $ y\in(-\infty,\xr_1] $ and let $ x_\circ $ be the horizontal coordinate of the intersection.
In Case~2, when $ k_1=k_2 $, necessarily $ x_\circ=\xl_1 $.
The condition $ (x_{\letterlast},a_{\letterlast}) \notin \hyp( F_2 )^\circ $ implies $ x_\circ \geq x_{\letterlast} $.
Since $ L = F_1 $ within $ y\in[x_{\letterlast},\xr_1] $, the point $ x_\circ $ is also an intersection of $ F_2 $ and $ F_1 $.
To summarize
\begin{description}
\item [Case~1] $ F_1 $ and $ F_2 $ intersect at $ y=x_\circ \in [x_{\letterfirst},\xr_1) $.
\item[Case~2 and $ \boldsymbol{k_1<k_2} $] $ F_1 $ and $ F_2 $ intersect at $ y=x_\circ \in [x_{\letterfirst},\xr_1) $.
\item[Case~2 and $ \boldsymbol{k_1=k_2} $] $ F_1 $ and $ F_2 $ intersect at $ y=x_\circ=\xr_1 $.
\end{description}

To further the induction, follow Figures~\ref{f.case1.rewired} and \ref{f.case2.rewired} to `rewire' $ F_1 $ and $ F_2 $.
Doing so produces $ F'_1 $ and $ \til{F}_2 $ with
\begin{align*}
	&\raterw(F_1|_{[\xl_1,\xr_1]}) - \raterw( \parab[k_1](t)|_{[\xl_2,\xr_1]} ) 
	+ \raterw(F_2|_{[\xl_2,\xr_2]}) 
	+ \ldots 
	- \raterw( \parab[k_n](t)|_{[\xr_n,\xl_1]} )
\\	
	=&
	\raterwREl{ \til{F}_2 }{ \parab[k_1](t) }
	+
	\Big( 
		\raterw(F'_1|_{[\xl_1,{\xr_1}']}) - \raterw( \parab[k_2](t)|_{[\xl_3,{\xr_1}']} ) 
		+ \ldots 
		- \raterw( \parab[k_n](t)|_{[\xr_n,\xl_1]} )		
	\Big),
\end{align*}
where $ {\xr_1}':=\xr_2 $ in Case~1 and $ {\xr_1}':=x_\circ $ in Cases~2.
By construction, the data $ (F'_1,F_3,\ldots,\xl_1,{\xr_1}',\xl_3,\xr_3,\ldots ,k_2,k_3,\ldots) $
satisfies the required conditions and has length $ n-1 $.
The downward induction hence continues until reaching $ n=1 $.
This induction takes any preferred trace $ \monomial $ as an input and outputs
$ \til{F}_1,\ldots,\til{F}_n $ such that $ \expfnword[\monomial] = \raterwRel{ \til{F}_{1} }{ \parab[k_1](t) } + \ldots +\raterwRel{ \til{F}_{n} }{ \parab[k_n](t) }. $
By construction, each $ \til{F}_k \in \HLspk(k) \setminus \{ \parab[k](t) \} $.

\ref{p.nondiag.1}\
It suffices to show that the $ \til{F}_k $'s can be chosen from a finite set.
Even though the induction takes any preferred trace $ \monomial $ as an input,
it actually suffices to consider those inputs where $ k_1,\ldots,k_n $ are distinct.
To see why, view $ k_0, k_1,k_2,\ldots $ as a list of integers under the cyclic identification $ k_{i} := k_{i \mod n} $.
We say such a list has repeated numbers if $ k_i=k_j $ for some $ i\neq j \mod n $.
The given list may have repeated numbers, but can always be decomposed into 
$\ldots$, $k_{n_1}$, $k_{n_1+1}$, $\ldots$, $k_{n_2-1}$, $k_{n_2}$, $k_{n_2+1}$, $\ldots$ ,$k_{n_3-1}$, $k_{n_3}$, $\ldots$,
where each $ \{k_{n_i},\ldots,k_{n_{i+1}}-1\} $ have no repeated numbers and $ k_{n_i}=k_{n_{i+1}} $.
This observation reduces the inputs to those with distinct  $ k_1,\ldots,k_n $.
There are only finitely many such inputs once $ (t,\icvecx,\icveca,\vecx,\veca) $ is fixed,
and hence the resulting $ \til{F}_k $'s form a finite set.

\ref{p.nondiag.2}\
Run the induction for any preferred trace $ \monomial $, regardless of whether $ k_1,\ldots,k_n $ are distinct.
In each step of the induction,
we seek to perform a surgery on $ \til{F}_2 $ in a small neighborhood of $ x_\circ $ to reduce $ \raterwRel{ \til{F}_2 }{ \parab[k_1](t) } $.
If $ k_1<k_2 $,
in both Cases~1 and 2, $ x_\circ \in [ x_{\letterlast},\xl_1) $, which implies $ F_2(x_\circ) = F_1(x_\circ) > \parab[k_1](t,x_\circ) $.
Given this property, it is possible to perform the surgery while keeping the resulting function in $ \HLspk(k_1) $ and passing through $ \wordF(\til{F}_2)\setminus\{\letterlast\} $; see Figure~\ref{f.smooth} for an illustration.
Note that even in the case $ x_{\letterlast} = x_{\circ} $, 
we can forgo $ \letterlast $ in $ \til{F}_2 $ because $ F'_1 $ already passes through $ \letterlast $.
Therefore, the inequality in \eqref{e.p.nondiag} is strict unless $ k_1=k_2 $ throughout the induction.
The last scenario forces all the $ k_i $'s to be the same.
\end{proof}

\begin{figure}[h]
\begin{minipage}[t]{.27\linewidth}
\frame{\includegraphics[height=100pt]{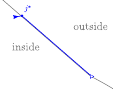}}
\caption{The inside and outside regions. The gray curve is $ L $. The blue curve is part of $ F_1 $.}
\label{f.L}
\end{minipage}
\hfill
\begin{minipage}[t]{.36\linewidth}
\frame{\includegraphics[height=100pt]{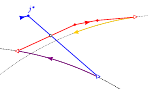}}
\caption{Illustration of Case~1. %
The blue, purple, red, and yellow curves are respectively parts of $ F_1 $, $ \parab[k_2](t) $, $ F_2 $, and $ \parab[k_3](t) $.}
\label{f.case1}
\end{minipage}
\hfill
\begin{minipage}[t]{.36\linewidth}
\frame{\includegraphics[height=100pt]{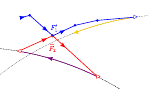}}
\caption{Rewired curves, Case~1.}
\label{f.case1.rewired}
\end{minipage}
\\
\begin{minipage}[t]{.405\linewidth}
\frame{\includegraphics[width=\linewidth]{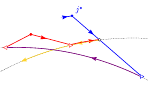}}
\caption{Illustration of Case~2. %
The blue, purple, red, and yellow curves are respectively parts of $ F_1 $, $ \parab[k_2](t) $, $ F_2 $, and $ \parab[k_3](t) $.}
\label{f.case2}
\end{minipage}
\hfill
\begin{minipage}[t]{.335\linewidth}
\begin{minipage}[b]{\linewidth}
\frame{\includegraphics[width=\linewidth]{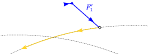}}\\
\frame{\includegraphics[width=\linewidth]{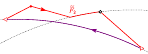}}
\end{minipage}
\caption{Rewired curves, Case~2.}
\label{f.case2.rewired}
\end{minipage}
\hfill
\begin{minipage}[t]{.245\linewidth}
\frame{\includegraphics[width=\linewidth]{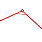}}
\caption{The dashed curve is the modified one.}
\label{f.smooth}
\end{minipage}
\vspace{-10pt}
\end{figure}

Proposition~\ref{p.nondiag} reduces minimizing $ \expfnword[\monomial_1]+\ldots+\expfnword[\monomial_\ell] $ among preferred terms 
to minimizing $ \sum \raterwRel{ f_{k_i} }{ \parab[k_i](t) } $.
This minimization is similar to \eqref{e.raterw.xa}, except that in the former the $ f_{k_i} $'s may share the same $ k_i $.
The next lemma shows that such $ f_{k_i} $'s can be combined into a single function.

\begin{lem}
\label{l.repeat}
For any $ f_1,\ldots,f_n \in \HLspk(k) $,
\begin{align*}
	\raterwREl{ f_{1} }{ \parab[k](t) } + \ldots + \raterwREl{ f_{n} }{ \parab[k](t) } 
	\geq 
	\min \Big\{  \raterwREl{ f }{ \parab[k](t) }  
	:
	f \in \HLsp(k),
	\
	\wordF(f) \supset \cup_{i=1}^n \wordF(f_{i}) \Big\},
\end{align*}
and the inequality is strict unless $ (\hyp(f_1)^\circ \setminus \hyp( \parab[k](t) )), \ldots , (\hyp(f_n)^\circ \setminus \hyp( \parab[k](t) )) $ are disjoint.
\end{lem}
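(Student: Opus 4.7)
The plan is to take $f:=\max_{i=1}^n f_i$ as the single candidate on the right-hand side, together with its pointwise order statistics $f^{(1)}=f\geq f^{(2)}\geq\cdots\geq f^{(n)}$. First I verify admissibility: $\HLspk(k)$ is closed under pointwise maximum (the 1-Lipschitz property, the sandwich $\parab[k](t)\leq\cdot\leq\parab[k](0)$, and the inequalities $\cdot(x_j)\leq a_j$ all pass to the max), so $f\in\HLspk(k)$; and if $f_i(x_j)=a_j$, then $a_j=f_i(x_j)\leq f(x_j)\leq a_j$, so $j\in\wordF(f)$, giving $\wordF(f)\supset\bigcup_i\wordF(f_i)$.

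At almost every $y$ the ordering of $(f_1(y),\ldots,f_n(y))$ is locally constant, so on a neighborhood of such $y$ each $f^{(\ell)}$ coincides with some $f_i$, and the multisets $\{\partial_y f^{(\ell)}(y)\}_{\ell=1}^n$ and $\{\partial_y f_i(y)\}_{i=1}^n$ agree. Integrating $\rateber$ against either yields the exact identity
\begin{align*}
\sum_{i=1}^n \raterwREl{f_i}{\parab[k](t)} = \sum_{\ell=1}^n \raterwREl{f^{(\ell)}}{\parab[k](t)}.
\end{align*}
Each $f^{(\ell)}$ satisfies $\parab[k](t)\leq f^{(\ell)}\leq\parab[k](0)$, so the lemma reduces to showing $\raterwRel{g}{\parab[k](t)}\geq 0$ for any such $g$.

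To prove this non-negativity I use strict convexity of $\rateber$: setting $\phi:=\rateber'(\partial_y\parab[k](t))$,
\begin{align*}
\rateber(\partial_y g)-\rateber(\partial_y\parab[k](t))\geq \phi\,(\partial_y g-\partial_y\parab[k](t)).
\end{align*}
The integrand vanishes outside the cone $[\icx_k-t,\,\icx_k+t]$ (where $g=\parab[k](t)=\parab[k](0)$); integrating over the cone and performing integration by parts, the boundary terms vanish because $g=\parab[k](t)$ at the endpoints, yielding
\begin{align*}
\raterwREl{g}{\parab[k](t)}\geq -\int_{\icx_k-t}^{\icx_k+t}\rateber''(\partial_y\parab[k](t))\,\partial_y^2\parab[k](t)\,\big(g-\parab[k](t)\big)\,\d y\,\geq 0,
\end{align*}
using $\rateber''>0$, $\partial_y^2\parab[k](t)=-1/t<0$, and $g\geq\parab[k](t)$.

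For strictness, equality in the displayed bound forces the non-negative integrand to vanish; the strict positivity of $\rateber''$ and $-\partial_y^2\parab[k](t)$ then forces $g\equiv\parab[k](t)$. Applied to $g=f^{(2)}$, equality in the lemma forces $f^{(2)}\equiv\parab[k](t)$, i.e., at no point do two of the $f_i$'s strictly exceed $\parab[k](t)$, which is precisely the pairwise disjointness of $\hyp(f_i)^\circ\setminus\hyp(\parab[k](t))$. The one subtle step is propagating the multiset identity from pointwise values to derivatives, which is handled by working on the complement of the (closed, measure-zero) set of points where the ordering among $(f_1,\ldots,f_n)$ changes.
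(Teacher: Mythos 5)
Your proof is correct and follows essentially the same route as the paper's: the paper rewires pairwise via $\max\{f_1,f_2\}$ and $\min\{f_1,f_2\}$ and then inducts on $n$, which is just the pairwise version of your order-statistics rearrangement, and it leaves the non-negativity $\raterwREl{g}{\parab[k](t)}\geq 0$ (which you prove explicitly by convexity and integration by parts) implicit. One small caveat: your claim that the ordering of $(f_1(y),\ldots,f_n(y))$ is locally constant off a measure-zero set is false in general (the coincidence set $\{f_i=f_j\}$ can have positive measure and empty interior), but the multiset identity for the derivatives still holds almost everywhere because Lipschitz functions that agree on a set have equal derivatives a.e.\ on that set.
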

\begin{proof}
Take $ n=2 $, and the case $ n>2 $ follows by induction. 
Consider $ \max\{f_1,f_2\} $ and $ \min\{f_1,f_2\} $.
Both functions belong to $ \HLspk(k) $ and the former passes through $ \wordF(f_1)\cup\wordF(f_2) $.
Further
$
	\raterwRel{f_1}{\parab[k](t) } + \raterwRel{f_2}{\parab[k](t) }
	=
	\raterwRel{\max\{f_1,f_2\}}{\parab[k](t) } + \raterwRel{\min\{f_1,f_2\}}{\parab[k](t) }	
$.
Forging the last term gives the desired inequality, and the last term is zero only when $ (\hyp(f_1)^\circ \setminus \hyp( \parab[k](t) )) $ and $ (\hyp(f_2)^\circ \setminus \hyp( \parab[k](t) )) $ are disjoint.
\end{proof}

Proposition~\ref{p.nondiag} and Lemma~\ref{l.repeat} together show that the minimum of $ \expfnword[\monomial_1]+\cdots+\expfnword[\monomial_\ell] $ among preferred, non-degenerate terms is bounded below by $ \raterw((\icvecx,\icveca)\xrightarrow{\scriptscriptstyle t}(\vecx,\veca)) $, given in \eqref{e.raterw.xa}.
We call a minimizer $ \{F_k\}_{k} $ of \eqref{e.raterw.xa} a \tdef{minimizer of $ (\icvecx,\icveca)\xrightarrow{\scriptscriptstyle t} (\vecx,\veca) $}.
Conversely, the next lemma asserts that any minimizer of $ (\icvecx,\icveca)\xrightarrow{\scriptscriptstyle t} (\vecx,\veca) $ is given by $ F_k = \rwf[\wordkl[\wordF(F_k)][k][k]] $ and $ \wordF(F)\in\treekl[kk] $.
Note that for such a minimizer, $ \wordF(F_k) = \emptyset $ only if $ F_k = \parab[k](t) $, in which case $ \raterwRel{F_k}{\parab[k](t)} = 0 $.
We hence adopt the convention.
\begin{con}
\label{con.no.empty}
Any $ F_k $ with $ \wordF(F_k) = \emptyset $ is removed from a minimizer $ \{F_k\}_k $ of $ (\icvecx,\icveca)\xrightarrow{\scriptscriptstyle t} (\vecx,\veca) $.
\end{con}
\noindent{}%
Recall from Section~\ref{s.results.matching}  the function $ \Rwf $ and that $ F_k = \Rwf( \wordkl[\wordF(F_k)][k][k] ) $, for any minimizer $ \{F_k\}_{k} $ of $ (\icvecx,\icveca)\xrightarrow{\scriptscriptstyle t} (\vecx,\veca) $.

\begin{lem}
\label{l.raterw.minimizer.}
Under Convention~\ref{con.no.empty},
for any $ F_k $ in a minimizer of $ (\icvecx,\icveca)\xrightarrow{\scriptscriptstyle t} (\vecx,\veca) $,
$ F_k = \Rwf( \wordkl[\wordF(F_k)][k][k] ) =\rwf[\wordkl[\word][k][k]] $, and $ \wordF(F_k) \in \treekl[kk] $.
\end{lem}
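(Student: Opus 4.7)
The lemma packages three assertions: (a) $F_k = \Rwf(\wordkl[\wordF(F_k)][k][k])$, (b) this same function equals $\rwf[\wordkl[\wordF(F_k)][k][k]]$, and (c) $\wordF(F_k) \in \treekl[kk]$. I plan to prove them in this order, building on each.

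For (a), the joint minimization in \eqref{e.raterw.xa} over $\{F_k\}_k$ subject to $\max_{k'} F_{k'}(x_j) = a_j$ decouples, once the words $\wordF(F_{k'})$ are fixed for all $k'$, into independent per-$k$ sub-minimizations: each $F_k$ must minimize $\raterwRel{f}{\parab[k](t)}$ over $f \in \HLspk(k)$ with $f(x_j) = a_j$ for $j \in \wordF(F_k)$. The uniqueness of the single-wedge minimizer $\Rwf(\wordkl[\wordF(F_k)][k][k])$, obtained by piecewise-linear interpolation between the passage points $(x_j, a_j)$ for $j \in \wordF(F_k)$ and tangent to $\parab[k](t)$ at the endpoints, together with a geometric check that this minimizer automatically satisfies $f(x_j) \leq a_j$ for $j \notin \wordF(F_k)$ (else the massif condition $\max_{k'} F_{k'}(x_j) \leq a_j$ for that $j$ would be violated), yields $F_k = \Rwf(\wordkl[\wordF(F_k)][k][k])$.

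For (b), I must verify the additional lower bounds $f \geq \parab[k''](t)$ imposed by Definition~\ref{d.rwf} on the subintervals partitioned by the $x_j$'s with $j \in \wordF(F_k)$. I argue by contradiction: if $F_k(y_0) < \parab[k''](t, y_0)$ at some $y_0$ in the relevant interval for some $k''$, then $F_{k''}$, satisfying $F_{k''} \geq \parab[k''](t)$ by $F_{k''} \in \HLspk(k'')$, strictly exceeds $F_k$ near $y_0$. The graphs of $F_k$ and $F_{k''}$ therefore cross, and by the rewiring trick of Example~\ref{ex.uu} (Figure~\ref{f.ex.uu.rewire}) --- replacing $(F_k, F_{k''})$ with localized max/min variants on a suitable interval around the crossings --- the total rate $\sum_{k'} \raterwRel{F_{k'}}{\parab[k'](t)}$ strictly decreases by strict convexity of $\rateber$, while $\max_{k'} F_{k'}(x_j) = a_j$ is preserved. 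This contradicts minimality, and uniqueness of $\rwf$ subject to \eqref{e.rwfn.space} forces $F_k = \rwf[\wordkl[\wordF(F_k)][k][k]]$.

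For (c), I first apply Proposition~\ref{p.tree.prop}\ref{p.tree.isle} to isle-decompose $\wordkl[\wordF(F_k)][k][k] = \wordkl[\wordsub^{(1)}][k][k_1] \cupi \cdots \cupi \wordkl[\wordsub^{(n)}][k_{n-1}][k]$, reducing the claim to $\wordsub^{(i)} \in \treekl[k_{i-1} k_i]$ for each $i$. By Proposition~\ref{p.tree.prop}\ref{p.IHC}, each such membership is equivalent to producing an ascending IHC tower from $\wordsub^{(i)}$ to $\wordfullkl[k_{i-1} k_i]$. I plan to build this tower inductively, at each step selecting a letter $j \in \wordfullkl[k_{i-1} k_i]$ not yet in the current word such that including $j$ grows the hypograph of the corresponding $\rwf$, which requires $a_j$ to lie at or below the current $\rwf$'s value at $x_j$. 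The main obstacle of the proof lies here: identifying an admissible letter at each step involves a subtle interaction between the position of $(x_j, a_j)$ relative to the current $\rwf$, the piecewise-linear kink structure of $\rwf$ encoded by $\vecUDc$, and the fact that enlarging $\word$ in \eqref{e.rwfn.space} extends the $\parab[1\cdots\icm](t)$-constraint region, which tends to push $\rwf$ downward. Within each isle, Assumption~\ref{assu.strict} and the controlled geometry let me pick admissible letters by following the order suggested by the up-down iteration (removing $\wedge$-kinks in reverse), which I expect to yield a valid tower and thereby close the proof.
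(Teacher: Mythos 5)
Parts (a) and (b) of your plan follow the paper's route. For (a), the identity $F_k=\Rwf(\wordkl[\wordF(F_k)][k][k])$ is essentially definitional once one observes that the constraints $f(x_j)\leq a_j$ are non-binding for $j\notin\wordF(F_k)$ (a binding constraint would force $j\in\wordF(F_k)$). For (b), the paper also argues by contradiction and rewiring, but note an imprecision in your sketch: the max/min rewiring \emph{conserves} $\sum_{k'}\int\rateber(\partial F_{k'})$ exactly, so "the total rate strictly decreases by strict convexity" is not justified as written. In the paper the strict decrease comes from an additional surgery --- smoothing a $\wedge$ kink of the rewired function at a point where it sits strictly above its floor (Figure~\ref{f.smooth}) --- and this in turn requires establishing a strict slope inequality at the rewiring point (the paper's \eqref{e.l.raterw.minimizer.1}). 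You would need to supply that step, and also verify that the rewired functions remain in the relevant feasible sets and still realize $\max_{k'}F_{k'}(x_j)=a_j$.

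The genuine gap is in (c). You propose to exhibit $\wordF(F_k)\in\treekl[kk]$ by building an $\IHC[kk]$ tower \emph{bottom-up}, adding one letter at a time so that the hypograph of $\rwf$ grows, and you concede that selecting an admissible letter at each step is the crux, ending with "which I expect to yield a valid tower." That is exactly the step that needs a proof, and it is delicate: the natural invariant (no $(x_j,a_j)$ lies in $\hyp(\rwf[\wordkl[\word'][k][k]])^\circ$ for the current word $\word'$) need not propagate, since adding a letter pushes the curve up and can strictly engulf another point, after which that point can never be appended with the hypograph still increasing. The paper avoids this by going \emph{top-down}: run the up-down iteration from $\wordfullkl[kk]$, at each step keeping activated letters that lie in $\word:=\wordF(F_k)$ and deleting the rest; this produces some $\wordsub\supset\word$ in $\treekl[kk]$, and the problem reduces to ruling out excess never-activated letters, which satisfy $\vecUDc(\wordkl[\wordsub][k][k])_j=\down$. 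Ruling these out uses two ingredients absent from your proposal: (i) $\word$ consists of \emph{consecutive} letters (Lemma~\ref{l.raterw.minimizer.word}\ref{l.raterw.minimizer.word.1}, itself proved by a separate rewiring argument exploiting minimality), so the excess letters sit outside $[x_{\word_1},x_{\word_{|\word|}}]$; and (ii) the geometric argument of Figure~\ref{f.wing}, which forces such a letter into the open hypograph of $\rwf[\wordkl[\wordsub][k][k]]$, contradicting the $\IHC[kk]$ satisfied by $\wordsub$. Finally, the appendix proves this lemma assuming only \eqref{e.HLc.xa} and \eqref{e.nondeg.sd}, and its content is used (e.g., in Section~\ref{s.matching}) in settings where Assumption~\ref{assu.strict} is not available, so an argument for (c) that leans on Assumption~\ref{assu.strict} would be too narrow.
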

\noindent{}%
This lemma is contained in the results of Lemma~\ref{l.raterw.minimizer}--\ref{l.raterw.minimizer.word}. 
See Section~\ref{s.a.raterw} for the proof.

\subsection{Bounding terms and identifying the dominant term}
\label{e.asymptotics.bound}

\begin{prop}
\label{p.term.bd}
Under Assumption~\ref{assu.strict},
there exists $ c=c(t,\icvecx,\icveca,\vecx,\veca) $ such that 
\begin{align*}
	|\gterm|
	\leq
	\exp\big( - N\raterw((\icvecx,\icveca)\xrightarrow{t}(\vecx,\veca)) \big)
	\,
	\exp\big( c \norm{\gterm} - \tfrac{N}{c}\, (\norm{\gterm}-c)_+ \big),
\end{align*}
for all preferred, non-degenerate terms $ \gterm $.
\end{prop}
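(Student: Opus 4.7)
The plan is to factor $\gterm$ into preferred traces, apply Proposition~\ref{p.trace} to each factor, and then reduce the resulting geometric data via Proposition~\ref{p.nondiag}\ref{p.nondiag.1} and Lemma~\ref{l.repeat}. Writing $\gterm = \monomial_1 \cdots \monomial_\ell$ as a product of preferred traces gives $\norm{\gterm} = \sum_{i=1}^\ell \norm{\monomial_i}$, so Proposition~\ref{p.trace} yields $|\gterm| \leq \exp(c_1 \norm{\gterm} - N \sum_{i=1}^\ell \expfnword[\monomial_i])$ for some $c_1 = c_1(t,\icvecx,\icveca,\vecx,\veca)$ independent of $\gterm$. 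The remaining task is to establish the lower bound $\sum_{i=1}^\ell \expfnword[\monomial_i] \geq \raterw((\icvecx,\icveca) \xrightarrow{t} (\vecx,\veca)) + \epsilon_0 (\norm{\gterm}-M)_+$ for constants $\epsilon_0, M$ depending only on $(t,\icvecx,\icveca,\vecx,\veca)$.

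Next I apply Proposition~\ref{p.nondiag}\ref{p.nondiag.1} to each $\monomial_i$. This produces functions drawn from the finite set $\bigcup_{k=1}^\icm \HLspkk(k)$; flattening across $i$ yields a list $\tilde{F}_1,\ldots,\tilde{F}_{\norm{\gterm}}$ with associated indices $\kappa_1,\ldots,\kappa_{\norm{\gterm}}$ such that $\sum_{i=1}^\ell \expfnword[\monomial_i] = \sum_{r=1}^{\norm{\gterm}} \raterwRel{\tilde{F}_r}{\parab[\kappa_r](t)}$. Non-degeneracy of $\gterm$, combined with the word-containment clause of Proposition~\ref{p.nondiag}\ref{p.nondiag.1}, forces $\bigcup_r \wordF(\tilde{F}_r) \supset \{1,\ldots,m\}$.

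The core step is an iterative merging procedure. Set $M := \bigl|\bigcup_k \HLspkk(k)\bigr|$ and $\epsilon_0 := \min\{\raterwRel{f}{\parab[k](t)} : f \in \HLspkk(k),\, k = 1,\ldots,\icm\}$; positivity of $\epsilon_0$ follows from $\HLspkk(k) \subset \HLspk(k) \setminus \{\parab[k](t)\}$. If $\norm{\gterm} > M$, then by pigeonhole some $\tilde{F}_r = \tilde{F}_{r'}$ with $r \neq r'$; in particular $\kappa_r = \kappa_{r'}$, and the hypograph intersection above $\parab[\kappa_r](t)$ is non-empty. The strict case of Lemma~\ref{l.repeat} then replaces the pair by $\max\{\tilde{F}_r,\tilde{F}_{r'}\} = \tilde{F}_r$ while shedding a rate contribution of at least $\raterwRel{\tilde{F}_r}{\parab[\kappa_r](t)} \geq \epsilon_0$. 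Iterating removes at least $(\norm{\gterm}-M)_+$ entries with aggregate rate drop at least $\epsilon_0(\norm{\gterm}-M)_+$. On the residual list (of length $\leq M$, with no repeated functions) I further apply Lemma~\ref{l.repeat} to combine entries sharing the same $\kappa$-index into a single $f_k \in \HLspk(k)$ with $\bigcup_k \wordF(f_k) \supset \{1,\ldots,m\}$. The tuple $(f_k)_k$ is then feasible for \eqref{e.raterw.xa}, so $\sum_k \raterwRel{f_k}{\parab[k](t)} \geq \raterw((\icvecx,\icveca) \xrightarrow{t} (\vecx,\veca))$.

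Combining these bounds yields $|\gterm| \leq \exp(c_1 \norm{\gterm} - N\raterw((\icvecx,\icveca) \xrightarrow{t} (\vecx,\veca)) - N\epsilon_0 (\norm{\gterm} - M)_+)$, and taking $c := \max(c_1, 1/\epsilon_0, M)$ gives the statement. The main obstacle is securing the \emph{linear-in-$\norm{\gterm}$} gap, which is what ultimately tames the combinatorial explosion of preferred, non-degenerate terms in the Plemelj-like expansion~\eqref{e.det.expanded}; without this quantitative improvement one controls only the leading exponential rate and the multiplicity of contributing terms could swamp the bound. The finiteness of $\HLspkk(k)$ secured in Proposition~\ref{p.nondiag} is exactly what makes pigeonhole available and thus powers the gap.
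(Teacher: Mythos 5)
Your ingredients — Propositions~\ref{p.trace} and \ref{p.nondiag}\ref{p.nondiag.1}, Lemma~\ref{l.repeat}, the finiteness of $\HLspkk(k)$, and the positivity of $\epsilon_0 := \min_k\min\{\raterwRel{f}{\parab[k](t)}: f\in\HLspkk(k)\}$ — are exactly the right ones, and the first half of your argument (factoring $\gterm$, applying Proposition~\ref{p.trace}, flattening the $\til{F}_r$'s, using non-degeneracy to get word coverage) matches the paper. But the final reduction is routed through a pigeonhole-and-merge procedure that both contains a gap as written and is unnecessary.

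The gap: you assert that $\til F_r = \til F_{r'}$ forces $\kappa_r = \kappa_{r'}$. This is not justified. The sets $\HLspkk(k)$ for different $k$ need not be disjoint, so a repeated \emph{function} need not carry a repeated \emph{wedge index}, and Lemma~\ref{l.repeat} can only combine functions living in a single $\HLspk(k)$. The repair is to pigeonhole on the \emph{pairs} $(\til F_r,\kappa_r)$ (so $M := \sum_k|\HLspkk(k)|$). A second, milder slip: the quantitative gain you extract from a duplicate pair is not what the strict clause of Lemma~\ref{l.repeat} gives (that clause only asserts strictness, not a bound); what you actually use is the trivial fact that one of two identical summands, each $\geq\epsilon_0$, can be dropped.

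More to the point, the whole merging machinery is avoidable, and the paper's argument is shorter. After flattening you have $\norm\gterm$ terms $\raterwRel{\til F_r}{\parab[\kappa_r](t)}$, each one individually $\geq\epsilon_0>0$, with $\bigcup_r\wordF(\til F_r)\supset\{1,\ldots,m\}$. Simply select a subcollection of at most $m$ of them whose words already cover $12\cdots m$ (one per letter suffices); apply Lemma~\ref{l.repeat} within each $k$-class of this subcollection and feasibility for~\eqref{e.raterw.xa} to bound that partial sum by $\raterw((\icvecx,\icveca)\xrightarrow{t}(\vecx,\veca))$ from below; and bound the remaining $\norm\gterm-n'\geq\norm\gterm-m$ terms below by $\epsilon_0$ each. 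This gives $\expfnword[\gterm]\geq\raterw^*+\epsilon_0(\norm\gterm-m)_+$ directly, with no pigeonhole, no merging, and no separate residual-list analysis; Proposition~\ref{p.trace} then finishes the proof with the constant $c$ absorbing $m$, $1/\epsilon_0$, and the constant from Proposition~\ref{p.trace}.
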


\begin{proof}
Throughout the proof $ c=c (t,\icvecx,\icveca,\vecx,\veca) $
and $ \raterw^* := \raterw((\icvecx,\icveca)\xrightarrow{\scriptscriptstyle t}(\vecx,\veca)) $.
Recall that $ \norm{\gterm} $ counts the total number $ \opuci{\ldots} $ in $ \gterm $.
Fix any preferred, non-degenerate term $ \gterm = \monomial_1\cdots\monomial_\ell $.
The first step is to derive a bound on $ \expfnword[\gterm] $.
Proposition~\ref{p.nondiag} applied to these $ \monomial_i $'s gives
$
	\expfnword[\gterm]
	=
	\expfnword[\monomial_1] + \ldots + \expfnword[\monomial_\ell]
	\geq
	\sum_{i=1}^{\norm{\gterm}} \raterwRel{ f_{i} }{ \parab[k_i](t) },
$
for some $ f_{i} \in \HLspkk(k_i) $,
and the union of all the $ \wordF(f_{i}) $'s exhausts all letters $ 1,2,\ldots,m $. 
After reindexing the $ f_{i} $'s, we have $ \wordF(f_{1})\cup\dots\cup\wordF(f_{n'}) = 12\cdots m $, for some $ n' \leq m $.
Referring to \eqref{e.raterw.xa} and using Lemma~\ref{l.repeat} give
$
	\sum_{i=1}^{n'} \raterwRel{ f_{i} }{ \parab[k_i](t) }
	\geq
	\raterw^*.
$
Since $ \HLspkk(k) $ is a finite subset of $ \HLspk(k_i) $
and since $ \parab[k](t) \notin \HLspkk(k) $,
$ \min\{ \raterwRel{f}{\parab[k](t)}: f \in \HLspkk(k) \}>0 $.
Hence
\begin{align*}
	\expfnword[\gterm]
	=
	\expfnword[\monomial_1] + \ldots + \expfnword[\monomial_\ell]
	\geq
	\raterw^*
	+
	\tfrac{1}{c}\, (\norm{\gterm}-n')_+
	\geq
	\raterw^*
	+
	\tfrac{1}{c}\, (\norm{\gterm}-m)_+.	
\end{align*}
Inserting this inequality into Proposition~\ref{p.trace} gives the desired result.
\end{proof}

We proceed to identify the dominant term, which will only be needed for obtaining the lower bound.
As is often the case in proving \acp{LDP}, the proof of the lower bound is tolerant to additional assumptions, and we impose the following.
\begin{assu}
\label{assu.lwbd}
The minimizer $ \{F_k\}_k $ of $ (\icvecx,\icveca)\xrightarrow{\scriptscriptstyle t} (\vecx,\veca) $ is unique,
the words do not share letters, namely $ \wordF(F_k) \cap \wordF(F_{k'}) = \emptyset $ for all $ k\neq k' $,
and each $ \wordF(F_k) \in \islesetkl[kk] $, under Convention~\ref{con.no.empty}.
\end{assu}
\noindent{}%
Under Assumption~\ref{assu.lwbd}, the next proposition shows that the dominant term is 
$ \gterm_\star := \prod_k \tr( \inddown_{\Ic{k}} \opuci{\wordF(F_k)} \inddown_{\Ic{k}} ). $
Recall from \eqref{e.DD} and \eqref{e.det.expanded} that $ \fcoef_{\gterm} := \sum_{n \geq 0} \frac{1}{n!}\coef_{n,\gterm} $ denotes the coefficient of $ \gterm $ in the Plemelj-like expansion.
Recall the parity $ \parityc(\wordkl) $ from the beginning of Section~\ref{s.updown}.

\begin{prop}
\label{p.term.dominanace}
Under Assumptions~\ref{assu.strict} and \ref{assu.lwbd},
\begin{align*}
	\lim_{N\to\infty} \frac{1}{N} \log \gterm_\star
	=
	- \raterw\big((\icvecx,\icveca)\xrightarrow{ t}(\vecx,\veca)\big),
	\qquad
	\fcoef_{\gterm_\star}
	:=
	\sum_{n \geq 0} \frac{1}{n!}\coef_{n,\gterm_\star} = \prod_{k} \parityc(\wordkl[\wordF(F_k)][k][k]),
\end{align*}
and, for any preferred, non-degenerate term $ \gterm \neq \gterm_\star $,
$
	\lim_{N\to\infty} \frac{1}{N} \log \gterm
	<
	- \raterw((\icvecx,\icveca)\xrightarrow{\scriptscriptstyle t}(\vecx,\veca)).
$
\end{prop}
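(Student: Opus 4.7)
\emph{Rate of $\gterm_\star$.} Under Assumption~\ref{assu.lwbd}, each $\wordF(F_k)\in\islesetkl[kk]$ is itself an isle, so the preferred trace $\monomial_k:=\tr(\inddown_{\ick}\opuci{\wordF(F_k)}\inddown_{\ick})$ satisfies $\expfnword[\monomial_k]=\expfn_\star(\wordkl[\wordF(F_k)][k][k])$. Lemma~\ref{l.raterw.minimizer.} identifies $\rwf[\wordkl[\wordF(F_k)][k][k]]$ with $F_k$, and the geometric description of $\expfn_\star$ illustrated in Figure~\ref{f.rate-v} then identifies this rate with $\raterwRel{F_k}{\parab[k](t)}$. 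Applying Proposition~\ref{p.trace} to each $\monomial_k$ and multiplying yields $\lim_{N\to\infty}N^{-1}\log\gterm_\star=-\sum_k\raterwRel{F_k}{\parab[k](t)}=-\raterw((\icvecx,\icveca)\xrightarrow{t}(\vecx,\veca))$, the last equality being the definition of $\raterw^*$ at the minimizer.

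\emph{Coefficient $\fcoef_{\gterm_\star}$.} The plan is to pass through the exponential identity $\det(\Id+K)=\exp\bigl(\sum_{j\geq 1}(-1)^{j-1}p_j/j\bigr)$ with $p_j:=\tr(K^j)$, which exhibits every power-sum monomial with an unambiguous rational coefficient. Set $S:=\{k:\wordF(F_k)\neq\emptyset\}$ and $q:=|S|$. Because each factor of $\gterm_\star$ is a trace containing exactly one operator $\opuci{\cdot}$, and because every summand in $p_j$ for $j\geq 2$ produces a generic trace with at least $j$ factors of $\opuci{\cdot}$, only the pure $p_1^q/q!$ term of the exponential expansion can contribute to $\gterm_\star$. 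Expanding $p_1^q=(\sum_k\tr(K_{kk\star}))^q$ and selecting the $q!$ multinomial terms that assign \emph{distinct} indices in $S$, while noting that only the single-isle summand in \eqref{e.det.islse} (with $\ell=1$, $\word=\wordF(F_k)$, $k=k'$) yields a single-factor trace and contributes the sign $\parityc(\wordkl[\wordF(F_k)][k][k])$, one obtains $\fcoef_{\gterm_\star}=(1/q!)\cdot q!\cdot\prod_{k\in S}\parityc(\wordkl[\wordF(F_k)][k][k])$, as required.

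\emph{Strict sub-dominance.} For a preferred, non-degenerate $\gterm\neq\gterm_\star$, Proposition~\ref{p.trace} reduces the claim to $\expfnword[\gterm]>\raterw^*$. Apply Proposition~\ref{p.nondiag}\ref{p.nondiag.1} to write $\expfnword[\gterm]\geq\sum_i\raterwRel{\til{F}_i}{\parab[k_i](t)}$, which is strict by Proposition~\ref{p.nondiag}\ref{p.nondiag.2} unless every preferred trace in $\gterm$ has constant $k$-index; then use Lemma~\ref{l.repeat} to consolidate functions sharing a common $k$, strict unless the relevant hypograph interiors above $\parab[k](t)$ are pairwise disjoint. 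The resulting bound $\geq\raterw^*$ is strict unless equality at both steps yields consolidated data that must, by the uniqueness and isle/disjointness hypotheses in Assumption~\ref{assu.lwbd}, coincide with $\{F_k\}_k$; since isles in $\islesetkl[kk]$ with disjoint hypograph residues above $\parab[k](t)$ are automatically $\lli$-ordered while the preferred condition forbids consecutive $\lli$-ordered isles, this forces each preferred trace in $\gterm$ to have exactly one $\opuci{\cdot}$ equal to $\opuci{\wordF(F_k)}$ for a distinct $k\in S$, i.e.\ $\gterm=\gterm_\star$. The main obstacle is the coefficient calculation: one must rule out hidden contributions to $\fcoef_{\gterm_\star}$ from higher power sums and from multi-isle summands in $K_{kk\star}$, and the operator-counting observation above is precisely what localizes the contribution to a unique summand of $p_1^q$.
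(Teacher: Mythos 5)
Your proposal is correct overall, and your third paragraph (strict sub-dominance) follows essentially the same route as the paper: inspect when equality can hold in Proposition~\ref{p.nondiag}\ref{p.nondiag.2} and Lemma~\ref{l.repeat}, then invoke Assumption~\ref{assu.lwbd} (uniqueness, disjointness, and the isle hypothesis) to force $ \gterm=\gterm_\star $. The first paragraph (rate of $ \gterm_\star $) is also what the paper does, packaging the identification $ \expfnword[\gterm_\star]=\raterw^* $ through Lemma~\ref{l.raterw.minimizer.}.

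Where you genuinely diverge is the coefficient computation. The paper argues by truncation: it replaces $ (K_{kk'\star})_{k,k'} $ by its diagonal, single-word summand $ \kdelta_{k=k'}\,\parityc(\wordkl[\wordF(F_k)][k][k])\,\inddown_{\ick}\opuc{\wordF(F_k)}\inddown_{\ick} $, observes that this surgery cannot affect the coefficient of $ \gterm_\star $ (any deleted summand, when traced, produces a formal monomial that cannot factor as $ \gterm_\star $), and reads $ \prod_k\parityc $ off the resulting block-diagonal determinant. You instead pass through $ \det(\Id+K)=\exp\big(\sum_{j\geq 1}(-1)^{j-1}p_j/j\big) $ and use the operator-counting fact that every summand of $ p_j $ carries at least $ j $ copies of $ \opuci{\cdot} $ while $ \gterm_\star $ is a $ \fdot $-product of $ q $ single-$ \opuci{\cdot} $ traces, localizing the contribution to $ p_1^q/q! $. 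Both are legitimate; your route makes the ``only one source for each factor'' structure explicit and more mechanical, whereas the paper's route sidesteps the exponential identity altogether (which, in the formal-determinant ring $ \C\bkk{\Monomials/\cyclic} $ of Section~\ref{s.isle.fdet}, you would need to record as a consequence of Lemma~\ref{l.det} in the same way as Proposition~\ref{p.det.prop}\ref{p.det.factorize} is proved). If you keep the exponential route, state that one-line justification; otherwise the argument is complete.
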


\begin{proof}
Throughout the proof 
$ \raterw^* := \raterw((\icvecx,\icveca)\xrightarrow{\scriptscriptstyle t}(\vecx,\veca)) $.
Fix the unique minimizer $ \{ F_k = \Rwf( \wordkl[\wordF(F_k)][k][k] ) \}_k $
and fix a preferred, non-degenerate term $ \gterm $.
Under the given assumptions,
inspecting the conditions for `$ = $' to hold in Proposition~\ref{p.nondiag}\ref{p.nondiag.2} and Lemma~\ref{l.repeat} reveals that
$ \expfnword[\gterm]>\raterw^* $ unless $ \gterm = \gterm_\star $.
Conversely, Lemma~\ref{l.raterw.minimizer.} asserts that $ \expfnword[\gterm_\star]=\raterw^* $ and $ \wordF(F_k) \in \treekl[kk] $.
To find the coefficient $ \fcoef_{\gterm_\star} $,
recall that $ \DD_n $ is obtained by inserting $ K_{kk'\star} $ from \eqref{e.det.islse} into $ \D_n(\ldots) $ in \eqref{e.simon} and expanding.
The summand in \eqref{e.det.islse} contains $ \opuci{\wordF(F_k)}=\opuc{\wordF(F_k)} $,
since $ \wordF(F_k) \in \treekl[kk] \cap\islesetkl[kk] $.
During the insertion and expansion procedure,
if any $ \opuci{\word} $ with $ \word \notin \{\wordF(F_k)\}_k $ gets involved, the resulting term will not be $ \gterm_\star $.
Therefore, without affecting any contribution to $ \gterm_\star $, we delete all $ \opuci{\word} $ with $ \word \notin \{\wordF(F_k)\}_k $ from $ (K_{kk'\star})_{k,k'=1}^{\icm} $.
Namely, we replace $ K_{kk\star} \mapsto \parityc( \wordkl[\wordF(F_k)][k][k] ) \opuc{\wordF(F_k)} $, replace $ K_{kk'\star} \mapsto 0 $ when $ k\neq k' $, insert the result into $ \D_n(\ldots) $, and expand.
The resulting expansion reads
\begin{align*}
	\det\Big( \Id + \big( \kdelta_{k=k'} \ \parityc(\wordkl[\wordF(F_k)][k][k]) \ \opuc{\wordF(F_k)}  \big)_{k,k'=1}^{\icm}  \Big)
	=
	\prod\nolimits_{k} \big( 1 + \parityc(\wordkl[\wordF(F_k)][k][k]) \ \tr( \opuc{\wordF(F_k)}) + \ldots \big).
\end{align*}
Reading the coefficient of $ \gterm_\star $ from the right side gives the desired result.
\end{proof}

\section{Determinantal analysis: approximations and proof of \hyperref[t.rw]{Fixed-time Theorem}}
\label{s.pftrw}

The proof will frequently involve the discretized setting 
$ (\icvecx,\icveca) \xrightarrow{\scriptscriptstyle t} (\vecx,\veca) $.
Throughout the proof, the initial condition $ (\icvecx,\icveca) $ will always be assumed to satisfy \eqref{e.nondeg.sd}. 
The terminal condition $ (\vecx,\veca) $ will have varying assumptions, to be specified below.

We will prove \hyperref[t.rw]{Fixed-time Theorem} by establishing the upper and lower bound separately.
For the upper bound, we will consider a massif initial condition first and then a general initial condition.
Recall that we endow $ \Lip $ with the metric $ \dist(f_1,f_2) := \sum_{n=1}^\infty 2^{-n} \sup_{x\in[-n,n]} | f_1(x) - f_2(x) | $.
\begin{thmfixedtimeup}
\begin{enumerate}[leftmargin=20pt,label=(\alph*)]
\item[] 
\item \label{t.rw.upper.1}
Fix any $ (\icvecx,\icveca) $,
and start the \ac{TASEP} from the massif initial condition $ \hh_N(0) = \parab[1\ldots\icm](0) $.
For any $ f\in\Lip $ that satisfies the Hopf--Lax condition $ \parab[1\ldots\icm](t) \leq f \leq \parab[1\ldots\icm](0) $,
\begin{align*}
	\limsup_{\delta\to 0} \limsup_{N\to\infty} \,
	\frac{1}{N} \log \P_{\parab[1\cdots\icm](0)}\big[ \dist(\hh_N(t),f)<\delta \big]  
	\leq 
	- \raterw\big( \parab[1\ldots\icm](0) \xrightarrow{t} f \big).
\end{align*}
\item \label{t.rw.upper.2}
For any $ f,g\in\Lip $,
\begin{align*}
	\limsup_{\delta\to 0}
	\limsup_{N\to \infty} \ \sup_{\dist(\hh_N(0),g)<\delta} 
	\Big\{ 
		\frac{1}{N} \log \P_{\hh_N(0)}[ \dist(\hh_N(t),f) < \delta ] 
	\Big\}
	\leq
	- \raterw( g \xrightarrow{ t} f).
\end{align*}
\end{enumerate}
\end{thmfixedtimeup}

\begin{proof}
\ref{t.rw.upper.1}\
Fix an $ \e>0 $.
Refer to the definition \eqref{e.raterw.gf} of $ \raterw( \parab[1\ldots\icm](0) \xrightarrow{t} f ) $.
Pick $ (\vecx,\veca)\!=\!(x_i,f(x_i))_{i=1}^m\! $ such that
\begin{align}
	\label{e.trw.upbd.1}
	\raterw\big( \parab[1\ldots\icm](0) \xrightarrow{t} f \big) - \e
	<
	\raterw\big( \parab[1\ldots\icm](0) \xrightarrow{t} (\vecx,\veca) \big)
	=
	\raterw\big( (\icvecx,\icveca) \xrightarrow{t} (\vecx,\veca) \big).
\end{align}
Recall that we call a minimizer $ \{F_k\}_k $ of \eqref{e.raterw.xa} \tdef{a minimizer of $ (\icvecx,\icveca)\xrightarrow{\scriptscriptstyle t} (\vecx,\veca) $}. 

\medskip\noindent{}%
\hypertarget{s.pftrw.upper.add}{\textbf{Step 1: adding and removing points.}}\
We will modify $ (\vecx,\veca) $ to obtain certain desirable properties.
Throughout the modification,
the inequality \eqref{e.trw.upbd.1} will always be maintained, and
the points $ (x_1,a_1),\ldots, (x_m,a_m) $ will always be on the graph of $ f $.

Consider an open interval $ U $ on which $ \partial_y f|_{U}= -1 $ or $ \partial_y f|_{U}= 1 $ or $ f|_{U}=\parab[k](t)|_{U} $ for some $ k $.
We seek to remove any $ x_{i_0}\in U $ from $ (\vecx,\veca) $.
For each such $ x_{i_0} $, consider the maximal interval $ U=(\underline{x},\overline{x})\ni x_{i_0} $ on which $ \partial_y f|_{U}=-1 $ or $ \partial_y f|_{U}= 1 $ or $  f|_{U}=\parab[k](t)|_{U} $,
and add the endpoints $ (\underline{x},f(\underline{x})) := (\underline{x},\underline{a}) $ and $ (\overline{x},f(\overline{x})) := (\overline{x},\overline{a}) $ to $ (\vecx,\veca) $.
Adding points only increases the right side of \eqref{e.trw.upbd.1}, so \eqref{e.trw.upbd.1} continues to hold after adding the points.
Next, remove all points $ (x_{i_0},a_{i_0}) $ such that $ a_{i_0} \in U $.
Removing points generally decreases the right hand side of \eqref{e.trw.upbd.1}, but \emph{not} in these cases.
To see why, consider the case $ \partial_y f|_{U}=-1 $, and take any minimizer $ \{F_k\}_k $ of $ (\icvecx,\icveca)\xrightarrow{\scriptscriptstyle t} (\vecx,\veca) $ for the \emph{post-removal} $ (\vecx,\veca) $.
Take an $ F_{k_0} $ that passes through $ (\underline{x},f(\underline{x})) $.
This function $ F_{k_0} $ passes through all $ (x_{i_0},a_{i_0}) $, despite the fact that it has been removed from $ (\vecx,\veca) $.
This is so because $ F_{k_0}(\bar{x}) \leq f(\bar{x}) $ forces $ \partial_x F_{k_0}|_{(\underline{x},\overline{x})} = -1 $.
For any other $ F_k $, using $ F_k(\underline{x})\leq f(\underline{a}) $, $ F_k(\bar{x})\leq f(\bar{a}) $, and $ F_k\in\Lip $ yields that $ F_k(x_{i_0}) \leq a_{i_0} $.
Hence $ \{F_k\}_k $ is also a minimizer of $ (\icvecx,\icveca) \xrightarrow{\scriptscriptstyle t}( (\vecx,\veca)\cup\{(x_{i_0},a_{i_0})\}) $, and therefore removing $ (x_{i_0},a_{i_0}) $ does not decrease the right side of \eqref{e.trw.upbd.1}.
The cases $ \partial_y f|_{U}=1 $ and $ \partial_y f|_{U}=\parab[k](t)|_{U} $ follow similar arguments.

\medskip\noindent{}%
\hypertarget{s.pftrw.upper.perturb}{\textbf{Step 2: perturbing $ \boldsymbol{s_i} $.}}\
We proceed to perturb the $ (\vecx,\veca) $ obtained at the end of Step~\hyperlink{s.pftrw.upper.add}{1}.
The properties stated in the first paragraph of Step~\hyperlink{s.pftrw.upper.add}{1} will still be maintained.
For clarity, the pre-perturbation variables will be labeled `pre', for example $ (\vecx^\text{pre},\veca^\text{pre}) $.
The perturbation will be done in the $ (s,d) $ coordinates.
Turn $ f $ into a function in the $ (s,d) $ system as $ \phi(s):= \inf\{ \frac{1}{2}(x+f(x)) : x\in\R \} $,
which is decreasing and right-continuous-with-left-limit.
Fix any $ \delta_0>0 $.
We seek to perturb $ (s^\text{pre}_i,d^\text{pre}_i) $ such that the following holds for the post-perturbation points $ (s_i,d_i)=(s_i, \phi(s_i)) $.
We call a letter $ i $ \tdef{deep} if $ a_i=\parab[k](t,x_i) $ for some $ k $.
\begin{enumerate}[leftmargin=20pt, label=(\Roman*)]
\item \label{enu.pertur.1} The function $ \phi $ is continuous at each $ s_i $.
\item \label{enu.pertur.2} $ |s_i-s^\text{pre}_i|+|d_i-d^\text{pre}_i| < \delta_0 $; if $ d^\text{pre}_i \neq d^\text{pre}_j $ then $ d_i \neq d_j $.
\item \label{enu.pertur.2.5} A letter $ i $ is deep only if $ \phi $ is non-continuous at $ s^\text{pre}_i $.
\item \label{enu.pertur.4} $ s_i \neq \ics_k $, for all $ i $ and $ k $.
\end{enumerate}
Since $ \phi $ is non-continuous for at most countably many $ s $,
around any neighborhood of $ s^\text{pre}_i $ there exists an $ s_i $ such that \ref{enu.pertur.1} holds.
If $ \phi $ is non-continuous at $ s^\text{pre}_i $, a small change in $ s $ could introduce an uncontrolled change in $ \phi $.
By the construction in Step~\hyperlink{s.pftrw.upper.add}{1}, any such $ s^\text{pre}_i $ is an endpoint of a maximal interval on which $ \partial_y f = -1 $, and the condition~\ref{enu.pertur.2} can be satisfied by choosing the suitable direction to perturb; see Figure~\ref{f.pfup-0}.
Similarly considerations show that \ref{enu.pertur.2.5} can be achieved.
Lastly, \ref{enu.pertur.4} is clearly achievable.

Perturbing $ s_i $ may change the set $ \alphabetkl $.
Let $ \alphabet^\text{pre}(kk') $ denote the corresponding pre-perturbation set.
We would like the set to remain the same as much as possible.
Referring to the definition~\ref{e.alphabetkl} of $ \alphabetkl $, one sees that the perturbation can be done without changing $ \alphabetkl $ except when $ s^\text{pre}_{i-1}=s^\text{pre}_{i} = \ics_{k'} $ for some $ k $.
These are the $ s_i^\text{pre} $ at which $ \phi $ is non-continuous that happens to coincide with some $ \ics_{k'} $.
We call the letter $ i $ \tdef{$ k' $-aligned} or simply \tdef{aligned}.
The perturbation gives $ s_{i-1}<s^\text{pre}_{i-1} $ and $ s^\text{pre}_i<s_i $ (see Figure~\ref{f.pfup-0}), so
\begin{enumerate}[leftmargin=20pt, label=(\Roman*)]
\setcounter{enumi}{4}
\item \label{enu.pertur.alphabet}
$ \alphabetkl = \alphabet^\text{pre}(kk') \setminus\{ i \} $, where $ i $ is the unique (if exists) $ k' $-aligned letter.
\end{enumerate}
We require two more conditions.
\begin{enumerate}[leftmargin=20pt, label=(\Roman*)] %
\setcounter{enumi}{5}
\item \label{enu.pertur.2.75} Any deep $ i $ is larger than $ 1 $, the precedent letter $ i-1 $ is not deep, and for any $ \word\supset\{i-1,i\} $ we have $ \UDc(\wordkl)_{i} = \down $.
\item \label{enu.pertur.3} For all $ \wordkl[\word][k_1][k_2] $ and $ \wordkl[\wordsub][k_2][k_3] $, $ \xr(\wordkl[\word][k_1][k_2]) \neq \xl(\wordkl[\wordsub][k_2][k_3]) $.
\end{enumerate}
The condition should hold for all $ \wordkl $ under Convention~\ref{con.wordkl} with respect to the \emph{post-perturbation} $ \wordsetkl $.
The condition~\ref{enu.pertur.2.75} holds for all small enough $ \delta_0>0 $ because by \ref{enu.pertur.2.5}
and by the construction in Step~\hyperlink{s.pftrw.upper.add}{1},
for any deep $ i $, $ (x^\text{pre}_{i-1},a^\text{pre}_{i-1}) $ and $ (x^\text{pre}_{i},a^\text{pre}_{i}) $ sit at the endpoints of a maximal open interval $ U $
on which $ \partial_y f|_{U} = -1 $.
As for \ref{enu.pertur.3}, note that perturbing $ s_i $ always changes $ \xr(\wordkl) $ and changes $ \xl(\wordkl) $ unless $ \phi(s_i)=\icd_k $.
Hence \ref{enu.pertur.3} can be achieved.

By Lemma~\ref{l.raterw.lsc}, the right side of \eqref{e.trw.upbd.1} is \ac{lsc} in $ (\vecx,\veca) $.
Now fix $ \delta_0 $ small enough so that \eqref{e.trw.upbd.1} holds for the post-perturbation points.

\medskip\noindent{}%
\hypertarget{s.pftrw.upper.d+-}{\textbf{Step 3: constructing $ \boldsymbol{d_i^\pm} $.}}\
Given the $ s_i $ and $ d_i=\phi(s_i) $ previously obtained,
we proceed to construct $ (d_i^-,d^+_i)\supset d_i $.
Let $ (x_i^\pm,a_i^\pm)=(s_i-d^\pm_i,s_i+d^\pm_i) $ denote the resulting point in the $ (x,a) $ coordinates.

We would like the resulting $ (\vecs,\vecd^{\vecrho}) $, where $ \vecrho\in\{\pm\}^m $,
to satisfy Assumption~\ref{assu.strict}. 
This is however not always possible due to the following scenarios.
\begin{description}[leftmargin=5pt] 
\item[Steep] If $ d_i=d_{i+1} $, necessarily $ d_i^-> d_{i+1}^+ $. We call $ (i,i+1) $ and $ (d_i^-,d_{i+1}^+) $ \tdef{steep}.
	Note that by the construction in Step~\hyperlink{s.pftrw.upper.add}{1} and by \ref{enu.pertur.2},
	the scenario $ d_i=d_{j} $ is possible only when $ |i-j|=1 $.
\item[Deep] If $ i $ is deep, necessarily $ (x_i^-,a^-_i) \in \hyp(\parab[k](t))^\circ $.
\end{description}
Further, recall that $ \alphabetkl $ consists of all $ i $'s such that $ d_i \leq \icd_k $ and $ s_i \leq \ics_{k'} $, and note that $ d_i=\icd_k $ is possible.
Hence replacing $ d_i $ with $ d_i^\pm $ could change $ \alphabetkl $.
\begin{description}[leftmargin=5pt] 
\item[High] If $ d_i=\icd_{k} $, necessarily $ d_i^+> \icd_{k} $. We call $ i $ and $ d_i^+ $ \tdef{$ k $-high} or just \tdef{high}.
\end{description}
We call $ \vecrho\in\{\pm\}^m $ \tdef{legal} if none of the above happens,
namely $ (\rho_i,\rho_{i'}) \neq (-,+) $ whenever $ (i,i') $ is steep,
$ \rho_i = + $ whenever $ i $ is deep,
and $ \rho_i=- $ whenever $ i $ is high.

We seek to construct $ d_i^\pm $ so that the following desired properties hold for all legal $ \vecrho $.
Let $ \alphabet^{\vecrho}(kk')$, $\vecUDc^{\vecrho}(\wordkl)$, $\isleset^{\vecrho}(kk') $ denote the corresponding sets and variables for $ (\vecs,\vecd)\mapsto (\vecs,\vecd^{\vecrho}) $.
Fix $ \delta_1>0 $.
Construct $ (d_i^-,d^+_i)\ni d_i $ such that, for any legal $ \vecrho $,
\begin{enumerate}[leftmargin=20pt, label=(\roman*)]
\item \label{enu.d+-.0} $ |d_i^+-d_i| < \delta_1 $ and $ |d_i-d_i^-| < \delta_1 $.
\item \label{enu.d+-.1} Assumption~\ref{assu.strict} holds for $  (\vecs,\vecd) \mapsto (\vecs,\vecd^{\vecrho}) $.
\item \label{enu.d+-.3} $ \alphabet^{\vecrho}(kk') = \alphabet(kk') $, $ \vecUDc^{\vecrho}(\wordkl) = \vecUDc(\wordkl) $, $ \isleset^{\vecrho}(kk') = \islesetkl $.
\item \label{enu.d+-.4} If $ d_j < \icd_k $ then $ d_j^+ < \icd_k $; if $ d_j>\icd_k $ then $ d^-_j > \max\{ d^+_i: d_i=\icd_k \} $.
\end{enumerate}
Given \ref{enu.pertur.1}--\ref{enu.pertur.4} and \ref{enu.pertur.3},
it is straightforward to check that \ref{enu.d+-.0}--\ref{enu.d+-.3} are achievable.
The property \ref{enu.d+-.4} follows from \ref{enu.d+-.0} for all small enough $ \delta_1 $.
By Lemma~\ref{l.raterw.lsc}, the right side of \eqref{e.trw.upbd.1} is \ac{lsc} in $ (\vecx,\veca) $.
Hence for $ \delta_1>0 $ small enough, the inequality \eqref{e.trw.upbd.1} holds for $ (\vecx,\veca)\mapsto (\vecx^{\vecrho},\veca^{\vecrho}) $.

\medskip\noindent{}%
\textbf{Step 4: applying the determinantal analysis.}\
We begin by bounding $ \P_{\parab[1\cdots\icm](0)}[ \dist(\hh_N(t),f)<\delta ] $.
By \ref{enu.pertur.1}, for all $ \delta $ small enough, $ \{ \dist(\hh_N(t),f) < \delta \} \subset \{ \hd_N(t) \in (d^-_i,d^+_i], i=1,\ldots,m \} $.
On the right side, forgo all deep $ d^-_i $. 
More precisely, let $ \deep $ denote the set of deep letters, and consider the modified pinning probability
\begin{align*}
	\ppin':= \P_{\parab[1\cdots\icm](0)}\big[ \hd_N(t) \in (d^-_i,d^+_i] \text{ for } i\notin\deep \text{, and } \hd_N(t) \leq d^+_i \text{ for }i\in\deep \big].
\end{align*}
Forgoing conditions only makes the event bigger, so
\begin{align}
	\label{e.<ppin'}
	\text{for all } \delta \text{ small enough,}\qquad
	\P_{\parab[1\cdots\icm](0)}[\dist(\hh_N(t),f) < \delta ] \leq \ppin'.
\end{align}
We call $ \vecrho\in\{\pm\}^m $ \tdef{non-deep} if $ \vecrho|_{\deep} = (+,\ldots,+) $.
The inclusion-exclusion formula gives $ \ppin' = \sumie' \punder(\vecs,\vecd^{\vecrho},N) $,
where the operator $ \sumie' $ is modified from $ \sumie $ (defined in \eqref{e.sumie})
by restricting the sum to non-deep $ \vecrho $.

We seek to apply the determinantal analysis with $ (\vecs,\vecd) \mapsto (\vecs,\vecd^{\vecrho}) $ for each non-deep $ \vecrho $.
The up-down iteration and isle factorization require some inputs: $ \vecUDc $, the $ kk' $-th alphabet, and the set of $ kk' $-th isles, for all $ k,k' $.
These variables and sets are defined only for legal $ \vecrho $.
However, since the up-down iteration and isle factorization are purely algebraic/combinatorial procedures, we can simply \emph{construct} a set of inputs and feed them into the procedures.
To construct the inputs,
within $ (\vecs,\vecd) $, for each aligned letter $ i $,
replace $ (s_i,d_i) \mapsto (s^\text{pre}_{i},d^\text{pre}_{i}) $,
and let $ (\vecs^\times,\vecd^\times) $ denote the result.
Namely, we undo the perturbation in Step~\hyperlink{s.pftrw.upper.perturb}{2} for aligned letters.
Let $ \alphabet^\times(kk'),\vecUDc^\times(\wordkl),\isleset^\times(kk') $ denote the corresponding sets and variables.
This construction together with \ref{enu.pertur.alphabet} gives $ \alphabet^\times(kk') = \alphabetkl \cup \{ i \} $, where $ i $ is the unique (if exists) $ k' $-aligned letter.
This added letter $ i $ can be combined with existing ones to form longer words, but those preexisting words in $ \wordsetkl $ are unaffected.
Hence
\begin{enumerate}[leftmargin=10pt]
\item[$ \circ $] $ \alphabet^\times(kk') = \alphabetkl \cup \{ i \} $, where $ i $ is the unique (if exists) $ k' $-aligned letter,
\item[$ \circ $] $ \wordsetkl \subset \wordset^\times(kk') $,
	and any word in $ \wordset^\times(kk')\setminus\wordsetkl $ must contain the $ k' $-aligned letter,
\item[$ \circ $] $ \vecUDc(\wordkl) = \vecUDc^\times(\wordkl) $ for all $ \word\in\wordsetkl $, and
\item[$ \circ $] $ \isleset(kk') \subset \isleset^\times(kk') $.
\end{enumerate}
By \ref{enu.d+-.3}, for all legal $ \vecrho $, these properties hold for $ \alphabetkl \mapsto \alphabet^{\vecrho}(kk') $, $ \vecUDc(\wordkl) \mapsto \vecUDc^{\vecrho}(\wordkl) $, etc.

Fix any non-deep $ \vecrho \in \{\pm\}^{m} $.
Apply the up-down iteration and isle factorization with $ (\vecs,\vecd) \mapsto (\vecs,\vecd^{\vecrho}) $
and with $ \alphabet^\times(kk'),\vecUDc^\times(\wordkl),\isleset^\times(kk') $ being the inputs.
Doing so requires modification of the trimming procedure in Section~\ref{s.geo.trim}.
For fixed $ k,k' $, examine if there exists a high letter $ i_* $ with $ d_{i_*}=\icd_k $, and in case such letters are not unique choose the smallest $ i_* $; examine if there exists a (necessarily unique) $ k' $-aligned letter $ i^* $.
Instead of applying Lemma~\ref{l.trimming} with $ \letterfirst = \min\{j: d_j^{\rho_j} \leq \icd_k \} $ and $ \letterlast = \max\{j: s_j \leq \ics_{k'} \} $, here we apply it with $ \letterfirst':=\min\{i_*,\letterfirst\} $ and $ {\letterlast}' := \max\{ i^*,\letterlast\} $.
The choices apply because  by Property~\ref{enu.d+-.4} we have $ i_* < \min\{j: d_j^{\rho_j} \leq \icd_k \} $, and because $ s_{i^*}>s^\text{pre}_{i^*} = \ics_{k} $.
The choices ensure $ [\letterfirst',{\letterlast}'] = \alphabet^\times(kk') $.

We examine the resulting terms from the application of up-down iteration and isle factorization.
Any term that involves steep $ (d^-_i,d^+_{i+1}) $ or high $ d_i^+ $ or an aligned letter is automatically zero.
To see why, consider steep $ d_i^-<d^+_{i+1} $, which happens if and only if $ d_i=d_{i+1} $.
By the construction in Step~\hyperlink{s.pftrw.upper.add}{1},
the points $ x_i $ and $ x_{i+1} $ must sit at the endpoints of a maximum open interval $ U $ on which $ \partial_y f|_U = 1 $.
Hence, for any word $ \word $ that contains $ i $ and $ i+1 $, $ \UDc(\wordkl)_i = \up $ and $ \UDc(\wordkl)_{i+1} = \down $.
From this and $ d^-_i<d^+_{i+1} $, it is readily checked from Definition~\ref{e.opu} that $ \opuc[k][k']{\word} = 0 $.
Next, it is readily checked that, for a $ k $-high or a $ k' $-aligned letter $ i $, we have $ \UDc^\times(\wordkl)_i=\up $.
Using the identity \eqref{e.trimming} shows that any term that contains such a letter $ i $ is zero.
Consequently, it suffices to consider terms that only involve legal $ \vecd^{\vecrho} $ and involve no aligned letters.
Any such term involves only words in $ \{\islesetkl[kk']\}_{k,k'} $ and satisfies Assumption~\ref{assu.strict}.

Next we examine a property related to degenerate terms.
Since the inclusion-exclusion sum has been modified to $ \sumie' $ here,
the notion of degenerate terms (defined in Definition~\ref{d.gterm}) should change accordingly.
We call a generic term \tdef{modified-degenerate} if it does not not use all non-deep letters.
The application of $ \sumie' $ kills modified-degenerate terms, but not necessarily degenerate terms.
We claim that, however, in the current setup, any nonzero non-modified-degenerate term $ \gterm $ is automatically a non-degenerate term.
To see why, fix a non-modified-degenerate term $ \gterm $ and any deep letter $ i $.
By \ref{enu.pertur.2.75}, the letter $ i-1 $ is non-deep, and hence $ i-1\in\word $ for some $ \wordkl $ involved in $ \gterm $.
By Proposition~\ref{p.tree.prop}\ref{p.IHC}, the word $ \wordkl $ satisfies $ \IHC $.
If $ i\in\wordsetkl $, then by the last property in \ref{enu.pertur.2.75}, the letter $ i $ must already be in $ \word $,
otherwise $ \wordkl $ cannot not satisfy $ \IHC $.
If $ i\notin\wordsetkl $, then by \ref{enu.pertur.alphabet} the letter is $ k' $-aligned, which forces $ \gterm=0 $.

We are now ready to bound $ \ppin' $.
For each non-deep $ \vecrho $,
invoke \eqref{e.det.expanded} for $ (\vecs,\vecd) \mapsto (\vecs,\vecd^{\vecrho}) $
and, as said previously, with $ \alphabet^\times(kk'),\vecUDc^\times(\wordkl),\isleset^\times(kk') $ being the inputs.
The result gives an expansion of $ \punder(\vecs,\vecd^{\vecrho},N) $ into sums of generic terms.
Apply $ \sumie' $ to the result.
The conclusion of the previous paragraph asserts that $ \sumie' $ kills all degenerate terms. 
Let $ \DDprnd_n $ denote the quantities obtained by removing non-preferred and degenerate terms from $ \DD_n $.
We have
\begin{align}
	\label{e.upbd.expansion}
	\ppin' = \Sumie' \sum_{n=0}^\infty \frac{1}{n!} \DD_n(\vecs,\vecd^{\vecrho},N)
	=
	\sum_{n=0}^\infty  \Sumie' \frac{1}{n!} \DDprnd_n(\vecs,\vecd^{\vecrho},N).
\end{align}
Swapping $ \sumie' $ and $ \sum_n $ requires the absolute convergence of the right side of \eqref{e.upbd.expansion}, which we verify next.
As discussed previously,
each nonzero term in \eqref{e.upbd.expansion} only involves words in $ \{\islesetkl[kk']\}_{k,k'} $ and satisfies Assumption~\ref{assu.strict}.
Proposition~\ref{p.term.bd} hence applies to the term.
Recall that $ \norm{\gterm} $ counts the total number of $ \opuci{\ldots} $ involved in $ \gterm $.
Recall the definition of $ \DD_n $ from \eqref{e.DD}, \eqref{e.det.islse}, and \eqref{e.simon}--\eqref{e.trace}.
For each $ \gterm $ in $ \DD_n $, we have $ \norm{\gterm} \geq n $, and there are at most $ n! c^n $ terms in $ \DD_n $, for some $ c=c(\icm,m) $.
These properties continue to hold for $ \DDprnd_n $ as it is obtained from $ \DD_n $ by removing terms.
Hence, the application of Proposition~\ref{p.term.bd} gives
\begin{align*}
	\big|\tfrac{1}{n!}\DDprnd_n(\vecs,\vecd^{\vecrho},N) \big|
	\leq
	\exp\big( - N\raterw((\icvecx,\icveca)\xrightarrow{t}(\vecx^{\vecrho},\veca^{\vecrho})) \big)
	\,
	\exp\big( c n - \tfrac{N}{c}\, (n-c)_+ \big),	
\end{align*}
for some $ c=c(t,\icvecx,\icveca,\vecx^{\vecrho},\veca^{\vecrho}) $.
This inequality verifies the absolute convergence of the right side of \eqref{e.upbd.expansion} and gives
\begin{align*}
	\limsup_{N\to\infty} \frac{1}{N}\log \ppin' 
	\leq 
	- \min_{\vecd^{\vecrho}\text{ legal }} \big\{  \raterw\big((\icvecx,\icveca)\xrightarrow{t}(\vecx^{\vecrho},\veca^{\vecrho})\big) \big\}
	\leq
	- \raterw\big( \parab[1\cdots \icm](0) \xrightarrow{t}f\big) + \e.
\end{align*}
Combining this result with \eqref{e.<ppin'} gives the desired result of Part~\ref{t.rw.upper.1}.

\bigskip

\ref{t.rw.upper.2}\
Fix a compact $ \calK \subset \Lip $.
With Proposition~\ref{p.HLc.topo.weak.HLc}\ref{p.HLc.topo}, standard point-set topology arguments leverage Part~\ref{t.rw.upper.1} into
\begin{align}
	\label{e.trw.upbd.goal.}
	\limsup_{N\to\infty} \,
	\frac{1}{N} \log \P_{\parab[1\cdots\icm](0)}[ f\in \calK ]  
	\leq 
	- \inf_{f\in\calK}\raterw\big( \parab[1\ldots\icm](0) \xrightarrow{\scriptscriptstyle t} f \big).
\end{align}
%
Next, fix any $ \e>0 $ and any $ g,f\in\Lip $.
Lemma~\ref{l.ic} gives an estimate of the stability of $ \P_{\hh_N(0)}[\Cdot] $ in $ \hh_N(0) $.
Apply the lemma with the given $ \e,g,f $ to obtain the $ \delta $.
Now, construct a massif $ \parab[1\ldots\icm](0) $ such that $ \dist(g,\parab[1\ldots\icm](0)) < \min\{\delta,\e\} $.
The result of applying Lemma~\ref{l.ic} gives, for all $ g\in\Lip $ with $ \dist(\hh_N(0),g)<\delta $,
$
	\P_{\hh_N(0)}[ \dist(\hh_N(t),f) < \e ]
	\leq
	\P_{\parab[1\cdots\icm](0)}[ \dist(\hh_N(t),f) \leq 2\e ] + c\exp(-\e^{-1} N),
$
where $ c $ is a universal constant.
The set $ \{ \til{f} \in\Lip: \dist(\til{f},f) \leq 2\e \} $ is compact, so by \eqref{e.trw.upbd.goal.}
\begin{align*}
	\limsup_{N\to \infty} 
	\sup_{\dist(\hh_N(0),g)<\delta} 
	\Big\{ 
		\frac{1}{N} \log \P_{\hh_N(0)}[ \dist(\hh_N(t),f) < \e ] 
	\Big\}
	\leq
	- \min\Big\{ 
		\inf_{\dist(\til{f},f) \leq 2\e}
		\raterw( \parab[1\ldots\icm](0) \xrightarrow{ t} \til{f} ), \e^{-1} 
	\Big\}.
\end{align*}
On the right side, sending $ \e \to 0 $ and using the fact that $ \raterw( \Cdot \xrightarrow{\scriptscriptstyle t} \Cdot) $
is \ac{lsc} (from Lemma~\ref{l.raterw.lsc}) gives $ -\raterw( g \xrightarrow{\scriptscriptstyle t} f) $.
On the left side, note that the quantity decreases when $ \delta $ and $ \e $ decrease.
The desired result follows.
\end{proof}

\begin{figure}[h]
\centering
\begin{minipage}[t]{.51\linewidth}
\centering
\frame{\includegraphics[width=.76\linewidth]{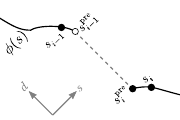}}
\caption{Perturbation of $ s_i $ when $ \phi $ is non-continuous at $ s_i^\text{pre} $.}
\label{f.pfup-0}
\end{minipage}
\hfil
\begin{minipage}[t]{.47\linewidth}
\centering
\frame{\includegraphics[width=.8\linewidth]{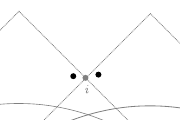}}
\caption{%
Perturbation in a borderline configuration.
The gray point is $ (b_i,y_i) $.
The two black points are the duplicated and perturbed points.%
}
\label{f.pflw-1}
\end{minipage}
\\
\begin{minipage}[t]{.51\linewidth}
\centering
\begin{minipage}[b]{.76\linewidth}
\frame{\includegraphics[width=\linewidth]{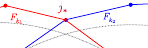}}\\
\frame{\includegraphics[width=\linewidth]{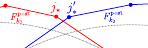}}
\end{minipage}
\caption{Duplicating and perturbing a shared letter in $ \{F_k\}_k $.}
\label{f.duplicate.perturb}
\end{minipage}
\hfil
\begin{minipage}[t]{.47\linewidth}
\centering
\begin{minipage}[b]{.8\linewidth}
\frame{\includegraphics[width=\linewidth]{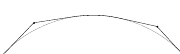}}\\
\frame{\includegraphics[width=\linewidth]{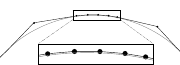}}
\end{minipage}
\caption{Adding points to make $ \wordF(F_k) $ an isle.}
\label{f.addtoisle}
\end{minipage}
\end{figure}

\begin{thmfixedtimelow}
For any $ f,g\in\Lip $,
\begin{align*}
	- \raterw( g \xrightarrow{ t} f)
	\leq
	\liminf_{\delta\to 0}
	\liminf_{N\to \infty} \ \inf_{\dist(\hh_N(0),g)<\delta} 
	\Big\{ 
		\frac{1}{N} \log \P_{\hh_N(0)}[ \dist(\hh_N(t),f) < \delta ] 
	\Big\}.
\end{align*}
\end{thmfixedtimelow}

\begin{proof}
\textbf{Step 1: discretizing the initial condition.}\
Fix any $ \e>0 $.
Refer to \eqref{e.raterw.gxa}--\eqref{e.raterw.gf}.
We find that there exists $ \icvecx=(\icx_1<\ldots<\icx_{\icm}) $,
such that, with $ \ica_k := g(\icx_k) $, $ \raterw( \parab[1\ldots\icm](0) \xrightarrow{\scriptscriptstyle t} f) < \raterw( g \xrightarrow{\scriptscriptstyle t} f) + \e $.
Next, apply Lemma~\ref{l.ic} with the given $ \e,g,f $ to obtain the $ \delta=:\delta_0 $.
Add points to $ \{\icx_k\}_k $ if necessary (with $ \ica_k := g(\ica_k) $) so that $ \dist(\parab[1\ldots\icm](0),g) < \delta_0 $.
The inequality $ \raterw( \parab[1\ldots\icm](0) \xrightarrow{\scriptscriptstyle t} f) < \raterw( g \xrightarrow{\scriptscriptstyle t} f) + \e $ continues to hold
because adding points to $ \{\icx_k\}_k $ only decreases the left side.
The result of applying Lemma~\ref{l.ic} gives, for some universal $ c<\infty $,
\begin{align}
	\label{e.lwbd.ic}
	\dist(\hh_N(0),g)<\delta_0 \text{ implies }
	\P_{\parab[1\cdots\icm](0)}\big[ \dist(\hh_N(t),f) < \e \big]
	\leq
	\P_{\hh_N(0)}\big[ \dist(\hh_N(t),f) < 2\e \big] + c\,e^{-\e^{-1} N}. 	
\end{align}

Pick $ \vecy=(y_1<\ldots<y_{m'}) $ and $ \delta_1>0 $ such that, with $ b_i:=f(y_i) $,
\begin{align}
	\label{e.lwbd.constr.1}
	\big\{ |\hh_N(t,y_1)-b_1| < \delta_1, \ldots , |\hh_N(t,y_{m'})-b_{m'}| < \delta_1  \big\}
	\subset
	\big\{ \dist(\hh_N(t),f) < \e \big\}.
\end{align}
Such $ \vecy $ and $ \delta_1 $ exist because $ \hh_N\in\Lip $ and because $ \dist $ measures the uniform norm over compact intervals.
Further, referring to \eqref{e.raterw.gf}, by making the mesh of $ \vecy $ small enough and making $ [y_1,y_{m'}] $ a wide enough interval,
we have $ \raterw( \parab[1\ldots\icm](0) \xrightarrow{\scriptscriptstyle t} (\vecy,\vecb)) < \raterw( \parab[1\ldots\icm](0) \xrightarrow{\scriptscriptstyle t} f) +\e $.
Hence
\begin{align}
	\label{e.lwbd.constr.2}
	\raterw\big( \parab[1\ldots\icm](0) \xrightarrow{t} (\vecy,\vecb)\big)
	<
	\raterw\big( g \xrightarrow{t} f\big) + 2\e.
\end{align}

We seek to modify $ (\vecy,\vecb) \mapsto (\vecx,\veca) $ so that the following holds.
\begin{enumerate}[leftmargin=20pt, label=(\Roman*)]
\item \label{enu.lwbd.a} The points $ (\icvecx,\icveca) $ and $ (\vecx,\veca) $ satisfy Assumption~\ref{assu.strict}.
\item \label{enu.lwbd.b} The condition \eqref{e.lwbd.constr.1} holds for $ (\vecy,\vecb) \mapsto (\vecx,\veca) $ and for some $ \delta_1 \mapsto \delta_2 $.
\item \label{enu.lwbd.c} The condition\eqref{e.lwbd.constr.2} holds for $ (\vecy,\vecb) \mapsto (\vecx,\veca) $.
\item \label{enu.lwbd.d} The minimizer of  $ (\icvecx,\icveca)\xrightarrow{\scriptscriptstyle t} (\vecx,\veca) $ is unique.
\item \label{enu.lwbd.e} For the unique minimizer $ \{F_k\} $ of $ (\icvecx,\icveca)\xrightarrow{\scriptscriptstyle t} (\vecx,\veca) $,
	the words $ \{\wordF(F_k)\}_k $ are mutually disjoint.
\item \label{enu.lwbd.f} For the unique minimizer $ \{F_k\} $ of $ (\icvecx,\icveca)\xrightarrow{\scriptscriptstyle t} (\vecx,\veca) $,
	the words $ \{\wordF(F_k)\}_k $ are isles.
\end{enumerate}
Property~\ref{enu.lwbd.b} holds as long as the perturbation is small enough.
Fix a minimizer $ \{\til{F}_k\}_k $ of $ (\icvecx,\icveca)\xrightarrow{\scriptscriptstyle t} (\vecy,\vecb) $.
Properties~\ref{enu.lwbd.a} and \ref{enu.lwbd.c} are achievable except in the case depicted in Figure~\ref{f.pflw-1}.
Namely, $ (y_i,b_i) $ sits on the boundary of $ \hyp(\parab[k](0))\setminus \hyp(\parab[k](t))^\circ $ for two $ k $'s,
and the letter $ i $ is shared by two $ \wordF(\til{F}_k) $'s.
We circumvent the issue by duplicating $ (y_i,b_i) $ and perturbing accordingly; see Figure~\ref{f.pflw-1} for an illustration.
The resulting $ (\vecx,\veca)=(x_i,a_i)_{i=1}^{m} $ may have $ m\geq m' $, 
but the desired properties \ref{enu.lwbd.a}--\ref{enu.lwbd.c} hold.

We continue to modify $ (\vecx,\veca) $ to meet \ref{enu.lwbd.d}.
Property~\ref{enu.lwbd.d} is achievable.
To see why, view $ \raterwRel{ \Rwf(\wordkl[\word][k][k]) }{ \parab[k](t) } $ as a function of $ (\vecx,\veca) $,
denoted $ \phi_{k,\word}(\vecx,\veca) $, 
with the convention that $ \phi_{k,\word}(\vecx,\veca):=+\infty $ whenever $ \raterwRel{ \Rwf(\wordkl[\word][k][k]) }{ \parab[k](t) } $ is undefined
or whenever $ (\Rwf(\wordkl[\word][k][k]))|_{y=x_j} > a_j $ for some $ j $ (see Remark~\ref{r.Rwf}).
It is readily verified that for those $ (\vecx,\veca) $ satisfying Assumption~\ref{assu.strict},
either $ \phi_{k,\word}(\vecx,\veca) $ is analytic or $ =+\infty $,
and $ \phi_{k,\word}= \phi_{k',\word'} $ as functions only when $ k=k' $ and $ \word=\word' $.
Hence, there exists an arbitrarily small perturbation of $ (\vecx,\veca) $ that satisfies \ref{enu.lwbd.d} while maintaining \ref{enu.lwbd.a}--\ref{enu.lwbd.c}.

The unique minimizer $ \{F_k\}_k $ may not satisfy \ref{enu.lwbd.e},
so we seek to further modify $ (\vecx,\veca) $.
By Lemma~\ref{l.raterw.minimizer.word.}, the only way to have shared letters in $ \{\wordF(F_k)\}_k $ is $ \wordF(F_{k_1})\cap \wordF(F_{k_2}) = \{j_*\} $, with $ k_1<k_2 $ and $ j_*= \wordF(F_{k_1})_{|\wordF(F_{k_1})|} = \wordF(F_{k_2})_1 $.
In this case we modify  $ (\vecx,\veca) $ by duplicating $ (x_{j_*},a_{j_*}) $ and perturbing accordingly.
Slightly abusing notation, we let $ (x_{j'_*},a_{j'_*}) $ denote the duplicated point, 
let $ \alphabet' := \alphabet\cup\{j'_*\} $, with the convention $ j_*<j'_*<j_*+1 $,
and label post-modification functions and variables by `post'.
We perturb $ (x_{j_*},a_{j_*}) $ and $ (x_{j'_*},a_{j'_*}) $ in such a way
that $ j_* \notin \wordF(F^\text{post}_{k_2}) $ and $ j'_* \notin \wordF(F^\text{post}_{k_1}) $
as depicted in Figure~\ref{f.duplicate.perturb}, 
which is achievable because $ k_1<k_2 $.
As long as the perturbation is small enough, $ \{F^\text{post}_k\}_k $ remains the unique minimizer.
To see why, note that since $ \{F_k\}_k $ was the \emph{unique} minimizer,
the only potential competitor is the following $ \{f_k\}_k $.
Let $ \word^{(k_1)} := (\wordF(F_{k_1})\setminus\{j_*\})\cup\{j_*'\} $,
$ \word^{(k_2)} := (\wordF(F_{k_2})\setminus\{j'_*\})\cup\{j_*\} $,
\begin{align*}
	f_{k_1} 
	&:= 
	\mathrm{argmin} \big\{ \raterwREl{f}{\parab[k_1](t)} :\, \wordF(f) \supset \word^{(k_1)}, \, f(x_j) \leq a_j \text{ for all } j\in\alphabet'\big\},
\\
	f_{k_2} 
	&:= 
	\mathrm{argmin} \big\{ \raterwREl{f}{\parab[k_2](t)} :\, \wordF(f) \supset \word^{(k_2)}, \, f(x_j) \leq a_j \text{ for all } j\in\alphabet'\big\},
\end{align*}
and $ f_k := F^\text{post}_k $ for $ k\neq k_1,k_2 $.
Namely, we attempt to swap the letters $ j_* $ and $ j'_* $ between 
$ F^\text{post}_{k_1} $ and $ F^\text{post}_{k_2} $ under the constraint $ f(x_j) \leq a_j $, for all $ j\in\alphabet' $.
Given that $ k_1<k_2 $, simple geometric considerations show that the last constraint forces either $ \{j_*,j'_*\} \in \wordF(f_{k_1}) $ or $ \{j_*,j'_*\} \in \wordF(f_{k_2}) $ to hold.
In either case, a rewiring argument similar to those in the preceding shows that $ \{f_k\}_k $ underperforms $ \{F^\text{post}_{k}\}_k $.

The unique minimizer may not have already satisfied \ref{enu.lwbd.f}.
Add points to $ (\vecx,\veca) $ as depicted in Figure~\ref{f.addtoisle} to make \ref{enu.lwbd.f} hold.
These added points can be made arbitrarily close to $ \parab[k](t) $,
and doing so controls the change in $ \raterwRel{F_k}{\parab[k](t)} $ within an arbitrarily small amount.
Since $ \{F_k\}_k $ was the \emph{unique} minimizer before adding the points,
it remains the unique minimizer and \ref{enu.lwbd.a}--\ref{enu.lwbd.e} continue to hold
as long as the added points are close enough to $ \parab[k](t) $.

Given the preceding $ (\vecx,\veca) $, we proceed to construct the corresponding $ s_i $ and $ d^\pm_i $.
As always, $ s_i := \frac12(x_i+a_i) $ and $ d_i := \frac12(-x_i+a_i) $.
For $ \delta_3>0 $, let $ d_i^\pm := d_i\pm\delta_3 $, and accordingly $ (x^\pm_i,a^\pm_i):=(x_i\pm\delta_3,a_i\pm\delta_3) $.
For $ \vecrho\in\{\pm\}^m $, let $ \alphabet^{\vecrho}(kk'),\vecUDc^{\vecrho}(\wordkl),\Rwf^{\vecrho}(\wordkl[\word][k][k]),\isleset^{\vecrho}(kk') $
denote the corresponding sets, variables, and functions with $ (\vecx,\veca) \mapsto (\vecx^{\vecrho},\veca^{\vecrho}) $.
By the construction of $ (\vecx,\veca) $,
the following holds for all $ \delta_3>0 $ small enough and all $ \vecrho\in\{\pm\}^m $.
\begin{enumerate}[leftmargin=20pt, label=(\roman*)]
\item \label{enu.lwbd.1}
The points $ (\icvecx,\icveca) $ and $ (\vecs,\vecd^{\vecrho}) $ satisfy Assumptions~\ref{assu.strict} and \ref{assu.lwbd}, and the unique minimizer of \eqref{e.raterw.xa} is given by $ \{\Rwf^{\vecrho}(\wordkl[\word^{(k)}][k][k])\}_k $, for some word $ \word^{(k)} $ that does not depend on $ \vecrho $.
\item \label{enu.lwbd.2}
The sets and variables $ \alphabet^{\vecrho}(kk'),\vecUDc^{\vecrho}(\wordkl),\isleset^{\vecrho}(kk') $ do not depend on $ \vecrho $.
\item \label{enu.lwbd.3}
$ \raterw( (\icvecx,\icveca) \xrightarrow{\scriptscriptstyle t} (\vecx^{\vecrho},\veca^{\vecrho})) < \raterw( g \xrightarrow{\scriptscriptstyle t} f)+2\e $.
\item \label{enu.lwbd.4}
$ \cap_{i=1}^m \{ d^-_i < \hd_N(t,s_i) \leq \hd_N(t,s_i^-) \leq d^+_i \} \subset \{ \dist(\hh_N(t),f) < \e \} $.
\end{enumerate}

\medskip\noindent{}%
\textbf{Step 2: applying the determinantal analysis.}\
Given \ref{enu.lwbd.1}--\ref{enu.lwbd.2}, we apply the up-down iteration and isle factorization with $ (\vecs,\vecd) \mapsto (\vecs,\vecd^{\vecrho}) $.
Doing so gives an expansion of $ \punder(\vecs,\vecd^{\vecrho},N) $.
Apply $ \sumie $ to the expansion to get
\begin{align*}
	\ppin = \Sumie \sum_{n=0}^\infty \frac{1}{n!} \DD_n(\vecs,\vecd^{\vecrho},N)
	=
	\sum_{n=0}^\infty  \Sumie \frac{1}{n!} \DDprnd_n(\vecs,\vecd^{\vecrho},N).
\end{align*}
As before, $ \DDprnd_n $ is obtained from $ \DD_n $ (defined in \eqref{e.DD}) by removing any non-preferred and degenerate terms.
Under  Assumptions~\ref{assu.strict} and \ref{assu.lwbd},
Proposition~\ref{p.term.dominanace}
asserts that $ \gterm_\star := \prod_k \tr( \inddown_{\Ic{k}} \opuci{\word^{(k)}} \inddown_{\Ic{k}} ) $ is the dominant term.
Further, as argued previously in the proof of the upper bound, each $ \DDprnd_n $ contains at most $ n!c^n $ terms, for some $ c=c(\icm,m) $, and each term $ \gterm $ in $ \DDprnd_n $ has $ \norm{\gterm} \geq n $.
Applying Propositions~\ref{p.term.bd} and \ref{p.term.dominanace} gives
\begin{align}
	\label{e.lwbd.expansion}
	\ppin =
	\prod\nolimits_{k} \parityc(\word^{(k)}) \cdot \Sumie \gterm_{\star}(\vecs,\vecd^{\vecrho},N) + \Sumie R(\vecs,\vecd^{\vecrho},N). 
\end{align}
Here $ R(\vecs,\vecd^{\vecrho},N) $ is exponentially subdominant to $ \gterm_{\star}(\vecs,\vecd^{\vecrho},N) $.

We next identify the dominant term among the candidates $ \gterm_{\star}(\vecs,\vecd^{\vecrho},N) $, $ \vecrho\in\{\pm\}^m $.
Recall from Proposition~\ref{p.term.dominanace} that each of them has rate
$ \raterw( (\icvecx,\icveca) \xrightarrow{\scriptscriptstyle t} (\vecx^{\vecrho},\veca^{\vecrho})) $,
which can be written as $ \sum_k \raterwRel{ \Rwf^{\vecrho}( \wordkl[\word^{(k)}][k][k] ) }{ \parab[k] } $.
Let $ \vecUDc(k) := \vecUDc^{\vecrho}(\wordkl[\word^{(k)}][k][k]) $, which is $ \vecrho $-independent by \ref{enu.lwbd.2}.
Varying $ \vecrho $ in the rate $ \raterwRel{ \Rwf^{\vecrho}( \wordkl[\word^{(k)}][k][k] ) }{ \parab[k] } $ amounts to varying the $ d_i $'s.
It is readily checked that the rate increases in $ d_i $ if $ \UDc(k)_i = \up $, and decreases in $ d_i $ if $ \UDc(k)_i = \down $.
Also, note that under Assumption~\ref{assu.lwbd}, the set of words $ \{\word^{(k)}\}_k $ partitions $ \{1,\ldots,m\} $.
Hence the unique dominant term in \eqref{e.lwbd.expansion} is $ \gterm_{\star}(\vecs,\vecd^{\vecrho_\star},N) $, where $ (\vecrho_\star)_i := - $ if $ \UDc(k)_i=\up $ and  $ (\vecrho_\star)_i := + $ if $ \UDc(k)_i=\down $, for the unique $ k $ such that $ \word^{(k)}\ni i $.
Recall that the inclusion-exclusion sum comes with the sign factor $ \prod_{i=1}^m \rho_i $,
and for $ \vecrho_\star $ this sign factor is $ \prod_{i} (\rho_\star)_i = \prod_{k} \parityc(\word^{(k)}) $.
Hence
$
	\lim_{N\to\infty} \frac{1}{N}\log\ppin = - \raterw( (\icvecx,\icveca) \xrightarrow{\scriptscriptstyle t} (\vecx^{\vecrho_\star},\veca^{\vecrho_\star})).
$
Combining the last result with \ref{enu.lwbd.3}--\ref{enu.lwbd.4} and \eqref{e.lwbd.ic} gives the desired result.
\end{proof}

\section{Hydrodynamic large deviations: elementary solutions}
\label{s.matching}

Here we prove Proposition~\ref{p.matching} and establish the relevant properties of elementary solutions.
Then, in the last subsection, we combine the preceding results to conclude \hyperref[t.main]{Main Theorem}.

\subsection{Wedge initial conditions}
\label{s.matching.icm=1}
Fix $ t>0 $, $ (\icx_{1},\ica_{1}) $, and $ (\vecx,\veca)=(x_i,a_i)_{i=1}^m $ that satisfy the discretized Hopf--Lax condition~\eqref{e.HLc.xa}.
Recall $ \Rwf:=\Rwf( \wordkl[12\cdots m][1][1] ) $, the backward Hopf--Lax evolution $ \HLb $, and the elementary solution $ h_\star(\tau) := \HLb_{t-\tau}(\Rwf) $ from Section~\ref{s.results.matching}.

Let $ u_\star := \partial_x h_\star $.
The evolution of $ u_\star $ can be described by a geometric transformation applied to the graph of $ u_\star(t) $.
To begin, the graph of $ u_\star(t) = u_\star(t,\Cdot) = \partial_x \rwfsymb $ consists
of segments that are flat or coincide with $ \partial_x \parab[1](t) $ and finitely many jumps.
We leave $ u $ undefined at the jumps.
The \tdef{complete graph} of $ u_\star(t) $, denoted \tdef{$ \CG(u_\star(t)) $}, 
is the curve in $ \R^2=\{(x,u)\} $ formed by the graph of $ u_\star(t) $ 
augmented by vertical line segments at the jumps of $ u_\star(t) $.
By the method of characteristics, $\CG(u_\star(s)) $ can be obtain from $ \CG(u_\star(t)) $ by a shear-and-cut transformation;
see \cite[Ch\,2]{whitham11}.
First apply the shear $ (x,u)\mapsto (x+(t-\tau)u,u) $ to $ \CG(u_\star(t)) $.
The sheared graph may have overhangs, which should be rectified by vertical cuts that conserve area.
See Figure~\ref{f.sheerandcut} for an illustration.
The shear-and-cut transformation satisfies the semigroup property.
Namely, for $ \tau<\tau'\in[0,t) $, applying the transformation to $ u_\star(t) \mapsto u_\star(\tau') $ and then to $ u_\star(\tau')\mapsto u_\star(\tau) $ 
produces the same result as applying the transformation to $ u_\star(t) \mapsto u_\star(\tau) $.

\begin{figure}[h]
\centering
\begin{minipage}[t]{.327\linewidth}
\frame{\includegraphics[width=\linewidth]{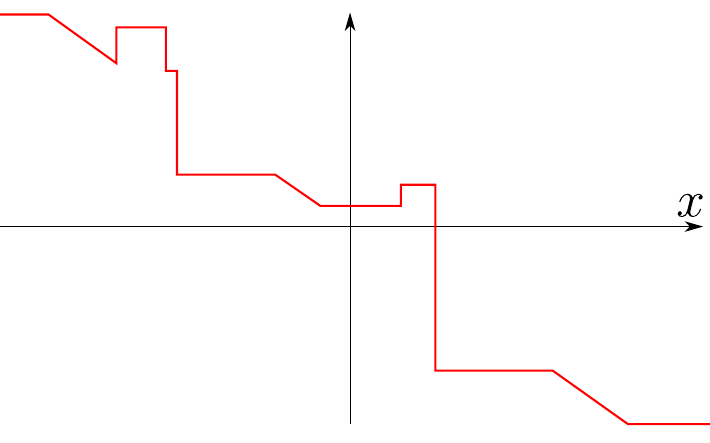}}
\end{minipage}
\hfill
\begin{minipage}[t]{.327\linewidth}
\frame{\includegraphics[width=\linewidth]{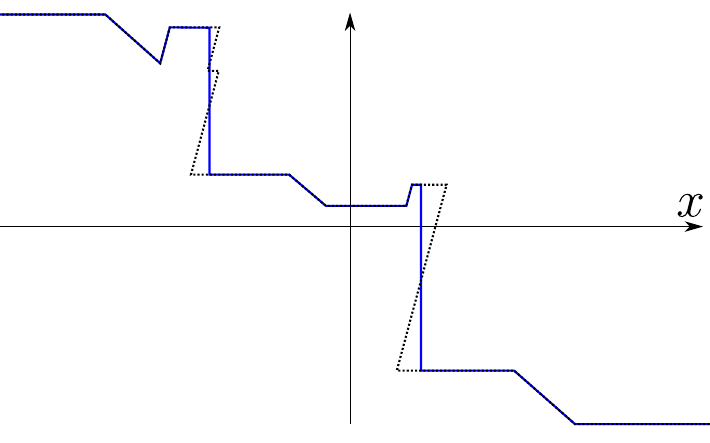}}
\end{minipage}
\hfill
\begin{minipage}[t]{.327\linewidth}
\frame{\includegraphics[width=\linewidth]{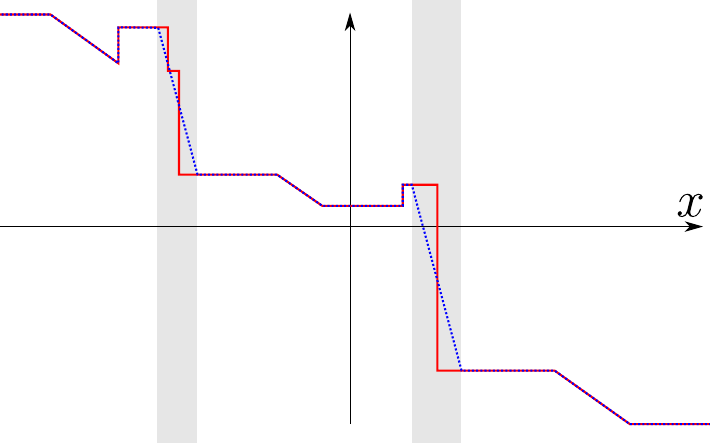}}
\end{minipage}
\caption{An realization of the sheer-and-cut procedure. 
The red curves in the left and right figures are $ \CG(u_\star(t)) $.
In the middle figure, the dashed curve is obtained by applying the (backward) sheer to $ \CG(u_\star(t)) $, and the blue curve is $ \CG(u_\star(\tau)) $.
The right figure concerns the proof of Proposition~\ref{p.raterw.sg}: The blue dashed curve is $ \CG(v) $ and the gray zones are $ U_i\times\R $.}
\label{f.sheerandcut}
\vspace{-10pt}
\end{figure}

Let us summarize some properties of $ u_\star $.
For $ \tau\in(0,t) $ and $ |x-\icx_{1}|\leq \tau $, 
$ \CG(u_\star(\tau)) $ consists of line segments of slopes $ 1/(t-\tau) $ or $ 0 $, 
vertical line segments, and segments that coincide with $ \partial_x \parab[1](\tau) $,
and for $ |x-\icx_{1}| > \tau $, $ u_\star(\tau,x)=\partial_x \parab[1](\tau,x) $.
For any $ \tau\in(0,t) $, the function $ u_\star(\tau) $ has \emph{downward jumps} only.
Namely, at every jump the left limit is larger than the right limit.
Every weak solution of the backward Burgers equation $ \partial_\tau u = -\frac12\partial_x (u^2) $ is also a weak solution of the forward equation \eqref{e.burgers}.
Let $ \shock_1(\tau)<\ldots<\shock_{n(\tau)}(\tau)\in( -\tau+\icx_{1},\icx_{1}+\tau) $ label the locations of these jumps.
At the terminal time $ t $, the $ \shock_i(t) $'s are exactly those $ x_i $'s where $ \Rwf $ has a $ \wedge $ kink (concave kink);
going backward in time, these jumps merge but never branch, so $ n(\tau) $ decreases as $ \tau $ decreases.
Each trajectory is uniformly Lipschitz, and is analytic except when merges happen, which happen only finitely many times in $ [0,t] $.

Consider a piecewise analytic weak solution $ u $ of \eqref{e.burgers} with finitely many piecewise jumps, and assume the trajectories of the jumps are piecewise analytic. 
It is readily verified that $ \partial_\tau \rateber(u) - \partial_x \rateberc(u) $ is supported around the jumps. Further using the Rankine–Hugoniot condition shows that the quantity is positive if and only if the jump is downward.
For $ u_\star $ this condition always holds in $ 0<\tau<t $.
Namely, all jumps of $ u_\star(\tau) $ in $ 0<\tau<t $ correspond to antishocks/non-entropic shocks.

\begin{proof}[Proof of Proposition~\ref{p.matching} for wedge initial conditions]
Given the preceding discussion, the proof follows by calculus, as follows.
Recall (see \eqref{e.raterwrel} and \eqref{e.raterw.gf}) that 
$ \raterw( \parab[1](0) \xrightarrow{\scriptscriptstyle \tau } h_\star(\tau) ) = \raterwRel{ h_\star(\tau) }{ \parab[1](\tau) } = \int_{\R} \d x \, ( \rateber(u_\star(\tau)) - \rateber(\partial_x \parab[1](\tau)) ) $.
Differentiate this quantity in $ \tau $ to get
\begin{align*}
	\partial_\tau \raterw( \parab[1](0) \xrightarrow{\tau } h_\star(\tau) ) 
	=
	\int_{-t+\icx_{1}}^{\icx_{1}+t} \d x \, \partial_\tau  \rateber(u_\star(\tau,x))
	-
	\int_{-t+\icx_{1}}^{\icx_{1}+t} \d x \, \partial_\tau  \rateber(\partial_x \parab[1](\tau,x)),
\end{align*}
where the first integrand is interpreted in the weak sense.
For the second integrand, since $ \partial_x \parab[1](t,x) $ is a continuous and piecewise smooth solution of \eqref{e.burgers},
$ \partial_\tau \rateber(\partial_x \parab[1](\tau,x)) = \partial_x \rateberc(\partial_x \parab[1](\tau,x)) $.
This identity turns the resulting integral into
$ \rateberc(\partial_x \parab[1](\tau,x))|_{x=-t+\icx_{1}}^{x=\icx_{1}+t} $.
Further using $ \partial_x\parab[1](\tau,\icx_1\pm t) = u_\star(\tau,\icx_1\pm t) $ turns the last expression into
$
	\int_{-t+\icx_{1}}^{\icx_{1}+t} \d x \, \partial_x \rateberc(u_\star),
$
interpreted in the weak sense.
Hence
$
	\partial_\tau 
	\raterw( \parab[1](0) \xrightarrow{ \scriptscriptstyle \tau } h_\star(\tau) ) 
	=
	\int_{\R} \d x \, ( \partial_\tau \rateber(u_\star) + \partial_x \rateberc(u_\star) ).
$
As mentioned previously, the integrand is non-negative.
Integrating in $ \tau $ concludes the desired result.
\end{proof}

\subsection{A subadditive property and an additive property}

First, by \hyperref[t.rw]{Fixed-time Theorem} and the submultiplicativity of Markov transition probabilities,
\begin{align}
	\label{e.sg.<=}
	\raterw\big( (\icx_1,\ica_1) \xrightarrow{t} (\vecx,\veca) \big)
	=
	\raterw\big( \parab[1](0) \xrightarrow{t} h_\star(t) \big)
	\leq
	\raterw\big( \parab[1](0) \xrightarrow{\tau}f \big)
	+
	\raterw\big( f \xrightarrow{t-\tau} (\vecx,\veca) \big).
\end{align}
%
Set $ b_i(\tau):= h_\star(\tau,\shock_i(\tau)) $.

\begin{prop}
\label{p.raterw.sg}
The inequality in \eqref{e.sg.<=} becomes an equality when $ f=h_\star(\tau) $.
Further, there exist $ F_1,\ldots,F_{n(\tau)} \in \Lip $ such that $ \max\{ F_1,\ldots, F_{n(\tau)},\HLf_{t-\tau}(h_\star(\tau)) \} = h_\star(t) $, and
\begin{align*}
	\raterw\big( h_\star(\tau) \xrightarrow{t-\tau} (\vecx,\veca) \big)  
	= 
	\raterw\big( (\shock_i(\tau),b_i(\tau))_{i=1}^{n(\tau)} \xrightarrow{t-\tau} (\vecx,\veca) \big)
	=
	\sum_{i=1}^{n(\tau)} \raterw\big( \parab[\shock_i(\tau),b_i(\tau)](0) \xrightarrow{t-\tau} F_i \big).
\end{align*}
\end{prop}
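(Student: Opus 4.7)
The plan is to exploit the characteristic structure of $h_\star(\tau)=\HLb_{t-\tau}(\Rwf)$ to reduce the proposition to the already-established single-wedge case of Proposition~\ref{p.matching}. First, I will use the shear-and-cut description of $u_\star$ to partition the terminal letters: tracing each antishock $\shock_i(\tau)$ forward in time under the characteristic flow identifies a cluster $\word^{(i)}\subset\{1,\ldots,m\}$ consisting of those $j$ for which the $\wedge$-kink of $\Rwf$ at $(x_j,a_j)$ is absorbed into $\shock_i(\tau)$ upon backward evolution. The cluster decomposition $\word^{(1)}\sqcup\cdots\sqcup\word^{(n(\tau))}=\{1,\ldots,m\}$ is well-defined because antishock trajectories never branch going backward and merge only finitely often. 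I then define $F_i:=\Rwf(\wordkl[\word^{(i)}][k_i][k_i])$ with $(\icx_{k_i},\ica_{k_i}):=(\shock_i(\tau),b_i(\tau))$, the unique single-wedge minimizer from Section~\ref{s.results.notation}, and verify the max identity $h_\star(t)=\max\{F_1,\ldots,F_{n(\tau)},\HLf_{t-\tau}(h_\star(\tau))\}$ pointwise: any $y$ either lies in the forward-characteristic image of a smooth portion of $h_\star(\tau)$, where $\HLf_{t-\tau}(h_\star(\tau))(y)=\Rwf(y)$, or it lies in the backward-characteristic cone of some antishock $\shock_i(\tau)$, where $F_i(y)=\Rwf(y)$.

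Next I will establish the last equality in the proposition. The inequality $\leq$ follows by substituting $\{F_i\}$ into the definition~\eqref{e.raterw.xa}: the max identity just proved gives $\max_i F_i\leq h_\star(t)$, hence $F_i(x_j)\leq a_j$ for all $i,j$, with equality at $j\in\word^{(i)}$, so the max constraint is met. For $\geq$, any competitor $\{f_i\}_{i=1}^{n(\tau)}$ in \eqref{e.raterw.xa} can be rewired cluster-by-cluster (via the max/min surgery used in Example~\ref{ex.uu} and in the proof of Proposition~\ref{p.nondiag}) into a family $\{\til f_i\}$ with $\wordF(\til f_i)\supset\word^{(i)}$ and with total Bernoulli-entropy cost no larger than the original; each $\til f_i$ is then bounded below by the unique single-wedge minimizer $F_i$.

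The middle equality will be proved by showing the two directions of \eqref{e.raterw.gxa}. The $\leq$ direction is immediate, as $(\icx_k,\ica_k)=(\shock_k(\tau),b_k(\tau))$ is an admissible discretization of $g=h_\star(\tau)$. For $\geq$, given any discretization $\icx'_1<\cdots<\icx'_{\icm}$ with $\ica'_k:=h_\star(\tau,\icx'_k)$, I group the $\icx'_k$ by the nearest antishock and use the $1$-Lipschitz bound $\ica'_k=h_\star(\tau,\icx'_k)\leq b_i(\tau)-|\icx'_k-\shock_i(\tau)|+2(\icx'_k-\shock_i(\tau))_+\vee\cdots$ together with the fact that $h_\star(\tau)\geq\parab[\shock_i(\tau),b_i(\tau)](0)\vee\parab[1](\tau)$ to show the pointwise comparison $\parab[\icx'_k,\ica'_k](0)\leq\parab[\shock_i(\tau),b_i(\tau)](0)$ in the relevant region; a rewiring argument then forces any competitor configuration for $(\icx',\ica')$ to have cost at least the antishock-based one.

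Finally, the first equality will follow by assembling the pieces: Section~\ref{s.matching.icm=1} established $\raterw(\parab[1](0)\xrightarrow{\tau}h_\star(\tau))=\rateJV|_{[0,\tau]}(\partial_x h_\star)$ via the calculus on the wedge-initial elementary solution, and the same calculus extended to each single-wedge layer over $[\tau,t]$ yields $\sum_i\raterw(\parab[\shock_i(\tau),b_i(\tau)](0)\xrightarrow{t-\tau}F_i)=\rateJV|_{[\tau,t]}(\partial_x h_\star)$; additivity of $\rateJV$ over $[0,\tau]\cup[\tau,t]$ and the global identity $\raterw(\parab[1](0)\xrightarrow{t}(\vecx,\veca))=\rateJV|_{[0,t]}(\partial_x h_\star)$ (Proposition~\ref{p.matching} for the wedge case) close the loop. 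The main obstacle is the $\geq$ direction of the middle equality, since $h_\star(\tau)$ is a continuum initial condition and finer discretizations \emph{a priori} offer more freedom in \eqref{e.raterw.gxa}; the crux is the pointwise wedge-domination in the 1-Lipschitz/Hopf--Lax estimate above, which says that no off-antishock wedge can beat the antishock wedge in its characteristic cone, so the min over discretizations is attained exactly at the antishock configuration.
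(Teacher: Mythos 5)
Your architecture is genuinely different from the paper's, and the difference is exactly where the trouble lies. The paper never verifies the middle or last equalities by a direct variational argument. It proves the single exact identity \eqref{e.raterw.sg.final},
$
	\raterw( \parab[1](0) \xrightarrow{t} h_\star(t) )
	=
	\raterw(  \parab[1](0) \xrightarrow{\tau} h_\star(\tau) )
	+
	\sum_{i} \raterwREl{ F_i }{ \parab[\shock_i(\tau),b_i(\tau)](t-\tau) },
$
by introducing $ g:=\HLf_{t-\tau}(\HLb_{t-\tau}(h_\star(t))) $, telescoping $ \raterwRel{h_\star(t)}{\parab[1](t)}=\raterwRel{h_\star(t)}{g}+\raterwRel{g}{\parab[1](t)} $, and analyzing the two pieces through the shear-and-cut picture (area conservation of the cuts gives $ h_\star(t)=g $ off the intervals $ U_i $, and a change of variables identifies $ \raterwRel{g}{\parab[1](t)} $ with $ \raterwRel{h_\star(\tau)}{\parab[1](\tau)} $). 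Once this identity is in hand, all three equalities follow from the subadditivity \eqref{e.sg.<=} by a sandwich: the left side equals the first term plus a quantity that is simultaneously an upper bound for the minimum and, by \eqref{e.sg.<=}, a lower bound for it. No optimality of any discretization or of any letter-to-wedge assignment ever has to be checked.

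Two steps of your direct route would fail. First, for the $\geq$ direction of the middle equality your key pointwise claim $ \parab[\icx'_k,\ica'_k](0)\leq\parab[\shock_i(\tau),b_i(\tau)](0) $ is backwards: since $ h_\star(\tau) $ is $1$-Lipschitz and $ h_\star(\tau,\shock_i(\tau))=b_i(\tau) $, every discretization point satisfies $ \ica'_k=h_\star(\tau,\icx'_k)\geq b_i(\tau)-|\icx'_k-\shock_i(\tau)| $, so the wedge centered at $ \icx'_k $ pokes \emph{above} the antishock wedge near $ \icx'_k $ (equality only when $ h_\star(\tau) $ has slope $ \pm1 $ between the two points). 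Finer discretizations genuinely enlarge the admissible set in \eqref{e.raterw.gxa}, and ruling out that this lowers the rate is precisely the hard content; the Lipschitz bound cannot deliver it. Second, for the last equality you assert that rewiring forces the optimal assignment of terminal letters to the wedges $ (\shock_i(\tau),b_i(\tau)) $ to be the characteristic-cluster assignment $ \word^{(i)} $; the rewiring lemmas only reduce a competitor to \emph{some} assignment, and identifying the minimizing one is again nontrivial. (Your closing $ \rateJV $-additivity computation is in fact the germ of a correct alternative proof of \eqref{e.raterw.sg.final}, from which the sandwich would give everything; but as written it presupposes the shock-survival/max-decomposition on $ [\tau,t] $, which the paper establishes only later, in Proposition~\ref{p.shock.survive}, \emph{using} the present proposition.)
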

\begin{proof}
The key is to consider $ g := \HLf_{t-\tau}(\HLb_{t-\tau}( h_\star(t))) $.
Throughout the proof, $ \tau $ and $ t $ are fixed, so $ g=g(x)\in\Lip $.
Telescope the quantity
$
	\raterw( \parab[1](0) \xrightarrow{\scriptscriptstyle t} h_\star(t) )
	=
	\raterwRel{ h_\star(t) }{ \parab[1](t) }
$
into
\begin{align}
	\label{e.raterw.sg.decomp}
	\raterw\big( \parab[1](0) \xrightarrow{t} h_\star(t) \big)
	=
	\raterwREl{ h_\star(t) }{ g }
	+
	\raterwREl{ g }{\parab[1](t) },
\end{align}
where $ \raterwRel{ h_\star(t) }{ g } $ is defined the same way as in \eqref{e.raterwrel}, but with $ g $ replacing $ \parab[\icx,\ica](t) $.

We begin by preparing some notation and tools.
We call $ (x,u) \mapsto (x+(t-\tau)u,u) $ the \tdef{backward shear}, as it corresponds to the backward evolution,
and call $ (x,u) \mapsto (x-(t-\tau)u,u) $ the \tdef{forward shear}.
For given $ f_1,f_2\in \Lip $ such that $ f_1(x)=f_2(x) $ for all large enough $ |x| $,
we have
\begin{align}
	\label{e.raterw.sg.formula}
	\raterwREl{f_1}{f_2}
	:=
	\int_{\R} \d x \, \big( \rateber( \partial_x f_1 ) - \rateber( \partial_x f_2 ) \big)
	=
	\int_{\Omega(u_1/\!\!/ u_2)} \d x \d u \ \rateber'(u).
\end{align}
Here $ \Omega(u_1 /\!\!/ u_2) \subset \R^2 = \{(x,u)\} $ is the region bounded by $ \CG(u_1) $ and $ \CG(u_2) $,
signed according to whether $ u_1 $ is above or below $ u_2 $.

Set $ v:= \partial_x g $.
Let us establish some properties of $ u_\star(t) $, $ u_\star(\tau) $, and $ v $. First,
\begin{align}
	\label{e.raterw.sg.cg1}
	\CG(u_\star(t))
	&\mapstochar\xrightarrow{\text{ backward shear } }
	(\ldots) 
	\mapstochar\xrightarrow{\text{ cuts } }
	\CG(u_\star(\tau))
	\mapstochar\xrightarrow{\text{ forward shear } }
	\CG(v),
\\
	\label{e.raterw.sg.cg2}
	\CG(\partial_x \parab[1](t))
	&\mapstochar\xrightarrow{\text{ backward shear } }
	\CG(\partial_x \parab[1](\tau))
	\mapstochar\xrightarrow{\text{ forward shear } }
	\CG(\partial_x \parab[1](t)).
\end{align}
The first two steps in \eqref{e.raterw.sg.cg1} make up the cut-and-sheer procedure described in the second paragraph of this subsection.
Next, note that $ \CG(u_\star(\tau)) $ consists of lines of slopes $ -1/(t-\tau) $, flat lines, vertical lines that correspond to downward jumps of $ u_\star(\tau) $, and segments that coincide with $ \partial_x \parab[1](\tau) $.
This being the case, upon the forward sheer, there is no need to perform the cuts.
Likewise, it is readily checked that there is no need for cuts in either step in \eqref{e.raterw.sg.cg2}.
Moving on, we turn to the region of difference $ \Omega(u_\star(t)/\!\!/ v) $ between $ u_\star(t) $ and $ v $.
Refer to \eqref{e.raterw.sg.cg1}.
Since the forward and backward shears cancel each other, the differences between $ \CG(u_\star(t)) $ and $ \CG(v) $ all come from the vertical lines introduced in the cuts in \eqref{e.raterw.sg.cg1}.
Within $ \CG(u_\star(\tau)) $, these vertical cuts/lines are located at $ \shock_1(\tau),\ldots,\shock_{n(\tau)}(\tau) $, and the vertical lines turn into lines of slope $ -1/(t-\tau) $ upon the forward shear.
Therefore, we have the \emph{disjoint} intervals $ U_i := (\shock_i(\tau)-(t-\tau)u_\star(\tau,\shock_i(\tau)^-),\shock_i(\tau)-(t-\tau)u_\star(\tau,\shock_i(\tau)^+)) $, $ i=1,\ldots,n(\tau) $, such that $ \Omega(u_\star(t)/\!\!/ v) \subset \cup_{i=1}^n (\bar{U_i}\times\R) $ and $ v(x)|_{x\in U_i} = -x/(t-\tau)+ $constant; see Figure~\ref{f.sheerandcut} for an illustration.

Next, we leverage the final conclusion of the last paragraph into properties of $ h_\star(t) $ and $ g $.
We claim that
\begin{align}
	\label{e.raterw.sg.misc}
	h_\star(t) \geq g,
	\qquad
	g|_{U_i} = \parab[\shock_i(\tau),b_i(\tau)](t-\tau)|_{U_i},
	\qquad
	h(t)=g \text{ off } (U_1 \cup \ldots \cup U_{n(\tau)}),
\end{align}
where $ b_i(\tau) := h_\star(\tau,\shock_i(\tau)) $.
The first claim $ h_\star(t) \geq g=\HLf_{t-\tau}(\HLb_{t-\tau}(h_\star(t))) $ is readily checked from \eqref{e.HLf} and \eqref{e.HLb}.
The property $ v(x)|_{x\in U_i} = -x/(t-\tau)+ $constant together with $ |v(x)|_{x\in U_i}<1 $ gives $ g(x)|_{U_i} = \parab[\shock_i(\tau),\beta_i](t-\tau,x) $ for some $ \beta_i\in\R $.
Undoing the forward shear in \eqref{e.raterw.sg.cg1} gives $ \HLb_{t-\tau}(g) = h_\star(t-\tau) $, thereby 
$ \beta_i= \HLb_{t-\tau}(\parab[\shock_i(\tau),\beta_i](\tau))|_{x=\shock_i(\tau)}=h_\star(\tau,\shock_i(\tau)) = b_i(\tau) $.
Move on. Recall that those cuts in \eqref{e.raterw.sg.cg1} that transform $ (\ldots) $ into $ \CG(u_\star(t-\tau)) $ must conserve areas,
and note that the forward shear turns $ (\ldots) $ and $ \CG(u_\star(t-\tau)) $ respectively into $ \CG(u_\star(t)) $ and $ \CG(v) $.
Hence $ \int_{\Omega(u_\star(t)/\!\!/ v)\cap U_i} \d x \d u = \int_{U_i} \d x (u_\star(t,x) - v(x)) = 0 $.
This property implies $ h_\star(t) = g $ off $ U_1 \cup \ldots \cup U_{n(\tau)} $.

Return to \eqref{e.raterw.sg.decomp}.
Let $ F_i := h_\star(t)|_{x\in U_i} + \parab[\shock_i(\tau),b_i(\tau)](t-\tau)|_{x\notin U_i} $.
By \eqref{e.raterw.sg.misc}, $ \max\{F_1,\ldots,F_{n(\tau)}, g\} = h_\star(t) $, and
\begin{align*}
	\raterwRel{ h_\star(t) }{ g } 
	= 
	\sum_{i=1}^{n(\tau)} \int_{U_i} \d x \, \big( \rateber(\partial_x h_\star(t)) - \rateber(\partial_x g) \big) 
	=
	\sum_{i=1}^{n(\tau)} \raterwREl{ F_i }{ \parab[\shock_i(\tau),b_i(\tau)](t-\tau) }.
\end{align*} 
Next, use \eqref{e.raterw.sg.formula} to express the last term in \eqref{e.raterw.sg.decomp} as $ \int_{\Omega(v/\!\!/ \partial_x\parab[1](t))} \d x \d u \, \rateber'(u) $.
Within this integral, apply the change of variables $ (x,u)\mapsto (x+(t-\tau)u,u) $.
Doing so does not change the integrand $ \partial_u \rateber $ (because it depends only on $ u $)
but turns the integration domain into $ \Omega(u_\star(\tau) /\!\!/ \partial_x\parab[1](\tau)) $
because of \eqref{e.raterw.sg.cg1}--\eqref{e.raterw.sg.cg2}.
The resulting integral is recognized as 
$ \raterwRel{h_\star(\tau)}{\parab[1](\tau)} = \raterw(  \parab[1](0) \xrightarrow{\scriptscriptstyle \tau} h_\star(\tau) ) $. 
Using the preceding results on the right side of \eqref{e.raterw.sg.decomp} gives 
\begin{align}
	\label{e.raterw.sg.final}
	\raterw\big( \parab[1](0) \xrightarrow{t} h_\star(t) \big)
	=
	\raterw\big(  \parab[1](0) \xrightarrow{\tau} h_\star(\tau) \big)
	+
	\sum\nolimits_{i=1}^{n(\tau)} \raterwREl{ F_i }{ \parab[\shock_i(\tau),b_i(\tau)](t-\tau) }.
\end{align}
By \eqref{e.raterw.sg.misc}, the last term in \eqref{e.raterw.sg.final} is a candidate of a minimizer of 
$ \raterw( h_\star(\tau) \xrightarrow{\scriptscriptstyle t-\tau} (\vecx,\veca) ) $.
By \eqref{e.sg.<=}, the left side of \eqref{e.raterw.sg.final}
is $ \leq (  \parab[1](0) \xrightarrow{\scriptscriptstyle \tau} h_\star(\tau) ) + \raterw( h_\star(\tau) \xrightarrow{\scriptscriptstyle t-\tau} (\vecx,\veca) ) $,
so the last term in \eqref{e.raterw.sg.final} is a minimizer. The desired result follows.
\end{proof}

\subsection{General initial conditions}
\label{s.matching.general}

Recall $ \ldq $ from Section~\ref{s.results.matching}.
Fix any $ g\in\ldq $
and $ (\vecx,\veca)=(x_i,a_i)_{i=1}^{m} $ with $ (x_i,a_i)\in\hyp(g)\setminus\hyp(\HLb_{t}(g))^\circ $.
Let $ (\icx_{k},F_{k})_{k=1}^{\icm} $ be a minimizer of $ \raterw(g\xrightarrow{\scriptscriptstyle t} (\vecx,\veca)) $ in \eqref{e.raterw.gxa}.
Consider 
\begin{align}
	\label{e.skeleton.singlelayer}
	h_\star(\tau,x) := \max\big\{ h_{\star 1}(\tau,x),\ldots,h_{\star\icm}(\tau,x), \HLf_{\tau}(g)(x) \big\},
	\
	h_{\star k}(\tau) := \HLb_{t-\tau}(F_{k}),
	\quad
	\tau\in[0,t].
\end{align}
Recall from Section~\ref{s.results.notation} that $ F_k = \Rwf( \wordkl[\wordF(F_k)][k][k] ) $.

Let us verify that $ h_\star(\tau)\in\ldq $.
First, each $ \partial_x h_{\star k}(\tau) $ evolves according to the description in Section~\ref{s.matching.icm=1}.
Let $ \shock_{k1}(\tau)<\ldots<\shock_{kn_{k}(\tau)}(\tau) $  be the shocks.
Off these shocks, the function $ h_{\star k} $ solves \eqref{e.intburgers} classically.
From these observations, we see that there exists a finite partition of $ [0,t]\times\R $ into regions with piecewise analytic boundaries, such that, within each region, the function $ h_{\star k}(\tau,x) $ is equal to one of the following
\begin{align}
	\label{e.skeleton.form}
	\parab[\icx,\ica](\tau+\gamma,x),
	\quad
	\ica'-\parab(t-\tau,x-\icx'),
	\quad
	\alpha x - \tfrac12(1-\alpha^2)\tau + \beta,
	\quad
	\icx,\ica,\icx',\ica',\beta\in\R,
	\
	\gamma\geq 0,
	\
	\alpha\in[-1,1],
\end{align}
and $ \ica'-\parab(t-\tau,x-\icx') $ only shows up in a bounded region.
Given that $ g \in \ldq $, similar considerations show that $ (\HLf_{\tau}(g))(x) $ enjoys the same property.
Therefore, $ h_\star(\tau)\in\ldq $ for all $ \tau\in[0,t] $.

Let us verify that $ h_\star $ solves \eqref{e.intburgers} almost surely.
The intersection of finitely many functions of the form \eqref{e.skeleton.form}
makes up finite many piecewise analytic curves in $ [0,t]\times\R $.
Therefore, off a finite number of piecewise analytic curves, 
the function $ h_\star(\tau,x) $ coincides with a function of the form \eqref{e.skeleton.form},
and hence solves \eqref{e.intburgers} classically.

Next we show that the shocks of each $ \partial_x h_{\star k} $ survive the maximum in \eqref{e.skeleton.singlelayer}.
This is the major step toward proving Proposition~\ref{p.matching}.
Let $ \shock_{k1}(\tau)<\ldots<\shock_{kn_{k}(\tau)}(\tau) $ denote the locations of the shocks of $ \partial_x h_{\star k}(\tau) $.
Let us prepare some notation and tools.
Set $ \ica_{k} := g(\icx_{k}) $ and $ \parab[k](t) := \parab[\icx_{k},\ica_{k}](t) $ as always, and consider
\begin{align}
	\label{e.raterw.1}
	\raterw\big( (\icx,\ica) \xrightarrow{t} (x_j,a_j)_{j\in\word}|_{\star} \big)
	:=
	\min\big\{ 
		\raterwRel{ f }{ \parab[\icx,\ica](t) }
		:
		\wordF(f) \supset \word, \ f(x_j) \leq a_j, j=1,\ldots,m
	\big\},
\end{align}
where the $ |_\star $ means to satisfy $ f(x_j) \leq a_j $ for all $ j $.
The analog of \eqref{e.sg.<=} reads
\begin{align}
	\tag{\ref*{e.sg.<=}'}
	\label{e.sg.<=.}
	\raterw\big( (\icx_k,\ica_k) \xrightarrow{t} (x_j,a_j)_{j\in\word}|_{\star} \big)
	\leq
	\raterw\big( \parab[1](0) \xrightarrow{\tau}f \big)
	+
	\raterw\big( f \xrightarrow{t-\tau} (x_j,a_j)_{j\in\word}|_{\star} \big).
\end{align}
Next, set $ b_{ki}(\tau) := h_\star(\tau,\shock_{ki}(\tau)) $.
Applying Proposition~\ref{p.raterw.sg} with $ (\vecx,\veca) \mapsto (x_i,a_i)_{i\in\wordF(F_k)} $ and $ (\icx_{1},\ica_{1}) \mapsto (\icx_{k},\ica_k) $ gives some $ F_{k1},\ldots,F_{kn(\tau)}\in\Lip $ such that $ \max\{ F_{k1},\ldots,F_{kn(\tau)},\HLf_{t-\tau}(h_{\star k}(\tau))\} = F_k $ and
\begin{align}
	\label{e.raterw.sg.icm>1}
	\raterw\big( \parab[k](0) \xrightarrow{t} F_k \big)
	=
	\raterw\big( \parab[k](0)  \xrightarrow{\tau} h_{\star k}(\tau) \big)
	+
	\sum_{i=1}^{n_k(\tau)}
	\raterw\big( (\shock_{ki}(\tau),b_{ki}(\tau)) \xrightarrow{t-\tau} F_{ki} \big).
\end{align}
Being a minimizer of $ \raterw(g\xrightarrow{\scriptscriptstyle t} (\vecx,\veca)) $, the function $ F_k $ must satisfy $ F_k(x_j) \leq a_j $, for all $ j $.
Hence, for all $ k $ and $ i $,
\begin{align}
	\label{p.shock.survive.Fki}
	F_{ki}(x_j) \leq a_j,\qquad j=1,\ldots,m.
\end{align}

\begin{prop}
\label{p.shock.survive}
For all $ \tau\in(0,t) $, $ k_0\in\{1,\ldots,\icm\} $, and $ i_0\in\{1,\ldots,n_{k_0}(\tau)\} $,
\begin{align*}
	h_{\star k_0}(\tau,\shock_{k_0i_0}(\tau))
	=:
	b_{k_0i_0}(\tau)
	> 
	\max\big\{ \HLf_{\tau}(g)(x), \  h_{\star k}(\tau,x): \  k\neq k_0 \big\} \big|_{x=\shock_{k_0i_0}(\tau)}.
\end{align*}
\end{prop}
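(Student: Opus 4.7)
The strategy is by contradiction, leveraging the minimality of the chosen $\{F_k\}_k$ together with the additive decomposition~\eqref{e.raterw.sg.icm>1} and the subadditivity~\eqref{e.sg.<=.}. Write $x_* := \shock_{k_0 i_0}(\tau)$ and $W := \wordF(F_{k_0 i_0}) \subset \wordF(F_{k_0})$; since $x_*$ is a genuine downward jump of $\partial_x h_{\star k_0}(\tau)$, $W$ is nonempty. Suppose, for contradiction, that either $h_{\star k_1}(\tau, x_*) \geq b_{k_0 i_0}(\tau)$ for some $k_1 \neq k_0$, or $\HLf_\tau(g)(x_*) \geq b_{k_0 i_0}(\tau)$. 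The two cases are parallel; we outline only the first, viewing $\HLf_\tau(g)$ as a degenerate analogue of $h_{\star k_1}(\tau)$ in the second case.

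The plan is to produce a competitor collection $\{\hat F_k\}_k$ minimizing the same problem $\raterw(g \xrightarrow{t}(\vecx,\veca))$ whose total rate is no larger, and strictly smaller in at least one constituent. The construction re-routes the letters $W$ from $F_{k_0}$ to $F_{k_1}$: let $\hat F_{k_0}$ be a minimizer of the problem in \eqref{e.raterw.1} with letter set $\wordF(F_{k_0}) \setminus W$ (and initial wedge $\parab[k_0](0)$), let $\hat F_{k_1}$ be a minimizer with letter set $\wordF(F_{k_1}) \cup W$ (and initial wedge $\parab[k_1](0)$), and set $\hat F_k := F_k$ for $k \neq k_0, k_1$. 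The $\hat F_{k_1}$ one builds explicitly by applying \eqref{e.sg.<=.} with intermediate time $\tau$, using $h_{\star k_1}(\tau)$ as the intermediate profile and then, from there, a variant of the $F_{k_0 i_0}$ construction rooted at $(x_*, h_{\star k_1}(\tau, x_*))$ rather than $(x_*, b_{k_0 i_0}(\tau))$.

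To compare costs, apply the additive decomposition \eqref{e.raterw.sg.icm>1} to both $F_{k_0}$ and $F_{k_1}$: the cost of $F_{k_0}$ contains the term $\raterw((x_*, b_{k_0 i_0}(\tau)) \xrightarrow{t-\tau} F_{k_0 i_0})$, which is precisely the amount saved by $\hat F_{k_0}$ relative to $F_{k_0}$ (using \eqref{p.shock.survive.Fki} and that a legal $\hat F_{k_0}$ can simply truncate the $i_0$-th shock's contribution). Meanwhile the subadditivity \eqref{e.sg.<=.} bounds the cost of $\hat F_{k_1}$ by
\[
	\raterw(\parab[k_1](0) \xrightarrow{\tau} h_{\star k_1}(\tau)) + \raterw(h_{\star k_1}(\tau) \xrightarrow{t-\tau} \wordF(F_{k_1}) \cup W|_\star),
\]
the second summand of which is in turn at most $\raterw(\parab[k_1](0) \xrightarrow{t} F_{k_1}) - \raterw(\parab[k_1](0) \xrightarrow{\tau} h_{\star k_1}(\tau)) + \raterw((x_*, h_{\star k_1}(\tau, x_*)) \xrightarrow{t-\tau} F_{k_0 i_0})$, by again decomposing through $h_{\star k_1}(\tau)$ and using monotonicity of the rate in the initial height (starting from a higher level is cheaper). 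The assumed inequality $h_{\star k_1}(\tau, x_*) \geq b_{k_0 i_0}(\tau)$ makes the last rate no larger than $\raterw((x_*, b_{k_0 i_0}(\tau)) \xrightarrow{t-\tau} F_{k_0 i_0})$, so the two savings and charges cancel and $\sum_k \raterw(\parab[k](0) \xrightarrow{t} \hat F_k) \leq \sum_k \raterw(\parab[k](0) \xrightarrow{t} F_k)$.

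The main obstacle is obtaining a \emph{strict} inequality somewhere, so as to actually contradict minimality rather than merely re-deriving equality. The key input must be the shock structure itself: $\partial_x h_{\star k_0}(\tau)$ has an honest downward jump at $x_*$, so going backward from a single point $(x_*, b_{k_0 i_0}(\tau))$ in \emph{two} directions uses the full parabolic cone, whereas starting from the strictly smoother profile $h_{\star k_1}(\tau)$ passes through an interval of characteristics and must strictly cut some cost. Concretely, one expects a strict gap either in the monotonicity step (if $h_{\star k_1}(\tau, x_*) > b_{k_0 i_0}(\tau)$) or in the subadditivity step via a rewiring argument at the shock (in the boundary case of equality). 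Verifying this strictness, and simultaneously checking the upper constraint $\hat F_{k_1} \leq \parab[k_1](0)$ required for finiteness of the rate (which follows from $\parab[k_1](0, x_*) \geq h_{\star k_1}(\tau, x_*) \geq b_{k_0 i_0}(\tau)$ and the Lipschitz structure), is the technical heart of the proof.
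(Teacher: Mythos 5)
Your overall route coincides with the paper's: argue by contradiction, transport the letters $W=\wordF(F_{k_0i_0})$ from $k_0$ to $k_1$, compare the competitor's cost using the additive decomposition \eqref{e.raterw.sg.icm>1} together with the subadditivity \eqref{e.sg.<=.}, and use monotonicity of $b\mapsto\raterw((x_*,b)\xrightarrow{t-\tau}W|_\star)$ to absorb the hypothesis $h_{\star k_1}(\tau,x_*)\geq b_{k_0i_0}(\tau)$. However, the step you yourself flag as the technical heart --- producing a \emph{strict} inequality --- is a genuine gap, and neither of your two candidate mechanisms closes it. Strict monotonicity in the initial height is unavailable in the boundary case $h_{\star k_1}(\tau,x_*)=b_{k_0i_0}(\tau)$ (and the paper only ever uses the non-strict bound $\Phi(b_{k_0i_0})\geq\Phi(b_1)$), while ``a rewiring argument at the shock'' on the $k_1$ side is not where the strictness actually comes from.

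The paper extracts the strict gap entirely on the $k_0$ side, in the $[0,\tau]$ layer. In \eqref{e.raterw.sg.icm>1} for $k=k_0$, after singling out $A_0=\raterw(k_0i_0\xrightarrow{t-\tau}F_{k_0i_0})$, the remainder $B$ still contains the term $\raterw(\parab[k_0](0)\xrightarrow{\tau}h_{\star k_0}(\tau))$, and $h_{\star k_0}(\tau)$ has a $\wedge$ kink at $\shock_{k_0i_0}(\tau)$ precisely because that location is an antishock. Once the letters $W$ are surrendered to $k_1$, this kink serves no purpose: smoothing it in a small neighborhood (as in Figure~\ref{f.smooth}) strictly reduces $\raterwRel{\cdot}{\parab[k_0](\tau)}$ by the strict convexity of $\rateber$, while (for a small enough neighborhood) the remaining letters $\wordF(F_{k_0})\setminus W$ are still attainable at time $t$. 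This gives $B>B'\geq\raterw\big(k_0\xrightarrow{t}\wordF(F_{k_0})\setminus W|_\star\big)$, i.e.\ the saving on the $k_0$ side strictly exceeds $A_0$. Combined with your (correct, but only non-strict) accounting on the $k_1$ side, this is what contradicts minimality. As written, your argument establishes only ``$\leq$'' and therefore cannot exclude the degenerate configuration it is meant to rule out.
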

\begin{proof}
To simplify notation, we will often denote points by their indices.
For example $ (\icx_k,\ica_k) = k $, $ (\shock_{ki}(\tau),b_{ki}(\tau)) = ki $, and $ (x_j,a_j)_{j\in\wordF(F_k)} = \wordF(F_k) $.
Also, we use $ |_f $ to emphasize that a point is on the graph of $ f $.
For example $ (\shock_{ki}(\tau),b_{ki}(\tau)) = (\shock_{ki}(\tau),h_{\star k}(\shock_{ki}(\tau))) = ki|_{h_{\star k}(\tau)} $.

Assume the contrary, and consider the case $ b_{k_0i_0}(\tau) \leq h_{\star k_1}(\tau,\shock_{k_0i_0}) $ for some $ k_1 \neq k_0 $.
The other case $ b_{k_0i_0}(\tau) \leq \HLf_{\tau}(g)(\shock_{k_0i_0}) $ can be treated similarly.
Set $ A_0:= \raterw( k_0i_0 \xrightarrow{ \scriptscriptstyle t-\tau} F_{k_0i_0} ). $

The idea is to derive a contradiction by `transporting $ A_0 $ from $ k_0 $ to $ k_1 $'.
More precisely, $ A_0 $ appears on the right side of \eqref{e.raterw.sg.icm>1} for $ k=k_0 $,
and we seek to transport the contribution of $ A_0 $ into \eqref{e.raterw.sg.icm>1} for $ k=k_1 $. 
The first step is to ``move $ A_0 $ out of $ k_0 $''. 
Consider \eqref{e.raterw.sg.icm>1} for $ k=k_0 $.
On the right side, single out $ A_0 $, and combine the rest into a single expression $ B $. 
Let $ \wordsub^{(0)} := \wordF(F_{k_0i_0}) $.
Using \eqref{e.sg.<=.} for $ f=h_{\star k}(\tau) $ and using \eqref{p.shock.survive.Fki} for $ k=k_0 $ and all $ i\neq i_0 $ give
\begin{align}
	\label{e.p.shock.survive.1}
	B \geq \raterw\big( k_0{} \xrightarrow{ \ t\ } \wordF(F_k)\setminus \wordF(F_{k_0i_0})|_{\star} \big).
\end{align}
Note that $ B $ contains the term $ \raterw( k_0 \xrightarrow{\scriptscriptstyle \tau} h_{\star k_0}(\tau) ) $,
and note that $ h_{\star k_0}(\tau) $ has a $ \wedge $ kink at $ y=\xi_{k_0i_0} $ because $ \xi_{k_0i_0} $ is an antishock.
We modify $ h_{\star k_0}(\tau) $ in a small neighborhood of $ \xi_{k_0i_0} $ as depicted in Figure~\ref{f.smooth}
to reduce $ \raterw( k_0 \xrightarrow{\scriptscriptstyle\tau} (\ldots) ) = \raterwRel{ (\ldots) }{ \parab[k_0](\tau) } $.
Let $ B' $ denote the post-modification $ B $. We have $ B>B' $.
Further, by making the neighborhood small enough, we ensure that the function $ \max\{ \HLf_{t-\tau}(f_0), \parab[\xi_{ki},b_{ki}](t), k\neq k_0 \} $ passes through all letters in $ \wordF(F_k)\setminus \wordF(F_{k_0i_0}) $, so that \eqref{e.p.shock.survive.1} holds also for $ B\mapsto B' $.
These procedures altogether give
\begin{align}
	\label{e.p.shock.survive.2}
	\raterw\big( \parab[k_0](0) \xrightarrow{t} F_{k_0} \big)
	=
	B+A_0
	>
	\raterw\big( k_0 \xrightarrow{\ t\ } \wordF(F_{k_0})\setminus\wordF(F_{k_0i_0})|_{\star} \big) 
	+
	A_0.
\end{align}

Having ``moved $ A_0 $ out of $ k_0 $'', we proceed to ``move $ A_0 $ into $ k_1 $''.
View $ \raterw( (\shock_{k_0i_0}(\tau),b) \xrightarrow{\scriptscriptstyle t-\tau} \word(F_{k_0i_0})|_{\star} ) =:\Phi(b) $
as a function of $ b $, and note that $ A_0 = \Phi(b_{k_0i_0}) $.
Set $ b_1:=h_{\star k_1}(\shock_{k_0i_0}(\tau)) $ and recall from the second paragraph of this proof that $ b_1\geq b_{k_0i_0}(\tau) $ by assumption.
We claim that $ \Phi $ is decreasing on $ [b_{k_0i_0},b_1] $.
Let $ f_b \in \Lip $ be the unique minimizer of $ \Phi(b) $, more explicitly
$
	f_b := \mathrm{argmin} \{ \raterwRel{ f }{ \parab[\shock_{k_0i_0}(\tau),b](\tau) } \}.
$
As $ b $ increases from $ b_{k_0i_0} $ to $ b_1 $, the word $ \wordF(f_b) $ may increase (gain more letters).
Break $ [b_{k_0i_0}, b_1] $ into subintervals $ [b_0,b_1],[b_1,b_2],\ldots $ on which $ \wordF(f_b) $ remains constant.
On each subinterval, it is readily checked that $ \Phi(b) $ decreases when $ b $ increases.
The claim follows, so in particular $ A_0 = \Phi(b_{k_0i_0}) \geq \Phi(b_1) = \Phi(h_{\star k_1}(\shock_{k_0i_0})) $.
Now, take \eqref{e.raterw.sg.icm>1} for $ k=k_1 $, add $ A_0 $ to both sides of the equation, and on the right side of the result use $ A_0 \geq\Phi(h_{\star k_1}(\shock_{k_0i_0})) $.
We have
\begin{align*}
	\raterw\big( \parab[k_1](0) \xrightarrow{t} F_{k_1} \big) + A_0
	\geq
	\raterw\big( \parab[k_1](0)  \xrightarrow{\tau} h_{\star k_1}(\tau) \big)
	+
	\sum_{i=1}^{n_{k_1}(\tau)}
	\raterw\big( k_1i|_{h_{\star k_1}(\tau)} \xrightarrow{t-\tau} F_{k_1i} \big)
	+
	\Phi\big(h_{\star k_1}(\shock_{k_0i_0})\big).
\end{align*}
On the right side, further use \eqref{e.sg.<=.} for $ f=h_{\star k_1}(\tau) $ and use \eqref{p.shock.survive.Fki} for $ k=k_1 $ and for all $ i $ and for $ (k,i)=(k_0,i_0) $.
We see that the right side is $ \geq \raterw( k_1 \xrightarrow{\scriptscriptstyle t } \wordF(F_{k_1})\cup\wordF(F_{k_0i_0}) |_\star ) $.

Combining the last result with \eqref{e.p.shock.survive.2} gives
\begin{align*}
	\sum_{k=k_0,k_1}\raterw\big( \parab[k](0) \xrightarrow{t} F_{k} \big)
	>
	\raterw\big( k_0 \xrightarrow{t} \wordF(F_{k_0})\setminus \wordF(F_{k_0i_0})|_{\star} \big)
	+
	\raterw\big( k_1 \xrightarrow{t} \wordF(F_{k_1})\cup\wordF(F_{k_0i_0})|_{\star} \big).
\end{align*}
On both sides, add $  \raterw( k \xrightarrow{\scriptscriptstyle t } \wordF(F_{k}) ) $, for all $ k\neq k_0,k_1 $.
The left side of the result is $ \raterw((\icvecx,\icveca)\xrightarrow{\scriptscriptstyle t }(\vecx,\veca)) $,
because $ (\icx_k,F_k)_k $ is a minimizer of it;
the right side of the result is $ \geq\raterw((\icvecx,\icveca)\xrightarrow{\scriptscriptstyle t }(\vecx,\veca)) $, a contradiction.
\end{proof}

\begin{proof}[Proof of Proposition~\ref{p.matching} for LdQ initial conditions]
The expression $ \partial_\tau \rateber(\partial_x h_\star) + \partial_x \rateberc(\partial_x h_\star) $
is supported around shocks of $ \partial_x h_\star(\tau) $, or equivalently kinks of $ h_\star(\tau) $.
Proposition~\ref{p.shock.survive} asserts that the kinks of $ h_{\star k}(\tau) $, all of which correspond to antishocks, are presented in $ h_\star(\tau) $.
There are other kinks of $ h_\star $ that come from intersections of the functions $ h_{\star 1}(\tau),\ldots,h_{\star \icm}(\tau), \HLf_{\tau}(g) $; see \eqref{e.skeleton.singlelayer}.
These functions are piecewise analytic (LdQ in fact),
and the intersection must give $ \vee $ kinks (convex kinks) since we are taking the maximum in \eqref{e.skeleton.singlelayer}.
These $ \vee $ kinks correspond to entropic shocks and do not contribute to $ \rateJV $.
Therefore, $ \rateJV(\partial_t h_{\star}) = \sum_{k=1}^{\icm} \rateJV(\partial_t h_{\star k}) $.
The result of Section~\ref{s.matching.icm=1} gives $ \rateJV(\partial_x h_{\star k})  = \raterw( \parab[k](0) \xrightarrow{ \scriptscriptstyle t } h_{\star k}(t) ) = \raterw( \parab[k](0) \xrightarrow{ \scriptscriptstyle t } F_k ) $.
Hence $ \rateJV(\partial_t h_{\star}) = \sum_{k=1}^{\icm} \raterw( \parab[k](0) \xrightarrow{ \scriptscriptstyle t } F_k ) = \raterw( g \xrightarrow{ \scriptscriptstyle t } h_\star(t) ) $.
\end{proof}

\subsection{Proof of \hyperref[t.main]{Main Theorem}}
\label{s.matching.pfmain}
Combine the fixed-time \ac{LDP} from \hyperref[t.rw]{Fixed-time Theorem}
and the general statement Lemma~\ref{l.discrete.to.continuous} to go from fixed-time to full \acp{LDP}.
We have that $ \hh_N $ satisfies the \ac{LDP} with rate function
\begin{align*}
	\Rate(h) 
	:=
	\liminf_{ \norm{\mathbf{t}} \to 0 } \ \sum_{i=1}^{n} \raterw\big( h(t_{i-1}) \xrightarrow{t_i-t_{i-1}} h(t_i) \big)
	\ \
	\text{ if } h(0)=\hic,
	\qquad
	\Rate(h):=+\infty \ \ \text{ otherwise}.
\end{align*}
By the definition of $ \raterw $, this rate function $ \Rate(h)=+\infty $ whenever $ h\notin\HLsp $; see \eqref{e.raterwrel}.
For an $ h\in\HLsp $, by Proposition~\ref{p.matching.} 
and the fact that $ \raterw(g \xrightarrow{ \scriptscriptstyle t} f) $ is \ac{lsc} in $ g, f $ (from Lemma~\ref{l.raterw.lsc}),
the rate $ \Rate(h) $ coincides with the $ \RAte(h) $ given in Definition~\ref{d.Rate}.

Having settled \hyperref[t.main]{Main Theorem}, we show that $ \RAte|_{\Elem}=\rateJV\circ\partial_x|_{\Elem} $.
Fix an $ h_\star \in \Elem $ with layers $ 0=t_0<\ldots<t_n=T $.
Applying \hyperref[t.rw]{Fixed-time Theorem} with $ (t,g,f) = (t_{i}-t_{i-1},h_\star(t_{i-1}),h_\star(t_i)) $, $ i=1,\ldots,n $
and applying \hyperref[t.main]{Main Theorem} for a small ball around $ h_\star $
give $ \sum_{i=1}^n \raterw(h_\star(t_{i-1})\xrightarrow{ \scriptscriptstyle t_{i}-t_{i-1}} h_\star(t_i)) \leq \RAte(h_\star) $.
By Proposition~\ref{p.matching}, the left side is $ \rateJV(\partial_x h_\star) $,
but the right side is the liminf of $ \rateJV\circ\partial_x|_{\Elem} $.
Hence $ \rateJV(\partial_x h_\star) = \RAte(h_\star) $.

\appendix

\section{The Hopf--Lax space and related properties}
\label{s.a.HLsp}
We begin with a preliminary version of Proposition~\ref{p.HLc.topo.weak.HLc}\ref{p.HLc.topo}.
\begin{lem}
\label{l.HLc}
Initiate the \ac{TASEP} from an arbitrary $ \hh_N(0) $.
Partition $ [0,T] $ into subintervals $ [0,t_1],[t_1,t_2],\ldots,[t_{n-1},T] $.
Given any $ \e>0 $ and $ r<\infty $, there exists $ c=c(\e,t_0,\ldots,t_{n},r)>0 $ such that
\begin{align*}
	\P\big[ 
		\HLf_{t_i-t_{i-1}}(\hh_N(t_{i-1})) |_{[-r,r]} -\e \leq \hh_N(t_i)|_{[-r,r]}, \
		i=1,\ldots,n
	\big]
	\geq 
	1 - e^{-\frac{1}{c}N^{2}}.
\end{align*}
\end{lem}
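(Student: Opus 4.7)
The plan is to reduce to a single time step via the Markov property, then use the basic coupling to reduce to wedge initial conditions, and finally invoke the classical speed-$N^2$ lower-tail large deviation estimate for TASEP from a wedge.

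By the Markov property applied at $t_1,\ldots,t_{n-1}$ together with a union bound over the $n$ subintervals, it suffices to prove the one-step version: for any $t>0$, $r<\infty$, $\e>0$, and any initial condition $\hh_N(0)$,
\begin{align*}
	\P\bigl[\HLf_t(\hh_N(0))|_{[-r,r]} - \e \le \hh_N(t)|_{[-r,r]}\bigr] \ge 1 - e^{-N^2/c}
\end{align*}
for some $c=c(\e,t,r)>0$. Let $\hh^{\icx,\ica}_N$ denote the TASEP height function started from the wedge $\parab[\icx,\ica](0)$. Since $\hh_N(0)$ is $1$-Lipschitz, $\hh_N(0,x)=\sup_\icx\parab[\icx,\hh_N(0,\icx)](0,x)$, and hence by monotonicity of the basic coupling (\cite[pp 215--219]{liggett99}),
\begin{align*}
	\hh_N(t,x)\ge \sup_{\icx\in\R}\hh^{\icx,\hh_N(0,\icx)}_N(t,x),
\end{align*}
while by definition $\HLf_t(\hh_N(0))(x)=\sup_\icx\parab[\icx,\hh_N(0,\icx)](t,x)$. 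So it will suffice to control, uniformly for $x\in[-r,r]$, the downward deviation of each wedge height from its limiting parabola.

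To pass to a finite problem, note that the $1$-Lipschitz property of $\hh_N(0,\cdot)$ together with the linear decay of $\parab(t,\cdot)$ outside $[-t,t]$ confine the effective range of $\icx$ in the Hopf--Lax sup, for $x\in[-r,r]$, to the compact interval $[-r-t,r+t]$, up to an error absorbed into $\e/3$. I would choose a $\delta$-net $\{\icx_j\}_{j=1}^J\subset[-r-t,r+t]$ and a $\delta$-net $\{x_k\}_{k=1}^K\subset[-r,r]$ with $\delta$ a small multiple of $\e$. Since both $\hh^{\icx,\ica}_N(t,\cdot)$ and $\parab[\icx,\ica](t,\cdot)$ are $1$-Lipschitz in $x$, and depend in a Lipschitz way on $(\icx,\ica)$ with $\ica=\hh_N(0,\icx)$ inheriting Lipschitzness, the discretization error from both nets is at most $\e/3$.

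For each pair $(\icx_j,x_k)$ I would then apply the classical one-point lower-tail estimate for TASEP from a wedge,
\begin{align*}
	\P\bigl[\hh^{\icx_j,\hh_N(0,\icx_j)}_N(t,x_k)<\parab[\icx_j,\hh_N(0,\icx_j)](t,x_k)-\tfrac{\e}{3}\bigr]\le c'\,e^{-N^2/c'},
\end{align*}
with $c'=c'(\e,t,r)>0$ uniform in the wedge parameters thanks to the translation invariance of the TASEP. A union bound over the $JK=O(1)$ pairs then yields the desired single-step estimate. The main obstacle, and essentially the only substantive input, is this speed-$N^2$ one-point estimate; it is well-known (\cite{seppalainen98,johansson00}) and can also be read off from the one-point specialization of the determinantal formula~\eqref{e.det}. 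The remainder of the argument is purely coupling and discretization.
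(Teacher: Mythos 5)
Your proposal is correct and follows essentially the same route as the paper's proof: reduce to a single time step, use the attractivity of the basic coupling to dominate the height from below by TASEPs started from the tangent wedges of $\hh_N(0)$, invoke the speed-$N^2$ one-point lower-tail estimate from \cite{johansson00}, and remove the remaining suprema by Lipschitz continuity plus a union bound over a finite net. The only cosmetic difference is that the paper discretizes only in the spatial variable $x$ (choosing a single near-optimal wedge $\icx_*$ for each $x_*$), whereas you discretize in both $\icx$ and $x$; both work.
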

\begin{proof}
Let us first assume $ n=1 $ and $ \hh_N(0)=\parab[\icx,\ica](0) $, the wedge initial condition.
By \cite[Theorem~1.2]{johansson00},
\begin{align}
	\label{e.l.HLc.1}
	\P_{\parab[1\cdots\icm](0)}\big[ \hh_N(t,x) \geq \parab(t,x)-\e \big] \geq 1 - \exp\big(-\tfrac{1}{c(\e,t,x)}N^2\big),
\end{align}
for fixed $ \e>0 $, $ t>0 $, and $ x\in(-t+\icx,\icx+t) $.
Next, set $ m := \lceil\frac{2t}{\e}\rceil $, and partition $ (-t+\icx,\icx+t) $ into $ \icm $ evenly spaced intervals as $ -t+\icx=x_0<x_1 <\ldots<x_{m-1}<x_{m} =\icx+t $.
This partition has a mesh at most $ \e $.
Apply the union bound to \eqref{e.l.HLc.1} for $ x=x_i $, $ i=1,\ldots,m-1 $. 
The resulting probability is bounded below by $ 1 - (m-1)\exp(-\tfrac{1}{c(\e,t)}N^2) $.
Since $ m\leq c(\e,t) $, the factor $ (m-1) $ can be absorbed into the exponential.
We have
$
	\P_{\parab[1\cdots\icm](0)}[ \hh_N(t,x_i) \geq\parab[\icx,\ica](t,x_i)-\e, \ i=1,\ldots,m ] \geq 1 - \exp(-\tfrac{1}{c(\e,t)}N^2).
$
Further, since $ \hh_N(t)\in\Lip $ and since $ \parab[\icx,\ica](t)|_{|x-\icx|>t} =  \parab[\icx,\ica](0)|_{|x-\icx|>t} =  \hh_N(0)|_{|x-\icx|>t} $,
\begin{align}
	\label{e.l.HLc.3}
	\P_{\parab[1\cdots\icm](0)}\big[ \parab[\icx,\ica](t)-2\e \leq \hh_N(t) \big] 
	\geq
	1 - \exp\big(-\tfrac{1}{c(\e,t)}N^2\big).
\end{align}

The next step is to leverage \eqref{e.l.HLc.3} into a statement for a general initial condition $ \hh_N(0) $.
Fix arbitrary $ x_*\in\R $. Recall from~\eqref{e.HLf} that $ \HLf_t(\hh_N(0))(x_*) $ is defined as an infimum, so
there exists $ \icx_* $ such that $ \HL_t(\hh_N(0))(x_*) \geq \parab[\icx_*,\ica_*-\e](t,x_*) $, where $ \ica_* := \hh_N(0,\icx_*) $.
Consider another \ac{TASEP}, denoted $ \hh_N' $, with the wedge initial condition $ \hh'_N(0) = \parab[\icx_*,\ica_*-\e](0) $.
The initial conditions are ordered, namely $ \hh_N(0) \geq \hh_N'(0) $, because $ \hh_N(0,\icx_*) = \ica_* \geq \hh_N'(0,\icx_*) $, $ \hh_N(0)\in\Lip $, and $ \partial_x \hh_N'(0,x) = \sgn(\icx-x) $.
Under the basic coupling, $ \hh_N(t) \geq \hh'_N(t) $ for all $ t \geq 0 $.
Combining this coupling result with \eqref{e.l.HLc.3} for $ (\icx,\ica)\mapsto (\icx_*,\ica_*-\e) $ gives
$	
	\P_{\hh_N(0)}[ \hh_N(t,x_*) \geq \HL_{t}(\hh_N(0))(x_*)-3\e ] 
	\geq
	1 - \exp(-\tfrac{1}{c(\e,t)}N^2).
$
Applying the union-bound argument that leads to \eqref{e.l.HLc.3}
yields the desired result for $ n=1 $, with $ \e \mapsto 4\e $ and $ T\mapsto t $.

The desired result for $ n>1 $ follows by applying the result for $ n=1 $ with $ (\hh_N(0),\hh_N(t)) \mapsto (\hh_N(t_{i-1}),\hh_N(t_i)) $ 
and taking the union bound over $ i=1,\ldots,n $.
\end{proof}

The following bound will come in handy, and the proof is straightforward from \eqref{e.HLf}.
\begin{align}
	\label{e.hl.cmp}
	\text{ For } r>t>0,
	f_1, f_2 \in \Lip \text{ with }  f_1|_{[-r,r]} \leq f_2|_{[-r,r]},
	\ \
	\HLf_{t}(f_1))|_{[-(r-t),r-t]} \leq (\HLf_{t}(f_2))|_{[-(r-t),r-t]}.
\end{align}

Recall that we endow the space $ \Lip $ with the metric $ \dist(f_1,f_2) := \sum_{n=1}^\infty 2^{-n} \sup_{x\in[-n,n]} | f_1(x) - f_2(x) | $,
and the space $ \Dsp([0,T],\Lip) $ with the metric $ \dist_{[0,T]}(h_1,h_2) := \sup_{t\in[0,T]} \dist(h_1(t),h_2(t)) $.

\begin{proof}[Proof of Proposition~\ref{p.HLc.topo.weak.HLc}\ref{p.HLc.topo}]
The first step is to establish an approximation result.
Fix $ \e_*>0 $.
For some $ \e,n,r $ to be specified later, consider an evenly spaced partition $ [0,t_1],\ldots[t_{n-1},T] $ of $ [0,T] $, namely $ t_i= \frac{iT}{n} $, and a realization of $ \hh_N $ that satisfies the approximate Hopf--Lax condition 
\begin{align}
	\label{e.p.HLc.1}
	\HL_{t_i-t_{i-1}}(\hh_N(t_{i-1}))|_{[-r,r]} -\e \leq \hh_N(t_i)|_{[-r,r]} \leq \hh_N(t_{i-1})|_{[-r,r]}, \qquad i=1,\ldots,n.
\end{align}
For such a realization $ \hh_N $, we claim that there exists $ g\in\HLsp $ such that $ \dist_{[0,T]}(\hh_N,g)< \e_* $. 
Let $ \Mid\{a,b,c\} := a+b+c - \max\{a, b, c\} - \min\{a, b, c\} $.
Define $ g $ on $ [t_{i-1},t_i] $, inductively in $ i $ as $ g(0) := \hh_N(0) $ and
\begin{align}
	\label{e.p.HLc.2}
	g(t) := \Mid \big\{ \HL_{t-t_{i-1}}(g(t_{i-1})), \, \hh_N(t_i)+i\e, \, g(t_{i-1})  \big\}, \qquad t\in(t_{i-1},t_i].
\end{align}
It is straightforward (though tedious) to check that $ g\in\HLsp $.
We claim that, for $ r_i:=r-t_{i-1} $,
\begin{align}
	\label{e.p.HLc.3}
	\hh_N(t_{i})|_{[-r_i,r_i]} \leq g(t_i)|_{[-r_i,r_i]} \leq \hh_N(t_i)|_{[-r_i,r_i]}+i\e,
	\qquad
	i=1,\ldots,n.
\end{align}
Setting $ t=t_1 $ in \eqref{e.p.HLc.2} and using \eqref{e.p.HLc.1} for $ i=1 $ prove \eqref{e.p.HLc.3} for $ i=1 $.
To progress we use induction.
Assume \eqref{e.p.HLc.3} holds for $ i\ge 1 $.
Apply \eqref{e.hl.cmp} with $ (f_1,f_2,t,r) \mapsto ( g(t_i),\hh_N(t_i)+i\e,t_{i+1}-t_i,r_i) $ and use the first inequality in \eqref{e.p.HLc.1}.
We have $ \HLf_{t_{i+1}-t_i}(g(t_i))|_{[-r_{i+1},r_{i+1}]} \leq \hh_N(t_{i+1})|_{[-r_{i+1},r_{i+1}]} + (i+1)\e $.
Combining this result with \eqref{e.p.HLc.2} for $ t=t_{i+1} $ gives $ g(t_{i+1})|_{[-r_{i+1},r_{i+1}]} = \min\{ \hh_N(t_{i+1})+(i+1)\e, g(t_i) \}|_{[-r_{i+1},r_{i+1}]} $,
which proves the second inequality in \eqref{e.p.HLc.3}.
Within the last expression, using $  g(t_i)|_{[-r_i,r_i]}\geq\hh_N(t_i)|_{[-r_i,r_i]} \geq \hh_N(t_{i+1})|_{[-r_i,r_i]} $ proves the first inequality in \eqref{e.p.HLc.3}.
By \eqref{e.p.HLc.3}, we have $ \dist(\hh_N(t_i),g(t_i)) \leq n\e + 2^{-\lfloor (r-T)_+ \rfloor} $, for $ i=1,\ldots,n $.
The next step is to leverage this bound, which holds for $ t_1,\ldots,t_n $, into a bound that holds for all $ t\in[0,T] $.
First, recall from Remark~\ref{r.Lip}\ref{r.Lip.} that $ g\in\HLsp $ is necessarily $ \frac12 $-Lipschitz in time.
Next, the function $ \hh_N $ decreases in $ t $ for each fixed $ x $, so
for all $ t\in[t_{i-1},t_i] $ we have $ |\hh_N(t,x)-\hh_N(t_{i-1},x)| \leq \hh_N(t_{i-1},x)-\hh_N(t_{i-1},x) $.
Combining this inequality with \eqref{e.p.HLc.1} gives 
$
	|\hh_N(t,x)-\hh_N(t_{i-1},x)| \leq \hh_N(t_{i-1},x) - \HL_{T/n}(\hh_N(t_{i-1}))(x) +\e.
$
The last expression, by~\eqref{e.HLf}, is be bounded by $ -\parab(\frac{T}{n},0) +\e = \tfrac{T}{2n}+\e $. 
Altogether these properties give
$	
	\dist_{[0,T]}(\hh_N,g) \leq (n\e + 2^{-\lfloor (r-T)_+ \rfloor}) + (\frac{T}{2n}) + (\frac{T}{2n} + \e) =: \e'. 
$
Now choose large enough $ n,L $ and small enough $ \e $ so that $ \e' < \e_* $. 
These choices of $ n,L,\e $ depend only on $ \e_*,T $.

We now prove the desired statement.
Assume without loss of generality $ \hh_N(0)=0 $, and accordingly replace $ \HLsp $ with $ \HLsp':= \HLsp \cap \{ h : h(0,0)=0 \} $.
Recall from Remark~\ref{r.Lip}\ref{r.Lip.} that any $ h\in\HLsp $ is uniformly Lipschitz, so $ \HLsp' $ is compact. 
For the given open set $ \calO $, there exists $ \e_*>0 $ such that $ \{ h\in\Dsp([0,T],\Lip) : \dist_{[0,T]}(h,\HLsp') < \e_* \} \subset \calO. $
By the preceding construction, as soon as $ \hh_N $ satisfies \eqref{e.p.HLc.1},
we have $ \dist_{[0,T]}(\hh_N,\HLsp) < \e_* $, which implies $ \hh_N \in \calO $.
By Lemma~\ref{l.HLc}, the premise \eqref{e.p.HLc.1} holds up to probability $ \exp(-\frac{1}{c(\e,n,L,T)}N^2) $.
The choice of $ \e,n,L $ depend only on $ \e_*,T $, and $ \e_* $ depends only on $ \calO $.
Hence $ c=c(\calO,T) $. This completes the proof.
\end{proof}

\begin{proof}[Proof of Proposition~\ref{p.HLc.topo.weak.HLc}\ref{p.weak.HLc}]
Since elementary solutions are dense in $ \HLsp $, it suffices to prove that every weak solution  $ h $ lives in $ \HLsp $.
To this end, we begin with a reduction.
Recall from~\eqref{e.HLf} that $ \HL_{t}(h(t_0))(x) $ is defined as a supremum, so proving $ h\in\HLsp $ amounts to proving $ h(t_0,\icx) + \parab(t,x-\icx) \leq h(t_0+t,x) \leq h(t_0,x) $, for all $ t_0 < t_0+t\in[0,T]$ and $ \icx,x \in \R $.
Since \eqref{e.intburgers} is invariant under shifts in spacetime, without loss of generality we assume $ t_0=0 $ and $ \icx =0 $.
The goal can be reduced to showing
\begin{align}
	\label{e.weak->HLc.goal}
	h(0,0) + \parab(t,x) \leq h(t,x) \leq h(0,x),
	\qquad
	t\in[0,T], \ \icx \in \R.
\end{align}

The idea for showing the second inequality in~\eqref{e.weak->HLc.goal} is to use $ \partial_\tau h \leq 0 $ from \eqref{e.intburgers}. 
Roughly speaking, we seek to integrate this inequality along $ [0,t]\times\{x\} $ to get $ h(t,x)\leq h(0,x) $.
However, recall that for a weak solution \eqref{e.intburgers} holds only almost everywhere, and $ \partial_\tau h \leq 0 $ can fail on zero-measure sets in $ [0,T]\times\R $.
To circumvent this issue, we consider a thin corridor $ \Omega_\e = [0,t]\times[x-\e,x+\e] $, integrate the inequality ($ \partial_\tau h \leq 0 $ a.e.) over $ \Omega_\e $, and divide the result by $ 2\e $.
Doing so gives $ \frac{1}{2\e} \int_{|y-x|\le\e} \d y \, h(t,y) - \frac{1}{2\e} \int_{|y-x|\le\e} \d y \, h(0,y) \leq 0 $.
Since $ h $ is continuous (see Remark~\ref{r.Lip}\ref{r.Lip.express}), sending $ \e\to 0 $ yields the second inequality in \eqref{e.weak->HLc.goal}.

To show the first inequality in \eqref{e.weak->HLc.goal}, consider the characteristic velocity $ v:= \Mid\{ -1, x/t,1 \} $, the line in $ [0,t]\times\R $ that passes through $ (t,x) $ with velocity $ v $, and a thin corridor $ \Omega'_\e $ around this line, namely $ \Omega'_\e:=\{ (\tau,y) : \tau\in[0,t], |y-x-(t-\tau) v| \leq \e \} $.
On both sides of \eqref{e.intburgers} add $ v\partial_x h $, integrate the result over $ \Omega'_\e $, and divide the result by $ 2\e $. 
We get
$
	\frac{1}{2\e} \int_{\Omega'_\e} \d \tau \d y \, ( \partial_\tau h + v\partial_yh )  
	=
	\frac{1}{2\e} \int_{\Omega'_\e} \d \tau \d y \, (\frac{-1}{2}(1+v^2) +\frac{1}{2}(v+\partial_y h)^2 ) ).
$
For the integrand on left side we have $ (\partial_\tau h + v\partial_yh)(\tau,y) = \partial_\tau ( h(\tau,y+\tau v)) $ by the chain rule, which applies since $ h $ is Lipschitz in $ (\tau,y) $.
After the change of variables $ y+\tau v\mapsto t $, the left integral evaluates to $ \frac{1}{2\e}\int_{|y-x|<\e} \d y \, h(t,y) - \frac{1}{2\e} \int_{|y-x+tv|<\e} \d y \, h(0,y) $.
The integrand on the right side is at least $ -\frac12(1+v^2) $, so the right integral is bounded below by $ -\frac{t}{2}(1+v^2) $.
Combining the preceding resulting and sending $ \e\to 0 $ give $ h(t,x) \geq h(0,x-tv) - \frac{t}{2}(1+v^2) $.
When $ |x|\leq t $, the right side evaluates to $ h(0,0) + \parab(t,x) $.
When $ \pm x >t $, $ v=\pm 1 $. 
In this case, we use $ h(0,x-tv) \geq h(0,0)-|x-tv| = h(0,0)+t\mp x $ to obtain $ h(t,x) \geq h(0,0)\mp x = h(0,0)-|x| = h(0,0)+\parab(t,x) $. 
This concludes the first inequality in~\eqref{e.weak->HLc.goal}.
\end{proof}

\section{Properties of the operator $ \opu[k][k']{\ldots} $}
\label{s.a.opu.properties}

Let $ \conj $ act on $ \ell^2(\Z) $ by multiplication by $ (1+\mu^2) $.
Recall $ \opu[k][k']{(\ldots)_{k_1}(\ldots)_{k_2}\ldots} $ from Definition~\ref{d.opu.extended}.

\begin{lem}
\label{l.trace-class}
The operator $ \conj^{k} \inddown_{\ick}\opu{(\ldots)_{k_1}(\ldots)_{k_2}\ldots}\inddown_{\ick'} \conj^{-k'} $ is trace-class.
\end{lem}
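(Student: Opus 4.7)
My plan is to exploit the contour--integral representation of $\opu[k][k']{\ldots}$ to write the kernel $\opu[k][k']{\ldots}(\icmu,\icmu')$ as a continuous superposition of rank--one operators and then bound the trace norm via an integral triangle inequality. The key observation, visible from Definitions~\ref{d.opu} and \ref{d.opu.extended}, is that the dependence on the left coordinate $\icmu$ is confined to the leftmost endpoint factor (the $z_1$--integrand $\fnSll_{k\,\wordsub^{(1)}_1}(z;\icmu)$ when $\wordsub^{(1)}\neq\emptyset$, or a $\fnrw$ factor when $\wordsub^{(1)}=\emptyset$), and symmetrically the $\icmu'$--dependence sits only in the rightmost endpoint factor. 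In either case the $\icmu$--dependence takes the explicit monomial form $z^{-\icmu-1+\text{const}(N)}$, and likewise for $\icmu'$. Thus I can write
\[
\opu[k][k']{\ldots}(\icmu,\icmu')
= \oint \d z \oint \d z'\, \Psi(z,z')\, f_z(\icmu)\, g_{z'}(\icmu'),
\]
where the outer contours are the $z_1$ and $z_\ell$ contours, $\Psi$ absorbs all of the interior contour integrations together with the $z$--rational factors, and $f_z,g_{z'}$ carry the $\icmu,\icmu'$ dependence explicitly.

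Next I would establish $\ell^2$ control of the endpoint pieces. The cutoff $\inddown_{\ick}$ restricts $\icmu$ to the half--line $\icmu<N\icd_{\ick}$, so I only need decay as $\icmu\to-\infty$. Choosing the outer $z_1$--contour to have radius in $(0,1)$ makes $|f_z(\icmu)|\leq C\,r^{|\icmu|}$ for some $r\in(0,1)$ uniformly in $z$ on the contour, and the polynomial weight from $\conj^k=(1+\icmu^2)^k$ is absorbed by this geometric decay. Hence $\conj^k\inddown_{\ick}f_z\in\ell^2(\Z)$ with $\ell^2$--norm bounded uniformly in $z$. The corner cases $\wordsub^{(1)}=\emptyset$ or $\wordsub^{(\ell)}=\emptyset$ require choosing the corresponding outer contour radius in $(1,2)$ instead (because the $\icmu$--dependent factor has the opposite sign of exponent), but the conclusion is the same. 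An entirely analogous argument applies to $\conj^{-k'}\inddown_{\ick'} g_{z'}$.

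With these preparations the operator is expressed as
\[
\conj^k\inddown_{\ick}\opu[k][k']{\ldots}\inddown_{\ick'}\conj^{-k'}
= \oint\!\oint \Psi(z,z')\,\bigl|\conj^k\inddown_{\ick} f_z\bigr\rangle\!\bigl\langle\conj^{-k'}\inddown_{\ick'} g_{z'}\bigr|\,\d z\,\d z',
\]
a continuous sum of rank--one operators. Applying the triangle inequality for the trace norm and $\|\,|u\rangle\langle v|\,\|_1=\|u\|_{\ell^2}\|v\|_{\ell^2}$ gives
\[
\bigl\|\conj^k\inddown_{\ick}\opu[k][k']{\ldots}\inddown_{\ick'}\conj^{-k'}\bigr\|_1
\leq \oint\!\oint |\Psi(z,z')|\cdot\|\conj^k\inddown_{\ick} f_z\|_{\ell^2}\cdot\|\conj^{-k'}\inddown_{\ick'} g_{z'}\|_{\ell^2}\,|\d z||\d z'|,
\]
and the integrand is bounded on the compact contours of integration, yielding a finite bound.

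The only potentially delicate point, and the place where I would spend the most care, is to verify that the outer contour radii can indeed be chosen in $(0,1)$ (or $(1,2)$ in the empty--word corners) while simultaneously satisfying all of the nesting and non--enclosure conditions listed in Definitions~\ref{d.opu}\ref{d.opu.1}--\ref{d.opu.5} and in Definition~\ref{d.opu.extended}. This is a bookkeeping exercise that splits into a small number of cases indexed by whether $\wordsub^{(1)}$ and $\wordsub^{(\ell)}$ are empty and by the $\vecUD$--signs at the endpoints; in each case one can construct valid contours by first fixing the outer radii in the desired range and then choosing the interior radii to nest accordingly, using that $0$ and $2$ stay bounded away from the outer loops.
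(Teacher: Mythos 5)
Your approach is genuinely different from the paper's. The paper first invokes \eqref{e.flip} and Lemma~\ref{l.opu.id} to write $\opu[k][k']{(\ldots)_{k_1}(\ldots)\ldots}$ as a linear combination of products of the all-up operators $\opu[k_0][k'_0]{\word_{\ups\ldots\ups}}$ (which equal $\opuMQR[k_0][k'_0]{\word_{\ups\ldots\ups}}=\Slop\,\rwop^{\cdots}\indup_{\word_1}\cdots\Srop$, a genuine operator product), and then bounds that product via a three-factor Hilbert--Schmidt decomposition exploiting support restrictions. You instead work directly with the contour kernel and a rank-one Schmidt expansion, which is a reasonable alternative strategy.

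However, the ``bookkeeping exercise'' you defer is precisely where the argument has a gap, and it is more than a radius choice. The obstruction: whenever a contour condition of the form ``contour $A$ encloses $2-z_B$'' is in force (conditions \ref{d.opu.4} and \ref{d.opu.5} of Definition~\ref{d.opu}, and both inter-isle conditions of Definition~\ref{d.opu.extended}), the variable $z_B$ must be the \emph{outer} one in the iterated integral. Indeed, the $z_B$ loop must enclose $0$ (by condition~\ref{d.opu.1}), so its image under $z\mapsto 2-z$ is a loop around $2$, which a fixed contour $A$ that excludes $2$ can never enclose in its entirety; the only consistent reading is that $A$ depends on $z_B$. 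Tracking this through Definition~\ref{d.opu.extended}, you find that $z'_\ell$ is forced to be an \emph{inner} variable both when $\wordsub^{(\ell)}=\emptyset$ (via ``$z_\ell$ contour encloses $2-z'_{\ell-1}$'') and when $|\wordsub^{(\ell)}|=1$ with $\UD=\down$ (via condition~\ref{d.opu.5}). In either case your decomposition with $z_1$ and $z'_\ell$ outermost and $\Psi$ independent of them is unavailable, so ``first fix the outer radii, then nest the interior'' breaks down. This can be repaired --- either by applying \eqref{e.flip} first to eliminate all $\down$ signs (as the paper does), or by allowing the rightmost contour to depend on its neighbor while keeping it uniformly bounded away from $|z|=1$ and absorbing that integral into the $\icmu'$-side factor --- but as written the step is not correct. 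Separately, your claim that the $\wordsub^{(1)}=\emptyset$ corner requires the left outer radius in $(1,2)$ is wrong: both $\fnSll$ and $\fnrw$ carry $\icmu$-dependence $z^{-\icmu}$, so the left radius should lie in $(0,1)$ in all cases; the sign flip occurs only at the right endpoint, where the $\icmu'$-dependence switches from $(2-z')^{\icmu'}$ (for $\fnSrr$) to $(z')^{\icmu'}$ (for $\fnrw$).
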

\begin{proof}
By Lemma~\ref{l.opu.id} and \eqref{e.flip}, the operator can be expressed as a linear combination of products of operators of the form  $ \conj^{k_0} \inddown_{\ick_0}\opu{\word_{\ups\ldots\up}} \inddown_{\ick'_0} \conj^{-k'_0} $, $ \word\in\wordset $, $ k_0,k'_0\in\{1,\ldots,\icm\} $, so it suffices to show the latter are trace-class.
The statement is straightforward to show when $ \word=\emptyset $.
We consider $ |\word|=n>0 $, in which case 
\begin{align*}
	\conj^{k_0} &\inddown_{\ick_0}\opu{\word_{\ups\ldots\up}} \inddown_{\ick'_0} \conj^{-k'_0}
	=
	\conj^{k_0} \inddown_{\ick_0}\opuMQR{\word_{\ups\ldots\up}} \inddown_{\ick'_0} \conj^{-k'_0}
\\
	&=\big( \conj^{k_0}\inddown_{\ick_0}  \Slop_{-t,-\ics_{k_0}+s_{1}} \conj \big)
	\big(
		\conj^{-1}
		\rwop^{-s_1+s_{\word_1}}
		\indup_{\word_1} 
		\rwop^{-s_{\word_1}+s_{\word_2}}
		\cdots 
		\indup_{\word_{n}}
		\rwop^{-s_{\word_{n}} +s_1}
	\big) 
	\big( \Srop_{-t,-s_1+\ics_{k_0'}} \inddown_{\ick_0'} \conj^{-k_0'} \big),
\end{align*}
where we have dropped the irrelevant scaling by $ N $.
View the last expression as the product of three factors, denoted by $ A_1,A_2 $, and $ A_3 $, respectively.
Note that $ s_1 \leq s_{\word_n} $.
As is readily checked from \eqref{e.rwop}, we have $ \rwop^{-s_1+s_{\word_1}}(\mu,\mu')|_{\mu<\mu'}=0 $ and $ \rwop^{-s_{\word_{n}} +s_1}(\mu,\mu')|_{\mu>\mu'-|s_1-s_{\word_n}|}=0 $.
Hence $ A_2(\mu,\mu') $ is supported on $ \mu \geq d_{\word_1} $ and $ \mu' \geq d_{\word_n}-|s_1-s_{\word_n}| := \lambda $.
Therefore, 
$ 
	\conj^{k_0} \inddown_{\ick_0}\opu{\word_{\ups\ldots\up}} \inddown_{\ick'_0} \conj^{-k'_0} 
	=
	(A_1 \indup_{\word_1})(A_2)( \ind_{ \geq \lambda} A_3 ).
$
From \eqref{e.rwop}--\eqref{e.Srop}, it is straightforward to check that the last three factors are Hilbert--Schmidt.
The product is hence trace-class.
\end{proof}

\section{Steepest descent}
\label{s.a.steep}

Here we use steepest descent to prove Proposition~\ref{p.trace}, which concerns a preferred trace $ \monomial $.

From Definition~\ref{d.opu.extended}, we have the following contour integral expression.
\begin{align*}
	\monomial = \tr\big( \inddown_{\ick_n} \opuci{ \word^{(1)} } \inddown_{\ick_1} \opuci{ \word^{(2)} } \cdots \inddown_{\ick_{n-1}} \opuci{ \word^{(n)} } \inddown_{\ick_n} \big)
	=
	\prod_{i=1}^{n}
	\oint \frac{\d z_{i}}{2\pi\img} \oint \frac{\d z'_{i}}{2\pi\img}\, \til{U}_i(z_{i},z'_{i};\icd_{k_{i-1}},\icd_{k_{i}}) \til{V}_i(z'_{i},z_{i+1}).
\end{align*}
Here we adopt the cyclic convention $ k_0:=k_n $, $ z_{n+1}:=z_1 $, etc., and let $ \til{U}_i(z_i,z'_i;\icd_{k_{i-1}},\icd_{k_i}) $ be the right side of \eqref{e.opu.extended} without the $ z_0 $ and $ z'_\ell $ integrals, with $ (z_0,z'_\ell,\icmu,\icmu',k_0,k_n) \mapsto (z_i,z'_i,\icd_{k_{i-1}},\icd_{k_i},k_{i-1},k_{i}) $, and with $ (\vecUD^{(1)},\ldots,\vecUD^{(\ell)}) $ being specialized to the $ \vecUDc $ variables (defined in \eqref{e.UDc}).
\begin{description}[leftmargin=10pt]
\item[When the last isle in \tdef{$ \word^{(i)}$} is not \tdef{$ \emptyset $}]
We set $ \til{V}_i(z_i',z_{i+1}) := \frac{z_{i+1}}{2-z_i'-z_{i+1}} $. The $ z'_i $ contour does not enclose $ 2-z_{i+1} $ and the $ z_{i+1} $ contour does not enclose $ 2-z'_i $.
\item[When the last isle in \tdef{$ \word^{(i)}$} is \tdef{$ \emptyset $}]
We set $ \til{V}_i(z_i',z_{i+1}) := \frac{z_{i+1}}{z'_i-z_{i+1}} $ and the $ z'_{i} $ contour encloses $ z_{i+1} $.
\end{description}
The contours are counterclockwise loops that satisfy all conditions in Definitions~\ref{d.opu} and \ref{d.opu.extended} and the preceding ones.

It will be more convenient to work with circular contours.
We let all contours be circles $ \{z=re^{\img\theta}:\theta\in\R\} $ that pass through the corresponding critical points in \eqref{e.zs}--\eqref{e.zs.specialized.}.
For example, in Definition~\ref{d.opu}, $ r=\zls[k\word_1]$, $\zrws[\word_1\word_2]$, \ldots, $\zrws[\word_{n-1}\word_n]$,$ \zrs[\word_n k'] $ respectively for $ z_0,z_1,\ldots,z_n $ if $ \word\neq\emptyset $, and $ r=\zrws[\ick\ick'] $ for $ z_0 $ if $ \word=\emptyset $.
Such contours satisfy the required conditions except possibly for those $ z $'s that involve $ \fnSr $.
Take the conditions in Definition~\ref{d.opu} for example.
The conditions \ref{d.opu.1}--\ref{d.opu.3} are readily checked from \eqref{e.UDc}, \eqref{e.geomeaning.rw}--\eqref{e.geomeaning.sr}.
As for the condition \ref{d.opu.4}, when $ n>1 $ and $ \UDc(\wordkl)_{\word_n}=\down $, we have $ \zrws[\word_{n-1}\word_n] > 2-\zrs[\word_n k'] $ (from \eqref{e.UDc} and \eqref{e.geomeaning.rw}--\eqref{e.geomeaning.sr}).
This inequality shows that the $ z_{n-1} $ contour does contain $ 2-z_n $ when the angle of $ z_n=\zrs[\word_n k'] e^{\img\theta} $ is small.
However, because $ \zrws[\word_{n-1}\word_n]<2 $, the $ z_{n-1} $ contour would not contain $ 2-z_n $ when the angle of $ z_n $ becomes larger.
We address this issue by simply evaluating the residue at $ z_{n-1} = 2-z_n $ when $ 2-z_n $ is outside the $ z_{n-1} $ contour, and use the notation 
$
	\mathrm{Res}_{z_{n-1} = 2-z_{n}}| {}_{\{|2-z_{n}| \geq \zrws[\word_{n-1}\word_n] \}}
$ 
to encode this action.
Similarly, when $ n=1 $ and  $ \UDc(\wordkl)_{\word_n}=\down $, we introduce $ \mathrm{Res}_{z_{1} = 2-z_{0}} | {}_{\{|2-z_{0}| \geq \zrs[\word_{1}k'] \}} $ to accommodate the possible violation of the condition \ref{d.opu.5}.
From \eqref{e.opuci} and Definition~\ref{d.prefer}, it is readily checked that the above scenario holds generally: The required conditions (a contour encloses a variable) always hold when the corresponding angle is small enough, and the residue operator acts only when the angle becomes larger.
Recall (from Definitions~\ref{d.opu} and \ref{d.opu.extended}) that the integrand is a product of the functions $ \fnSl,\fnSr $, and $ \fnrw $ and some $ N $-independent rational functions, and recall from \eqref{e.steep.expfn} that the former can be expressed as exponentials of the $ \expfn $'s.
Following the preceding descriptions, we schematically express $ \monomial $ as
\begin{align}
	\label{e.steep.formula.}
	\monomial 
	&= \prod \big( 1 + \mathrm{Res}\big|_\text{condition} \big) \cdot \prod \oint_{\text{circle}}\frac{\d z}{2\pi\img} \cdot \prod e^{-N \expfn } \cdot \prod \text{(rational function)}
\\
	\label{e.steep.formula}
	&=
	\sum \prod \mathrm{Res} \cdot \prod \oint_{\text{circle}}\frac{\d z}{2\pi\img} \cdot \Big( \prod e^{-N \expfn } \cdot \prod \text{(rational function)} \Big)\Big|_\text{conditions}.	
\end{align}
Each residue operator acts contingent on a given condition, for example $ |2-z_{n}| \geq \zrws[\word_{n-1}\word_n]  $ or $ |2-z_{0}| \geq \zrs[\word_{1}k'] $.

To analyze \eqref{e.steep.formula}, we need to estimate the $ \expfn $'s along those circles.
Recall the $ \expfn $'s from \eqref{e.steep.expfn}.
For $ \expSl(z;t,s,d) $ and $ \expSr(z;t,s,d) $, Assumption~\ref{assu.strict} gives 
\begin{align}
	\label{e.HLc.contour}
	(s-d,s+d) = (x,a) \in \hyp( \parab(0) )^\circ \setminus \hyp( \parab(t) ).
\end{align}
Recall $ \zls=\zls(t,s,d) $, $ \zrs=\zrs(t,s,d) $, and $ \zrws=\zrws(s,d) $ from \eqref{e.zs}.
In the $ (x,a) $ coordinates we have
\begin{align}
\label{e.zs.xa}
	\zls(t,\tfrac{x+a}2,\tfrac{-x+a}2) = \tfrac{1}{t} ( t+x - \sqrt{ t^2+x^2+2ta } ),
	\quad
	\zrs(t,\tfrac{x+a}2,\tfrac{-x+a}2) = \tfrac{1}{t} ( t-x - \sqrt{ t^2+x^2+2ta } ).
\end{align}
It readily verified from \eqref{e.zs.xa} that $ \zls,\zrs \in (0,2) $ under \eqref{e.HLc.contour}, and from \eqref{e.zrws} that $ \zrws \in (0,2) $ for all $ 0<s,d $.
\begin{lem}
\label{l.steep}
For fixed $ s,d $ that satisfy \eqref{e.HLc.contour}, consider $ \expSl(\theta) := \expSl(\zls e^{\img\theta};t,s,d) $ and  $ \expSr(\theta) := \expSl(\zrs e^{\img\theta};t,s,d) $; for fixed $ 0<s,d $, consider $ \exprw(\theta) = \exprw(\zrws e^{\img\theta};s,d) $.

The function $ \expfn $ (= $ \expSl,\expSr $, or $ \exprw $) satisfies $ \partial_\theta \expfn(0)=0 $ and $ \partial^2_\theta \expfn(0)\in(0,\infty) $. The real part $ \Re(\expfn) $ is even in $ \theta $ and strictly increasing on $ \theta\in[0,\pi] $.
\end{lem}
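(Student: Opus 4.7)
The strategy is to verify each claim by direct computation, reducing it to an algebraic identity at the critical point and then discharging that identity via Vieta's formulas for the critical-point quadratic.

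For the first two claims, the chain rule gives $\partial_\theta \expfn(re^{\img\theta}) = \img z\,\expfn'(z)$, which vanishes at $\theta = 0$ because $r$ is a critical point of $\expfn$; differentiating once more and using $\expfn'(r) = 0$ yields $\partial_\theta^2\expfn|_{\theta=0} = -r^2\expfn''(r)$, so the question reduces to $\expfn''(r) < 0$. For $\exprw$ this is immediate from $\exprw''(z) = -s(2-z)^{-2} - dz^{-2}$ with $s,d>0$. For $\expSl$, I would clear denominators in the critical relation $\expSl'(\zls) = 0$ and combine with $\expSl''(z) = -s(2-z)^{-2} + dz^{-2}$ to derive the compact identity
\begin{align*}
\zls^{\,2}\,\expSl''(\zls) = \frac{4d + t\,\zls^{\,2}}{2(2-\zls)},
\end{align*}
reducing the inequality to $4d + t\zls^2 < 0$. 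By Vieta on the critical quadratic $tz^2 + 2(d-s-t)z - 4d = 0$, the root product is $-4d/t$ and the root sum is $2(s+t-d)/t$. Under \eqref{e.HLc.contour} one has $a < -|x|$, hence $a < x$, hence $d = (a-x)/2 < 0$ strictly; together with the bound $a > \parab(t,x) \geq -t$ on the Hopf--Lax region (which gives $s,d > -t$), both the product and the sum of the two roots are positive, so both roots are real and positive. Calling them $\zls < \zls^\star$, we get $\zls^2 < \zls\zls^\star = -4d/t$, as needed. For $\expSr$ the analogous identity reads $(2-\zrs)^2\expSr''(\zrs) = -(4d + t(2-\zrs)^2)/(2\zrs)$, and the substitution $w = 2-z$ transforms the $\zrs$-quadratic into the $\zls$-quadratic, so $2-\zrs \in \{\zls,\zls^\star\}$; a sign check pins down $2-\zrs = \zls^\star$, giving $(2-\zrs)^2 = (\zls^\star)^2 > \zls\zls^\star = -4d/t$.

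Evenness is then standard: each of $\expSl, \expSr, \exprw$ is real-analytic on the slit domain $\C\setminus((-\infty,0]\cup[2,\infty))$ with real Taylor coefficients at any point of $(0,2) \subset \R$, so Schwarz reflection gives $\expfn(\bar z) = \overline{\expfn(z)}$. Combined with $\overline{re^{\img\theta}} = re^{-\img\theta}$, taking real parts yields $\Re\expfn(re^{\img\theta}) = \Re\expfn(re^{-\img\theta})$.

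For the strict monotonicity on $[0,\pi]$, I would compute $\Re\expfn(re^{\img\theta})$ in closed form using $|z| = r$, $|2-z|^2 = 4 - 4r\cos\theta + r^2$, and $\Re z = r\cos\theta$, arriving at
\begin{align*}
\partial_\theta\Re\exprw = \tfrac{2sr\sin\theta}{|2-z|^2},
\qquad
\partial_\theta\Re\expSl = r\sin\theta\Bigl(\tfrac{2s}{|2-z|^2} + \tfrac{t}{2}\Bigr),
\qquad
\partial_\theta\Re\expSr = r\sin\theta\Bigl(\tfrac{2d}{|2-z|^2} + \tfrac{t}{2}\Bigr).
\end{align*}
For $\theta \in (0,\pi)$ the factor $r\sin\theta$ is strictly positive, so only strict positivity of each bracketed factor remains. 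The $\exprw$ case is immediate from $s > 0$. For $\expSl$ and $\expSr$ the bracket is manifestly positive when the coefficient ($s$ for $\expSl$, $d$ for $\expSr$) is nonnegative; under \eqref{e.HLc.contour} both $s, d$ are strictly negative (by the same argument as above), in which case the bracket is a monotone increasing function of $|2-z|^2$ on $[0,\pi]$ and is therefore minimized at $\theta = 0$. The same critical-equation manipulation as in the first paragraph rewrites this minimum as $-(4d + t\zls^2)/(2\zls(2-\zls))$ for $\expSl$ and $-(4s + t\zrs^2)/(2\zrs(2-\zrs))$ for $\expSr$; both are positive by the Vieta argument applied to the respective critical quadratics. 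The main algebraic obstacle is extracting the identity $\zls^2\expSl''(\zls) = (4d + t\zls^2)/(2(2-\zls))$ and its $\expSr$-variant: these bundle both the second-derivative and the monotonicity questions into a single Vieta-accessible inequality, after which everything is routine.
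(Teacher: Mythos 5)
Your proof is correct and takes a genuinely different route for the key positivity step. The paper also reduces everything to showing the bracketed factor in $\partial_\theta\Re\expfn = r\sin\theta(\cdots)$ is positive, and establishes the value at $\theta=0$ is positive by a one-parameter-family argument: fixing $t,s$ and varying $d$ over its admissible range $(d_*,0)$, the bracket at $\theta=0$ is shown to be monotone increasing in $d$ and to vanish at the boundary $d=d_*$ where the critical quadratic acquires a double root. Your Vieta approach replaces this with a closed algebraic identity $\zls^2\expSl''(\zls) = (4d+t\zls^2)/(2(2-\zls))$ (and its $\expSr$-variant), reducing the claim to $\zls^2 < -4d/t = \zls\zls^\star$; this is cleaner, avoids identifying the boundary $d_*$ explicitly, and simultaneously handles the second-derivative claim (which the paper treats as ``straightforwardly verified'' without details). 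You also nicely explain the $\expSr$ case by the substitution $w=2-z$ identifying $2-\zrs=\zls^\star$, rather than rerunning the argument symmetrically as the paper does.

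One inference, though, is not valid as stated: you write ``both the product and the sum of the two roots are positive, so both roots are real and positive.'' Positivity of the sum and product of roots of a real quadratic does \emph{not} imply the roots are real — the example $z^2 - 2z + 2$ has sum $2$ and product $2$ but complex roots. What actually guarantees realness (with a strict inequality $\zls < \zls^\star$) is positivity of the discriminant, which in your notation reads $(t+s-d)^2+4td = t^2+x^2+2ta$; this is strictly positive precisely because $a > \parab(t,x)$ under \eqref{e.HLc.contour}. Equivalently, one may cite \eqref{e.zs.xa}, which the paper uses right before the lemma to establish $\zls,\zrs \in (0,2)$ — this already presupposes the discriminant is positive. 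Once realness is secured by either route, your argument that $\zls$ is the smaller root (it carries the minus sign in \eqref{e.zls}) and the rest of the Vieta computation go through.
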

\begin{proof}
Throughout this proof $ z=z(\theta) = \radius e^{\img\theta} $, where $ \radius=\zls,\zrs,\zrws $.
All but the last statement are straightforwardly verified, and we will show the last statement for $ \expSl,\expSr $, and $ \exprw $ separately.

We begin with $ \expSl $.
Let $ d_* := -t+|s|-2\sqrt{t|s|} $.
First, it is straightforward to verify that \eqref{e.HLc.contour} implies $ 0<-s < t $ and $ d\in(d_*,0) $. 
Let $ q_{\triangleleft}(\theta) := \frac{t}{2} - 2|s| \,|2-z(\theta)|^{-2} $. 
Direct calculations give $ \Re(\partial_\theta \expSl) = \zls \sin\theta \, q_{\triangleleft}(\theta) $.
Since $ r\in(0,2) $, we have $ q_{\triangleleft}(\theta) \geq q_{\triangleleft}(0) $, and it suffices to show $ q_{\triangleleft}(0) >0 $.
Write $ q_{\triangleleft}(0) = \frac{t}{2} - 2|s|(2-\zls)^{-2}  $ and recall that $ \zls $ depends on $ (t,s,d) $.
Fixing $ t,s $ but varying $ d $, we view $ q_{\triangleleft}(0) $ as a function of $ d $.
It is readily checked from \eqref{e.zs} that $ \zls $ strictly decreases in $ d $,
so the value of $ q_{\triangleleft}(0) $ for $ d \in (d_*,0) $ is strictly larger than the value at $ d=d_* $.
The latter evaluates to $ \frac{t}{2} - 2|s|(2\sqrt{|s|/t})^{-2} = 0 $. 
Hence $ q_{\triangleleft}(0) > 0 $.

The proof for $ \expSr $ parallels the preceding one. 
Set $ q_{\triangleright}(\theta) := \frac{t}{2} - 2|d| \,|2-z(\theta)|^{-2}  $.
Straightforward calculations give $ \Re(\partial_\theta \expSr) = \zrs \sin\theta \, q_{\triangleright}(\theta) $.
By varying $ s $ while keeping $ t,d $ fixed, similar arguments as in the preceding shows that $ q_{\triangleright}(0)>0 $.
The proof for $ \exprw $ parallels the preceding ones. 
Here $ 0<s,d $. Set $ q_{\trw}(\theta) := 2s\, |2-z(\theta)|^{-2} $. 
Straightforward calculations give $ \Re(\partial_\theta \expSr) = \zrws \sin\theta  \, q_{\trw}(\theta) $.
That $ \zrws\in(0,2) $ implies $ q_{\trw}(\theta) >0 $.
\end{proof}

\begin{proof}[Proof of Proposition~\ref{p.trace}]
Consider the expression \eqref{e.steep.formula}. 
Let $ R $ denote the total number of residue operators so that the sum in \eqref{e.steep.formula} has $ 2^R $ terms.
Parameterize the contours by $ z=z(\theta)=z_\star e^{\img\theta} $ and accordingly $ \expfn(\theta)=\expfn(z(\theta)) $.
We have and will often omit specifying the superscript ($ \triangleleft,\triangleright,\trw $) and the subscript ($ k\word_1,\word_1\word_2, \ldots $) in $ \expfn $, $ z_\star $, etc.
Throughout the proof, we write $ c=c(t,\icvecx,\icveca,\vecx,\veca) $ for a generic positive constant that depends only on $ t,\icvecx,\icveca,\vecx,\veca $.

We begin with the upper bound.
Recall from just before Proposition~\ref{p.trace} that $ \expfnword[\monomial] = \sum \expfn_\star $, which is equal to $ \sum \expfn(0) $ under the current notation.
Lemma~\ref{l.steep} gives $ \prod |e^{-N \expfn(\theta) }| \leq e^{-N \expfnword[\monomial] } $.
We claim that this property remains true under the action of the residue operators up to some multiplicative constants.
Take the scenario described in Definition~\ref{d.opu}\ref{d.opu.4} for example.
The residue operator turns $ \oint \frac{\d z_{n-1}}{2\pi \img} \fnrw_{\word_{n-1}\word_n}(z_{n-1}) \frac{-(2-z_n)}{2-z_n-z_{n-1}} \fnSr_{\word_n k'}(z_{n}) $ into
\begin{align}
	\label{e.pf.p.trace.1}
	\fnrw_{\word_{n-1}\word_n}(2-z_{n}) \cdot (2-z_n) \cdot \fnSr_{\word_n k'}(z_{n}) \big|_{ \{ |2-z_n| \geq \zrws[\word_{n-1}\word_n] \} } 	
	=
	\fnSr_{\word_{n-1} k'}(z_{n}) \big|_{ \{ |2-z_n| \geq \zrws[\word_{n-1}\word_n] \} }.
\end{align}
Write $ z_{n-1}=z_{n-1}(\theta_{n-1}) $ and $ z_n=z_n(\theta_n) $ and let $ (\theta_{n-1},\theta_n)=(\alpha,\beta)\in(0,\frac{\pi}{2})^2 $ be the angles where the  circles $ \Circ:=\{ {z_{n-1}(\theta_{n-1})}\} $ and $ \Circ':=\{ 2-{z_{n}(\theta_{n})}\} $ intersect, more explicitly $ \zrws[\word_{n-1}\word_n] e^{\img\alpha} = (2-\zrs[\word_nk'] e^{\img\beta}) $.
The condition $ |2-z_n| \geq \zrws[\word_{n-1}\word_n] $ in \eqref{e.pf.p.trace.1} is equivalent to $ |\theta_n| \geq \beta $, so the right side of \eqref{e.pf.p.trace.1} is bounded in absolute value from above by $ \exp(-N \Re(\expSr_{\word_{n-1} k'}(\beta))) $.
On the other hand, specializing \eqref{e.pf.p.trace.1} at $ \theta_n=\beta $ and taking absolute values give
\begin{align*}
	\exp( -N \Re(\exprw_{\word_{n-1}\word_n}(\alpha)) ) \cdot |2-\zrs[\word_nk'] e^{\img\beta}| \cdot \exp( -N \Re(\expSr_{\word_n}(\beta)) )
	=
	\exp(-N \Re(\expSr_{\word_{n-1} k'}(\beta)) ).
\end{align*}
The left side is $ \leq c \exp( -N (\exprw_{\word_{n-1}\word_n}(0)+\expSr_{\word_{n-1} k'}(0)) = c \exp( -N (\exprws[\word_{n-1}\word_n]+\expSrs[\word_{n-1} k']) ) $, whereby
\begin{align*}
	\big| \eqref{e.pf.p.trace.1} \big|
	\leq
	c \exp( -N (\exprws[\word_{n-1}\word_n]+\expSrs[\word_{n-1} k']) ).
\end{align*}
To summarize, a residue operator effectively combines two $ \fncontour $ factors, and the combined factor is exponentially smaller than the critical value of the pre-combined one.

We proceed to complete the proof of the upper bound.
By the preceding paragraph, the $ \expfn $ factors in \eqref{e.steep.formula} altogether is bounded above by $ c^R \exp(-N\expfnword[\monomial]) $.
The remaining rational functions are $ N $-independent, and are either bounded or integrable along the circular contours.
Let $ M\in\Z_{> 0} $ denote the total number of $ z $ variables in \eqref{e.steep.formula.}.
We have that $ |\monomial| = |\eqref{e.steep.formula}| \leq \sum c^{R+M} \exp(-N\expfnword[\monomial]) = 2^R c^{R+M} \exp(-N\expfnword[\monomial]) $.
Recall that $ \norm{\monomial} $ counts the total number of $ \opuci{\ldots} $ involved in $ \monomial $, and note that $ R $, the number of residue operators, is at most the number of $ \fnSr $ factors involved.
Under Convention~\ref{con.wordkl}, it is straightforward to check that $ R+M \leq c\,\norm{ \monomial } $.
This concludes the desired upper bound.

Turning to the lower bound, we fix a preferred trace $ \monomial $ and view $ R,M $ as being fixed too.
Among the $ 2^{R} $ terms in the sum in \eqref{e.steep.formula}, there is a distinguished one that does not involve residue operators.
By the preceding discussion, those that involve residue operators are exponentially smaller than $ e^{-N \expfnword[\monomial] } $.
For the distinguished one, use Lemma~\ref{l.steep} and the change of variables $ \theta\mapsto \theta/\sqrt{N} $. 
The result evaluates to $ \geq c^{-M} N^{-M/2} e^{-N \expfnword[\monomial] } $.
\end{proof}

\section{Properties of $ \raterw(\Cdot\xrightarrow{ \scriptscriptstyle t}\Cdot) $}
\label{s.a.raterw}

Except in Lemma~\ref{l.raterw.minimizer.word.}, in this section we only assume the basic conditions \eqref{e.HLc.xa} and \eqref{e.nondeg.sd} on $ (\icvecx,\icveca) $ and $ (\vecx,\veca) $. Take any minimizer $ \{F_k\}_{k} $ of \eqref{e.raterw.xa} under Convention~\ref{con.no.empty} and set $ \word^{(k)} := \wordF(F_k) $.
Under this convention, the list $ \{\word^{(k)} \}_k $ does \emph{not} contain any empty word.
Recall from Section~\ref{s.results.matching} the function $ \Rwf(\wordkl[\word][k][k]) $ and that $ F_k = \Rwf(\wordkl[\word^{(k)}][k][k]) $.

\begin{lem}
\label{l.raterw.minimizer}
Fix any $ \word=\word^{(k)} $ and $ F_k $ in the minimizer.
The word $ \word=\word^{(k)} $ decomposes into $ \wordkl[\word][k][k]=\wordkl[\wordsub^{(1)}][k][k] \cupi \ldots \cupi \wordkl[\wordsub^{(n)}][k][k] $,
where each $ \wordkl[\wordsub^{(i)}][k][k] \in \islesetkl[kk] $, and $ F_k = \Rwf(\wordkl[\word][k][k]) = \rwf[\wordkl[\word][k][k]] $.
\end{lem}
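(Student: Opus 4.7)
The plan is to identify $F_k$ successively with $\Rwf(\wordkl[\word][k][k])$ and $\rwf[\wordkl[\word][k][k]]$, and then derive the isle decomposition from this structure.

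First I would show $F_k = \Rwf(\wordkl[\word][k][k])$. The claim is that $F_k$ is the unique minimizer of $\int \rateber(\partial_y f)$ over $f \in \Lip$ satisfying (i)~$\parab[k](t) \le f \le \parab[k](0)$, (ii)~$f(x_j) = a_j$ for $j \in \word^{(k)}$, and (iii)~$f(x_j) \le a_j$ for $j \notin \word^{(k)}$. Indeed, any $\til F_k$ satisfying (i)--(iii) with strictly smaller rate could replace $F_k$ in $\{F_k\}_k$ while preserving $\max_k f_k(x_j) = a_j$ (at each $j \notin \word^{(k)}$ the maximum is already attained by some $F_{k'}$ with $j \in \word^{(k')}$), contradicting minimality. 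By the definition $\word^{(k)} = \wordF(F_k)$, the constraints (iii) are strictly slack at $F_k$, so they are inactive and can be dropped without altering the minimizer (a standard convex-analysis fact, using strict convexity of $\rateber$). The resulting problem is precisely the one defining $\Rwf(\wordkl[\word][k][k])$, so uniqueness gives $F_k = \Rwf(\wordkl[\word][k][k])$; in particular $\Rwf(x_j) \le a_j$ for all $j$, which sharpens Remark \ref{r.Rwf} in this special case.

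Second I would verify that $F_k$ lies in the admissible set of $\rwf[\wordkl[\word][k][k]]$. The infinity conditions \eqref{e.rwfn.space.<}, \eqref{e.rwfn.space.>} follow because $F_k = \Rwf$ coincides with $\parab[k](t)$ outside $[-t + \icx_k, \icx_k + t]$. The substantive content is the lower bounds $F_k \ge \parab[k_0](t)$ on the prescribed intervals for $k_0 \neq k$. Suppose by contradiction that $F_k(y_0) < \parab[k_0](t, y_0)$ at some $y_0$ where the admissibility requires the reverse. Then $F_{k_0}(y_0) \ge \parab[k_0](t, y_0) > F_k(y_0)$, so on the interval where $F_k < F_{k_0}$ one can perform a max/min rewiring $(\til F_k, \til F_{k_0}) = (F_k \vee F_{k_0}, F_k \wedge F_{k_0})$, in the spirit of Lemma \ref{l.repeat}. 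By the rearrangement identity this preserves the pointwise sum $\rateber(\partial_y F_k) + \rateber(\partial_y F_{k_0})$. A case analysis exploiting the orderings in \eqref{e.nondeg.sd} (notably the monotonicity $\icx_1 < \cdots < \icx_\icm$ inherited from $\ics_k$, $\icd_k$) shows that the rewired pair remains admissible and that its words differ from $(\word^{(k)}, \word^{(k_0)})$. Applying the characterization of Step 1 to the rewired configuration and using strict convexity of $\rateber$ upgrades this to a strict decrease in total rate, contradicting minimality. Once admissibility is secured, a parallel argument identifies $F_k$ with the $\rwf$-minimizer (the residual check that $\rwf \le \parab[k](0)$ on its support follows from minimality applied to $f \mapsto \min(\rwf, \parab[k](0))$).

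Finally, the isle decomposition follows by inspecting the graph of $F_k = \rwf$ for nontrivial touches with each $\parab[k_*](t)$ and cutting at those touches, producing pieces that are isles by Definition \ref{d.isle}\ref{d.isle.2}. To see that each intermediate index equals $k$, I argue that a touch with $\parab[k_0](t)$ for $k_0 \neq k$ would let us reassign some of $\word^{(k)}$ to $F_{k_0}$ without increasing the total rate; strict convexity then forces a strict decrease, contradicting minimality. This yields the decomposition $\wordkl[\word][k][k] = \wordkl[\wordsub^{(1)}][k][k] \cupi \cdots \cupi \wordkl[\wordsub^{(n)}][k][k]$ with each $\wordsub^{(i)} \in \islesetkl[kk]$. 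The main obstacle is the admissibility verification in the second step: a direct lift $F_k \mapsto \max(F_k, \parab[k_0](t))$ may violate the upper bound $\le \parab[k](0)$, forcing the more delicate max/min rewiring between $F_k$ and $F_{k_0}$, and certifying that the rewiring respects all the admissibility bounds requires careful geometric bookkeeping tied to the orderings of $(\icx_k, \ica_k)$.
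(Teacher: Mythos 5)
Your overall shape --- minimality of $\{F_k\}_k$ plus a rewiring plus strict convexity of $\rateber$ --- is the right one, and your Step 1 (dropping the slack constraints $f(x_j)\le a_j$, $j\notin\word^{(k)}$, to identify $F_k=\Rwf(\wordkl[\word][k][k])$) matches what the paper takes for granted in Section~\ref{s.results.notation}. The core of the lemma, however, is your Step 2, and there the proposal has two concrete gaps. First, the global max/min swap $(\til F_k,\til F_{k_0})=(F_k\vee F_{k_0},\,F_k\wedge F_{k_0})$ cannot be made admissible in the very scenario you are refuting: at the point $y_0$ where $F_k(y_0)<\parab[k_0](t,y_0)$ one has $(F_k\wedge F_{k_0})(y_0)=F_k(y_0)<\parab[k_0](t,y_0)$, so the min violates the lower bound required of index $k_0$; assigning the min to index $k$ instead forces the max onto $k_0$, and $F_k\vee F_{k_0}\le\parab[k_0](0)$ need not hold. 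No bookkeeping of the orderings in \eqref{e.nondeg.sd} rescues a \emph{global} swap between $F_k$ and $F_{k_0}$. The paper instead rewires locally and against the parabola $\parab[k'](t)$ rather than against $F_{k'}$: it locates an interval $[\alpha,\beta]$ on which $F_k\ge\parab[k'](t)$ with equality at the endpoints (using $\Rwf\le\rwf$ at the merge point $\xl$ and $\Rwf(x_{\letterlast})\ge\parab[k'](t,x_{\letterlast})$), replaces $F_k$ by $\parab[k'](t)$ on $[\alpha,\beta]$, donates the carved-out piece to a new competitor equal to $\parab[k'](t)$ off $[\alpha,\beta]$ (hence automatically admissible for index $k'$), and absorbs that competitor into $F_{k'}$ via Lemma~\ref{l.repeat}.

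Second, your source of strictness is missing. The rearrangement identity preserves the total rate \emph{exactly}, so ``strict convexity then forces a strict decrease'' is not an argument: a rate-preserving reassignment plus strict convexity does not by itself produce a strictly better competitor. The paper closes the contradiction by exhibiting one explicitly: the slope gap \eqref{e.l.raterw.minimizer.1} at $\beta$ (which uses $k<k'$ to order $\partial_y\parab[k](t)<\partial_y\parab[k'](t)$ for $y>x_{\letterlast}$) shows the rewired $\til F_k$ has a genuine $\wedge$ kink at $\beta$, and smoothing that kink as in Figure~\ref{f.smooth} strictly lowers $\raterwRel{\til F_k}{\parab[k](t)}$ while preserving all constraints. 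The same objection applies to your final step, where a tangential touch of $\rwf$ with $\parab[k_0](t)$ is asserted to yield a strict decrease ``by reassignment'': without an explicit strictly better configuration the minimality contradiction does not close.
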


\begin{proof}
Throughout this proof $ \Rwf(y)=(\Rwf(\wordkl[\word][k][k]))(y) $ and $ \rwfsymb(y)=(\rwf[\wordkl[\word][k][k]])(y) $.

The two statements are equivalent.
To see why, recall that $ \rwfsymb $ is the minimizer of $ \raterwRel{f}{\parab[k](t)} $ under the constraints in \eqref{e.rwfn.space},
while $ \Rwf $ is the minimizer of the same quantity  under the constraints in \eqref{e.raterw.xa}.
If $ \wordkl[\word][k][k] $ decomposes into isles in $ \islesetkl[kk] $, the constraints posed by $ \parab[k'](t) $, $ k'=1,\ldots,\icm $, in \eqref{e.rwfn.space} is effective only for $ k'=k $, so $ \Rwf = \rwfsymb $.
Conversely, if $ \Rwf=\rwf $, those constraints posed by $ \parab[k'](t) $, $ k'=1,\ldots,\icm $, in \eqref{e.rwfn.space} are not effective for all $ k'\neq k $, so the decomposition of $ \wordkl[\word][k][k] $ into isles consists solely of isles in $ \islesetkl[kk] $.

It suffices to prove the first statement.
Assume the contrary: $ \wordkl[\word][k][k] = \wordkl[\wordsub][k][k'] \cupi \wordkl[\wordsub'][k'][k] $, for some $ k'\neq k $.
Consider $ k'>k $, and the case $ k'<k $ can be treated similarly.
The first step is to compare $ \Rwf(y) $ and $ \parab[k'](t,y) $ for various $ y $.
By Convention~\ref{con.wordkl}, $ \wordsub' \neq \emptyset $.
Setting $ \letterlast := \wordsub'_{|\wordsub'|} $, we have $ \Rwf(x_{\letterlast}) = a_{\letterlast} \geq \parab[k'](t,x_{\letterlast}) $.
Next, recall that $ \xl:=\xl(\wordkl[\wordsub'][k'][k])<x_{\letterlast} $ denotes the left point where $ \rwfsymb $ merges tangentially with $ \parab[k'](t) $, and note that $ \Rwf \leq \rwfsymb $, because \eqref{e.rwfn.space} imposes more `$ \geq $-type' constraints than \eqref{e.raterw.xa}.
Hence $ \Rwf(\xl) \leq \rwfsymb(\xl) = \parab[k'](t,\xl) $.
%
Finally, because $ k<k' $, for all $ y $ large enough $ \Rwf(y) = \parab[k](t,y) < \parab[k'](t,y) $.
Given these properties, we set $ \alpha := \sup\{ y\in [\xl,x_{\letterlast}] : \Rwf(y) \leq \parab[k'](t,y) \} $
and $ \beta := \inf\{ y\in[x_\letterlast,\infty) : \Rwf(y) \leq \parab[k'](t,y) \} $
so that $ \Rwf|_{[\alpha,\beta]} \geq \parab[k'](t)|_{[\alpha,\beta]} $ and $ \Rwf|_{y=\alpha,\beta} = \parab[k'](t)|_{y=\alpha,\beta} $. 
Further,
\begin{align}
	\label{e.l.raterw.minimizer.1}
	(\partial_y \Rwf)(\beta^+) < (\partial_y \parab[k'](t))(\beta).
\end{align}
To see why, note that $ \Rwf $ is linear in $ (x_{\letterlast},\xr(\wordkl[\word'][k'][k])) $ and then merges tangentially with $ \parab[k](t) $ for $ y \geq \xr(\wordkl[\word'][k'][k]) $.
Hence, for all $ y>x_{\letterlast} $, $ \partial_y \Rwf(y) \leq \partial_y\parab[k](t,y) $, which is strictly smaller than $ \partial_y\parab[k'](t,y) $ since $ k<k' $.
Taking into account the possibility that $ \beta=x_{\letterlast} $ leads to \eqref{e.l.raterw.minimizer.1}.

We now derive a contradiction by a `rewiring' argument similar to the proof of Proposition~\ref{p.nondiag}.
Recall that $ F_k = \Rwf $ and that $ \{F_{k''}\}_{k''} $ minimizes \eqref{e.raterw.xa}.
Rewire the functions $ F_k=\Rwf $ and $ \parab[k](t) $ at $ y=\alpha,\beta $ to get
$
	\til{F}_k := \Rwf(y) \ind_{y\notin(\alpha,\beta)} + \parab[k'](t,y)\ind_{y\in[\alpha,\beta]},
$
and
$
	\til{F}_{k'} := \Rwf(y) \ind_{y\in(\alpha,\beta)} + \parab[k'](t,y)\ind_{y\notin(\alpha,\beta)}.
$
These new functions conserve $ \wordF(F_k) $ and $ \raterwRel{F_k}{\parab[k](t)} $,
namely $ \wordF(\til{F}_k)\cup\wordF(\til{F}_{k'}) = \wordF(F_k) $
and $ \raterwRel{ F_k }{ \parab[k](t) } = \raterwRel{ \til{F}_k }{ \parab[k](t) } +\raterwRel{ \til{F}_{k'} }{ \parab[k'](t) } $.
By Lemma~\ref{l.repeat}, the last quantity can be `combined' with $ \raterwRel{ F_{k'} }{ \parab[k'](t) } $, and the result does not increase.
As for $ \raterwRel{ \til{F}_k }{ \parab[k](t) } $, given \eqref{e.l.raterw.minimizer.1},
we modify $ \til{F}_k $ in a small neighborhood of $ y=\beta $ as depicted in Figure~\ref{f.smooth}, and let $ \widehat{F}_k $ denote the modified function.
The modified function satisfies $ \raterwRel{\widehat{F}_k}{ \parab[k](t) } < \raterwRel{\til{F}_k}{ \parab[k](t) } $, $ \widehat{F}_k \leq \til{F}_k $, and $ \wordF(\widehat{F}_k) \cup \wordF(\til{F}_{k'}) = \wordF(\til{F}_k) \cup \wordF(\til{F}_{k'}) $. 
Altogether, these procedures strictly decreases the value of $ \sum_{k''} \raterwRel{({\ldots})_{k''}}{\parab[k''](t)} $ while satisfying the constraints in \eqref{e.raterw.xa}, a contradiction.
\end{proof}

\begin{lem}
\label{l.raterw.minimizer.word}
Fix any $ \word=\word^{(k)} = \wordF(F_k) $.
\begin{enumerate*}[label=(\alph*)]
\item \label{l.raterw.minimizer.word.1}
The word contains all letters in $ [\word_1,\word_{|\word|}] $.
\quad
\item \label{l.raterw.minimizer.word.2}
$ \word\in\treekl[kk] $.
\end{enumerate*}
\end{lem}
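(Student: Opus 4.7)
The plan is to prove (a) by a redistribution (rewiring) argument and (b) by combining the isle decomposition of Lemma~\ref{l.raterw.minimizer} with a greedy tower construction.

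For part (a), I argue by contradiction. Suppose $\word = \wordF(F_k)$ has a gap: there exists $j_0$ with $\word_1 < j_0 < \word_{|\word|}$ and $j_0 \notin \word$. Feasibility of the minimizer gives $F_k(x_{j_0}) \leq a_{j_0}$, and the inequality is strict since $F_k(x_{j_0}) = a_{j_0}$ would place $j_0$ in $\wordF(F_k) = \word$. The constraint $\max_{k'} F_{k'}(x_{j_0}) = a_{j_0}$ then produces some $k_0 \neq k$ with $F_{k_0}(x_{j_0}) = a_{j_0}$. The plan is to transfer the letter $j_0$ from $F_{k_0}$ to $F_k$ by setting $\til F_k := \rwf[\wordkl[\word \cup \{j_0\}][k][k]]$ and $\til F_{k_0} := \rwf[\wordkl[\wordF(F_{k_0}) \setminus \{j_0\}][k_0][k_0]]$, invoking Lemma~\ref{l.repeat} if the rewiring creates another $F_{k'}$ sharing the $k$-index that must be consolidated to preserve feasibility. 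The goal is to show $\sum_{k'} \raterwRel{\til F_{k'}}{\parab[k'](t)} < \sum_{k'} \raterwRel{F_{k'}}{\parab[k'](t)}$, contradicting minimality. The mechanism parallels the strict-rewiring step at the end of Lemma~\ref{l.raterw.minimizer}: the local contributions of $F_k$ and $F_{k_0}$ around $(x_{j_0}, a_{j_0})$ are consolidated into a single bump on $\til F_k$, and a local smoothing at the newly created kink (cf.\ Figure~\ref{f.smooth}) leverages the strict convexity of $\rateber$ together with the slack $F_k(x_{j_0}) < a_{j_0}$ to yield a strict decrease.

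For part (b), Lemma~\ref{l.raterw.minimizer} gives the decomposition $\wordkl[\word][k][k] = \wordkl[\wordsub^{(1)}][k][k] \cupi \cdots \cupi \wordkl[\wordsub^{(n)}][k][k]$ with each $\wordsub^{(i)} \in \islesetkl[kk]$. By Proposition~\ref{p.tree.prop}\ref{p.tree.isle}, $\word \in \treekl[kk]$ reduces to $\wordsub^{(i)} \in \treekl[kk]$ for each $i$. Fix one isle $\wordsub = \wordsub^{(i)}$; by Proposition~\ref{p.tree.prop}\ref{p.IHC}, it suffices to construct a tower from $\wordsub$ up to $\wordfullkl[kk]$ that adds one letter per step and satisfies $\IHC[kk]$. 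I build this tower greedily: at each stage, pick a letter $j \in \wordfullkl[kk]$ not yet included with $\rwf[\mathrm{current}](x_j) \leq a_j$. The standard comparison principle for constrained minimizers of $\int \rateber(\partial_y f)\,\d y$ — imposing an equality constraint above the current value raises the function pointwise — then gives $\rwf[\mathrm{current} \cup \{j\}] \geq \rwf[\mathrm{current}]$ pointwise, so $\IHC[kk]$ holds at that step. To see an admissible letter always exists until $\wordfullkl[kk]$ is reached, I use that outside the span $[\xl, \xr]$ of the current isles the function $\rwf[\mathrm{current}]$ tracks $\parab[k](t)$ and hence lies below every $a_j$ by \eqref{e.HLc.xa}, so exterior letters are automatically admissible and can be added first; interior admissibility is then handled inductively using part (a), which rules out gaps in the span of the growing word.

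The main obstacle will be the strict cost-decrease claim in (a). Because $F_k$ and $F_{k_0}$ are anchored to different parabolas $\parab[k](t) \neq \parab[k_0](t)$, a direct swap of function segments cannot preserve the feasibility constraints $f \geq \parab[k'](t)$, so the transfer of the letter must be combined with a carefully chosen local modification near the newly formed kink of $\til F_k$ at $(x_{j_0}, a_{j_0})$; the strict convexity of $\rateber$ together with the slack $F_k(x_{j_0}) < a_{j_0}$ is then what manufactures the genuine decrease in total cost needed to close the contradiction.
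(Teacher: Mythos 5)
Part (a) of your proposal follows essentially the paper's own route: you locate a $k_0\neq k$ whose $F_{k_0}$ passes through the missing letter, transfer that letter to $F_k$ by a rewiring of the two functions, and extract a strict improvement by a local smoothing at the resulting kink. The paper does the same, and is equally terse about the details, deferring to the rewiring step in the proof of Lemma~\ref{l.raterw.minimizer}. No complaint there.

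Part (b) is where you genuinely diverge, and where there is a gap. The paper argues \emph{top-down}: it runs the up-down iteration from $\wordfullkl[kk]$, always following the child that keeps exactly those activated letters lying in $\word$, lands on a tree word $\wordsub=\{\ldots j_1\}\cup\word\cup\{j_1'\ldots\}$, and then eliminates the excess letters by noting that they were never activated, hence are all $\down$ in $\vecUDc(\wordkl[\wordsub][k][k])$, which forces their points into the open hypograph of $\rwf[\wordkl[\wordsub][k][k]]$ (the wing region of Figure~\ref{f.wing}) and contradicts the $\IHC[kk]$ that $\wordsub$ satisfies. You instead go \emph{bottom-up}, building an IHC tower from $\word$ up to $\wordfullkl[kk]$ greedily. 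The reduction to towers via Proposition~\ref{p.tree.prop}\ref{p.IHC}, and the comparison principle (imposing $f(x_j)=a_j$ with $a_j$ at or above the current value raises the constrained minimizer pointwise, so that single step preserves IHC), are both sound. What is not justified is that an admissible letter exists at \emph{every} stage until $\wordfullkl[kk]$ is exhausted. At the first stage all remaining letters are admissible, since $F_k=\rwf[\wordkl[\word][k][k]]$ satisfies $F_k(x_j)\leq a_j$ for all $j$ by feasibility of the minimizer; but adding a letter raises the function over a larger span and can push it strictly above another remaining point --- this is precisely the phenomenon of Remark~\ref{r.Rwf} and Figure~\ref{f.ex.ud} --- so a letter that is admissible now can become permanently inadmissible if letters are added in the wrong order. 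Your justification conflates ``exterior in letter order'' (which is all that part (a) tells you about $\wordfullkl[kk]\setminus\word$) with ``positionally outside the span of the current word,'' and the phrase ``interior admissibility is handled inductively using part (a)'' does not carry weight: part (a) constrains $\wordF(F_k)$, not the intermediate words of your tower. To close the gap you would need to exhibit and justify a specific admissible ordering of the remaining letters; the paper's top-down argument is designed exactly to avoid having to produce such a tower explicitly.
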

\begin{proof}
\ref{l.raterw.minimizer.word.1}\
Fix any $ j\in(\word_1,\word_{|\word|})\cap\Z $.
That $ \{F_{k''}\}_{k''} $ minimizes \eqref{e.raterw.xa} requires $ \cup_{k''}\word^{(k'')} = 12\cdots m $,
so $ j \in \word^{(k')} $ for some $ k' $.
If $ j\notin\word $, there exist $ \alpha<\beta $
with $ x_j \in (\alpha,\beta) \subset [x_{\word_1},x_{\word_{|\word|}}] $
such that $ F_{k'}|_{(\alpha,\beta)} >F_{k}|_{(\alpha,\beta)} $
and $ F_{k'}|_{y=\alpha,\beta} = F_{k}|_{y=\alpha,\beta} $.
A similar rewiring procedure as in the proof of Lemma~\ref{l.raterw.minimizer} produces a contradiction.

\ref{l.raterw.minimizer.word.2}\
Follow the up-down iteration corresponding to $ \treekl[kk] $, starting with $ \wordfullkl[kk] $.
Recall that, at each step of the iteration, a child chooses to delete some and keep some letters from the activated letters.
In each step of the iteration, follow the child that chooses to keep any letter that is in $ \word $ and delete any letter that is not in $ \word $.
Since, by Part~\ref{l.raterw.minimizer.word.1}, the word $ \word $ is made up of consecutive letters,
this procedure eventually leads to $ \wordsub = \{\ldots j_1\}\cup\word\cup\{j'_1\ldots\} \in \treekl[kk] $,
where the $ j $'s are excess letters, with $ \ldots,j_1 < \word_1 $ and $ \word_{|\word|} < j'_1,\ldots $.
These excess letters have never activated throughout the iteration, so $ \vecUDc(\wordsub)|_{\{\ldots j_1,j'_1,\ldots\}} = (\down\ldots\down) $.
Invoking geometric argument as depicted in Figure~\ref{f.wing} shows that the last property contradicts with the $ \IHC[kk] $ that $ \wordsub $ satisfies.
Hence those excess letters cannot exist and $ \word=\wordsub \in \treekl[kk] $.
\end{proof}

Under Assumption~\ref{assu.strict},
the following lemma gives a stronger version of Lemma~\ref{l.raterw.minimizer.word}\ref{l.raterw.minimizer.word.1}.
The proof is similar to that of Lemma~\ref{l.raterw.minimizer.word}\ref{l.raterw.minimizer.word.1}, so we omit it.

\begin{lem}
\label{l.raterw.minimizer.word.}
List the words $ \word^{(k)} $ in the ascending order of $ k $.
Under Assumption~\ref{assu.strict}, the words share a letter only if they are consecutive in the list,
and the shared letter must be the last letter in the precedent word and the first letter in the succeeding word.
\end{lem}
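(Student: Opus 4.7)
The plan is a contradiction argument via the ``drop-a-letter'' rewiring technique used in Lemma~\ref{l.raterw.minimizer.word}\ref{l.raterw.minimizer.word.1}. By that lemma, each $\word^{(k)}$ is an interval $[\letterfirst_k,\letterlast_k]\cap\Z$, so the claim reduces to showing that two intervals $\word^{(k_1)}, \word^{(k_2)}$ with $k_1<k_2$ in the (reduced) list can meet only in a single shared boundary letter $j=\letterlast_{k_1}=\letterfirst_{k_2}$. The ``consecutive in the list'' clause is automatic once the boundary condition is established: any non-empty $\word^{(k'')}$ with $k_1<k''<k_2$ is itself an interval and, by the coverage constraint $\bigcup_k\word^{(k)}=\{1,\ldots,m\}$, must share a letter with $\word^{(k_1)}$ or $\word^{(k_2)}$ in a way that the already-established boundary condition then pins down.

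Assume for contradiction that the boundary condition fails: the two intervals either (A) overlap in at least two letters, or (B) overlap in exactly one letter $j$ not of the form $\letterlast_{k_1}=\letterfirst_{k_2}$ (so $j$ lies in the interior, or at the ``wrong'' boundary, of at least one of them). In either case, pick a shared letter $j$ and perform the drop: replace $F_{k_2}$ by $\tilde F_{k_2} := \rwf[\wordkl[\word'^{(k_2)}][k_2][k_2]]$ where $\word'^{(k_2)} \subsetneq \word^{(k_2)}$ is obtained by removing $j$ and further truncating, if necessary, so the result remains an interval (removing $F_{k_2}$ entirely if the word becomes empty), while leaving $\tilde F_k:=F_k$ for $k\neq k_2$. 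Admissibility ($\max_k\tilde F_k(x_i)=a_i$ for every $i$) holds because every dropped letter belongs to $\word^{(k_1)}$ and is thus still covered by $F_{k_1}$; and weak monotonicity in the pass-through constraint gives $\raterwRel{\tilde F_{k_2}}{\parab[k_2](t)}\le\raterwRel{F_{k_2}}{\parab[k_2](t)}$.

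The main obstacle is upgrading this to a \emph{strict} inequality. Under Assumption~\ref{assu.strict}\ref{assu.slopes1}, $F_{k_2}$ has genuinely different left and right slopes at $x_j$, so the pass-through constraint $f(x_j)=a_j$ is actively shaping the minimizer at $x_j$; the delicate point is to rule out that the inequality constraint $\tilde F_{k_2}(x_j)\le a_j$ immediately re-binds upon relaxation. The key geometric observation is that in all the failure modes (A)--(B), the neighbouring letter structure of $\word^{(k_2)}$ around $x_j$ leaves room for the relaxed optimizer to dip strictly below $a_j$ --- this is precisely what distinguishes these failure modes from the admissible ``good'' configuration, where the inequality does re-bind and prevents any strict improvement. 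Once $\tilde F_{k_2}(x_j)<a_j$ strictly, strict convexity of $\rateber$ together with the $\hat F$-smoothing procedure of Figure~\ref{f.smooth} (as used in the proof of Lemma~\ref{l.raterw.minimizer}) yields $\raterwRel{\tilde F_{k_2}}{\parab[k_2](t)}<\raterwRel{F_{k_2}}{\parab[k_2](t)}$, contradicting minimality of $\{F_k\}_k$.
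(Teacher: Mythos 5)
The paper omits this proof, pointing to the crossing-and-rewiring mechanism of Lemma~\ref{l.raterw.minimizer.word}\ref{l.raterw.minimizer.word.1}: one locates an interval on which one of the two competing functions strictly exceeds the other with equality at the endpoints, swaps the graph segments, and then gets a \emph{strict} improvement by smoothing a kink as in Figure~\ref{f.smooth}. Your proposal replaces this exchange mechanism with a constraint-relaxation mechanism (drop the shared letter $j$ from $\word^{(k_2)}$ and re-minimize), and that is exactly where it breaks down. Relaxing the equality $f(x_j)=a_j$ to the inequality $f(x_j)\le a_j$ gives only $\raterwRel{\tilde F_{k_2}}{\parab[k_2](t)}\le\raterwRel{F_{k_2}}{\parab[k_2](t)}$, with equality precisely when the inequality constraint re-binds, i.e.\ when $F_{k_2}$ has a $\vee$ kink at $x_j$. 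You correctly flag this as the delicate point, but then you assert that in the failure modes the relaxed optimizer ``dips strictly below $a_j$''; that assertion \emph{is} the lemma, and you offer no geometric analysis to support it --- no description of how $F_{k_1}$, $F_{k_2}$, $\parab[k_1](t)$, $\parab[k_2](t)$ sit relative to one another near $x_j$, and no indication of which part of Assumption~\ref{assu.strict}\ref{assu.slopes1}--\ref{assu.slopes2} excludes the equality case. As written, the strictness step is a restatement of the goal rather than a proof.

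The reduction of the ``consecutive in the list'' clause is also incorrect. The coverage constraint $\bigcup_k\word^{(k)}=\{1,\dots,m\}$ does not force an intermediate nonempty $\word^{(k'')}$ with $k_1<k''<k_2$ to share a letter with $\word^{(k_1)}$ or $\word^{(k_2)}$ --- it could be a disjoint interval of letters elsewhere --- and even when it does share a boundary letter, the boundary condition alone does not exclude it. What is actually needed is a monotonicity statement: the intervals $\word^{(k)}$ are ordered left-to-right consistently with $k$, so that a word caught between the last letter of $\word^{(k_1)}$ and the first letter of $\word^{(k_2)}$ is forced to equal $\{j\}$ and can then be eliminated. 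That ordering is a further exchange argument of the same kind, which your write-up does not supply. A smaller issue: when $j$ is interior to $\word^{(k_2)}$, deleting $j$ and ``truncating to an interval'' discards letters that need not belong to $\word^{(k_1)}$, so admissibility of the competitor is not automatic; you should delete only $j$ (a competitor's word need not be an interval --- only the minimizer is constrained by Lemma~\ref{l.raterw.minimizer.word}\ref{l.raterw.minimizer.word.1}).
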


\begin{lem}
\label{l.raterw.lsc}
The rates $ \raterw((\icvecx,\icveca) \xrightarrow{ \scriptscriptstyle t} (\vecx,\veca)) $,
$ \raterw(g \xrightarrow{ \scriptscriptstyle t} (\vecx,\veca)) $,
and $ \raterw(g \xrightarrow{ \scriptscriptstyle t} f) $
are \ac{lsc} in $ (\icvecx,\icveca,\vecx,\veca)\in\R^{2\icm+2m} $,
in $ (g,\vecx,\veca)\in\Lip\times\R^{2m} $,
and in $ (g,f) \in (\Lip)^2 $, respectively.
\end{lem}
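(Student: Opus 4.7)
The three statements will be established in sequence, each reducing to the previous via standard limiting arguments. The core ingredient throughout is Arzelà--Ascoli compactness combined with the lower semi-continuity of convex integral functionals.

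\textbf{Step 1 (discretized rate).} Assume $(\icvecx^{(n)},\icveca^{(n)},\vecx^{(n)},\veca^{(n)}) \to (\icvecx,\icveca,\vecx,\veca)$ with finite liminf of the corresponding rates. For each $n$, pick a near-minimizer $\{F_k^{(n)}\}_{k=1}^{\icm}$ of~\eqref{e.raterw.xa}. By Lemma~\ref{l.raterw.minimizer} each $F_k^{(n)}\in\Lip$ is sandwiched between $\parab[k]^{(n)}(t)$ and $\parab[k]^{(n)}(0)$; uniform equicontinuity and local boundedness give a subsequential uniform-on-compacta limit $\{F_k\}$. The constraint $\max_k f_k(x_j) = a_j$ may be relaxed to $\max_k f_k(x_j) \geq a_j$ (tight at any minimizer by the discrete Hopf--Lax condition~\eqref{e.HLc.xa}), and as a closed condition it passes to the limit. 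For the rate, $f \mapsto \int_\R \rateber(\partial_y f)\,dy$ is lower semi-continuous under such convergence by convexity of $\rateber$ on $[-1,1]$, while the subtracted $\int \rateber(\partial_y \parab[k]^{(n)}(t))\,dy$ depends continuously on $(\icx_k^{(n)},\ica_k^{(n)})$. Summing over $k$, the liminf of the rates is at least $\sum_k \raterwRel{F_k}{\parab[k](t)} \geq \raterw((\icvecx,\icveca)\xrightarrow{t}(\vecx,\veca))$.

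\textbf{Step 2 (continuum initial condition).} Let $(g^{(n)},\vecx^{(n)},\veca^{(n)}) \to (g,\vecx,\veca)$ with finite liminf. For each $n$ choose a near-minimizing discretization $\icvecx^{(n)}$ of size $\icm_n$ with $\ica_k^{(n)}=g^{(n)}(\icx_k^{(n)})$. The key reduction is to show $\icvecx^{(n)}$ may be chosen with uniformly bounded size inside a uniformly bounded region. For the size: in an optimal $\{F_k^{(n)}\}$, any $k$ with $F_k^{(n)}=\parab[k]^{(n)}(t)$ contributes zero cost and its wedge can be dropped, while the remaining active wedges can be arranged so that their letter sets $\wordF(F_k^{(n)})$ are pairwise disjoint (by a rewiring argument as in the proof of Lemma~\ref{l.raterw.minimizer}), so the number of active wedges is $\leq m$. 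For the location: a wedge with $\icx_k^{(n)}$ far outside $[x_1^{(n)}-t,\,x_m^{(n)}+t]$ does not interact with the target and can likewise be dropped. After these reductions $(\icvecx^{(n)},\icveca^{(n)})$ lies in a compact set; extract a subsequence $(\icvecx^{(n)},\icveca^{(n)})\to(\icvecx,\icveca)$ with $\ica_k=g(\icx_k)$ (by uniform-on-compacta convergence of $g^{(n)}$), then apply Step~1.

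\textbf{Step 3 (continuum terminal condition).} The limit in~\eqref{e.raterw.gf} is actually a supremum: as $\e$ decreases, the constraint $M(\vecx)<\e$ restricts the admissible set, so the inner infimum is non-decreasing in $1/\e$. Thus $\raterw(g\xrightarrow{t} f) = \sup_{\e>0} \inf\{\raterw(g\xrightarrow{t}(x_j,f(x_j))_{j=1}^m): x_1<\cdots<x_m,\ M(\vecx)<\e\}$, and since a supremum of lower semi-continuous functions is lower semi-continuous, it suffices to prove lower semi-continuity of the inner infimum in $(g,f)$ for each fixed $\e$. For $(g^{(n)},f^{(n)})\to(g,f)$ with finite liminf, pick near-minimizing $\vecx^{(n)}$; the condition $M(\vecx^{(n)})<\e$ forces $\vecx^{(n)}$ into a compact subset of $\R^m$, yielding a subsequential limit $\vecx$, and $f^{(n)}(x_j^{(n)}) \to f(x_j)$ by uniform convergence on compacta. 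Step~2 then concludes.

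\textbf{Main obstacle.} The hardest ingredient is the uniform size and location bound on $\icvecx^{(n)}$ in Step~2; without it, the limit of $(\icvecx^{(n)},\icveca^{(n)})$ could fail to live in a finite-dimensional compact set. This reduction relies on the structural characterization of minimizers from Section~\ref{s.results.notation} and Lemmas~\ref{l.raterw.minimizer}--\ref{l.raterw.minimizer.word} in this appendix.
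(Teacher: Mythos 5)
Your Step 1 is correct, but it takes a genuinely different route from the paper. You argue by the direct method: Arzel\`a--Ascoli compactness of near-minimizers plus lower semicontinuity of the convex functional $f\mapsto\int\rateber(\partial_y f)$ under uniform convergence of $1$-Lipschitz functions. The paper instead exploits the finite combinatorial parameterization of minimizers: it defines a word $\wordkl[\word][k][k]$ to be ``$n$-admissible'' if it is a valid word for the $n$-th data and $\Rwf^{(n)}(\wordkl[\word][k][k])(x_j^{(n)})\leq a_j^{(n)}$ for all $j$, observes that (since there are finitely many words) every $n$-admissible word is $\infty$-admissible once $n$ is large, takes the words $\wordF(F_k^{(n)})$ of the $n$-th minimizer, and uses that for each \emph{fixed} word the function $\Rwf(\wordkl[\word][k][k])$ and its rate depend continuously on $(\icvecx,\icveca,\vecx,\veca)$. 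That reduction turns the whole question into finitely many continuous functions of finitely many real parameters and avoids any functional-analytic lsc, at the price of leaning on the structure theory of minimizers; your argument is softer. One caveat: your relaxation of $\max_k f_k(x_j)=a_j$ to $\geq a_j$ is both unnecessary and insufficiently justified --- the equality-constrained problem can be infeasible while the relaxed one is feasible, so ``tight at any minimizer'' is not automatic. It is also not needed: the equality constraint is itself closed under uniform-on-compacta convergence of the $f_k$ together with convergence of $(\vecx,\veca)$, so you can pass it to the limit directly.

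For the two continuum statements the paper writes only ``follow from this one by approximations,'' so your Steps 2--3 supply detail the paper omits; two points there do not hold as stated. (a) In Step 2, a wedge with $F_k^{(n)}=\parab[k]^{(n)}(t)$ cannot always be dropped: it may be the unique wedge attaining the equality $\max_k f_k(x_j)=a_j$ at a letter $j$ with $a_j=\parab[k]^{(n)}(t,x_j)$ (a ``deep'' letter), and removing it then violates the constraint. The correct reduction is to retain, for each letter $j$, one wedge attaining equality at $j$; this keeps at most $m$ wedges without increasing the cost, and no rewiring is needed. (b) In Step 3 the compactness claim is false as stated: for fixed $\e$ the condition $M(\vecx)<\e$ forces $x_1<-1/\e$, $x_m>1/\e$ and mesh $<\e$, but it places no upper bound on $m$, $|x_1|$, or $x_m$, so a near-minimizing $\vecx^{(n)}$ need not lie in a compact subset of a fixed $\R^m$; moreover the metric $\dist$ only controls $f^{(n)}-f$ on compact sets, so $f^{(n)}(x_j^{(n)})-f(x_j^{(n)})$ is uncontrolled for points drifting to infinity. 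You need an additional localization step --- e.g.\ showing that constraint points outside a large fixed window can be added or removed at a cost that vanishes with the window size --- before extracting subsequential limits; without it the passage from the inner infimum at data $(g^{(n)},f^{(n)})$ to a competitor for $(g,f)$ does not go through.
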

\begin{proof}
Take any sequences $ (\icvecx^{(n)},\icveca^{(n)}) \to (\icvecx,\icveca) $
and $ (\vecx^{(n)},\veca^{(n)}) \to (\vecx,\veca) $.
Accordingly, $ \wordset^{(n)}(kk') $ and $ \Rwf^{(n)}(\wordkl[\word][k][k]) $ depend on $ n $.
We call $ \wordkl[\word][k][k] $ \tdef{$n$-admissible} if $ \word \in \wordsett^{(n)}(kk') $ and $ (\Rwf^{(n)}(\wordkl[\word][k][k]))(x^{(n)}_j) \leq a^{(n)}_j $, for all $ j $,
with the convention $ \vecx^{(\infty)} := \vecx $, etc.
Indeed, any $ \wordkl[\word][k][k] $ that is not $ \infty $-admissible is not $ n $-admissible for all large enough $ n $. 
Hence there exists $ n_0\in\Z_{> 0} $ such that the following holds: For all $ n\geq n_0 $ and $ \wordkl[\word][k][k] $, being $ n $-admissible implies being $ \infty $-admissible.
For each $ n\geq n_0 $, take a minimizer $ \{ F^{(n)}_k \}_k $ of $ \raterw((\icvecx^{(n)},\icveca^{(n)}) \xrightarrow{ \scriptscriptstyle t} (\vecx^{(n)},\veca^{(n)}) ) $ and set $ f^{(n)}_k := \Rwf(\wordkl[\wordF(F^{(n)}_k)][k][k]) $.
We have
$
	\sum_k \raterwRel{ f^{(n)}_k }{ \parab[k](t) } \geq \raterw((\icvecx,\icveca) \xrightarrow{ \scriptscriptstyle t} (\vecx,\veca) ).
$
It is readily checked that $ \lim_{n\to\infty} | \raterwRel{ f^{(n)}_k }{ \parab[k](t) } - \raterwRel{ F^{(n)}_k }{ \parab[k]^{(n)}(t) } | = 0 $.
This concludes that $ \raterw((\icvecx,\icveca) \xrightarrow{ \scriptscriptstyle t} (\vecx,\veca)) $ is \ac{lsc}.
The other two statements follow from this one by approximations.
\end{proof}

\section{Stability in the initial condition}
\label{s.a.ic}

Initiate the \ac{TASEP} from two deterministic initial conditions $ \hh_N(0) $ and $ \hh'_N(0) $.
Couple the dynamics together by the basic coupling, and let $ \hh_N(t) $ and $ \hh'_N(t) $ denote the resulting processes.

\begin{lem}
\label{l.ic.}
For any given $ t,\e>0 $, there exist a $ \delta=\delta(t,\e)>0 $ and a universal $ c<\infty $ such that $ \dist(\hh_N(0),\hh'_N(0))< \delta $
implies $ \P[ \dist(\hh_N(t),\hh'_N(t)) < \e ] \geq 1- c\exp( -\e^{-1} N ) $.
\end{lem}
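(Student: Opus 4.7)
The plan is to combine two ingredients: a deterministic $L^\infty$-non-expansiveness of the basic coupling, and a finite-propagation-of-disagreement estimate.

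First, I will show that the basic coupling is $L^\infty$-non-expansive: under the basic coupling, deterministically,
\begin{align*}
	\|\hh_N(t) - \hh'_N(t)\|_{L^\infty(\R)} \le \|\hh_N(0) - \hh'_N(0)\|_{L^\infty(\R)},
\end{align*}
whenever the right side is finite. This rests on two facts: (i) the basic coupling is monotone (pointwise ordering of heights at time $0$ persists for all $s \ge 0$), and (ii) the location of a $\wedge$-kink depends only on the local slopes of the height, so the dynamics commutes with adding a real constant to the initial height. Sandwiching $\hh'_N(0)$ between $\hh_N(0) \wedge \hh'_N(0)$ and $\hh_N(0)\wedge \hh'_N(0)+\rho$ with $\rho:=\|\hh_N(0)-\hh'_N(0)\|_\infty$ and invoking (i)--(ii) yields the claim.

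Next, I will invoke a finite-propagation-speed estimate: there exist universal $C, c_0<\infty$ such that for any $r-R\ge Ct$,
\begin{align*}
	\P\big[\{\hh_N(s,x):s\in[0,t],\,|x|\le R\}\text{ depends on }\hh_N(0)|_{[-r,r]^c}\big]\le c_0\exp\big(-c_0^{-1}N(r-R-Ct)\big).
\end{align*}
This is a standard Poisson large-deviations bound: any such dependence would require a causal chain of clock rings of length $\ge N(r-R)$ within microscopic time $\le Nt$.

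Given these inputs, the proof assembles as follows. Choose $R=R(\e)$ large enough that $\sum_{n>R}2^{-n}\sup_{|x|\le n}|\hh_N(t,x)-\hh'_N(t,x)|\le \e/2$, using the crude bounds $\sup_{|x|\le n}|\hh_N(t)|\le n + |\hh_N(0,0)| + t/2$ (and likewise for $\hh'_N$) that follow from $\Lip$-regularity and the $\frac12$-Lipschitz-in-time bound on $\hh_N$. Set $r:=R+Ct+c_0\e^{-1}$ and $\delta:=\e\,2^{-r-2}$. Define extensions $\tilde\hh_N(0),\tilde\hh'_N(0)\in\Lip$ that agree with $\hh_N(0),\hh'_N(0)$ respectively on $[-r,r]$ and are extended outside $[-r,r]$ by a single common pair of slopes $\pm 1$; then $\tilde\hh_N(0)-\tilde\hh'_N(0)$ is constant on $(-\infty,-r]\cup[r,\infty)$ and bounded globally by $\sup_{|x|\le r}|\hh_N(0,x)-\hh'_N(0,x)|\le 2^r\delta=\e/4$. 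Coupling all four processes by the same basic coupling, the $L^\infty$-non-expansiveness of the first step gives $\|\tilde\hh_N(t)-\tilde\hh'_N(t)\|_{L^\infty(\R)}\le \e/4$, while the finite propagation speed of the second step, applied separately to the pairs $(\hh_N,\tilde\hh_N)$ and $(\hh'_N,\tilde\hh'_N)$ (each of which shares initial data on $[-r,r]$ and the same clocks), forces $\hh_N(t)|_{[-R,R]}=\tilde\hh_N(t)|_{[-R,R]}$ and $\hh'_N(t)|_{[-R,R]}=\tilde\hh'_N(t)|_{[-R,R]}$ on a joint event of probability $\ge 1-2c_0\exp(-\e^{-1}N)$. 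Combining these, on this event $\dist(\hh_N(t),\hh'_N(t))\le \e/4+\e/2<\e$, establishing the lemma with $c:=2c_0$.

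The main technical hurdle is the quantitative Poisson estimate underlying the finite-propagation-speed step, requiring careful bookkeeping of large-deviations constants to produce a clean $\e^{-1}N$ exponent with a universal prefactor; the $L^\infty$-non-expansiveness is essentially an algebraic consequence of monotonicity and height-translation invariance of the dynamics.
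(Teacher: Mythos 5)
Your proposal is correct, and it reaches the conclusion by a genuinely different decomposition than the paper's. The paper's proof constructs explicit sandwiching envelopes $\hh_N^{\pm}$ as maxima of coupled wedge-initial-condition processes whose evolution depends only on $\hh_N(0)|_{[-r',r']}$, shows $\hh_N^-(t)=\hh_N^+(t)$ on $[-r,r]$ with probability $\geq 1-c\exp(-\e^{-1}N)$ via the first-particle Poisson large deviation bound, and then closes by sandwiching $\hh'_N$ between $\hh_N^- - 2^{r'}\delta$ and $\hh_N^+ + 2^{r'}\delta$ and invoking monotonicity. You instead factor the argument into two reusable black boxes --- the deterministic $L^\infty$-non-expansiveness of the basic coupling (derived from monotonicity plus vertical translation invariance, applied after truncating the initial data so the $L^\infty$ distance is finite) and a finite-speed-of-disagreement estimate (a Poisson light-cone bound). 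The two routes rest on the same underlying TASEP facts, namely order preservation under basic coupling and Poisson large deviations for the speed at which discrepancies spread, but your organization is more modular and less tied to the wedge/Hopf--Lax structure, whereas the paper's envelope construction packages the truncation and the propagation bound into a single explicit object. One small point you should note but do not need to belabor: adding the real constant $\rho$ in the non-expansiveness step must respect the $\tfrac{2}{N}\Z$ lattice of admissible height shifts, which introduces an $O(1/N)$ error; this is harmless for the conclusion but should be acknowledged. Your quantitative bookkeeping ($r=R+Ct+c_0\e^{-1}$, $\delta=\e 2^{-r-2}$, $c=2c_0$) is consistent and produces the claimed universal prefactor and $\e^{-1}N$ exponent.
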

\begin{proof}
All \ac{TASEP} height functions in this proof are coupled by the basic coupling.
Fix $ \e>0 $. We say an event occurs \tdef{With High Probability (WHP)} if the probability is $ \geq 1 - c\exp(-N/\e) $, for some universal $ c $.

Recall that the metric $ \dist $ controls the sup norm over compact intervals.
Fix $ r<\infty $.
The main step of the proof is to show that $ \hh_N(0,x) $, for large $ |x| $, does not affect $ \hh_N(t,x)|_{|x|\leq r} $ WHP.
To state this precisely, 
for an $ \alpha>0 $ to be specified later, let $ r':=r+\alpha $.
Consider the \ac{TASEP} $ \hh^{\icx}_N(t) $ with the wedge initial condition $  h^{\icx}_N(0) = \parab[\icx,\hh_N(0,\icx)](t) $,
and similarly for $ \hh^{\tL,\icx}_N(t) $ with $ h^{\tL,\icx}_N(0) = \parab[\icx,\hh_N(0,-r')-(\icx+r')](0) $
and for $ \hh^{\tR,\icx}_N(t) $ with $ h^{\tR,\icx}_N(0) = \parab[\icx,\hh_N(0,r')+(\icx-r')](0) $.
Define the lower envelope of $ \hh_N $ to be
$ \hh_N^-(t,x) := \max\{ \hh^{\icx}_N(t,x) : |\icx| \leq r' \} $ and
the upper envelope of $ \hh_N $ to be
\begin{align}
	\label{e.l.ic.0}
	\hh_N^+(t,x) := \max\Big\{ \max_{\icx<-r'}\big\{ \hh^{\tL,\icx}_N(t,x) \big\}, \ \hh_N^-(t,x), \ \max_{\icx>r'}\big\{ \hh^{\tR,\icx}_N(t,x) \big\} \Big\}.
\end{align}
Indeed, $ \hh^-_N $ and $ \hh^+_N $ sandwich $ \hh_N $, and they all evolve according to the \ac{TASEP} dynamics.
We claim that $ \hh_N^-(t)|_{[-r,r]} = \hh_N^+(t)|_{[-r,r]} $ WHP.
Once the claim is proven, the desired statement follows by considering $ \hh_N^++\delta $ and $ \hh_N^--\delta $
and using the order-preserving property of the basic coupling.

To prove the claim, consider the dynamics of $ \hh^{\icx}_N $ in terms of particles.
The first particle proceeds as a unit-rate (in the pre-scaled units) Poisson process.
For large $ \alpha $, WHP the particle travels less than $ N\alpha $ within $ [0,Nt] $, which gives $ \hh^{\icx}_N(t)|_{x\geq\icx+\alpha} = \hh^{\icx}_N(0)|_{x\geq\icx+\alpha} $.
The same argument together with the particle-hole duality gives $ \hh^{\icx}_N(t)|_{x\leq\icx-\alpha} = \hh^{\icx}_N(0)|_{x\leq\icx-\alpha} $.
Use these properties for $ \icx=-r',r' $ gives, WHP, $ \hh_N^-(t)|_{[-r,r]} \geq \max\{ \parab[-r',\hh_N(0,-r')](0), \parab[r',\hh_N(0,r')](0) \}|_{[-r,r]} $.
The last function is bounded below by the inner maxima in \eqref{e.l.ic.0} when restricted to $ x\in[-r,r] $, because $ \hh^{\tL,\icx}_N(t,x) \leq \hh^{\tL,\icx}_N(0,x) = \hh_N(0,-r')-(\icx+r') - (x-\icx) =  \parab[-r',\hh_N(0,-r')](0,x) $ and similarly $ \hh^{\tR,\icx}_N(t) \leq \parab[r',\hh_N(0,r')](0,x) $.
Dropping the inner maxima in \eqref{e.l.ic.0} proves the claim.
\end{proof}

For convenience of referencing we convert Lemma~\ref{l.ic.} into the following statement.
\begin{lem}
\label{l.ic}
Fix any $ t,\e>0 $, $ f,g\in\Lip $.
There exist a $ \delta=\delta(t,\e)>0 $ and a universal $ c<\infty $ such that\\
$ \dist(\hh_N(0),g)< \delta $ and $ \dist(\hh'_N(0),g)< \delta $
imply $ \P_{\hh_N(0)}[ \dist(\hh_N(t),f) < \e ] \leq \P_{\hh'_N(0)}[ \dist(\hh_N(t),f) < 2\e ] + c \exp(-\e^{-1}N) $.
\end{lem}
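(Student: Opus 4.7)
The plan is to deduce Lemma~\ref{l.ic} from the coupling estimate in Lemma~\ref{l.ic.} together with the triangle inequality, taking advantage of the fact that the basic coupling produces a joint realization whose marginals are exactly $\P_{\hh_N(0)}$ and $\P_{\hh'_N(0)}$.

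First, I would apply Lemma~\ref{l.ic.} with the given $t$ and $\e$ to obtain a threshold $\delta_0 = \delta_0(t,\e) > 0$. I would then set $\delta := \delta_0/2$, so that the hypotheses $\dist(\hh_N(0), g) < \delta$ and $\dist(\hh'_N(0), g) < \delta$ combined with the triangle inequality give $\dist(\hh_N(0), \hh'_N(0)) < \delta_0$. Next, I would realize $\hh_N(t)$ and $\hh'_N(t)$ on a common probability space via the basic coupling, and invoke Lemma~\ref{l.ic.} to conclude that under this coupling the good event $\calB := \{\dist(\hh_N(t), \hh'_N(t)) < \e\}$ occurs with probability at least $1 - c\exp(-\e^{-1}N)$ for a universal constant $c$.

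On the event $\calB$, the triangle inequality yields $\{\dist(\hh_N(t),f)<\e\}\cap\calB \subset \{\dist(\hh'_N(t),f)<2\e\}$. Decomposing according to $\calB$ and $\calB^c$, I would write
\begin{align*}
\P_{\hh_N(0)}\big[\dist(\hh_N(t),f)<\e\big]
&\leq \P\big[\dist(\hh_N(t),f)<\e,\, \calB\big] + \P[\calB^c]
\\
&\leq \P\big[\dist(\hh'_N(t),f)<2\e\big] + c\exp(-\e^{-1}N)
\\
&= \P_{\hh'_N(0)}\big[\dist(\hh'_N(t),f)<2\e\big] + c\exp(-\e^{-1}N),
\end{align*}
where the last equality uses that the basic coupling preserves the marginal law of $\hh'_N(t)$.

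No step looks delicate here; the only thing to verify is that the constant $c$ from Lemma~\ref{l.ic.} truly is universal (independent of $t,\e,f,g$), which is stated explicitly, and that $\delta$ is allowed to depend on $(t,\e)$ only, which matches the dependence of $\delta_0$. There is no serious obstacle: this is a short reduction of the statement about two independent initial conditions to the coupling statement already established in Lemma~\ref{l.ic.}.
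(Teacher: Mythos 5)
Your argument is correct and is exactly the intended (implicit) derivation: the paper introduces Lemma~\ref{l.ic} merely as a restatement of Lemma~\ref{l.ic.} for convenience of referencing and gives no separate proof, and your short reduction via the basic coupling and two triangle inequalities is the natural way to carry out that conversion. Nothing to add.
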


\bibliographystyle{alphaabbr}
\bibliography{tasepLDP-det}

\end{document}